\documentclass{article}
\usepackage{amsmath}
\usepackage{amsthm}
\usepackage{amssymb}
\usepackage{hyperref}
\usepackage{verbatim}
\usepackage{graphicx}
\usepackage[usenames,dvipsnames,svgnames,table]{xcolor}

\usepackage[hmargin=3cm,vmargin=3.5cm]{geometry}
\linespread{1}


\def\a{\alpha}
\def\b{\beta}
\def\ga{\gamma}
\def\Ga{\Gamma}
\def\de{\delta}
\def\De{\Delta}
\def\ep{\epsilon}

\def\la{\lambda}

\def\si{\sigma}

\def\Om{\Omega}

\def\th{\theta}

\def\nab{\nabla}
\def\varep{\varepsilon}


\def\CC{{\cal C}}

\def\II{{\cal I}}

\def\PP{{\cal P}}


\newcommand{\N}[0]{\mathbb{N}}

\newcommand{\R}[0]{\mathbb{R}}
\newcommand{\Z}[0]{\mathbb{Z}}

\newcommand{\T}[0]{\mathbb{T}}

\newcommand{\supp}{\mathrm{supp} \,}

\newcommand{\fr}[2]{\frac{#1}{#2}}

\newcommand{\ALI}[1]{\begin{align*} #1 \end{align*}}
\newcommand{\tx}[1]{\mbox{#1}}
\newcommand{\leqc}[0]{\lesssim}

\newcommand{\pr}[0]{\partial}

\newcommand{\co}[1]{\| #1 \|_{C^0}}

\newcommand{\Ddt}[0]{\fr{\bar D}{\partial t}}
\newcommand{\Ddtof}[1]{\fr{\bar D #1}{\partial t}}
\newcommand{\DDdt}[0]{\fr{\bar D^2}{\partial t^2}}

\newcommand{\ali}[1]{ \begin{align} #1 \end{align} }


\def\XXint#1#2#3{{\setbox0=\hbox{$#1{#2#3}{\int}$}
     \vcenter{\hbox{$#2#3$}}\kern-.5\wd0}}

\newcommand{\bMinus}{ } 

\newcommand{\nrm}[1]{\Vert#1\Vert}
\newcommand{\abs}[1]{\vert#1\vert}
\newcommand{\brk}[1]{\langle#1\rangle}
\newcommand{\set}[1]{\{#1\}}

\newcommand{\aeq}{\sim}

\newcommand{\ud}{d}
\newcommand{\rd}{\partial}
\newcommand{\nb}{\nabla}

\newcommand{\bb}{\Big}

\newcommand{\alp}{\alpha}
\newcommand{\bt}{\beta}
\newcommand{\gmm}{\gamma}

\newcommand{\dlt}{\delta}
\newcommand{\Dlt}{\Delta}
\newcommand{\eps}{\epsilon}

\newcommand{\Lmb}{\Lambda}
\newcommand{\sgm}{\sigma}

\newcommand{\tht}{\theta}

\newcommand{\Omg}{\Omega}

\newcommand{\zt}{\zeta}


\newcommand{\bfb}{{\bf b}}

\newcommand{\bfd}{{\bf d}}



\newcommand{\bbR}{\mathbb R}

\newcommand{\bbT}{\mathbb T}


\newcommand{\calF}{\mathcal F}

\newcommand{\calO}{\mathcal O}

\newcommand{\calU}{\mathcal U}


\newcommand{\LCyl}{\hat{\Gamma}}
\newcommand{\ECyl}{\hat{C}}

\newcommand{\approxMd}{\frac{\overline{\overline{D}}}{\rd t}}

\newcommand{\moll}[2]{{}^{(#1)} \widetilde{#2}}			
\newcommand{\spMP}{\bar{\rho}}				
\newcommand{\tMP}{\bar{\tau}}					
\renewcommand{\ell}{l}


\newtheorem{thm}{Theorem}[section]
\newtheorem{lem}{Lemma}[section]

\newtheorem{prop}{Proposition}[section]

\newtheorem{claim}{Claim}
\newtheorem{conj}{Conjecture}

\theoremstyle{definition}
\newtheorem{defn}{Definition}[section]

\theoremstyle{remark}
\newtheorem*{rem}{Remark}

\title{ On Nonperiodic Euler Flows with H\"{o}lder Regularity }
\author{ Philip Isett\thanks{Department of Mathematics, MIT, Cambridge, MA  (\href{mailto:isett@math.mit.edu}{isett@math.mit.edu}).  The work of P. Isett is supported supported by the National Science Foundation under Award No. DMS-1402370.} \\ Sung-Jin Oh\thanks{Department of Mathematics, UC Berkeley, Berkeley, CA (\href{mailto:sjoh@math.berkeley.edu}{sjoh@math.berkeley.edu}). S.-J. Oh is a Miller Research Fellow, and would like to thank the Miller Institute at UC Berkeley for support.}
}

\begin{document}
\maketitle

\begin{abstract}
In \cite{isett2}, the first author proposed a strengthening of Onsager's conjecture on the failure of energy conservation for incompressible Euler flows with H\"{o}lder regularity not exceeding $1/3$.  This stronger form of the conjecture implies that anomalous dissipation will fail for a generic Euler flow with regularity below the Onsager critical space $L_t^\infty B_{3,\infty}^{1/3}$ due to low regularity of the energy profile.  In this paper, we establish two theorems that may be viewed as first steps towards establishing the conjectured failure of energy regularity for generic solutions with H\"{o}lder exponent less than $1/5$.  

Our first result shows that any non-negative function with compact support and H\"older regularity $1/2$ can be prescribed as the energy profile of an Euler flow in the class $C^{1/5-\epsilon}_{t,x}$.  The exponent $1/2$ is sharp in view of a regularity result of Isett \cite{isett2}.  The proof employs an improved greedy algorithm scheme that builds upon that in Buckmaster--De~Lellis--Sz\'ekelyhidi \cite{deLSzeBuck}.  Our second result shows that any given smooth Euler flow can be perturbed in $C^{1/5-\epsilon}_{t,x}$ on any pre-compact subset of $\R\times \R^3$ to violate energy conservation.  In particular, there exist nonzero $C^{1/5-\epsilon}_{t,x}$ solutions to Euler with compact space-time support, generalizing previous work of the first author \cite{isett} to the nonperiodic setting.  

The construction of nonperiodic solutions involves new issues that are closely related to the conservation of angular momentum, most notably the need to construct symmetric tensors with a prescribed divergence $\pr_j R^{jl} = U^l$ and good decay.  A key idea to address this difficulty is the construction of operators with good transport properties that yield compactly supported solutions to the symmetric divergence equation for compatible data.  Through these operators and the use of spatially localized waves, we achieve simplifications in the iteration that are desirable from a physical point of view, including exponential growth of frequencies in the construction, and estimates that are consistent with the scaling and Galilean symmetries of the equations.
\end{abstract}

\tableofcontents

\section{Introduction}
The present work concerns the construction of H\"{o}lder continuous solutions to the incompressible Euler equations on $\R \times \R^3$
\ali{
\label{eq:theEulerEqns}
\tag{E}
\begin{split}
\pr_t v^l + \pr_j(v^j v^l) + \pr^l p &= 0 \\
\pr_j v^j &= 0
\end{split}
}
that fail to conserve energy.  As we consider solutions with fractional regularity, what we mean by a solution to \eqref{eq:theEulerEqns} is a continuous velocity field $v : \R \times \R^3 \to \R^3$ and pressure $p : \R \times \R^3 \to \R$ that together satisfy \eqref{eq:theEulerEqns} in the sense of distributions.  For continuous solutions, this notion of solution may be formulated equivalently in terms of the integral laws of momentum balance and balance of mass, which are commonly used to derive \eqref{eq:theEulerEqns} in continuum mechanics; see \cite{deLSzeCtsSurv}.

A central question concerning weak solutions to \eqref{eq:theEulerEqns} is the possibility of dissipation or creation of energy for solutions to Euler in H\"{o}lder or Besov type spaces where the known results on energy conservation do not apply.  The interest in this question originates from a 1949 note of L. Onsager on statistical turbulence \cite{onsag}, wherein Onsager proposed a mechanism for turbulent energy dissipation driven by frequency cascades that he postulated may exist even among appropriately defined weak solutions to the inviscid equation \eqref{eq:theEulerEqns}.  There Onsager stated that energy is conserved by periodic solutions in the class $L_t^\infty C_x^\a$ if $\a > 1/3$, and conjectured that energy conservation may fail for such solutions if $\a < 1/3$ (see \cite{deLSzeCtsSurv, eyinkSreen} for detailed expositions).  The conservation of energy stated by Onsager was proven in \cite{eyink, CET}, and this result was refined in \cite{ches} to show that energy conservation holds for energy class solutions in the space $L_t^3 B_{3,c_0(\N)}^{1/3}$ on either $I \times \T^n$ or $I \times \R^n$ (see also \cite{duchonRobert, isettOh} for further proofs).  On the other hand, the proof of energy conservation fails for the space $L_t^3 B_{3,\infty}^{1/3}$, and an example in \cite{ches} suggests that anomalous dissipation of energy may be possible in this class.  The Besov regularity $\dot{B}_{p,\infty}^{1/3}$ carries a special significance in turbulence theory as it agrees with the $p = 3$ case of the scaling $\langle |v(x+ \De x) - v(x)|^p \rangle^{1/p} \sim \varep^{\fr{1}{3}} |\De x|^{\fr{1}{3}}$ predicted by Kolmogorov's theory \cite{K41}.  See \cite{eyinkDissip, shvOns} for further discussion.  Recently there has also been a series of advances towards the negative direction of Onsager's conjecture that we will discuss further below \cite{deLSzeCts, deLSzeHoldCts, isett, buckDeLIsettSze}.

Following the works above, the first author proposed in \cite{isett2} a stronger form of Onsager's conjecture that will be a main concern of the present work.  The conjecture of \cite{isett2} states that a generic solution to incompressible Euler with regularity at most $1/3$ will not only fail to conserve energy, but also will possess an energy profile of minimal regularity.  For periodic solutions in the class $C_t C_x^\a$ with $\a < 1/3$, the conjecture may be formulated precisely as follows:

\newpage

\begin{conj}[Generic Failure of Energy Regularity] \label{conj:energy:reg}  For any $\a < 1/3$, there exists a solution to \eqref{eq:theEulerEqns} in the class $v \in C_t C_x^\a(\R \times \T^n)$ whose energy profile $e(t) = \int_{\T^n} |v|^2(t,x) dx$ fails to have any regularity above the exponent $2\a/(1-\a)$, in the sense that\footnote{Here we use $W^{s,p}$ to denote the Sobolev space with ``$s$'' derivatives measured in $L^p$.} $e(t) \notin W^{2\a/(1-\a) + \ep, p}(I)$ for every $\ep > 0$, $p \geq 1$ and every open time interval $I \subseteq \R$.  

Furthermore, the set of all such solutions $v$ with the above property is residual (in the sense of category) within the space of all weak solutions to \eqref{eq:theEulerEqns} in the class $v \in C_t C_x^\a(\R \times \T^n)$ when the latter space is endowed with the topology from the $C_t C_x^\a$ norm.
\end{conj}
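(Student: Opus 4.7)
The plan is to recast the conjecture as a residuality assertion in the complete metric space $X$ of all weak solutions $v \in C_t C_x^\a(\R \times \T^n)$ to \eqref{eq:theEulerEqns}, equipped with the $C_t C_x^\a$ topology.  Enumerate forbidden parameters $(s,p,I,k)$ with $s = 2\a/(1-\a) + \ep$, $\ep \in \Q_{>0}$, $p \in \Q \cap [1, \infty)$, $I$ an open interval with rational endpoints, and $k \in \N$, and set
\ali{
A_{s,p,I,k} := \set{ v \in X : e_v(t) = \int_{\T^n} |v|^2(t,x) \, dx \text{ satisfies } \|e_v\|_{W^{s,p}(I)} \leq k }.
}
The ``bad'' set singled out in the conjecture is $X \setminus \bigcup_{s,p,I,k} A_{s,p,I,k}$, a countable intersection of the complements of $A_{s,p,I,k}$, hence a $G_\de$ subset of $X$.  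Since $v_n \to v$ in $C_t C_x^\a$ implies $e_{v_n} \to e_v$ locally uniformly, each $A_{s,p,I,k}$ is closed, so it suffices to show each has empty interior in $X$.

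This emptiness-of-interior step reduces to the density statement: given $v \in X$, $\de > 0$, and $(s,p,I,k)$ as above, produce $v' \in X$ with $\|v - v'\|_{C_t C_x^\a} < \de$ and $\|e_{v'}\|_{W^{s,p}(I)} > k$.  I would prove this by combining the two constructions highlighted in the present paper: the prescribed-energy-profile theorem and the localized perturbation theorem.  Specifically, fix a target energy increment $\Delta e(t)$ supported in $I$, of $C^0$ size $O(\de^2)$, H\"{o}lder regularity exactly $2\a/(1-\a)$ (and no better), and nonnegative.  Running the greedy convex-integration scheme against the Euler--Reynolds system associated to $v$ with prescribed energy defect $\Delta e$ along $I$, one produces $v'$ whose correction-wave amplitudes scale like $(\Delta e)^{1/2} = O(\de)$ and whose oscillation frequencies can be chosen as large as needed, so that $\|v - v'\|_{C_t C_x^\a} < \de$ while $e_{v'} = e_v + \Delta e$ inherits the failure of $W^{s,p}(I)$ regularity from $\Delta e$.

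The main obstacle is twofold.  First, the two constructions of the present paper yield solutions at regularity $C^{1/5-\ep}_{t,x}$, corresponding to the critical energy H\"{o}lder exponent $2\a/(1-\a) = 1/2$; extending the prescribed-energy method to arbitrary $\a < 1/3$ requires meshing the greedy algorithm with an Onsager-critical construction along the lines of \cite{buckDeLIsettSze}, while preserving sharp control on the $C^\a$ loss incurred at each stage of the iteration.  Without such an extension, the conjecture is likely accessible only in a restricted range of exponents (e.g., $\a < 1/5$).  Second, the perturbation must begin from an arbitrary $v \in C_t C_x^\a$ rather than from a smooth background, so the associated Euler--Reynolds stress is merely H\"{o}lder; a natural remedy is to first mollify $v$, carry out the perturbation on the smoothed solution, and then absorb the resulting stress error into subsequent iterations.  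Executing this mollification--absorption step uniformly in $v$ while maintaining bounds compatible with the scaling and Galilean symmetries of \eqref{eq:theEulerEqns} emphasized in this paper is the part of the argument I expect to be most delicate.
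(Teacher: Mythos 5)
There is no proof of this statement to compare against: it is stated in the paper as a \emph{conjecture} (due to \cite{isett2}), and the paper explicitly presents Theorems~\ref{thm:eulerOnRn:presEnergy} and \ref{thm:eulerOnRn:cptPert} only as ``first steps'' toward it in the restricted range $\a < 1/5$. Your Baire-category framing --- writing the exceptional set as a countable union of sets $A_{s,p,I,k}$, checking they are closed under $C_t C_x^\a$ convergence, and reducing residuality to a density/perturbation statement --- is exactly the outline the authors themselves sketch in the paragraph following Theorem~\ref{thm:eulerOnRn:cptPert}, so that part is fine but not new. The substance of the conjecture lies entirely in the density step, and that is where your proposal has a genuine gap rather than a proof.

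Concretely, two things you flag as ``obstacles'' are precisely the open problems, and your suggested remedies do not close them. First, the perturbation must start from an \emph{arbitrary} weak solution $v \in C_t C_x^\a$, but the convex integration machinery (the Main Lemma here, or its analogues in \cite{isett, deLSzeBuck, buckDeLIsettSze}) requires an Euler--Reynolds flow with quantitative control of several derivatives. Your mollify-then-absorb plan runs into the basic fact that mollification does not converge in the critical norm: $\|v - v_\ell\|_{C_t C_x^\a}$ does not tend to $0$ as $\ell \to 0$ for a generic $C^\a$ field, so the solution built over the smoothed background $v_\ell$ is not automatically $\de$-close to $v$ in the topology in which residuality must be proved, and no mechanism is offered for absorbing the difference $v - v_\ell$ (which is not small in $C^\a$) into the iteration. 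Second, even granting a perturbation theorem around smooth backgrounds, the scheme of this paper (and all known schemes controlling the energy profile) is limited to $\a < 1/5$; asserting that one can ``mesh the greedy algorithm with an Onsager-critical construction'' for all $\a < 1/3$ while keeping sharp control of the energy profile is a statement of hope, not an argument --- indeed the paper notes (Section~\ref{sec:localizing}) that the known $1/3-\ep$ constructions do not come with control of the energy profile. So the proposal is a program coinciding with the authors' own announced strategy, with the decisive analytic steps left open; it does not establish the conjecture.
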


Conjecture~\ref{conj:energy:reg} conveys a sense in which anomalous dissipation should be unstable and nongeneric for weak solutions to Euler with regularity strictly below $1/3$.  
Assuming Conjecture~\ref{conj:energy:reg}, anomalous dissipation fails to hold for generic solutions to Euler in the class $C_t C_x^\a$ when $\a < 1/3$, as the energy profile of a typical solution in such a space will fail to be of bounded variation, and hence fail to be monotonic.  Instead, the only regularity one can expect for the energy profile of a solution in this class would be provided by the following estimate, proven in \cite{isett2}:
\ali{
\sup_t \sup_{\De t \neq 0} \fr{| e(t + \De t) - e(t) |}{|\De t|^{\fr{2 \a}{1 - \a}}} \leq C_\a \| v \|_{C_t \dot{B}_{3,\infty}^{\a}}^3. \label{eq:holderEnergy}
}
One expects that the $C^{\fr{2 \a}{1-\a}}$ bound above should be sharp, since the proof of \eqref{eq:holderEnergy} can be viewed as a generalization of the argument used by \cite{CET} to prove the positive direction of Onsager's conjecture.  (The proof of \eqref{eq:holderEnergy} gives more precise information, showing that the fluctuations in the energy profile at time scales of the order $\tau$ are governed by contributions from wavenumbers of the order $\tau^{-\fr{1}{1 - \a}}$.)


The formulation of Conjecture~\ref{conj:energy:reg} captures part of the intuition that even slight perturbations in a space of solutions with regularity below $1/3$ will typically produce small, rapid oscillations in time for the energy profile of the solution, and the regularity of these oscillations will be governed by the regularity of the perturbation in accordance with the proof of inequality \eqref{eq:holderEnergy}.  The same intuition offers a picture of what may be expected for solutions in the Onsager critical spaces $L_t^p \dot{B}_{3,\infty}^{1/3}$ for $p \leq \infty$, namely that anomalous dissipation (if possible) would be similarly nongeneric for solutions in $L_t^p \dot{B}_{3,\infty}^{1/3}$ for $p < \infty$, but in contrast would be stable under perturbation for Euler flows in the  $L_t^\infty \dot{B}_{3,\infty}^{1/3}$, where having a strictly positive rate of energy dissipation $- \fr{d}{dt} \int \fr{|v|^2}{2}(t,x) dx \geq \varep > 0$ is an open condition\footnote{This stability result derives from the following estimate for the difference of the energy profiles $e_1$, $e_2$ of two weak solutions to Euler in the class $v_1, v_2 \in L_t^\infty \dot{B}_{3,\infty}^{1/3}$ with domain $I \times \T^n$ or $I \times \R^n$, which was observed in \cite[Section 3]{isett2} by extending the argument of \cite{CET, ches}:
\ALI{
\left\| \fr{d}{dt} ( e_2 - e_1 ) \right\|_{L_t^\infty(I)} \leq C \| v_1 - v_2 \|_{L_t^\infty \dot{B}_{3,\infty}^{1/3}} \max \left\{ \| v_1 \|_{L_t^\infty \dot{B}_{3,\infty}^{1/3}} ,  \| v_2 \|_{L_t^\infty \dot{B}_{3,\infty}^{1/3}} \right\}^2.
}
Here it is important to consider solutions with uniform in time bounds rather than $L_t^p$ integrability, since the analogous estimate in the class $L_t^p \dot{B}_{3,\infty}^{1/3}$ for $3 < p < \infty$ controls only the $L_t^{p/3}$ norm of $\fr{de}{dt}$ (or the total variation norm of $\fr{de}{dt}$ in the case $p = 3$).  For $p < \infty$, one should expect instead that the set of all solutions with nonincreasing energy profiles would be a closed set with empty interior (and hence be nowhere dense) in the space of all $L_t^p B_{3,\infty}^{1/3}$ solutions.
}.  One goal of our work is to give rigorous support to the above intuition in the range of exponents $\a < 1/5$ for dimension $n = 3$.

Our first main result, Theorem~\ref{thm:eulerOnRn:presEnergy} below, proves the existence of solutions to Euler with energy profiles approaching the minimal regularity $2 \a / (1-\a)$ for $0 < \a < 1/5$, thus confirming that the $2\a/(1-\a)$-H\"{o}lder estimate \eqref{eq:holderEnergy} is sharp in this range.  This Theorem supports the intuition underlying Conjecture~\ref{conj:energy:reg}, as we show moreover that irregularity of the energy profile may arise from a compactly supported perturbation of the $0$ solution in $C_{t,x}^\a$.  As in previous constructions, the range $\a \geq 1/5$ is out of reach of our method.  The construction of $(1/5-\varep)$-H\"{o}lder solutions that fail to conserve energy was first achieved in \cite{isett} improving on initial constructions of $(1/10-\varep)$-H\"{o}lder solutions in \cite{deLSzeCts, deLSzeHoldCts} (see also \cite{deLSzeBuck, buckDeLIsettSze} for a shorter proof closer to the scheme of \cite{deLSzeCts, deLSzeHoldCts}).  We also note the construction of compactly supported solutions in the class $C_{t,x}^0 \cap L_t^1 C_x^{1/3 - \varep}$ by \cite{Buckmaster, buckDeLSzeOnsCrit}, although a result on the control of the energy profile does not appear to be available in this setting (c.f. Section~\ref{sec:localizing}).

\begin{thm}[Euler flows with prescribed energy profile] \label{thm:eulerOnRn:presEnergy}
Let $\a < 1/5$, let $I \subseteq \R$ be a bounded open interval, and let $\calU$ be an open subset of $\R^3$.  
Let $\bar{e}(t) \geq 0$ be any non-negative function with compact support in $I$ which belongs to the class $\bar{e}(t) \in C_t^{\ga}$ for some $\ga > \fr{2 \a}{1-\a}$.  Then there exists a weak solution $(v, p)$ to the incompressible Euler equations in the class $v \in C_{t,x}^\a(\R\times\R^3)$ with support contained in 
 \[ \supp v \cup \supp p \subseteq I \times \calU \] 
such that the energy profile of $v$ is equal to $\int_{\R^3} |v|^2(t,x) dx = \bar{e}(t)$ for all $t \in \R$.  Moreover, one may choose a one parameter family of solutions $(v_A, p_A)$, $0 \leq A \leq 1$, with the above properties such that the energy profile of $v_A$ is equal to $\int_{\R^3} |v_A|^2(t,x) dx = A \bar{e}(t)$ and such that $\| v_A \|_{C_{t,x}^\a} \to 0$ as $A \to 0$.
\end{thm}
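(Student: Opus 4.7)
My plan is to run a convex-integration iteration on the Euler--Reynolds system
\begin{align*}
\partial_t v^l + \partial_j(v^j v^l) + \partial^l p &= \partial_j R^{jl}, \\
\partial_j v^j &= 0,
\end{align*}
starting from the trivial triple $(v_0, p_0, R_0) = (0,0,0)$ and producing a sequence $(v_q, p_q, R_q)$ whose spatial frequency parameters $\lambda_q$ grow (ideally simply exponentially, as the abstract advertises) while the Reynolds stress obeys $\|R_q\|_{L^1} \lesssim \delta_{q+1}$. Each correction $V_{q+1} = v_{q+1} - v_q$ is built as a sum of high-frequency, spatially localized Beltrami/Mikado-type waves whose leading quadratic self-interaction cancels $R_q$. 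To preserve the support condition $\supp v \cup \supp p \subset I \times \calU$, I replace the torus-based global waves of earlier periodic constructions with amplitude cutoffs vanishing outside a nested family of subsets of $I\times \calU$, and I use the compactly supported symmetric divergence operators highlighted in the abstract to invert $\partial_j R^{jl} = U^l$ while respecting transport by $v_q$, so that the new stress $R_{q+1}$ remains compactly supported and still satisfies the scheme's error budget.

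To match the energy profile exactly, I embed a greedy algorithm into the iteration following \cite{deLSzeBuck}: at stage $q+1$, the wave amplitudes are tuned so that $\int_{\R^3} |V_{q+1}|^2(t,x)\,dx$ equals a prescribed increment $e_{q+1}(t) - e_q(t)$ of a smooth approximation to $\bar e(t)$, up to a residual of size $\delta_{q+2}$. The hypothesis $\bar e \in C^\gamma$ with $\gamma > 2\a/(1-\a)$ is exactly what allows these greedy increments to be chosen of size $\|\bar e\|_{C^\gamma}\tau_q^\gamma$ on the natural transport time scale $\tau_q \sim \lambda_q^{-(1-\a)}$ attached to frequency $\lambda_q$, so that the matching error fits the scheme's admissible size $\delta_q \sim \lambda_q^{-2\a}$ and the identity $\gamma(1-\a) > 2\a$ is exactly the compatibility requirement. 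Standard Nash-type estimates $\|V_{q+1}\|_{C^0} \lesssim \delta_{q+1}^{1/2}$ and $\|\nabla V_{q+1}\|_{C^0} \lesssim \delta_{q+1}^{1/2}\lambda_{q+1}$, plus analogous advective derivative estimates, yield a $C^\a_{t,x}$ limit whenever $\sum_q \delta_q^{1/2}\lambda_q^\a < \infty$, which the parameter choice permits for any $\a<1/5$ just as in \cite{isett}. In the limit $R_q\to 0$ the cross term $2V_{q+1}\cdot v_q$ telescopes to mean-zero high-frequency oscillations and the residual $L^2$-mass errors vanish, so $\int |v|^2 = \bar e$. The one-parameter family $(v_A, p_A)$ is obtained by running the same construction with target $A\bar e$: amplitudes scale like $A^{1/2}$ at every stage, giving $\|v_A\|_{C^\a} \lesssim A^{1/2}$, which tends to $0$ as $A \to 0$.

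The main obstacle, and the source of what is genuinely new relative to periodic constructions, is effecting the stress cancellation in a compactly supported way on $\R^3$. In the periodic setting, Helmholtz projection and inverse divergence are applied freely via Fourier series; on $\R^3$ these global operators destroy compact support. The replacement symmetric divergence solvers must simultaneously (i) produce symmetric $R^{jl}$ with compact support from a source $U^l$ subject to the compatibility (moment) conditions dictated by angular momentum balance, (ii) commute sufficiently well with the material derivative along $v_q$ for the advective-derivative estimates to close, and (iii) respect the Galilean and scaling symmetries that keep the frequency-localized estimates sharp at frequency $\lambda_{q+1}$. Designing such operators, combining them with spatially localized waves, and threading them through the iteration while simultaneously maintaining the greedy energy matching and the support envelope $I\times\calU$ is the technical heart of the construction.
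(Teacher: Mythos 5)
Your high-level strategy coincides with the paper's: iterate a localized Euler--Reynolds lemma built on the compactly supported symmetric divergence operators, with a greedy choice of energy increments, and your exponent bookkeeping (the variation of $\bar e$ over the cutoff time scale $\sim\lambda_q^{-(1-\alpha)}$ must fit inside the admissible stress size $\sim\lambda_q^{-2\alpha}$, i.e.\ $\gamma(1-\alpha)>2\alpha$) is the right numerology. The genuine gap is that you never confront the feature that separates this theorem from the strictly positive, smooth-profile results of Buckmaster--De Lellis--Sz\'ekelyhidi: $\bar e$ is merely non-negative and may vanish. The Main Lemma requires the increment to obey a pointwise lower bound $e_{(q)}\geq K e_{R,(q)}$ on a space-time neighborhood of $\supp R_{(q)}$ (otherwise the stress equation for the amplitudes has no solution), while the running energy must never overshoot $\bar e$; when $\bar e(t)$ drops below $K e_{R,(q)}$ no wave may be placed at time $t$ at stage $q$, so the time support of the solution and of the stress must grow only gradually, and this must be propagated inductively. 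Your plan of tuning amplitudes so that $\int|V_{q+1}|^2$ matches ``a prescribed increment of a smooth approximation to $\bar e$ up to $\delta_{q+2}$'' supplies no mechanism for this: the paper's proof runs through the specific choice $e_{(q)}^{1/2}=\bigl(\bar e-E_q-\Delta E_q\bigr)_+^{1/2}\ast\eta_{\hat\tau_q}$ with a deliberate gap $\Delta E_q\sim e_{R,(q+1)}$, growing time-support sets $I_{(q)}$, two inductive claims (``room to add where the error is supported'' and ``threshold nearly saturated''), a CET-type quadratic commutator estimate to show that mollifying the \emph{square root} still reproduces the intended increment up to $O(e_{R,(q+1)})$ (the Main Lemma needs bounds on $\nabla^k(\partial_t+v\cdot\nabla)^r e^{1/2}$, and the square root of a positive part is exactly where the $C^\gamma$ hypothesis is spent), and a modified Main Lemma tolerating advective derivatives of $e^{1/2}$ at the fast rate $\hat\tau_q^{-1}$ rather than $\Xi e_v^{1/2}$. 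Relatedly, your statement that the increments themselves have size $\|\bar e\|_{C^\gamma}\tau_q^\gamma$ misplaces the role of the H\"older norm: the increments are of size $\sim e_{R,(q)}$, and the H\"older bound controls the drift of the target over the cutoff time scale relative to $e_{R,(q+1)}$.

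A second, smaller but real error is the one-parameter family. Replacing $\bar e$ by $A\bar e$ and ``scaling amplitudes by $A^{1/2}$ at every stage'' with frequencies unchanged breaks the very compatibility you identified: smaller amplitudes slow the coarse-scale flow, lengthening the natural time scale by $A^{-1/2}$, so the drift of $A\bar e$ over that scale compares to the admissible error $A e_{R,(q+1)}$ worse by a factor $A^{-\gamma/2}$. One must instead take the initial frequency large depending on $A$ (in the paper $\Xi_{(0)}\sim e_{R,(0)}^{-1/\gamma-1/2}$ with $e_{R,(0)}\propto A$), which yields $\|v_A\|_{C^\alpha_{t,x}}\lesssim A^{\,1/2-\alpha(1/\gamma+1/2)}$ rather than $A^{1/2}$; this exponent is positive precisely because $\gamma>2\alpha/(1-\alpha)$ is strict. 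So the conclusion $\|v_A\|_{C^\alpha_{t,x}}\to0$ survives, but not by the scaling argument you give.
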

Theorem~\ref{thm:eulerOnRn:presEnergy} builds upon work of \cite{deLSzeCts,deLSzeHoldCts,deLSzeBuck} for prescribing smooth energy profiles the periodic setting and on the organizational framework developed in \cite{isett}.  
We discuss the improvements in Theorem~\ref{thm:eulerOnRn:presEnergy} in Section~\ref{sec:newIdeas:energy} below.  We remark that our arguments also allow one to achieve an energy profile that does not have compact support provided the norm $\| e \|_{C_t^\ga} = \sup_t |e(t)| + \sup_t \sup_{|\De t| \neq 0} \fr{|e(t+\De t) - e(t)|}{|\De t|^\ga}$ is finite.



Our second main result, Theorem~\ref{thm:eulerOnRn:cptPert} below, shows that the failure of energy conservation and higher regularity may arise from perturbations of an arbitrary smooth, Euler flow.
\begin{thm}[Perturbation of smooth Euler flows] \label{thm:eulerOnRn:cptPert}
Let $(v_{(0)}, p_{(0)})$ be any smooth solution to the incompressible Euler equations on $\bbR \times \bbR^{3}$. Then for any $\eps, \dlt > 0$ and pre-compact open sets $\Omg_{(0)}$, $\calU$ such that $\Omg_{(0)} \neq \emptyset$ and $\overline{\Omg_{(0)}} \subseteq \calU$, there exists a weak solution $(v, p) \in C^{1/5-\eps}_{t,x} \times C^{2(1/5-\eps)}_{t,x}$ to the incompressible Euler equations on $\bbR \times \bbR^{3}$ such that the following statements hold:
\begin{enumerate}
\item The solutions $(v, p)$ and $(v_{(0)}, p_{(0)})$ coincide outside $\calU$, i.e.,
	\begin{equation} 
		(v, p) = (v_{(0)}, p_{(0)}) \hbox{ on } (\bbR \times \bbR^{3}) \setminus \calU.
	\end{equation}		

\item The solutions $(v, p)$ and $(v_{(0)}, p_{(0)})$ differ at most by $\dlt$ in the $C^{1/5-\eps}_{t,x} \times C^{2(1/5-\eps)}_{t,x}$ topology, i.e.,
	\begin{equation} 
		\nrm{v-v_{(0)}}_{C^{1/5-\eps}_{t,x}} + \nrm{p - p_{(0)}}_{C^{2(1/5-\eps)}_{t,x}} < \dlt.
	\end{equation}	 
\item For every $t \in \R$ and open set $\Om' \subseteq \R^3$ such that $\{ t \} \times \Om' \subseteq \Om_{(0)}$, the solution $v(t,x)$ fails to be in the class $v(t, \cdot) \notin C^{1/5}(\Om')$, and furthermore fails to belong to the Sobolev space $v(t, \cdot) \notin W^{1/5, 1}(\Om')$.  As a consequence, $v$ does not coincide with $v_{(0)}$ on any open subset of $\Om_{(0)}$.

\item There exists $t_\star \in \R$ and a smooth, non-negative function $\psi = \psi(x) \geq 0$ with compact support such that
\[ \{ x ~|~ (t_\star, x) \in \calU \} \subseteq \{ x ~|~ \psi(x) = 1 \} \]
and we have 
\begin{equation} \label{}
		\int_{\bbR^{3}} \psi(x) \frac{\abs{v(t_{\star},x)}^{2}}{2} \, \ud x > 
		\int_{\bbR^{3}} \psi(x) \frac{\abs{v_{(0)}(t_{\star},x)}^{2}}{2} \, \ud x.
	\end{equation}
In particular, the solution $v$ fails to conserve energy if its energy is finite. 
	
\end{enumerate}
\end{thm}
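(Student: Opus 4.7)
The plan is to run a convex integration iteration adapted from the scheme behind Theorem~\ref{thm:eulerOnRn:presEnergy} (building on \cite{isett, buckDeLIsettSze}), starting from a compactly supported seed perturbation of $(v_{(0)}, p_{(0)})$ rather than of the zero solution, with the paper's new compactly supported divergence inverse for symmetric tensors as the central enabling tool. Specifically, I would recast the problem in terms of the perturbation $V = v - v_{(0)}$, $P = p - p_{(0)}$, which must be supported in $\mathcal{U}$ and satisfy the linearized-around-$v_{(0)}$ Euler equation
\ali{
\pr_t V^l + \pr_j \bigl( v_{(0)}^j V^l + V^j v_{(0)}^l + V^j V^l \bigr) + \pr^l P = 0, \quad \pr_j V^j = 0. \notag
}
This is an Euler-Reynolds system with additional smooth linear coefficients involving $v_{(0)}$. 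The two new structural improvements of this paper --- exponential frequency growth $\la_{k+1} = \la_k^{1+\theta}$ and spatial localization of waves on scales much finer than those of $v_{(0)}$ --- together with the Galilean covariance of the estimates, ensure that these extra linear terms contribute only lower-order corrections to the Reynolds stress, which are absorbed into the standard error hierarchy.

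The iteration is initialized with a seed $(V_0, P_0, R_0)$ where $V_0, P_0$ are smooth and compactly supported in $I \times \mathcal{U}$ for a bounded interval $I$ containing the time projection of $\Omg_{(0)}$, with $R_0$ also compactly supported thanks to the new divergence inverse. The seed is chosen small in the appropriate critical scale-invariant norm to yield property~(2), and so that at the target time $t_\star$ we already have the strict inequality $\int \psi \, (|v_{(0)} + V_0|^2/2) \, dx > \int \psi \, (|v_{(0)}|^2/2) \, dx$. At each subsequent stage $k$, the correction $w_k$ is a sum of high-frequency Beltrami-like waves spatially localized inside $\mathcal{U}$, with amplitudes dictated by $R_k$ and a prescribed energy increment as in the proof of Theorem~\ref{thm:eulerOnRn:presEnergy}; the new stress $R_{k+1}$ is recovered via the compactly supported inverse so that $\supp R_{k+1} \subseteq \mathcal{U}$. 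The prescribed energy increment is then chosen so as to preserve the strict inequality at $t_\star$ in the limit, giving property~(4). Convergence of $V_k$ in $C^{1/5-\eps}_{t,x}$ to $V$ with $\supp V \subseteq \mathcal{U}$ delivers properties (1) and (2).

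For property~(3), the strategy is to distribute the waves densely in space: at each stage $k$, arrange that every open subset $\Omg' \subseteq \R^3$ with $\{t\} \times \Omg' \subseteq \Omg_{(0)}$ contains the support of at least one wave of amplitude $\sim \la_k^{-1/5 + O(\eps)}$ at frequency $\sim \la_k$. Since $\Omg_{(0)}$ is precompact, this can be arranged uniformly by means of a partition of unity at each scale. A Littlewood--Paley-type summation over $k$ then forces $v(t, \cdot) \notin C^{1/5}(\Omg')$; the failure in $W^{1/5,1}(\Omg')$ follows from the fact that each wave carries an $L^1$ mass of its fractional derivative of order unity, and these masses are not summable at the threshold $1/5$. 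The non-coincidence of $v$ with $v_{(0)}$ on any open subset of $\Omg_{(0)}$ is then immediate from the smoothness of $v_{(0)}$.

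The main obstacle is carrying out the iteration with compact support preserved at every stage, despite the smooth background $v_{(0)}$ introducing extra linear terms and despite the conservation of linear and angular momentum imposing nontrivial compatibility conditions on the divergence equation $\pr_j R^{jl} = U^l$. The compactly supported divergence inverse with good transport properties developed in this paper is precisely what resolves the first difficulty, yielding compactly supported $R_{k+1}$ whenever the input data meet the momentum compatibility conditions, which are automatically satisfied by the transport-type corrections arising at each stage. The remaining issue --- preventing the background's linear terms from destroying the $C^{1/5-\eps}$ frequency bookkeeping --- is resolved by the spatial localization of waves on scales much finer than the variations of $v_{(0)}$: on each wave support, $v_{(0)}$ is effectively constant, and by Galilean invariance the residual variations can be treated as standard error terms in the hierarchy.
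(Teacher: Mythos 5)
Your overall framework (localized waves, exponential frequency growth, and the compactly supported inverse of the symmetric divergence equation) is the right one, and in fact the paper's route is simpler than your reformulation: there is no need to pass to a ``linearized-around-$v_{(0)}$'' system or to a nonzero seed $(V_0,P_0,R_0)$. One simply regards $(v_{(0)},p_{(0)},0)$ itself as an Euler--Reynolds flow with $R_{(0)}=0$ and iterates the Main Lemma on the full fields; the smooth background enters only through $\nabla v_{(0)}$ in the frequency--energy levels, and the prescribed local energy density $e_{(k)}(t,x)$ (a mollification along the flow of the characteristic function of a cylindrical neighborhood of the growing support set $\Omega_{(k)}$) is what generates both the energy increase and the support bookkeeping. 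If you do insist on a seed, note that $R_0$ can only be made compactly supported by the divergence inverse if $\partial_t V_0$ satisfies the linear and angular momentum compatibility conditions, so the seed must itself be taken in (say) double-curl form; and the support control is not automatic from localization of the waves alone --- one must sum the cylinder enlargements $\hat C_{v_{(k)}}(\theta_{(k)},\Xi_{(k)}^{-1};\cdot)$ over all stages and use the exponential decay of $\theta_{(k)},\Xi_{(k)}^{-1}$ together with the fact that $v_{(k)}=v_{(0)}$ outside $\Omega_{(k)}$ (Lemma \ref{lem:MLMT:fsp4Flow}) to keep everything inside $\calU$.

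The genuine gap is in your argument for statement (3). Placing in every ball a wave of amplitude $\sim\lambda_k^{-1/5+O(\eps)}$ at frequency $\lambda_k$ and invoking a ``Littlewood--Paley-type summation'' does not prove that the \emph{limit} fails to be $C^{1/5}$ or $W^{1/5,1}$ on that ball: the corrections are not sharply frequency-localized (nonlinear phases, amplitudes varying at scale $\Xi_{(k)}$), and nothing in your argument rules out cancellation between a given wave and the later corrections, nor does non-summability of the $W^{1/5,1}$ norms of the individual pieces imply anything about the sum. The paper's proof of this statement (Claim \ref{claim:MLMT:nontrivial}) is quantitative and of a different nature: for an arbitrary candidate $u\in W^{1/5,1}_x\cup C^{1/5}_x$ on a ball inside $\Omega_{(0)}$, one shows that $\int\psi\,|v_{(k)}-u|^2/2$ is eventually strictly increasing in $k$, using the localized energy prescription estimate \eqref{eq:energyPrescribed} for the main term, the vector-potential bound \eqref{eq:Wco} with $V_{(k)}=\nabla\times W_{(k)}$ and integration by parts for the cross terms, a mollification $u_\eps$ at scale $\eps=N_{(k)}^{-1}\Xi_{(k)}^{-1}$, and the numerology $C_0>1$ with exponential frequency growth to make the errors $o(e_{R,(k)})$. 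The same mechanism (added local energy $\gtrsim e_{R,(k)}$ dominating cross terms $O(Z^{-2}e_{R,(k)})$) is what actually proves your statement (4) at every stage; your plan to have a seed create the inequality and then ``choose increments to preserve it'' defers exactly this estimate, which is the real content. Without an argument of this type, statement (3) --- and in particular the $W^{1/5,1}$ part --- is not established by your proposal.
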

We view our proofs of Theorems~\ref{thm:eulerOnRn:presEnergy} and \ref{thm:eulerOnRn:cptPert} as first steps towards establishing Conjecture~\ref{conj:energy:reg} in the range of exponents $\a < 1/5$.  Namely, in the greater scheme of proving Conjecture~\ref{conj:energy:reg}, one could proceed by showing that the set of exceptions to the Conjecture is contained in a countable union of closed subsets of $C_tC_x^\a$ having empty interior.  Verifying the empty interior condition amounts to proving a perturbation result, which would roughly amount to showing that an arbitrary solution with $v \in C_t C_x^\a$ can be perturbed in $C_t C_x^\a$ to obtain a solution $\tilde{v} \in C_tC_x^\a$ whose energy profile fails to belong to $W^{2\a/(1-\a)+\ep, 1}(I)$ on every open interval $I \subseteq \R$.  Theorem~\ref{thm:eulerOnRn:presEnergy} shows that the trivial solution $v = 0$ can be perturbed in $C_tC_x^\a$ to achieve any given energy profile $\bar{e}(t)$ which is small in $C^{2\a/(1-\a) + \ep/2}$ (see Section~\ref{sec:remarksOnThm} below).  Our proof of Theorem~\ref{thm:eulerOnRn:cptPert} suggests that a similar perturbation should be possible with the $0$ solution replaced by an arbitrary smooth background flow. 

We make some remarks regarding Theorem \ref{thm:eulerOnRn:cptPert}.
\begin{itemize}
\item  Theorem \ref{thm:eulerOnRn:cptPert} holds as well for background solutions $(v_{(0)}, p_{(0)})$ which are defined only on some open set $\calO$ which contains $\overline{\calU}$. Indeed, all our arguments go through essentially verbatim, as all of our techniques are localized.
\item In terms of the Cauchy problem, Theorem \ref{thm:eulerOnRn:cptPert} demonstrates that, within the class of weak solutions constructed in the Theorem, uniqueness and conservation of energy fail for all smooth initial data in the energy class.
\end{itemize}

Theorems~\ref{thm:eulerOnRn:presEnergy} and \ref{thm:eulerOnRn:cptPert} provide the first constructions of finite energy, continuous solutions failing conserve energy that take place outside the setting of periodic tori.  In particular, we obtain failure of energy conservation for $C_{t,x}^{1/5-\ep}$ solutions on any bounded domain, and the existence of compactly supported solutions on $\R \times \R^3$ by taking $(v_{(0)}, p_{(0)}) \equiv 0$ and $\Omg_{(0)}$ to be a non-empty pre-compact open subset of a suitable domain $\calU$.  


An important goal of our work is to emphasize the perspective that Onsager's conjecture is inherently a local problem, where the main issue at hand concerns high frequency oscillations in the velocity field at small spatial scales.  Other results that help draw attention to this point of view are the works of \cite{duchonRobert, deLSzeAdmiss, isettOh}.  This local perspective on the problem is emphasized by the local character of Theorems~\ref{thm:eulerOnRn:presEnergy} and \ref{thm:eulerOnRn:cptPert}, and by the improvements in our construction that allow us to achieve this localization.


In considering the problem of constructing nonperiodic solutions, we are confronted with new issues that are closely connected to the conservation of angular momentum and did not arise in the previous work in the periodic setting
.  That is, in the setting of the whole space every weak solution to the Euler equations with finite energy and appropriate integrability conserves both linear and angular momentum, and these conservation laws pose further restrictions on the construction of weak solutions that were not present in the periodic setting.  Thus, even if one is only interested in constructing solutions with finite energy without requiring the additional property of compact support, there is essentially no way to avoid considerations regarding the conservation of angular momentum.  The main difficulty we face in this regard involves the construction of symmetric tensors with a prescribed divergence $\pr_j R^{jl} = U^l$ and good decay.  See Sections~\ref{sec:angMom:symmDiv}, \ref{sec:theMainIssue} and \ref{sec:solvingSymmDiv} below for further discussion.  At the same time, our method of constructing compactly supported solutions by localizing the construction also appears to be the most straightforward approach to obtaining finite energy, continuous solutions on the whole space or on a bounded domain.  

In connection with the conservation of angular momentum, we observe that our methods yield a result of $h$-principle type that is of independent interest.  The result we obtain (Theorem~\ref{thm:hPrinciple} below) states that any smooth incompressible velocity field with compact support that satisfies the conservation of linear and angular momentum can be realized as a limit in $L_{t,x}^\infty$ weak-$*$ of some sequence of compactly supported $C_{t,x}^{1/5-\ep}$ Euler flows.  This theorem contributes to the growing literature on $h$-principle type results in fluid equations \cite{deLSzeHFluid, choff, choffSzeStationary, isettVicol}.  
See Appendix~\ref{sec:hPrinc} below for further discussion.

The proofs of Theorems~\ref{thm:eulerOnRn:presEnergy} and \ref{thm:eulerOnRn:cptPert} are simplified substantially by the fact that we are able to obtain an exponential growth of frequencies in the iteration, and to truncate a parametrix expansion in the argument after a bounded number of steps.  These simplifications are achieved through the use of spatially localized waves, through a family of operators designed to solve the symmetric divergence equation (see Sections~\ref{sec:angMom:symmDiv} and \ref{subsec:intro:turbulence} below), and through the use of sharp estimates for the regularized velocity field, phase functions and stress that were developed in the work of \cite{isett} using an accelerated mollification technique.  The same novelties in the proof lead to other features in the construction that are desirable from a physical point of view, including a compatibility with the scaling and Galilean symmetries of the equations, and a self-similarity of the construction.  We discuss these further in Sections~\ref{sec:localizing} and \ref{subsec:intro:turbulence} below.  Our proof also features a simple proof of a key property of the mollification along the flow technique introduced in \cite{isett}, which is included in Section~\ref{subsec:MLMT:moll}.

Our overall construction is based on the method of convex integration that has been used to construct H\"{o}lder continuous Euler flows in the periodic setting \cite{deLSzeCts, deLSzeHoldCts, isett, buckDeLIsettSze}.  In particular, we follow rather closely the notation and framework developed in the first author's earlier paper \cite{isett}.  However, the present construction also involves several modifications compared to \cite{isett} that specifically address the issue of angular momentum conservation, and which are used to localize the construction.  We have therefore made an effort to give a summary of the new construction that is mostly self-contained, referring to \cite{isett} only for some basic results and estimates. 


\paragraph{Acknowledgements}

The authors are grateful to Peter Constantin for conversations related to Theorem~\ref{thm:eulerOnRn:cptPert}.  We thank Emil Wiedemann and Camillo De Lellis for encouraging our pursuit of Theorem~\ref{thm:eulerOnRn:presEnergy}.  
We also thank the Institut Henri Poincar\'e for its hospitality, where part of this work was done.

\subsection{New Ideas in the Proof of Theorem~\ref{thm:eulerOnRn:presEnergy}} \label{sec:newIdeas:energy}
Theorem~\ref{thm:eulerOnRn:presEnergy} on solutions with prescribed rough energy profiles builds upon work of \cite{deLSzeCts,deLSzeHoldCts,deLSzeBuck} in the periodic setting, which exhibit solutions whose energy profiles can be any given smooth, strictly positive function on a closed interval $[0,T]$.  These results show in particular that for any $\a < 1/5$ it is possible to construct solutions with $C_{t,x}^{\a}$ regularity whose energy profiles are strictly increasing or strictly decreasing (which we expect to be nongeneric solutions, as in Conjecture~\ref{conj:energy:reg}).  Theorem~\ref{thm:eulerOnRn:presEnergy} improves on these results by obtaining sharp regularity for the energy profile, and by removing the restriction of having a strictly positive lower bound on the desired energy profile.  To achieve these improvements, we develop a more delicate greedy algorithm for choosing the energy increments at each stage of the iteration, and develop a sharper form of the Main Lemma in the iteration that allows us to execute this algorithm.  A quadratic commutator estimate akin to the one used in the proof of energy conservation in \cite{CET, ches} (as well as the proof of the higher regularity \eqref{eq:holderEnergy} of the energy profile in \cite{isett2}) plays a key role in the proof.

Theorem~\ref{thm:eulerOnRn:presEnergy} appears to be inherently a more technical result when compared to other results that can be deduced from the construction, such as Theorem~\ref{thm:eulerOnRn:cptPert} or the prescription of smooth energy profiles.  In order to aid the readability of our manuscript, we devote the main body of our paper to the proof of Theorem~\ref{thm:eulerOnRn:cptPert} (which proceeds through a simpler Main Lemma), and provide the more technical application of Theorem~\ref{thm:eulerOnRn:presEnergy} in a subsequent Section~\ref{sec:prescribeEnergy}.

\subsection{New Ideas in the Construction}

The main new ideas in our construction revolve around the issue of angular momentum conservation and the related problem of localizing the construction to obtain compactly supported solutions.  The new ideas we employ result in some new features for the construction that are desirable from a physical point of view, including compatibility with the symmetries of the equations and the exponential growth of frequencies.

\subsubsection{Conservation of angular momentum and the symmetric divergence equation} \label{sec:angMom:symmDiv}

The point at which the conservation of momentum plays an important role during the construction occurs when solving the following underdetermined, elliptic equation, which we call the {\bf symmetric~divergence~equation:}
\ali{
 \pr_j R^{jl} &= U^l \label{eq:theDivEqn00}
}
Here $R^{jl} = R^{lj}$ is an unknown, symmetric tensor contributing to the stress in the next stage of the iteration, and $U^l = e^{i \la \xi} u^l$ is a high frequency vector field representing a force.  The main innovation in our construction is a new method for solving equation \eqref{eq:theDivEqn00} which enables us to control the support of the solution $R^{jl}$ and to obtain $C^0$ estimates for $R^{jl}$ and its derivatives which are compatible with dimensional analysis and with the transport structure of the problem.  Our solutions are given by explicit linear operators applied to the data $U^l$, which retain the property of compact support when the data $U^l$ is compactly supported, and produce solutions to \eqref{eq:theDivEqn00} whenever $U^l$ is $L^{2}$-orthogonal to constant and rotational vector fields, i.e.
\begin{equation} \label{eq:NC4divEqn}
	\int U^{l} \, \ud x = 0, \quad
	\int x^{k} U^{l} - x^{l} U^{k} \, \ud x =0, \hskip4em
	k, \ell = 1, \ldots, n.
\end{equation}
Note that the conditions in \eqref{eq:NC4divEqn} are necessary for a compactly supported solution to \eqref{eq:theDivEqn00} to exist (see Section~\ref{sec:theMainIssue} below for a more general statement).  



The starting point behind the construction of solution operators to \eqref{eq:theDivEqn00} can be illustrated in the context of the simpler problem of finding compactly supported solutions to the divergence equation
\ali{
\pr_\ell R^\ell &= U \label{eq:scalarDivEqn}
}
where $U$ is a scalar function and $R^\ell$ is an unknown vector field on $\R^n$.  This equation arises often in hydrodynamics, as well as in the foundations of the differential forms approach to degree theory \cite[Section 1.19]{taylorBook}.  Before we explain our method in the context of Equation~\ref{eq:scalarDivEqn}, it is important to observe that the problem of solving \eqref{eq:scalarDivEqn} is strictly less difficult than solving the symmetric divergence equation \eqref{eq:theDivEqn00}.  Namely, if one solves equation \eqref{eq:scalarDivEqn} for each component $U^l$ in the equation~\eqref{eq:theDivEqn00}, the tensor one obtains will solve \eqref{eq:theDivEqn00} but will not be symmetric.  Furthermore, in order to solve \eqref{eq:theDivEqn00}, the data must satisfy orthogonality conditions in \eqref{eq:NC4divEqn} which interrelate the components of the vector field $U^l$, and the operator we construct must be sensitive to these conditions.





Assume now that the scalar field $U$ in \eqref{eq:scalarDivEqn} has compact support, and satisfies $\int_{\R^n} U dx = 0$.  These conditions are clearly necessary for a compactly supported solution to \eqref{eq:scalarDivEqn} to exist.  Our starting point for solving \eqref{eq:scalarDivEqn} is that, when these necessary conditions are satisfied, one can obtain a solution to \eqref{eq:scalarDivEqn} by Taylor expanding in frequency space
\ali{
\widehat{U}(\xi) &= \widehat{U}(0) + \sum_{i=1}^n \xi_\ell \int_0^1 \pr^\ell \hat{U}(\si \xi) d\si = \sum_{i=1}^n i \xi_\ell  \widehat{R}^\ell(\xi) \label{eq:canSeeSolveDiv} \\
\widehat{R}^\ell(\xi) &= \fr{1}{i} \int_0^1 \pr^\ell \hat{U}(\si \xi) d\si \label{eq:taylorAbout0div}
}
From the physical space expression of \eqref{eq:taylorAbout0div} one can see that the vector field $\widehat{R}^\ell$ defined in \eqref{eq:taylorAbout0div} actually has compact support in a ball of radius $\rho$ about $0$ whenever $U$ is supported in the ball of radius $\rho$.  One can see also see from \eqref{eq:canSeeSolveDiv} that the vector field $R^\ell$ defined by \eqref{eq:taylorAbout0div} solves \eqref{eq:scalarDivEqn} whenever $\int U dx = \widehat{U}(0) = 0$.

We now view Formula~\eqref{eq:taylorAbout0div} as the frequency space representation of a linear operator applied to the scalar function $U$.  The problem with this operator is that the resulting solution $R^\ell$ apparently has a singularity at the origin in physical space.
Our cure for this problem is to ``spread out'' the singularity by taking advantage of the translation invariance of Equation \eqref{eq:scalarDivEqn}.  Namely, one can construct new solutions to \eqref{eq:scalarDivEqn} by conjugating the operator defined by \eqref{eq:taylorAbout0div} with a translation operator, thereby translating the singularity.  By taking a smooth average of such conjugates we obtain an operator which is explicit and does not have a singularity while also maintaining control over the support of the solution.  The operator obtained in this way turns out to coincide with a known formula introduced by Bogovskii\footnote{The authors thank Hao Jia and Peter Constantin for bringing our attention to Bogovskii's formula.} in \cite{bogovskii1980solution} for solving Equation~\eqref{eq:scalarDivEqn}.  The novelty here is that we obtain a conceptual derivation of this formula that generalizes to solving the symmetric divergence equation.


For the applications of our paper, the above approach must be generalized to obtain solutions to Equation \eqref{eq:scalarDivEqn} which are symmetric tensors.  As we have discussed, such solutions will not arise from solving the scalar divergence equation componentwise.  However, we are able to obtain operators which provide symmetric solutions by generalizing the above idea and taking a second order Taylor expansion in frequency space.  The key computation in the derivation of these operators is closely related to the proof that constant and rotational vector fields span the entire space of Killing vector fields on $\R^n$.  That is, the computation classifies all local obstructions to solving Equation~\ref{eq:theDivEqn00}, and hence characterizes all solutions $K^l$ to the Killing equation $\pr_j K_l + \pr_l K_j = 0$, which is dual to \eqref{eq:scalarDivEqn} (see Section~\ref{sec:theMainIssue}).


In our context, it is also important that the solution $R^{jl}(t,x)$ moves with time along the ambient coarse scale flow of the construction when the data $U^l(t,x)$ travels in the same way.  In other words, our operators should commute well with the advective derivative along the coarse scale flow.  We achieve good transport properties for our solution operators by taking advantage of the freedom to conjugate with any smooth family of translations we desire when defining the solution operator at each time slice.  By averaging with respect to a family of translations which moves along the ambient coarse scale flow, we are able to achieve solution operators with good commutator properties with respect to the advective derivative.  We refer to Section \ref{sec:solvingSymmDiv} for the full details of the solution to \eqref{eq:theDivEqn00}.


\subsubsection{Localization of the construction } \label{sec:localizing}

Our construction relies on the use of localized waves that are supported on small length scales which vary inversely with the frequency of the iteration.  In contrast, the constructions in the periodic setting use waves supported on length scales of order $\approx 1$ independent of the ambient frequency.  Thus, in our construction, the number of waves occupying each time slice is very large at high frequencies.  The corrections in the construction are modified so that they maintain the balance of angular momentum as well as the divergence-free property.  Our technique to achieve the angular momentum balance is to use corrections of a double curl form, although we consider other approaches which may be more natural from a geometric point of view.  

Due to the use of localized waves and a rearrangement of the error terms in the construction, we always solve \eqref{eq:theDivEqn00} with data which satisfies the necessary orthogonality conditions while simultaneously remaining localized to a small length scale $\rho$.  This smallness of support leads to a gain of a factor $\rho$ for the solution to \eqref{eq:theDivEqn00}, which is an estimate one expects from dimensional analysis
\[ \co{ R } \leqc \rho \co{ U }. \]
The gain of this smallness parameter $\rho$ allows us to achieve for the first time exponential (rather than double-exponential) growth of frequencies during the iteration.  Eliminating the need for double-exponential growth of frequencies in the iteration leads to some technical simplifications in the proof, and also leads to solutions that appear more natural from a physical point of view.

In contrast to the periodic case, where the increment to the energy in each stage of the iteration is a prescribed function of time $e(t)$, we prescribe a local energy increment $e(t,x)$ that is a function of both space and time, allowing for the possibility of compact support in time and space.  In order to ensure that our increments satisfy the required bounds on both spatial and advective derivatives, we apply the machinery of mollifying along the flow introduced in \cite{isett}.  For the Main Lemma of the iteration, we also prove estimates on the local energy increment (c.f. the estimates \eqref{eq:energyPrescribed} and \eqref{eq:DtenergyPrescribed} below).  These estimates are applied in our paper to prove the nontriviality of our solutions.  We hope that they may also be useful in future applications, such as the study of admissibility criteria for the Euler equations as initiated in \cite{deLSzeAdmiss}.

Using our bounds on the local energy increments and the other natural estimates of the construction, we prove that the solutions obtained from the iteration fail to belong to $v(t, \cdot) \notin C^{1/5 + \de}$ on essentially every open ball contained in their support.   While this lack of regularity is a new result concerning the solutions produced by the iteration, we emphasize that this property actually follows from the construction without any modifications.  For instance, the same argument shows that the solutions of \cite{isett, deLSzeBuck}, which belong to $C_t C_x^{\a^* - \ep}$ for some $\a^* < 1/5$ and all $\ep > 0$, actually fail to belong to $v(t, \cdot) \notin C^{\a^* + \de}$ for any $\de > 0$ on basically their whole support.  Our solutions, whose frequencies grow exponentially, necessarily fail to belong to $v(t, \cdot) \notin C^{1/5 + \de}$, and we show that they may also fail to be in $v(t, \cdot) \notin C^{1/5}$ by taking an appropriate choice of frequencies in the iteration.

Our result on the failure of higher regularity confirms that the estimates applied in \cite{isett} are sharp, and that any improvement in regularity for the solutions requires modifications in the construction.  For example, we see that the solutions of \cite{Buckmaster}, which possess regularity $v(t, \cdot) \in C^{1/3 - \ep}$ for a.e. $t$, must be obtained through a nontrivial modification of the construction of \cite{deLSzeBuck}.  The proof of this lack of regularity also suggests that such results may be more difficult to obtain without losing control over the energy profile, as we show that failure of spatial regularity follows from the same family of estimates that are used to control the energy profile.

Our techniques for localizing the construction and addressing the issue of angular momentum conservation lead to a framework that accords well with the symmetries of the Euler equations.  Thanks to the combination of our use of localized waves and our new method for solving the divergence equation \eqref{eq:theDivEqn00}, we have obtained an iteration framework whose bounds are all dimensionally correct with constants that are universal, and we have eliminated the need for the super-exponential growth of frequencies.  We have also maintained the property that the estimates of the iteration depend only on estimates for relative velocities as opposed to absolute velocities (for example, $\co{ \nab v }$ as opposed to $\co{v}$).  This property is natural in view of the Galilean symmetries of the Euler equations, and the fact that Onsager's conjecture itself is Galilean invariant.  



\subsubsection{Comparison with ideas in turbulence} \label{subsec:intro:turbulence}
		
Previous constructions of Euler flows by convex integration have led to many features that are regarded as unphysical and sharply contrast the well known description of turbulence in the physics literature.		One of the most glaringly unphysical features of previous solutions obtained through convex integration has been the requirement of double-exponentially growing frequencies in the iteration, which result in large gaps in the energy spectrum of the solutions.  In contrast, turbulent flows are well-known to exhibit a power law in their energy spectrum, which was first predicted by the foundational theory of Kolmogorov.  Another strange feature common to previous constructions of H\"{o}lder continuous solutions is the use of waves occupying length scales of size $\approx 1$ independent of their frequency.  Turbulent flows, on the other hand, have since the seminal ideas of Richardson been described to first approximation as being composed of a self-similar hierarchy of eddies occupying smaller and smaller length scales\footnote{It is important to note, however, that self-similarity in turbulence is known to fail. 
  The phenomenon of intermittency which describes this failure of self-similarity is an active area of research; see, for instance \cite{chesShv} for a recent mathematical approach expanding the model of \cite{frisch1978simple} and for further references.}.

The solutions constructed in the present work turn out to have a closer resemblance to the above physical descriptions of turbulent flows.  We use waves that are supported on small length scales which allow us to achieve an exponential growth of frequencies for the iteration while obtaining a purely local framework for the construction. This use of localized waves seems to be essentially forced on us by the problem.
The resulting solutions exhibit a self-similar structure similar to what was once imagined to be characteristic of turbulence.  From the point of view of comparing to turbulence, the most significant, unphysical characteristic that remains for our solutions is the failure to reach the regularity $1/3$ conjectured by Onsager, which is predicted by Kolmogorov's theory and agrees with certain experimental measurements of turbulent flows.  In this regard, it is important to remark that the new methods introduced here do not introduce any error terms which would prevent improvements in the H\"{o}lder regularity (see Remark \ref{rem:spaceScaleRemark}).  A closely related, unphysical property of our solutions is the presence of anomalous time scales in the construction that are inconsistent with the H\"{o}lder regularity $1/3$ and also differ from the natural time scales of turbulent flows.  We clarify this point in Section \ref{sec:anoalousTimeScales} below.

\subsubsection{A remark about anomalous time scales} \label{sec:anoalousTimeScales}

An important reason we fail to reach the conjectured $1/3$ regularity is the presence of anomalous time scales in the construction.  The construction involves partitions of unity in time, which impose a rapid time scale on the motion of the high frequency components of the solutions.  The idea of using time cutoffs in the construction of solutions by itself appears to be natural.  The rise and vanishing of time cutoffs is reminiscent of the physical picture of the formation and turnover of eddies in turbulent motion.  Also, as remarked in \cite{deLSzeBuck}, time cutoffs play a role in the construction similar to the role of CFL conditions in the numerical analysis of evolution equations.  What is unnatural about the time cutoffs we employ is that they are too sharp to be compatible with increased H\"{o}lder regularity of solutions.  We can see this discrepancy by comparing the advective derivative bounds which hold for our solutions, to those which must hold for arbitrary solutions with better spatial regularity.  

In \cite{isett2}, it was shown that any solution to Euler in the class $v \in C_t C_x^\a(I \times \R^n)$ for $\a > 0$ have Littlewood-Paley projections obeying the estimates
\ali{
\label{eq:coarseScaleLPestimates}
\begin{split}
\| P_{q+1} v \|_{C^0(I \times \R^n)} &\leq C 2^{- q \a } \| v \|_{C_t \dot{C}_x^\a(I \times \R^n)} \\
\| (\pr_t + P_{\leq q} v \cdot \nab )P_{q+1} v \|_{C^0(I \times \R^n)} &\leq C 2^{(1-2\a) q} \| v \|_{C_t \dot{C}_x^\a(I \times \R^n)}^2 
\end{split}
}
The cost of taking the advective derivative (namely, the factor $2^{(1 - \a)q} \| v \|_{C_t \dot{C}_x^\a}$) represents the inverse time scale at which the Littlewood-Paley piece of frequency $2^q$ moves along the coarse scale flow of $P_{\leq q} v$.  The estimates \eqref{eq:coarseScaleLPestimates} therefore require that the time scale associated with frequency $\la$ should be no smaller than $\tau \sim \la^{-(1-\a)}\| v \|_{C_t \dot{C}_x^\a}^{-1}$ for a solution in the class $C_t C_x^\a$ that saturates the first inequality in \eqref{eq:coarseScaleLPestimates}.  This time scale has a physical significance, as it agrees when $\a = 1/3$ with the the time scale $\varep^{-2/3} \la^{-2/3}$ obtained from turbulence theory for the turnover time of eddies at scale $\la^{-1}$.  As with the other constructions of $C_{t,x}^{1/5 - \ep}$ solutions, the time scale we choose for the motion of frequency $\la$ components is of the order $\tau \sim \la^{-4/5}$ (see Section \ref{sec:mainLemImpliesMainThm} below), which consequently limits the regularity of the construction to $C_t C_x^{1/5}$ since our construction may saturate both inequalities in \eqref{eq:coarseScaleLPestimates}.







\section{Organization of the Paper}
We organize the remainder of the paper as follows.  Section \ref{sec:theMainIssue} is an introductory section which explains the underlying role played by the conservation of angular momentum in the construction, and why the consideration of this conservation law is essential for constructing finite energy solutions.  The results in this section are given for motivation, but are not used in the proof of the Theorem \ref{thm:eulerOnRn:cptPert}.  In Section \ref{sec:theMainLemma}, we state the Main Lemma of the paper, which provides an essentially complete statement of the result of a single iteration of the construction. 

After some preliminaries on the geometry of flow maps in Section \ref{sec:prelimCyls}, the construction itself begins in Section \ref{sec:techOutline}, which provides a high level summary of the scheme and derives a list of the error terms in the construction.  Section \ref{sec:correctionShape} finishes the description of the construction up to the choice of some length and time scale parameters that are determined in Section \ref{sec:choosingParameters}.  Sections \ref{sec:correctionEstimates} - \ref{sec:stressEstimates} are devoted to estimating the elements of the construction and the resulting error terms.  In Section \ref{sec:solvingSymmDiv}, we derive and prove estimates for the new operators which are applied in Section \ref{sec:applyParametrix} of the construction to solve the symmetric divergence equation \eqref{eq:theDivEqn00}.  In Section \ref{sec:mainLemImpliesMainThm}, we show how that the Main Lemma of the paper implies Theorem \ref{thm:eulerOnRn:cptPert} on the perturbation of smooth Euler flows.  In Section,~\ref{sec:prescribeEnergy}, we introduce a sharper version of the Main Lemma of the paper, and show how this sharper Main Lemma implies Theorem~\ref{thm:eulerOnRn:presEnergy} on the construction of Euler flows with a prescribed energy profile.

The paper concludes in Appendix~\ref{sec:hPrinc}, where we indicate how the Main Lemma of the paper can be used in combination with the operators of Section~\ref{sec:solvingSymmDiv} to yield a sharp $h$-Principle type result for incompressible Euler on $\R \times \R^3$.



\section{The Importance of Angular Momentum Conservation}\label{sec:theMainIssue}
The purpose of the following introductory section is to explain in some detail why the conservation of angular momentum is the basic issue at hand in order to execute convex integration for the Euler equations on the full space $\R\times\R^n$ as compared to the torus $\R \times \T^n$.  The issues we discuss here will play an important underlying role in the analysis.  However, the results stated in this section are all very elementary and will not be used during the construction, so that the reader interested in the proofs of Theorems \ref{thm:eulerOnRn:presEnergy}-\ref{thm:eulerOnRn:cptPert} can pass to Section \ref{sec:theMainLemma} below.  It is interesting to observe, however, that the proofs of the results below are connected to the methods that are used in the construction.


The basic reason why we are forced to be concerned with the issue of angular momentum conservation to construct Euler flows on $\R \times \R^n$ is the observation that every weak solution to Euler with finite energy conserves both linear and angular momentum provided the linear and angular momentum are well-defined as classical integrals.  More precisely, the following Proposition holds
\begin{prop}\label{prop:conserveLaw}  Let $I$ be an open interval and let $(v, p)$ be a weak solution to the incompressible Euler equations such that
\ali{
 (1 + |x|) v \in L^1_{t,x}(I \times \R^n), \qquad v \in L^2_{t,x}(I \times \R^n) \label{eq:energyClass}
}
then for any vector field $K^l$ on $\R^n$ which solves the Killing equation
\ali{
 \pr_j K_l + \pr_l K_j &= 0 \label{eq:killingEqn}
}
we have (as a distribution in the variable $t$) the conservation law
\ali{
\fr{d}{dt} \int K_l v^l(t,x) ~\ud x &= 0 \label{eq:momentumConserveLaws}
}
\end{prop}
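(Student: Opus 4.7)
The strategy is a weak-form Noether-type argument. First, classify the Killing vector fields on $\R^n$: tracing \eqref{eq:killingEqn} gives $\partial_l K^l = 0$, and a standard argument (differentiate \eqref{eq:killingEqn} once more and antisymmetrize) shows $K_l$ is affine in $x$, so the general solution is $K_l = a_l + b_{lm} x^m$ with $b$ antisymmetric. Two algebraic consequences will be used throughout: (i) $K$ is divergence-free, and (ii) the contraction of the symmetric tensor $v^j v^l$ against $\partial_j K_l$ vanishes,
\begin{align*}
v^j v^l \partial_j K_l = \tfrac{1}{2} v^j v^l (\partial_j K_l + \partial_l K_j) = 0,
\end{align*}
which is precisely the cancellation that annihilates the nonlinear flux.

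Next, I insert the test function $\phi_l(t,x) = \chi(t)\chi_R(x) K_l(x)$ into the distributional form of \eqref{eq:theEulerEqns}, where $\chi \in C_c^\infty(I)$ is arbitrary and $\chi_R \in C_c^\infty(\R^n)$ is a standard radial cutoff with $\chi_R \equiv 1$ on $\{\abs{x} \le R\}$, $\supp \chi_R \subseteq \{\abs{x} \le 2R\}$, and $\abs{\nabla \chi_R} \le C/R$. Expanding derivatives and using (i) to reduce $\partial_l \phi^l = \chi(t) K^l \partial_l \chi_R$ and (ii) to annihilate the interior term $\chi \chi_R v^j v^l \partial_j K_l$, what remains is
\begin{align*}
& \int \chi'(t) \Bigl( \int v^l K_l \chi_R \, dx \Bigr) dt + \int \chi(t) \Bigl( \int v^j v^l K_l\, \partial_j \chi_R \, dx \Bigr) dt \\
& \qquad + \int \chi(t) \Bigl( \int p\, K^l \partial_l \chi_R \, dx \Bigr) dt = 0.
\end{align*}

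I conclude by sending $R \to \infty$. For the main term, $\abs{K_l \chi_R} \lesssim 1 + \abs{x}$ uniformly and converges pointwise to $K_l$, so dominated convergence using $(1+\abs{x}) v \in L^1_{t,x}$ produces $\int \chi'(t) \int v^l K_l \, dx\, dt$. The $v \otimes v$ commutator is pointwise bounded by $C\abs{v}^2 \mathbf{1}_{\{R \le \abs{x} \le 2R\}}$ on the annulus (since $\abs{K_l \partial_j \chi_R} \lesssim 1$ there), and vanishes in the limit by dominated convergence from $v \in L^2_{t,x}$. The main technical obstacle is the pressure boundary term, since the hypotheses do not postulate global integrability of $p$; my plan is to handle it by taking $p$ in its natural representation $p = -\partial_j \partial_l \Delta^{-1}(v^j v^l)$ (which is well-defined up to a function of $t$ since $v \otimes v \in L^1_{t,x}$) and exploiting both the divergence structure $K^l \partial_l \chi_R = \partial_l(K^l \chi_R)$ and Calder\'on--Zygmund decay of the Riesz transforms acting on $v \otimes v$ to force the annular integral to zero. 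Arbitrariness of $\chi \in C_c^\infty(I)$ then yields \eqref{eq:momentumConserveLaws} as a distributional identity in $t$.
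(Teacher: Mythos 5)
Your treatment of the transport and nonlinear terms is fine and runs parallel to the paper's, but the pressure term is a genuine gap, and it is precisely the point the paper's proof is built to avoid. Since your test field $\chi(t)\chi_R(x)K_l(x)$ is not divergence free, the boundary term $\int \chi(t)\int p\, K^l\pr_l\chi_R\, dx\, dt$ survives, and the hypotheses of the proposition say nothing about $p$ beyond what the weak formulation gives. The paper instead integrates only against divergence-free approximations of $\chi(t)K_l$, so the pressure never enters: for a translation $e_i$ one writes $e_i = \tx{div}\,[\,x^j(e_j\otimes e_i - e_i\otimes e_j)\,]$ and places the cutoff \emph{inside} the divergence, so that $K^{(B)} = \tx{div}\,[\phi(x/B)\,x^j(e_j\otimes e_i - e_i\otimes e_j)]$ is automatically solenoidal; for a rotation, $\phi(\abs{x}/B)\Om_{(ij)}$ is already divergence free because $\Om_{(ij)}\cdot\nab$ annihilates radial functions. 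After rewriting the equation as $\pr_t v^l + \pr^l p = \pr_j R^{jl}$ with $R^{jl} = -v^jv^l \in L^1_{t,x}$, the only limits to justify are the $v$-term (dominated by $(1+\abs{x})\abs{v} \in L^1$) and the pairing of $R^{jl}$ against the symmetric gradient of $K^{(B)}$, which is uniformly bounded and tends to zero pointwise.

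Your proposed repair of the pressure term does not close as stated, for two reasons. First, the weak formulation determines $p$ only up to a function harmonic in $x$ for each $t$, so you may not simply substitute $p = -\pr_j\pr_l\De^{-1}(v^jv^l)$; ruling out the harmonic part under these hypotheses is itself an argument, and the natural way to do it is again to test against divergence-free fields, i.e. the paper's route. Second, even granting that representation, $v\otimes v(t,\cdot)$ is merely $L^1_x$ (interpolation of your hypotheses gives nothing better than $L^1$ for $\abs{v}^2$), and Calder\'on--Zygmund theory then yields only a weak-$L^1$ bound on $p$; weak-$L^1$ gives no control of $\int_{R\le\abs{x}\le 2R}\abs{p}\,dx$ coming from far-field sources, so the invoked ``CZ decay'' is not available. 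A correct version of your plan would have to move both derivatives of $\De^{-1}\pr_j\pr_l$ onto the cutoff (gaining $R^{-3}$ for the translational part, and noting that for the rotational part a radial cutoff already gives $K^l\pr_l\chi_R \equiv 0$) and estimate the Newtonian potential of $v\otimes v$ by splitting near- and far-field sources; that is a nontrivial extra step which your sketch does not supply, whereas the paper's choice of test fields makes the entire issue disappear.
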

The geometric interpretation of equation \eqref{eq:killingEqn} is that the left hand side is the deformation tensor of the vector field $K$ (i.e. the Lie derivative of the Euclidean metric $\de_{jl}$ under the flow of $K$).  Thus, the solutions to \eqref{eq:killingEqn} are exactly those vector fields whose flows are isometries of the Euclidean space, and one can view the fact that the conservation laws \eqref{eq:momentumConserveLaws} arise from these symmetries $K^l$ as an application of Noether's theorem.  It is a classical fact that the space of solutions to \eqref{eq:killingEqn} on $\R^n$ is spanned by translations (i.e. constant vector fields) $e_i$, $i = 1, \ldots, n$ and rotational vector fields of the form
\ali{
 \Om_{(ij)} &= x^i e_j - x^j e_i, \qquad 1 \leq i < j \leq n. \label{eq:rotationVFs}
}
Hence, equation \eqref{eq:momentumConserveLaws} summarizes the conservation of both linear and angular momentum for Euler flows on $\R \times \R^n$.  In contrast, the torus $\T^n$ admits only translations but not rotations as symmetries, so in the periodic case there is no globally defined concept of angular momentum.

The fact that the linear conditions \eqref{eq:momentumConserveLaws} are satisfied for the solutions to Euler that we are trying to construct implies restrictions on the approximate solutions which can be used in the method of convex integration.  Namely, it is a basic principle of convex integration that the approximate solutions one uses in the construction turn out to be weak limits of solutions to the partial differential equation (or inclusion) that is being solved (see, for example, \cite{deLSzeHFluid,choff} and the introduction to \cite{isett} for more discussion).  As a consequence, we are forced to work with approximate solutions which likewise satisfy the conservation laws \eqref{eq:momentumConserveLaws}.

Beginning with the work of \cite{deLSzeCts}, the space of approximate solutions used to build continuous solutions to the incompressible Euler equations consists of the solutions to the following underdetermined system known as the ``Euler-Reynolds equations''
%
\ali{
\label{eq:euReynolds}
\begin{split}
\pr_t v^l + \pr_j(v^j v^l) + \pr^l p &= \pr_j R^{jl} \\
\pr_j v^j &= 0
\end{split}
}
Here, $R^{jl}$ is a symmetric tensor called the Reynolds stress whose trace-free part\footnote{The convention initiated in \cite{deLSzeCts} is slightly different in that the Reynolds stress is represented as ${\mathring R}^{jl}$ and there is an additional requirement that ${\mathring R}^{jl}$ has vanishing trace.  Although we will not use this convention, one obtains an equivalent definition of Euler-Reynolds flows since the trace part can be absorbed into the pressure gradient $\pr^l p = \pr_j(p \de^{jl})$.} measures the error by which $(v,p)$ fail to solve the Euler equations.  Solutions to \eqref{eq:euReynolds} are called Euler-Reynolds flows.  A well-known and important property of the equation \eqref{eq:euReynolds} is that it contains weak limits of solutions to the Euler equations.  Namely, the divergence free property of $v$ remains true after taking weak limits, and a weak limit of tensors $v^j v^l$ must be symmetric, even though it may fail to be rank $1$.  

Under appropriate decay assumptions, the space of Euler-Reynolds flows on $\R \times \R^n$ can also be viewed as the space of incompressible velocity fields which conserve both linear and angular momentum.  Namely, the conservation law \eqref{eq:momentumConserveLaws} can also be proven for Euler-Reynolds flows under the assumption that $R^{jl} \in L_{t,x}^1$ (which is exactly the integrability one obtains if $R^{jl}$ is obtained from weak limits of Euler flows with uniform bounds on $\|v\|_{L^2_{t,x}}$).  Thus, Euler-Reynolds flows in the class \eqref{eq:energyClass} with $R \in L^1$ conserve both linear and angular momentum.  Conversely, if $v^l$ is divergence free and conserves both linear and angular momentum, then (formally) one can represent $v$ as an Euler-Reynolds flow with $p = 0$ by solving the underdetermined, elliptic equation
\ali{
\pr_j R^{jl} &= U^l \label{eq:theDivEqn}
}
for $U^l = \pr_t v^l + \pr_j(v^j v^l)$.  That is, the conservation of linear and angular momentum for $v^l$ ensures that the implied force $U^l = \pr_t v^l + \pr_j(v^jv^l)$ is orthogonal to every solution $K^l$ to equation \eqref{eq:killingEqn}.  We can interpret the space of solutions to \eqref{eq:killingEqn} as the kernel of the operator $\pr_j K_l + \pr_l K_j$, which is (up to a sign) the adjoint to the divergence operator $\pr_j R^{jl}$ operating on symmetric tensors.  According to the Fredholm Alternative, we should therefore (at least formally) be able to solve \eqref{eq:theDivEqn} when the laws of conservation of linear and angular momentum are satisfied by $v$.


As previously noted, constructing solutions to the equation \eqref{eq:theDivEqn} is among the main technical steps in the construction of continuous Euler flows by convex integration, and we must execute this step more carefully on $\R^n$ than was done in the periodic setting.  In general, finding solutions to \eqref{eq:theDivEqn} can be done in a straightforward manner using the Fourier transform, which reduces the problem to solving an underdetermined linear equation 
\ali{
i \xi_j {\hat R}^{jl}(\xi) &= {\hat U}^l(\xi) \label{eq:undetLinearEqn}
}
for each Fourier coefficient.  One can then write down an explicit solution to \eqref{eq:undetLinearEqn} as a Fourier multiplier ${\hat R}^{jl}(\xi) = q^{jl}(\xi)[{\hat U}(\xi)]$ which is homogeneous of degree $-1$ in $\xi$ for $\xi \neq 0$.  On the torus, this formula gives a solution to the PDE \eqref{eq:theDivEqn} for any smooth data with integral $0$, and it turns out that this type of operator is sufficient for the constructions of solutions to Euler by convex integration (see \cite{deLSzeBuck}).  On $\R^n$, the same procedure gives a solution to \eqref{eq:theDivEqn} for any smooth, compactly supported data (regardless of whether $U$ satisfies any orthogonality conditions), but this solution $R^{jl}$ in general fails to be in $L^1$.  
Since the corrections $V$ to the velocity field in the construction have size $|V| \sim |R|^{1/2}$, one sees that solutions to Euler with finite energy $v \in L_{t,x}^2$ cannot be constructed in the same way and a new strategy which takes the angular momentum conservation into account is necessary. 

Previously, weak solutions to Euler with compact support have been constructed using schemes based on differential inclusions, including the original construction of compactly supported solutions on $L^2_{t,x}(\R \times \R^2)$ by \cite{scheff} and the constructions of compactly supported solutions on $L^\infty(\R \times \R^n)$ and $L^\infty \cap C_t L_x^2(\R \times \R^n)$ for $n \geq 2$ by \cite{deLSzeIncl, deLSzeAdmiss}.  In these constructions, the authors consider approximate solutions which solve certain underdetermined systems of linear PDE, and the basic building blocks of the constructions are explicit, oscillatory solutions to these PDE.  The approximate solutions considered in these papers can be viewed as restricted classes of solutions to the Euler-Reynolds system\footnote{The connection between the scheme of \cite{deLSzeIncl} and the Euler-Reynolds system is explained in \cite{deLSzeCtsSurv}.  The scheme in \cite{scheff} corresponds to a stress matrix of the form $R = \fr{-1}{2} \begin{pmatrix} - \xi_{21} & \xi_{22} \\ \xi_{22} & \xi_{21} \end{pmatrix}$ which is integrated against test functions of the form $(- \pr_2 \psi, \pr_1 \psi)$ in Definition 1.3 of \cite{scheff}.}, although the methods employed in these papers do not require solving the underdetermined equation \eqref{eq:theDivEqn}, so that the requirement of angular momentum conservation does not explicitly appear as an issue in these schemes.  Given the greater generality of the Euler-Reynolds equations and their pivotal role in constructing continuous solutions, one might expect that using the Euler-Reynolds system should be a more robust framework for executing convex integration for the Euler equations in general.  Our main theorems give some support to this expectation, but do not completely confirm this point, as there have been many other applications of the method in \cite{deLSzeIncl} which have not been obtained through the Euler-Reynolds framework (we note in particular the work of \cite{choffSzeStationary} on stationary solutions to Euler).

It is interesting to observe that some of the techniques which apply to the construction in the present paper can also be used to prove Proposition \ref{prop:conserveLaw} on conservation laws.  Thus, for introductory purposes, we sketch a proof of Proposition \ref{prop:conserveLaw} and the more general claim that Euler-Reynolds flows in the class \eqref{eq:energyClass} with $R^{jl} \in L^1_{t,x}$ conserve linear and angular momentum.
\begin{proof}[Proof of Proposition \ref{prop:conserveLaw}]
Observe that every solution to the Euler in the class \eqref{eq:energyClass} can be written as a solution to 
\ali{
\pr_t v^l + \pr^l p &= \pr_j R^{jl} \label{eq:partialReynolds}
}
with $R^{jl} \in L^1_{t,x}$ a symmetric tensor field.  The proof of \eqref{eq:momentumConserveLaws} proceeds by multiplying \eqref{eq:partialReynolds} by a smooth, compactly supported approximation to the vector field $\chi(t) K_l$ where $K_l$ is a solution to \eqref{eq:killingEqn}, and $\chi(t)$ is a smooth function of $t$ compactly supported in $I$.  Since we have no control over the integrability of the pressure $p = \De^{-1} \pr_l \pr_j R^{jl}$, we must restrict to integrating against approximations that are also divergence free vector fields.

First, suppose that $K = e_i$ is a coordinate vector field, and observe that $e_i$ can be written as the divergence of an antisymmetric tensor 
\[ e_i = \tx{div} [ x^j (e_j \otimes e_i - e_i \otimes e_j ) ], \qquad j \neq i \]
Using this expression we can approximate $e_i$ as a limit of divergence free vector fields with compact support $e_i = \lim_{B \to \infty} K^{(B)}$.  Namely, let $\phi \in C_c^\infty(\R^n)$ be a smooth function of compact support equal to $1$ on a neighborhood of $0$ and integrate equation \eqref{eq:partialReynolds} against the smooth, compactly supported, divergence free vector field
\[ \chi(t) K^{(B)} = \chi(t) \tx{div} \left[ \phi\left( \fr{x}{B} \right) x^j (e_j \otimes e_i - e_i \otimes e_j ) \right] \]
to obtain an approximate conservation law
\ali{
- \int \chi'(t) v^l K^{(B)}_l \ud t \ud x &= - \int \chi(t) \pr_j K^{(B)}_l R^{jl} ~\ud t \ud x\\
&= - \fr{1}{2} \int \chi(t) [ \pr_j K^{(B)}_l + \pr_l K^{(B)}_j ] R^{jl} ~\ud t \ud x
}
Now we can apply the dominated convergence theorem to conclude \eqref{eq:momentumConserveLaws}, since we have the pointwise bounds $\| K^{(B)} \|_{L^\infty(\R^n)} + \| (1 + |x|) (\pr_j K^{(B)}_l + \pr_l K^{(B)}_j) \|_{L^\infty(\R^n)}  \leq C$ independent of $B$ and we see that $K^{(B)} \to e_i$ and $\pr_j K^{(B)}_l + \pr_l K^{(B)}_j \to 0$ pointwise as $B \to \infty$.

For the rotation vector fields, \eqref{eq:rotationVFs}, we take approximations $\Om_{(ij)} = \lim_{B \to \infty} K^{(B)}$ of the form
\ali{  
K^{(B)} &= \phi\Big(\fr{|x|}{B}\Big) \Om_{(ij)} \label{eq:approxAngularFields}
}
where the cutoff function $\phi(\fr{|x|}{B})$ is a rescaling of a radial function that is equal to $1$ in a neighborhood of $0$.   The approximate rotation fields \eqref{eq:approxAngularFields} defined in this way are divergence free, since 
\ali{
\tx{div } [ \phi\Big(\fr{|x|}{B}\Big) \Om_{(ij)} ] &= \Om_{(ij)} \cdot \nab \left[ \phi\Big(\fr{|x|}{B}\Big) \right] = 0.
}
We also have the estimates $\| (1 + |x|)^{-1} K^{(B)} \|_{L^\infty(\R^n)} + \| (\pr_j K^{(B)}_l + \pr_l K^{(B)}_j) \|_{L^\infty(\R^n)}  \leq C$ independent of $B$ and we have the convergence $K^{(B)} \to \Om_{(ij)}$ and $\pr_j K^{(B)}_l + \pr_l K^{(B)}_j \to 0$ pointwise, so that Proposition \ref{prop:conserveLaw} follows by the same dominated convergence argument.
\end{proof}
As we will explain below (see the remark right before Section \ref{sec:applyParametrix}), the vector fields $\phi(\fr{|x|}{B}) \Om_{(ij)}$ which appear in the above proof can also be used as natural ``angular momentum correctors'' for the construction in the present paper, although we have used an alternative technique in the current construction.

Our construction of operators solving equation \eqref{eq:theDivEqn} relies on a calculation closely tied to the identity
\ali{
 \pr_i \pr_i K_j &= \pr_i ( \pr_i K_j + \pr_j K_i) - \pr_j(\pr_i K_i), \qquad i, j = 1, \ldots, n \label{eq:usingSymmDerivFormula}
}
which is the same formula that is at the heart of the characterization of solutions to \eqref{eq:killingEqn}.  Namely, a generalization of formula \eqref{eq:usingSymmDerivFormula} shows that the symmetric part of the derivative of $K$ determines the second derivative of $K$ in the direction of any line.  From a polarization argument, one can then deduce that every solution to \eqref{eq:killingEqn} on $\R^n$ has a vanishing second derivative, which quickly leads to the characterization of solutions to \eqref{eq:killingEqn} on $\R^n$.

\section{The Main Lemma}\label{sec:theMainLemma}
In this section, we present the Main Lemma which is responsible for the proof of Theorem \ref{thm:eulerOnRn:cptPert}.  The purpose of this lemma is to describe precisely the result of one step of the convex integration procedure.  Theorem \ref{thm:eulerOnRn:cptPert} follows from iteration of this Lemma as we will explain in Section \ref{sec:mainLemImpliesMainThm}.

To state the Main Lemma, we recall the notion of frequency and energy levels for Euler-Reynolds flows introduced in Sections 9 and 10 of \cite{isett}.
\begin{defn} \label{def:subSolDef}
Let $L \geq 1$ be a fixed integer.  Let $\Xi \geq 2$, and let $e_v$ and $e_R$ be positive numbers with $e_R \leq e_v$.  Let $(v, p, R)$ be a solution to the Euler-Reynolds system.  We say that the frequency and energy levels of $(v,p,R)$ are below $(\Xi, e_v, e_R)$ (to order $L$ in $C^0 = C^0_{t,x}(\R \times \R^{3})$) if the following estimates hold.
\begin{align}
|| \nab^k v ||_{C^0} &\leq \Xi^k e_v^{1/2} &k = 1, \ldots, L \label{bound:nabkv} \\
|| \nab^k p ||_{C^0} &\leq \Xi^k e_v &k = 1, \ldots, L \label{bound:nabkp} \\
|| \nab^k R ||_{C^0} &\leq \Xi^k e_R &k = 0, \ldots, L \label{bound:nabkR} \\
|| \nab^k (\pr_t + v \cdot \nab) R ||_{C^0} &\leq \Xi^{k+1} e_v^{1/2} e_R  &k = 0, \ldots, L - 1 \label{bound:dtnabkR}
\end{align}
Here $\nab$ refers only to derivatives in the spatial variables.
\end{defn}
It is important to note that the bounds in Definition \ref{def:subSolDef} are consistent with the dimensional analysis of the Euler equations: namely, the frequency level $\Xi \sim [L]^{-1}$ is an inverse length and the energy levels $e_v$ and $e_R$ have the dimensions of an energy density $e_v, e_R \sim \fr{[L]^2}{[T]^2}$.  We refer to Sections 9 and 10 of \cite{isett} for the motivation for this definition and further discussion.

Our Main Lemma is based on the Main Lemma in Section 10 of \cite{isett} but also keeps track of how the support of the approximate solution enlarges after the addition of a correction.  The way the support enlarges is governed by the geometry of the flow map of the velocity field $v$, so the following definition will be useful for keeping track of this support.
\begin{defn}[$v$-adapted Eulerian cylinder]\label{def:vCylinder}  Let $\Phi_s = \Phi_s(t,x)$ be the flow map associated to a vector field $v$.  Given $\tau, \rho > 0$ and a point $(t_{0}, x_{0})$ of the space-time $\R \times \R^{3}$, we define the \emph{$v$-adapted Eulerian cylinder} ${\hat C}_{v}(\tau, \rho; t_{0}, x_{0})$ centered at $(t_{0}, x_{0})$ with duration $2 \tau$ and base radius $\rho > 0$ to be
\begin{equation} \label{eq:cmptCyl}
	{\hat C}_{v}(\tau, \rho; t_{0}, x_{0}) := \big\{ \Phi_{s}(t_0, x_0) + (0, h) ~:~ |s| \leq \tau, |h| \leq \rho \big\} 
\end{equation}
In other words, ${\hat C}_{v} (\tau, \rho; t_{0}, x_{0})$ is the union of spatial balls of radius $\rho$ about the trajectory of $(t_{0}, x_{0})$ along the flow of $v$ for $t \in [t_{0} - \tau, t_{0} + \tau]$.

Similarly, if $S \subseteq \R \times \R^3$ is a set, we define
\begin{equation} \label{eq:cmptCylSet}
	{\hat C}_{v}(\tau, \rho; S) := \bigcup_{(t_0, x_0) \in S} {\hat C}_{v}(\tau, \rho; t_{0}, x_{0})
\end{equation}
\end{defn}
With these definitions in hand, we can state the Main Lemma.
\begin{lem}[The Main Lemma] \label{lem:mainLemma}
Suppose that $L \geq 2$.  Let $K$ be the constant in Section 7.3 of \cite{isett}, and let $M \geq 1$ be a constant.  
There exist constants $C_{0}, C > 1$, which depend only on $M$ and $L$, such that that following holds:

Let $(v,p,R)$ be any solution of the Euler-Reynolds system whose frequency and energy levels are below $(\Xi, e_v, e_R)$
 to order $L$ in $C^0$.

Define the time-scale $\th = \Xi^{-1} e_v^{-1/2}$, and let
\[ e(t, x) : \R\times \R^3 \to \R_{\geq 0} \]
be any non-negative function which satisfies the lower bound
\begin{align}
 e(t,x) \geq K e_R \quad \quad \mbox{ for all } (t,x) \in {\hat C}_{v}(\th, \Xi^{-1}; \supp R) \label{eq:lowBoundEoftx}
\end{align}
(using the notation of Definition \ref{def:vCylinder}) and whose square root satisfies the estimates
\begin{align}
|| \nab^k (\pr_t + v \cdot \nab)^r e^{1/2} ||_{C^0} &\leq M \Xi^k (\Xi e_v^{1/2})^r e_R^{1/2} & 0 \leq r \leq 1, 0 \leq k + r \leq L \label{ineq:goodEnergy}
\end{align}

Now let $N$ be any positive number obeying the bound
\begin{align} 
 N &\geq \left(\fr{e_v}{e_R} \right)^{3/2}\label{eq:conditionsOnN2}
\end{align}
and define the dimensionless parameter ${\bf b} = \left(\fr{e_v^{1/2}}{e_R^{1/2}N} \right)^{1/2}$.

Then there exists a solution $(v_1, p_1, R_1)$ of the Euler-Reynolds system of the form $v_1 = v + V$, $p_1 = p + P$ whose frequency and energy levels are below
\begin{align}
 (\Xi', e_{v}', e_{R}') 
 &= (C_{0} N \Xi, e_R, \left(\fr{e_v^{1/2}}{e_R^{1/2}N} \right)^{1/2} e_R   ) \\
&= (C_{0} N \Xi, e_R, {\bf b}^{-1} \fr{e_v^{1/2} e_R^{1/2}}{N}) \label{eq:theNewEnergyLevel}
\end{align}
to order $L$ in $C^0$, and whose stress $R_1$ is supported in
\begin{align}
 \supp R_1 \subseteq \ECyl_{v}(\th, \Xi^{-1}; \supp e) \label{eq:goalForR1supp}
\end{align}
The correction $V = v_1 - v$ is of the form $V = \nab \times W$ and can be guaranteed to obey the bounds
\begin{align}
||V||_{C^0} &\leq C e_R^{1/2} \label{eq:Vco} \\
||\nab V ||_{C^0} &\leq C N \Xi e_R^{1/2} \label{eq:nabVco} \\
||(\pr_t + v^j \pr_j) V ||_{C^0} &\leq C {\bf b}^{-1} \Xi e_v^{1/2} e_R^{1/2} \label{eq:matVco} \\
||W||_{C^0} &\leq C \Xi^{-1} N^{-1} e_R^{1/2} \label{eq:Wco}\\
\co{ \nab W } &\leq C e_R^{1/2} \label{eq:nabWco} \\
||(\pr_t + v^j \pr_j) W ||_{C^0} &\leq C {\bf b}^{-1} N^{-1} e_v^{1/2} e_R^{1/2} \label{eq:matWco}
\end{align}
The energy of the correction can be prescribed locally up to errors bounded uniformly in $t$ by
\begin{align}
\left| \int_{\R^3} |V|^2(t,x) \psi(x) dx - \int_{\R^3} e(t,x) \psi(x) dx \right| &\leq C \fr{e_v^{1/2} e_R^{1/2}}{N} \left( \| \psi \|_{L^1} + \Xi^{-1} \| \nab \psi \|_{L^1} \right) \label{eq:energyPrescribed}
\end{align}
for any smooth test function $\psi(x) \in C_c^\infty(\R^3)$.  Furthermore, the local variations of the energy increment are bounded by
\ali{
\left|  \fr{d}{dt} \int_{\R^3} |V|^2 \psi(t,x) dx \right| &\leq C {\bf b}^{-1} \Xi e_v^{1/2} e_R (\| \psi \|_{L^1} + \Xi^{-1} \| \nab \psi \|_{L^1} + \Xi^{-2} \| \nab^2 \psi \|_{L^1}) \label{eq:DtenergyPrescribed} \\
&+ e_R (\| (\pr_t + v \cdot \nab) \psi \|_{L^1} + \Xi^{-1} \| \nab (\pr_t + v \cdot \nab ) \psi \|_{L^1} ) \notag
}
at any fixed time $t \in \R$ and for any smooth test function $\psi(t,x) \in C_c^\infty(\R \times \R^3)$.  In \eqref{eq:energyPrescribed} - \eqref{eq:DtenergyPrescribed}, we always mean $L^1 = L^1(\R^3)$.

The correction to the pressure $P = p_1 - p_0$ satisfies the estimates
\begin{align}
|| P ||_{C^0} &\leq C e_R \label{eq:Pco} \\
|| \nab P ||_{C^0} &\leq C N \Xi e_R \label{eq:nabPco} \\
|| (\pr_t + v\cdot \nab) P ||_{C^0} &\leq C {\bf b}^{-1} \Xi e_v^{1/2} e_R \label{eq:matPco}
\end{align}
Finally, the space-time supports of $V$ and $P$ are also contained in 
\ali{
\supp V \cup \supp P &\subseteq \ECyl_{v}(\th, \Xi^{-1};~ \supp e ) \label{eq:goalForVPsupp}
}
\end{lem}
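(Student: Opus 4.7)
The plan is to follow the convex integration framework developed in \cite{isett}, with the essential modifications advertised in the Introduction: waves are spatially localized on scale $\rho = \Xi^{-1}$, and the new stress is obtained by applying the symmetric divergence parametrix of Section~\ref{sec:solvingSymmDiv} to the resulting error terms. First I would mollify $(v, p, R)$ along the coarse-scale flow (using the technique recalled in Section~\ref{subsec:MLMT:moll}) to obtain smooth replacements with controlled spatial and advective-derivative bounds. Then I would build the correction as $V = \nab \times W$ with $W$ itself a curl, i.e.\ a \emph{double-curl} ansatz; this automatically forces $\pr_j V^j = 0$ and, once the building blocks are compactly supported, produces $V$ with vanishing linear and angular momentum on every time slice.

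The correction is expanded as $V = \sum_I e^{i \la \xi_I} v_I + \text{c.c.}$ oscillating at $\la = N \Xi$, with three ingredients for each index $I$: a phase function $\xi_I$ transported by $v$ (so that $(\pr_t + v \cdot \nab) \xi_I \equiv 0$ on the support of the wave), a spatial cutoff $\chi_I$ of radius $\rho = \Xi^{-1}$ translated along the flow, and a temporal cutoff $\eta_I$ of width $\tau \ll \th$. The amplitudes $v_I$ are obtained by the standard geometric decomposition of the tensor $e(t,x) \de^{jl} - R^{jl}$, which is positive definite thanks to the lower bound \eqref{eq:lowBoundEoftx} and the constant $K$; summing over a bounded family of frequency directions $\{f_I\}$ and over the spatial/temporal tiles, $V \otimes V$ cancels the old stress $R$ at low frequency and deposits the prescribed local energy density $e(t,x)$.

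Substituting $v_1 = v + V$ into Euler produces the usual error terms: transport and Nash interactions $(\pr_t + v \cdot \nab) V$, $V^j \pr_j v^l$; the high-high interaction coming from $V \otimes V - \fr{1}{2} e\, \de$ after subtracting pressure; mollification errors; and commutator terms from the cutoffs $\chi_I, \eta_I, \xi_I$. Each of these can be rewritten as a divergence $U^l = \pr_j \Om^{jl}$ of a symmetric compactly supported tensor, so $U^l$ is automatically orthogonal to constant Killing fields; orthogonality to rotations \eqref{eq:NC4divEqn} is arranged by symmetrizing error tensors, which the double-curl form is designed to permit at no cost. Invoking the parametrix of Section~\ref{sec:solvingSymmDiv} on each $U^l$ then yields a symmetric $R_1^{jl}$ whose support is contained in $\ECyl_v(\th, \Xi^{-1}; \supp e)$, giving \eqref{eq:goalForR1supp} and \eqref{eq:goalForVPsupp}.

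The main obstacle is the simultaneous control of size, support, and advective regularity of $R_1$. Each error gains a factor $\la^{-1} = (N \Xi)^{-1}$ from integration by parts against the oscillatory phase, plus a further factor of $\rho = \Xi^{-1}$ coming from the dimensional analysis of the compactly supported solution operator of Section~\ref{sec:solvingSymmDiv}; combined with the choice $N \geq (e_v/e_R)^{3/2}$ this gives $\co{R_1} \leqc \mathbf{b}^{-1} e_v^{1/2} e_R^{1/2}/N$ as required by \eqref{eq:theNewEnergyLevel}. The bounds \eqref{eq:Vco}--\eqref{eq:matWco} and \eqref{eq:Pco}--\eqref{eq:matPco} follow directly from the wave ansatz together with standard commutator estimates for $(\pr_t + v \cdot \nab)$ acting on $\chi_I$, $\eta_I$, $\xi_I$, and on the mollified amplitudes. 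For the local energy identities \eqref{eq:energyPrescribed}--\eqref{eq:DtenergyPrescribed}, one decomposes $|V|^2 = \sum_I |v_I|^2 + (\text{oscillations at frequency} \geq \la)$; the diagonal sum equals $e(t,x)$ by construction, while each oscillatory term paired against $\psi$ gains $\la^{-1} \| \nab \psi \|_{L^1}$ after a single integration by parts in space, and its time derivative produces the $(\pr_t + v \cdot \nab)$-cost $\mathbf{b}^{-1} \Xi e_v^{1/2}$ on the amplitude plus the residual $\| (\pr_t + v \cdot \nab) \psi \|_{L^1}$ recorded in \eqref{eq:DtenergyPrescribed}.
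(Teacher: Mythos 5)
Your overall architecture matches the paper's (spatially localized waves of double-curl form, mollification along the coarse-scale flow, and the compactly supported symmetric-divergence operator of Section~\ref{sec:solvingSymmDiv} to produce $R_1$ with the support property), but there is a genuine gap at the heart of the error estimates: you never supply the mechanism that makes the high-frequency interference terms small. For $J \neq \bar I$ the term $\pr_j(V_I^j V_J^l)$ has size $\la e_R$ (a derivative of an $O(e_R)$ quantity oscillating at frequency $\la$), so your claim that ``each error gains a factor $\la^{-1}$ from integration by parts against the oscillatory phase'' only returns a contribution of size $e_R$ — far above the target $e_R' = {\bf b}\, e_R = {\bf b}^{-1} e_v^{1/2} e_R^{1/2}/N$ in \eqref{eq:theNewEnergyLevel}. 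The paper handles this by imposing the microlocal Beltrami condition $(i\nab\xi_I)\times v_I = |\nab\xi_I| v_I$ together with the pressure corrections $P_{I,J} = -\tfrac12 V_I\cdot V_J$, so that the leading interaction reduces to terms proportional to $|\nab\xi_I|-1$ as in \eqref{eq:eqnForHighHigh}; these are then made small by \emph{sharp} time cutoffs of width $\tau \sim b\,\Xi^{-1}e_v^{-1/2}$, and the optimization of $b$ between the transport bound \eqref{eq:boundForTransportMain} and the interference bound \eqref{eq:boundForHighMain} is precisely what produces $b \sim {\bf b}$, hence the factor ${\bf b}^{-1}$ in \eqref{eq:matVco}, \eqref{eq:matWco}, \eqref{eq:matPco} and in $e_R'$, and ultimately the $1/5$ threshold. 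Your sketch leaves $\tau$ unspecified beyond $\tau \ll \th$ and never explains where ${\bf b}$ enters, so the estimates cannot close as stated; you would also need the nonstationary-phase condition of Proposition~\ref{prop:statPhaseProp} (uniform lower bounds on $|\nab(\xi_I+\xi_J)|$, arranged via the rotations of Proposition~\ref{prop:bunchOfRotations}) for the interference terms to be treatable at all.

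A secondary inaccuracy: the gains of $\la^{-1}$ and of $\rho = \Xi^{-1}$ are not stacked on the same term. In the paper the $\la^{-1}$ gain comes from a finite parametrix expansion (Section~\ref{sec:applyParametrix}, with $D=3$ terms), and the compactly supported operator of Theorem~\ref{thm:inv4divEq} — with its dimensional gain of $\bar\rho \sim \Xi^{-1}$ — is applied only to the parametrix \emph{remainder}; this combination is what eliminates the need for super-exponential frequency growth while keeping $\co{R_1}$ below $e_R'$. Claiming both factors simultaneously for each error term overestimates the gain and obscures why the choice $N \geq (e_v/e_R)^{3/2}$ and the bounded number of parametrix steps suffice.
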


Lemma \ref{lem:mainLemma} is very similar to the Main Lemma in \cite{isett}, but there are a few differences which are important to observe.  

Unlike the Main Lemma in \cite{isett}, Lemma \ref{lem:mainLemma} is entirely consistent with dimensional analysis and does not impose a restriction on $N$ that would force super-exponential growth of frequencies.  Namely, the Main Lemma of \cite{isett} imposes an additional condition $N \geq \Xi^\eta$ for some $\eta > 0$, and this condition on the frequency growth parameter forces a double-exponential growth of frequencies when the lemma is iterated to construct solutions to Euler.  The condition $N \geq \Xi^\eta$ is also unfavorable for being inconsistent with dimensional analysis, as the parameter $\Xi$ has the dimensions of an inverse length, whereas the parameter $N$ is supposed to be dimensionless.  By excluding the requirement $N \geq \Xi^\eta$, Lemma \ref{lem:mainLemma} is now completely consistent with dimensional analysis, and hence agrees with the scaling symmetries of the Euler-Reynolds equations.  Furthermore, Lemma \ref{lem:mainLemma} allows for an exponential (rather than double-exponential) growth of frequencies in the iteration, which gives our solutions a closer resemblance to the classical picture of turbulent flows.

Another feature of Lemma \ref{lem:mainLemma} contrasting the Main Lemma of \cite{isett} is that Lemma \ref{lem:mainLemma} keeps track of the enlargement of support of $R$ in terms of the $v$-compatible Eulerian cylinders in Definition \ref{def:vCylinder}.  Also, the function $e(t,x)$ which determines the increment of energy to the system is a function of both time and space rather than simply a function of time $e(t)$ as in \cite{isett}.  Thus, the required estimates \eqref{ineq:goodEnergy} for $e(t,x)$ are stated in terms of both advective and spatial derivatives, and the lower bound \eqref{eq:lowBoundEoftx} is stated in terms of the $v$-compatible cylinders.  In order to apply Lemma \ref{lem:mainLemma} to construct solutions, we will have to show that there exist energy profiles which satisfy the necessary conditions \eqref{eq:lowBoundEoftx} and \eqref{ineq:goodEnergy} for any given values of $\Xi$ and $e_v$.  We have included an additional parameter $M$ in \eqref{ineq:goodEnergy} to ensure that such functions can be constructed.  

The estimates \eqref{eq:energyPrescribed} and \eqref{eq:DtenergyPrescribed} for the energy increment also differ from those of \cite{isett} for the increment to the total energy.  Here our energy increment estimates are localized as they are stated in terms of a test function $\psi$.  This type of estimate allows us to establish local properties of the resulting solutions, including the failure of local $C^{1/5}$ regularity stated in Theorem \ref{thm:eulerOnRn:cptPert}.  The estimates \eqref{eq:energyPrescribed} and \eqref{eq:DtenergyPrescribed} are also more natural in terms of dimensional analysis.  From this point of view, the factor of $( \| \psi \|_{L^1} + \Xi^{-1} \| \nab \psi \|_{L^1} )$ has the dimensions of volume when we regard $\psi$ as being dimensionless.  The units of volume have been normalized to agree with units of mass $[M]$ in the physical derivation of the Euler equations.  Therefore, both sides of \eqref{eq:energyPrescribed} have the dimensions of energy $\fr{[M][L]^2}{[T]^2}$, and likewise both sides of \eqref{eq:DtenergyPrescribed} have the units of energy per unit time.

Finally, we point out that, as in \cite{isett}, the enlargement of support is expressed in terms of the support of $R$, rather than the support of $v$ and $p$.  Thus, if the Euler-Reynolds flow $(v,p,R)$ satisfies the Euler equations except on a compact subset $\overline{\Om} \subseteq \R \times \R^3$ on which $R$ is supported, the corrections to the pressure and velocity and the resulting error $R_1$ can be made to have compact support in a neighborhood of $\overline{\Om}$ by the appropriate choice of $e(t,x)$, even if the ambient velocity and pressure $(v,p)$ do not have compact support.  


Now we begin the proof of Lemma \ref{lem:mainLemma}.  We start in Section \ref{sec:prelimCyls} with some preliminary lemmas concerning the geometry of the Eulerian cylinders of Definition \ref{def:vCylinder} that will play an important role in the proof.  We then give a technical outline of the scheme in Section \ref{sec:techOutline} wherein we organize a list of the error terms in the construction.  We continue the proof of Lemma~\ref{lem:mainLemma} through Section~\ref{sec:solvingSymmDiv}. 

\section{Preliminaries on Eulerian and Lagrangian Cylinders} \label{sec:prelimCyls}
Here we collect some basic facts about the geometry of Eulerian cylinders which will be useful during the construction.  We will assume throughout this section that we are working with time-dependent vector fields $v(t,x) = (v^1, v^2, v^3)$ defined on $\R \times \R^3$ which are continuous in $(t,x)$ and $C^1$ in the spatial variables with uniform bounds on $\| \nab v \|_{C^0}$.  We denote by $\Phi_s$ the flow map associated to $v$, which is the one-parameter group of mappings $\Phi_s : \R \times \R \times \R^3 \to \R \times \R^3$ generated by the space-time vector field $\pr_t + v \cdot \nab$.  If $v$ is defined only on an open subset of $\R \times \R^3$, then likewise $\Phi_s(t,x)$ is defined only on an open subset of $\R \times \R \times \R^3$.

In addition to Eulerian cylinders, we will also be interested in the concept of a Lagrangian cylinder adapted to a vector field $v$, which we define as follows.
\begin{defn}[$v$-adapted Lagrangian cylinder]\label{def:vLagCylinder}  Let $v = (v^1, v^2, v^3)$ be as above.  Let $\Phi_s = \Phi_s(t,x)$ be the flow map associated to a vector field $v$.  Given $\tau, \rho > 0$ and a point $(t_{0}, x_{0})$ of the space-time $\R \times \R^{3}$, we define the \emph{$v$-adapted Lagrangian cylinder} ${\hat \Ga}_{v}(\tau, \rho; t_{0}, x_{0})$ centered at $(t_{0}, x_{0})$ with duration $2 \tau$ and base radius $\rho > 0$ to be
\begin{equation} \label{eq:cmptLagCyl}
	\LCyl_{v}(\tau, \rho; t_{0}, x_{0}) := \big\{ \Phi_{s}(t_0, x_0 + h) ~:~ |s| \leq \tau, |h| \leq \rho \big\} 
\end{equation}
In other words, ${\hat \Ga}_{v} (\tau, \rho; t_{0}, x_{0})$ is the union of trajectories for times $t \in [t_{0} - \tau, t_{0} + \tau]$ emanating from a spatial ball of radius $\rho$ about $x_{0}$.

Similarly, if $S \subseteq \R \times \R^3$ is a set, we define
\begin{equation} \label{eq:cmptLagCylSet}
	\LCyl_{v}(\tau, \rho; S) := \bigcup_{(t_0, x_0) \in S} {\hat \Ga}_{v}(\tau, \rho; t_{0}, x_{0})
\end{equation}
\end{defn}
Throughout the proof, we will often make use of the following duality between Eulerian and Lagrangian cylinders:
\ali{
 (t', x') \in \ECyl_v(\tau, \rho; t,x) &\iff (t,x) \in \LCyl_v(\tau, \rho; t', x')  \label{iff:EuLagDuality}
}

Our first Lemma provides the most basic estimate on the geometry of the flow of $v$.
\begin{lem} \label{lem:MLMT:basicEst4FlowFirst}
Let $v = (v^{1}, v^{2}, v^{3})$ be as above 
and let $\Phi_s(t,x) = (t+s, \Phi_s'(t,x))$ be the flow map associated to $\pr_t + v \cdot \nab$.  Then for every $(t_{0}, x_{0}), (s, h) \in \bbR \times \bbR^{3}$, we have
\begin{equation}
	\abs{h} e^{- s \nrm{\nb v}_{C^{0}}} 
	\leq \abs{ \Phi_{s}'(t_{0}, x_{0}) -  \Phi_{s}'(t_{0}, x_{0} + h)}
	\leq \abs{h} e^{s \nrm{\nb v}_{C^{0}}}.
\end{equation}
\end{lem}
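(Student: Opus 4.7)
The statement is a standard Grönwall-type estimate for the Jacobian of a flow along nearby trajectories, so the plan is straightforward. I would introduce the difference vector
\[
\Delta(s) := \Phi_s'(t_0, x_0 + h) - \Phi_s'(t_0, x_0),
\]
note that $\Delta(0) = h$, and differentiate in $s$. Using the defining ODE $\frac{d}{ds}\Phi_s'(t_0, x) = v(t_0 + s, \Phi_s'(t_0, x))$, one gets
\[
\frac{d}{ds}\Delta(s) = v\bigl(t_0+s,\, \Phi_s'(t_0, x_0+h)\bigr) - v\bigl(t_0+s,\, \Phi_s'(t_0, x_0)\bigr),
\]
so by the fundamental theorem of calculus in the spatial variable,
\[
\Bigl|\frac{d}{ds}\Delta(s)\Bigr| \leq \nrm{\nb v}_{C^0}\, |\Delta(s)|.
\]

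Next I would run Grönwall on $|\Delta(s)|$ itself. To avoid worrying about differentiability of $|\Delta(s)|$ at zeros, I would instead work with $|\Delta(s)|^2$, compute $\frac{d}{ds}|\Delta|^2 = 2 \Delta \cdot \frac{d}{ds}\Delta$, and apply Cauchy--Schwarz to obtain the two-sided differential inequality
\[
-2\nrm{\nb v}_{C^0}\,|\Delta(s)|^2 \;\leq\; \frac{d}{ds}|\Delta(s)|^2 \;\leq\; 2\nrm{\nb v}_{C^0}\,|\Delta(s)|^2.
\]
Integrating from $0$ to $s$ (and using $|\Delta(0)|^2 = |h|^2$) then gives the claimed bounds $|h|^2 e^{-2s\nrm{\nb v}_{C^0}} \leq |\Delta(s)|^2 \leq |h|^2 e^{2s\nrm{\nb v}_{C^0}}$ for $s \geq 0$; the case $s < 0$ follows either by symmetry (running time backwards along the flow of $-v$, which has the same Lipschitz bound) or by reading the same Grönwall argument in reverse, so the statement should be understood with $s$ replaced by $|s|$ in the exponent.

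There is no real obstacle here: everything is a direct consequence of the Lipschitz continuity of $v$ in the spatial variable and the one-parameter flow equation. The only point of modest care is the differentiability of $|\Delta(s)|$, which I would sidestep by working with $|\Delta(s)|^2$ as above. I would not belabor the computation further, since the estimate is classical.
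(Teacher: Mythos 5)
Your proposal is correct and follows essentially the same route as the paper: the paper also sets $\bfd^{2}(s) = \abs{\Phi_{s}'(t_{0},x_{0}) - \Phi_{s}'(t_{0},x_{0}+h)}^{2}$, derives the two-sided differential inequality $\abs{\tfrac{d}{ds}\bfd^{2}} \leq 2\nrm{\nb v}_{C^{0}}\bfd^{2}$ via the mean value theorem, and applies Gronwall on each side before taking square roots. Your remark about interpreting the exponent with $\abs{s}$ for negative $s$ is a reasonable refinement of the statement, which the paper leaves implicit.
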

\begin{proof} 
Define $\bfd^{2}(s) := \abs{ \Phi_{s}'(t_{0}, x_{0}) -  \Phi_{s}'(t_{0}, x_{0} + h)}^{2}$. Recalling the definition of $ \Phi'_{s}$, we easily compute
\begin{align*}
	\frac{\ud}{\ud s} \bfd^{2}(s)  
	 = 2 \bb( \Phi_{s}'(t_{0}, x_{0}) -  \Phi_{s}'(t_{0}, x_{0} + h) \bb) \cdot \bb( v(t_{0} + s,  \Phi_{s}'(t_{0}, x_{0})) - v(t_{0} + s,  \Phi_{s}'(t_{0}, x_{0})) \bb).
\end{align*}

By the mean value theorem, we see that
\begin{equation*}
	- 2 \nrm{\nb v}_{C^{0}} \, \bfd^{2}(s) \leq \frac{\ud}{\ud s} \bfd^{2}(s) \leq 2 \nrm{\nb v}_{C^{0}} \, \bfd^{2}(s).
\end{equation*}


Note that $\bfd^{2}(0) = \abs{h}^{2}$. Thus, applying Gronwall on each side, we obtain
\begin{equation*}
	\abs{h}^{2} e^{- 2 s \nrm{\nb v}_{C^{0}}} \leq \bfd^{2}(s) \leq \abs{h}^{2} e^{2 s \nrm{\nb v}_{C^{0}}}.
\end{equation*}

Taking the square root, we then arrive at the desired set of inequalities. \qedhere
\end{proof}

A simple consequence of Lemma \ref{lem:MLMT:basicEst4FlowFirst} is the equivalence of Eulerian and Lagrangian cylinders.
\begin{lem}[Equivalence of Eulerian and Lagrangian cylinders]\label{lem:cylinderEquiv} Let $v$ be as above, let $(t_{0}, x_{0}) \in \R \times \R^3$ and let $\tau, \rho > 0$.  Then
\ali{
\LCyl_v(\tau, e^{-\tau \| \nab v \|_0} \rho; t_0, x_0) \subseteq \ECyl_v(\tau, \rho; t_0, x_0) \subseteq \LCyl_v(\tau, e^{ \tau \| \nab v \|_0}\rho; t_0, x_0) \label{subset:LagEuCylEquiv}
}
\end{lem}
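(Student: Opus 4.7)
The plan is to reduce both inclusions directly to the bi-Lipschitz estimate of Lemma \ref{lem:MLMT:basicEst4FlowFirst}, after unwinding the definitions of the two types of cylinders. The key observation is that a point of $\ECyl_v$ is obtained by first flowing and then translating, while a point of $\LCyl_v$ is obtained by first translating and then flowing, so the only task is to convert between these two orderings using the fact that $\Phi_s'(t_0, \cdot) : \R^3 \to \R^3$ is a homeomorphism.

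For the second inclusion $\ECyl_v(\tau,\rho; t_0, x_0) \subseteq \LCyl_v(\tau, e^{\tau \|\nab v\|_0}\rho; t_0, x_0)$, I would take an arbitrary point $(t_0 + s, \Phi_s'(t_0,x_0) + h)$ with $|s| \leq \tau$, $|h| \leq \rho$. Since $\Phi_s'(t_0,\cdot)$ is a homeomorphism of $\R^3$ (by the ODE theory, or directly from Lemma \ref{lem:MLMT:basicEst4FlowFirst}), there is a unique $h' \in \R^3$ with $\Phi_s'(t_0, x_0 + h') = \Phi_s'(t_0, x_0) + h$. Applying the lower bound of Lemma \ref{lem:MLMT:basicEst4FlowFirst} to the pair $(x_0, h')$ yields
\[
\abs{h} = \abs{\Phi_s'(t_0, x_0 + h') - \Phi_s'(t_0, x_0)} \geq \abs{h'} e^{-|s| \|\nab v\|_{C^0}},
\]
so $|h'| \leq |h| e^{|s| \|\nab v\|_{C^0}} \leq \rho e^{\tau \|\nab v\|_{C^0}}$. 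Hence the point lies in $\LCyl_v(\tau, e^{\tau \|\nab v\|_0}\rho; t_0, x_0)$.

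For the first inclusion $\LCyl_v(\tau, e^{-\tau \|\nab v\|_0}\rho; t_0, x_0) \subseteq \ECyl_v(\tau, \rho; t_0, x_0)$, I would take a point $\Phi_s(t_0, x_0 + h)$ with $|s| \leq \tau$ and $|h| \leq e^{-\tau \|\nab v\|_0}\rho$, and simply define $h' := \Phi_s'(t_0, x_0 + h) - \Phi_s'(t_0, x_0)$, so that $\Phi_s(t_0, x_0 + h) = (t_0 + s, \Phi_s'(t_0, x_0) + h')$. The upper bound of Lemma \ref{lem:MLMT:basicEst4FlowFirst} gives
\[
|h'| \leq |h| e^{|s| \|\nab v\|_{C^0}} \leq e^{-\tau \|\nab v\|_0} \rho \cdot e^{\tau \|\nab v\|_0} = \rho,
\]
which places the point in $\ECyl_v(\tau, \rho; t_0, x_0)$.

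There is essentially no major obstacle; the only subtle point is ensuring that one reads Lemma \ref{lem:MLMT:basicEst4FlowFirst} with $|s|$ in the exponent rather than the signed $s$, which is justified since the Gronwall argument in its proof is symmetric in the time direction. I would also remark in passing that the duality \eqref{iff:EuLagDuality} could be used to derive one inclusion from the other by swapping the roles of $(t_0, x_0)$ and $(t_0 + s, \Phi_s'(t_0, x_0))$, but the direct argument above is equally short.
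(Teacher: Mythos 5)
Your proof is correct and follows essentially the same route as the paper: both inclusions are reduced to the bi-Lipschitz flow estimate of Lemma \ref{lem:MLMT:basicEst4FlowFirst} (read with $|s|$ in the exponent, exactly as you note). The only cosmetic difference is in the second inclusion, where the paper produces the preimage displacement $\tilde h$ by applying the backward flow $\Phi_{-s}$ to the two points and then using the upper bound of the estimate, whereas you invoke invertibility of $\Phi_s'(t_0,\cdot)$ and apply the lower bound of the forward estimate; these are the same inequality read in two directions.
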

\begin{proof}  Let $(t,x) \in \LCyl_v(\tau, e^{-\tau \| \nab v \|_0} \rho; t_0, x_0)$.  Then there exist $s \in \R$, $|s| \leq \tau$ and $h \in \R^3$ with $|h| \leq e^{-\tau \| \nab v \|_0} \rho$ such that
\ali{
(t,x) &= \Phi_s(t_0, x_0 + h) \\
&= \Phi_s(t_0, x_0) + \tilde{h} \\
\tilde{h} &= \Phi_s(t_0, x_0 + h) - \Phi_s(t_0, x_0)
}
From Lemma \ref{lem:MLMT:basicEst4FlowFirst} we have $|\tilde{h}| \leq e^{s \| \nab v \|_0} |h| \leq \rho$.  This bound establishes the first containment in \eqref{subset:LagEuCylEquiv}.

For the second containment, let $(t,x) \in \ECyl_v(\tau, \rho; t_0, x_0)$.  Then there exist $s \in \R$, $|s| \leq \tau$ and $h \in \R^3, |h| \leq \rho$ such that
\ali{
(t,x) &= \Phi_s(t_0, x_0) + (0,h) \\
&= \Phi_{s}(t_0, x_0 + \tilde{h} )   \\
(0, \tilde{h}) &= \Phi_{-s}( \Phi_s(t_0, x_0) + (0, h) ) - \Phi_{-s} ( \Phi_s(t_0, x_0) )
}
From Lemma \ref{lem:MLMT:basicEst4FlowFirst} we have $|\tilde{h}| \leq e^{|s| \| \nab v \|_0} |h| \leq e^{\tau \| \nab v \|_0 }\rho$, which concludes the proof.
\end{proof}

From Lemma \ref{lem:MLMT:basicEst4FlowFirst} we can also quickly prove the following containment properties of cylinders
\begin{lem}\label{lem:cylContainmentProps} Let $v$ be as above, let $(t_{0}, x_{0}) \in \bbR \times \bbR^{3}$ and let $\tau_0, \tau_1, \rho_0, \rho_1$ be positive numbers.  Then
\ali{
\ECyl_v(\tau_2, \rho_2; \ECyl_v(\tau_1, \rho_1; t_0, x_0) ) &\subseteq \ECyl_v(\tau_1 + \tau_2, \rho_2 + e^{ \| \nab v \|_0 \tau_2 } \rho_1; t_0, x_0 ) \label{eq:ECylECylinECyl} \\
\ECyl_v(\tau_2, \rho_2; \LCyl_v(\tau_1, \rho_1; t_0, x_0) ) &\subseteq \ECyl_v( \tau_1 + \tau_2, \rho_2 + e^{ \| \nab v \|_0 (\tau_1 + \tau_2) } \rho_1; t_0, x_0) \label{eq:ECylLCylinECyl} \\
\LCyl_v(\tau_2, \rho_2; \ECyl_v(\tau_1, \rho_1; t_0, x_0) ) &\subseteq \ECyl_v(\tau_1 + \tau_2, e^{ \| \nab v \|_0 \tau_2 } (\rho_1 + \rho_2); t_0, x_0 ) \label{eq:LCylECylinECyl}
}
\end{lem}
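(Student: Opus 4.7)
My plan is to prove each of the three containments by unpacking the definitions, exploiting the group property $\Phi_{s_1+s_2} = \Phi_{s_2} \circ \Phi_{s_1}$ of the flow, and then applying Lemma~\ref{lem:MLMT:basicEst4FlowFirst} to control how spatial displacements grow (or stay put) when transported along $v$. In each case the scheme is to write an arbitrary point in the inner cylinder in the canonical ``flow plus shift'' form dictated by its Definition, compose with the outer cylinder's construction, and then read off both the total time displacement (which is always $|s_1|+|s_2|\leq \tau_1+\tau_2$) and the total spatial displacement from $\Phi_{s_1+s_2}(t_0,x_0)$.

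For \eqref{eq:ECylECylinECyl}, I would take $(t,x) = \Phi_{s_2}(t_1,x_1)+(0,h_2)$ with $(t_1,x_1) = \Phi_{s_1}(t_0,x_0)+(0,h_1)$, and set $s_0 = s_1+s_2$. The spatial difference $x - \Phi_{s_0}'(t_0,x_0)$ splits as $h_2 + [\Phi_{s_2}'(t_1,\Phi_{s_1}'(t_0,x_0)+h_1) - \Phi_{s_2}'(t_1,\Phi_{s_1}'(t_0,x_0))]$, and Lemma~\ref{lem:MLMT:basicEst4FlowFirst} bounds the bracketed term by $e^{|s_2|\|\nb v\|_{C^0}}|h_1| \leq e^{\tau_2\|\nb v\|_{C^0}}\rho_1$, giving the claimed radius $\rho_2 + e^{\|\nb v\|_{C^0}\tau_2}\rho_1$.

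For \eqref{eq:ECylLCylinECyl}, the inner Lagrangian cylinder gives $(t_1,x_1) = \Phi_{s_1}(t_0,x_0+h_1)$, so after the outer flow I obtain $(t,x) = \Phi_{s_1+s_2}(t_0,x_0+h_1) + (0,h_2)$. Setting $s_0 = s_1+s_2$ again, the spatial displacement from $\Phi_{s_0}(t_0,x_0)$ is at most $|h_2| + |\Phi_{s_0}'(t_0,x_0+h_1)-\Phi_{s_0}'(t_0,x_0)| \leq \rho_2 + e^{|s_0|\|\nb v\|_{C^0}}\rho_1$, where the larger exponent $e^{\|\nb v\|_{C^0}(\tau_1+\tau_2)}$ appears because the initial perturbation $h_1$ is transported for the \emph{full} time $s_0$, not just $s_2$. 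This accounts for the only structural difference between \eqref{eq:ECylECylinECyl} and \eqref{eq:ECylLCylinECyl}.

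For \eqref{eq:LCylECylinECyl}, the inner point $(t_1,x_1)$ takes the Eulerian form $\Phi_{s_1}(t_0,x_0) + (0,h_1)$, and the outer Lagrangian step produces $(t,x) = \Phi_{s_2}(t_1,x_1+h_2) = \Phi_{s_2}(t_1,\Phi_{s_1}'(t_0,x_0)+h_1+h_2)$. Here the two spatial perturbations combine \emph{before} being flowed forward by $\Phi_{s_2}$, so Lemma~\ref{lem:MLMT:basicEst4FlowFirst} gives a single factor $e^{\tau_2\|\nb v\|_{C^0}}(\rho_1+\rho_2)$, exactly as claimed. The only conceptual subtlety in the whole proof is keeping careful track of which perturbations get transported for how long, and the three containments are essentially a bookkeeping exercise around this point; I expect no genuine obstacle, since Lemma~\ref{lem:MLMT:basicEst4FlowFirst} has already done the analytic work.
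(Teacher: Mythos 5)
Your proposal is correct and follows essentially the same route as the paper: decompose a point of the composite cylinder via the group property $\Phi_{s_1+s_2}=\Phi_{s_2}\circ\Phi_{s_1}$, compare with $\Phi_{s_1+s_2}(t_0,x_0)$, and bound the transported spatial shifts with Lemma~\ref{lem:MLMT:basicEst4FlowFirst} plus the triangle inequality. The paper only writes this out for \eqref{eq:ECylECylinECyl} and declares the other two "similar"; your treatment of \eqref{eq:ECylLCylinECyl} and \eqref{eq:LCylECylinECyl}, including why the exponent involves $\tau_1+\tau_2$ in the former and why $\rho_1+\rho_2$ is flowed together in the latter, is exactly the intended bookkeeping.
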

\begin{proof}
To see the containment \eqref{eq:ECylECylinECyl}, let $(t,x) \in \ECyl_v(\tau_1, \rho_1; \ECyl_v(\tau_0, \rho_0; t_0, x_0) )$.  Then we can write
\ali{
(t,x) &= \Phi_{s_2}( \Phi_{s_1}(t_0, x_0) + (0, h_1) ) + (0, h_2) \label{eq:txinEcylecyl}
}
with $|s_i| \leq \tau_i$ and $|h_i| \leq \rho_i$, $i = 1,2$.  We rewrite \eqref{eq:txinEcylecyl} as 
\ali{
(t,x) &= \Phi_{s_2 + s_1}(t_0, x_0) ) + (0, \tilde{h}_1 + h_2) \notag \\
(0, \tilde{h}_1) &= \Phi_{s_2}( \Phi_{s_1}(t_0, x_0) + (0, h_1) ) - \Phi_{s_2} ( \Phi_{s_1}(t_0, x_0) ) \notag
}
Then $|\tilde{h}_1| \leq e^{\| \nab v \|_0 s_2 } |h_1| \leq e^{ \| \nab v \|_0 \tau_2 } \rho_1$ by Lemma \ref{lem:MLMT:basicEst4FlowFirst}, and the containment \eqref{eq:ECylECylinECyl} follows by the triangle inequality.  The containments \eqref{eq:ECylLCylinECyl} and \eqref{eq:LCylECylinECyl} are proven similarly.
\end{proof}

We will sometimes have the need to compare the cylinders of two related velocity fields.  To prepare for such a comparison, we start with the following preliminary estimate.
\begin{lem}\label{lem:basicDifferentVEstimate}  Suppose that $u(t,x)$ and $v(t,x)$ be vector fields on $\R \times \R^3$ as above.  Denote by ${}^{(v)} \Phi_s(t,x) = ( t + s, {}^{(v)} \Phi_s' )$ and ${}^{(u)} \Phi_s(t,x) = ( t + s, {}^{(u)} \Phi_s' )$ their associated flow maps.  Let $(t_0, x_0) \in \R \times \R^3$.  Then we have a comparison estimate
\ali{
| {}^{(v)} \Phi_s'(t_0, x_0) - {}^{(u)} \Phi_s'(t_0, x_0) | &\leq \co{ v - u } |s| e^{ |s| \| \nab u \|_0 } \label{ineq:basicChangeVEstimate}
}
\end{lem}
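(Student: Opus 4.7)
The plan is to mimic the proof of Lemma \ref{lem:MLMT:basicEst4FlowFirst} but now with an inhomogeneous term reflecting the discrepancy between $u$ and $v$. Let me set
\[
	\bfd(s) := {}^{(v)} \Phi_s'(t_0, x_0) - {}^{(u)} \Phi_s'(t_0, x_0), \qquad \bfd(0) = 0,
\]
and compute $\frac{d}{ds} \bfd(s) = v(t_0+s, {}^{(v)}\Phi_s') - u(t_0+s, {}^{(u)}\Phi_s')$ from the defining ODE of the flow maps. The key algebraic step is to split this difference as
\[
	\frac{d}{ds} \bfd(s) = \bigl[ v(t_0+s, {}^{(v)}\Phi_s') - u(t_0+s, {}^{(v)}\Phi_s') \bigr] + \bigl[ u(t_0+s, {}^{(v)}\Phi_s') - u(t_0+s, {}^{(u)}\Phi_s') \bigr].
\]
The first bracket is bounded in norm by $\nrm{v-u}_{C^0}$, while by the mean value theorem the second bracket is bounded by $\nrm{\nb u}_{C^0} \abs{\bfd(s)}$.

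Taking the inner product of $\frac{d}{ds} \bfd$ with $\bfd$ (exactly as in the preceding lemma) then yields the scalar differential inequality
\[
	\frac{d}{ds} \abs{\bfd(s)} \leq \nrm{v-u}_{C^0} + \nrm{\nb u}_{C^0} \abs{\bfd(s)}
\]
for $s \geq 0$ (with the analogous inequality with signs reversed for $s \leq 0$, or equivalently reducing to $s \geq 0$ by the symmetry of the statement under swapping roles of forward/backward flow). Applying Gronwall's inequality starting from $\bfd(0) = 0$ gives
\[
	\abs{\bfd(s)} \leq \nrm{v-u}_{C^0} \int_0^{|s|} e^{\nrm{\nb u}_{C^0}(|s|-\sigma)} \, d\sigma \leq \nrm{v-u}_{C^0} \cdot |s| \cdot e^{|s| \nrm{\nb u}_{C^0}},
\]
which is exactly \eqref{ineq:basicChangeVEstimate}.

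There is no real obstacle here — the only point requiring a little care is the splitting in the ODE for $\bfd$, where one must evaluate $u$ at the common midpoint $(t_0+s, {}^{(v)}\Phi_s')$ so that the Lipschitz bound on $u$ (controlled by $\nrm{\nb u}_{C^0}$) closes the argument. Had one instead inserted $v$ at the midpoint, the resulting estimate would involve $\nrm{\nb v}_{C^0}$ and differ from the stated form, so the asymmetry in the final bound just reflects this natural choice.
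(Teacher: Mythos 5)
Your proof is correct and follows essentially the same route as the paper: the paper also writes $v = (v-u) + u$ at the point ${}^{(v)}\Phi_s$, applies the mean value theorem to the $u$-difference, and closes with a Gronwall argument from $\bfd(0)=0$ (working with $\bfd^2$ rather than $\abs{\bfd}$, a purely cosmetic difference that sidesteps differentiability of the norm at zeros). No gap to report.
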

\begin{proof}
Define 
\[ \bfd^2(s) = | {}^{(v)} \Phi_s'(t_0, x_0) - {}^{(u)} \Phi_s'(t_0, x_0) |^2 \]
Then we have
\ali{
\fr{d}{ds} (\bfd^2 ) &= 2 \bb( {}^{(v)} \Phi_s'(t_0, x_0) - {}^{(u)} \Phi_s'(t_0, x_0) \bb) \cdot \bb( v( {}^{(v)} \Phi_s(t_0, x_0) ) - u({}^{(u)} \Phi_s(t_0, x_0)) \bb)
}
Writing $v = (v - u) + u $ and applying the mean value theorem, we have
\ali{
| \fr{d}{ds} (\bfd^2 ) | &\leq 2 \co{ v - u } \bfd + 2 \co{ \nab u } \bfd^2 \label{ineq:almostRelVuGronwall}
}
Inequality \eqref{ineq:basicChangeVEstimate} now follows from \eqref{ineq:almostRelVuGronwall} and the fact that $\bfd^2(0) = 0$.
\end{proof}
The estimate \eqref{ineq:basicChangeVEstimate} is most useful when the vector field $u$ is the smoother of the two vector fields.  Note that the inequality \eqref{ineq:basicChangeVEstimate} reduces to the trivial bound $| \Phi_s(t_0, x_0) - (t_0 + s, x_0 + s \bar{v}) | \leq \co{ v - \bar{v} } |s| $ when we take $u = \bar{v}$ to be a constant vector field.  We also remark that Lemma \ref{lem:basicDifferentVEstimate} holds even if $v$ is only continuous, in which case the trajectory ${}^{(v)} \Phi_s(t,x)$ through any given point may fail to be unique.

From Lemma \ref{lem:basicDifferentVEstimate}, we have the following Cylinder Comparison Lemma
\begin{lem}[Cylinder Comparison Lemma] \label{lem:cylCompare}  Let $v$ and $u$ be as in Lemma \ref{lem:basicDifferentVEstimate}, $\tau > 0, \rho > 0$ and $(t_0, x_0) \in \R \times \R^3$.  Then,
\ali{
\ECyl_v(\tau, \rho; t_0, x_0) &\subseteq \ECyl_u(\tau, \rho + \tau \co{ v - u } e^{ \tau \min \{ \| \nab u \|_0, \| \nab v \|_0 \} }; t_0, x_0) \label{incl:cylComparison}
}
\end{lem}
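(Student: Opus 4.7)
The plan is to unpack the definition of the $v$-adapted Eulerian cylinder, translate the point to be expressed in terms of the flow of $u$, and then control the translation error using the comparison estimate from Lemma~\ref{lem:basicDifferentVEstimate}.

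More concretely, let $(t,x) \in \ECyl_v(\tau,\rho;t_0,x_0)$. By Definition~\ref{def:vCylinder}, there exist $s\in\R$ with $|s|\leq\tau$ and $h\in\R^3$ with $|h|\leq\rho$ such that
\[
(t,x) = {}^{(v)}\Phi_s(t_0,x_0) + (0,h).
\]
The first step is to rewrite this point as $(t,x) = {}^{(u)}\Phi_s(t_0,x_0) + (0, h')$, where
\[
h' = h + \bigl( {}^{(v)}\Phi_s'(t_0,x_0) - {}^{(u)}\Phi_s'(t_0,x_0) \bigr).
\]
Using the same $s$ is crucial: it keeps the time coordinate consistent (both flow maps preserve the time component in the obvious way) and isolates the discrepancy between the two flows into the purely spatial correction.

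The second step is to bound $|h'|$. By the triangle inequality,
\[
|h'| \leq \rho + \bigl| {}^{(v)}\Phi_s'(t_0,x_0) - {}^{(u)}\Phi_s'(t_0,x_0) \bigr|,
\]
and Lemma~\ref{lem:basicDifferentVEstimate} immediately gives
\[
\bigl| {}^{(v)}\Phi_s'(t_0,x_0) - {}^{(u)}\Phi_s'(t_0,x_0) \bigr| \leq \co{v-u}\,|s|\, e^{|s|\,\|\nab u\|_0} \leq \tau\,\co{v-u}\, e^{\tau\|\nab u\|_0}.
\]
This already gives the desired conclusion with $\|\nab u\|_0$ in the exponent. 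The mildly clever remaining step is to notice the roles of $u$ and $v$ in Lemma~\ref{lem:basicDifferentVEstimate} are symmetric: the quantity ${}^{(v)}\Phi_s' - {}^{(u)}\Phi_s'$ is antisymmetric in $(u,v)$, while the hypothesis on $\nab u$ is used only to close Gr\"onwall on $\bfd^2(s)$, so we obtain the same bound with $\|\nab v\|_0$ in place of $\|\nab u\|_0$ by applying the lemma with the roles of $u$ and $v$ swapped. Taking the better of the two yields the factor $e^{\tau\min\{\|\nab u\|_0,\|\nab v\|_0\}}$ in the final estimate.

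I do not expect any genuine obstacle here; the argument is essentially the same one used in the proofs of Lemmas~\ref{lem:cylinderEquiv} and \ref{lem:cylContainmentProps}, with the single flow replaced by two flows compared via Lemma~\ref{lem:basicDifferentVEstimate}. The only point requiring mild care is making sure to exploit the $u\leftrightarrow v$ symmetry to get the $\min$ in the exponent, rather than settling for whichever field happens to be on the right-hand side of Lemma~\ref{lem:basicDifferentVEstimate} as stated.
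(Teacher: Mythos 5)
Your argument is correct and is essentially the paper's own proof: rewrite the point via the $u$-flow, absorb the flow discrepancy into the spatial displacement, and bound it by Lemma~\ref{lem:basicDifferentVEstimate} plus the triangle inequality. The explicit remark that the $\min$ in the exponent comes from applying Lemma~\ref{lem:basicDifferentVEstimate} with the roles of $u$ and $v$ swapped is a valid (and welcome) clarification of a step the paper leaves implicit.
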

\begin{proof}
For $(t,x) \in \ECyl_v(\tau, \rho; t_0, x_0)$, There exists $s$ and $h$ with $|s| \leq \tau$ and $|h| \leq \rho$ such that
\ALI{
(t,x) &= {}^{(v)} \Phi_s( t_0, x_0) + (0, h) \\
&= {}^{(u)} \Phi_s( t_0, x_0) + ({}^{(v)} \Phi_s( t_0, x_0) - {}^{(u)} \Phi_s( t_0, x_0) ) + (0,h)
}
The containment \eqref{incl:cylComparison} now follows from Lemma \ref{lem:basicDifferentVEstimate} and the triangle inequality.
\end{proof}

The following Lemma will be our basic tool in Section \ref{sec:mainLemImpliesMainThm} for keeping track of the enlargement of support of the approximation solutions during the iteration.
\begin{lem} \label{lem:MLMT:fsp4Flow}
Let $v$, $u$ be $C^{1}$ vector fields on $\bbR \times \bbR^{3}$ such that $v = u$ on $(\bbR \times \bbR^{3}) \setminus Z$, where $Z$ is a closed set. Then for any open set $\Omg \subset \bbR \times \bbR^{3}$ containing $Z$ (i.e., $Z \subseteq \Omg$) and $\tau, \rho > 0$, we have
\begin{equation*}
\LCyl_{v}(\tau, \rho; \, \Omg) = \LCyl_{u}(\tau, \rho; \, \Omg).
\end{equation*}
\end{lem}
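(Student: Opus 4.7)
Since the hypothesis is symmetric in $v$ and $u$, it suffices to prove $\LCyl_{v}(\tau, \rho; \Omg) \subseteq \LCyl_{u}(\tau, \rho; \Omg)$. Fix a point $(t, x) \in \LCyl_{v}(\tau, \rho; \Omg)$, so by definition $(t, x) = \Phi_{s}^{v}(t_{0}, x_{0} + h)$ with $(t_{0}, x_{0}) \in \Omg$, $|s| \leq \tau$, $|h| \leq \rho$. The plan is to trace the $v$-trajectory $\gamma(\sigma) := \Phi_{\sigma}^{v}(t_{0}, x_{0}+h)$ on the interval with endpoints $0$ and $s$, and to dichotomize according to whether this trajectory ever enters $Z$.

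If $\gamma$ avoids $Z$ entirely, then $\dot{\gamma}(\sigma) = v(\gamma(\sigma)) = u(\gamma(\sigma))$ along the curve, so $\gamma$ is simultaneously a $u$-trajectory and $(t, x) = \Phi_{s}^{u}(t_{0}, x_{0} + h) \in \LCyl_{u}(\tau, \rho; \Omg)$. Otherwise, without loss of generality assume $s \geq 0$ and set
\[
    \sigma^{*} := \sup\{ \sigma \in [0, s] : \gamma(\sigma) \in Z \}.
\]
Closedness of $Z$ and continuity of $\gamma$ give $q := \gamma(\sigma^{*}) \in Z$, and the hypothesis $Z \subseteq \Omg$ places $q$ in $\Omg$. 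If $\sigma^{*} = s$, then $(t,x) = q \in \Omg$ and we are done with $h' = 0$, $s' = 0$.

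The remaining and main case is $\sigma^{*} < s$, for which the core step is to verify that $\gamma$ restricted to $[\sigma^{*}, s]$ coincides with the $u$-flow starting at $(\sigma^{*}, q)$. On the open interval $(\sigma^{*}, s]$ the curve lies in $Z^{c}$, so $v$ and $u$ agree along $\gamma$ and $\dot{\gamma}(\sigma) = u(\gamma(\sigma))$; passing to the limit $\sigma \to \sigma^{*+}$ and invoking continuity of $\dot{\gamma}$ and of $u$ yields the crucial identity $v(q) = \dot{\gamma}(\sigma^{*}) = u(q)$, which reconciles the two vector fields at the possibly troublesome junction point. Since $u \in C^{1}$, ODE uniqueness then identifies $\gamma$ on $[\sigma^{*}, s]$ with the $u$-trajectory emanating from $q$, so that $(t, x) = \Phi_{s - \sigma^{*}}^{u}(q)$ with $q \in \Omg$, $h' = 0$, and $|s - \sigma^{*}| \leq s \leq \tau$. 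The case $s < 0$ is symmetric, using instead $\sigma^{*} := \inf\{\sigma \in [s, 0] : \gamma(\sigma) \in Z\}$.

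The main obstacle is precisely this junction at $\sigma^{*}$, where $v$ and $u$ may a priori differ on $Z$. The continuity argument sketched above sidesteps the issue: because $\gamma$ instantaneously leaves $Z$ for $\sigma > \sigma^{*}$, the derivative $\dot{\gamma}$ is forced by continuity to take the common value $v(q) = u(q)$ at $\sigma^{*}$, which is exactly what licenses the ODE-uniqueness switch from the $v$-flow to the $u$-flow. I note that openness of $\Omg$ is never actually used; only the inclusion $Z \subseteq \Omg$ enters, supplying the pivot point $q \in Z \subseteq \Omg$ as a valid basepoint of a $u$-Lagrangian cylinder over $\Omg$.
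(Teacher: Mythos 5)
Your proof is correct, but it takes a genuinely different route from the paper's. The paper argues by passing to complements: if $(t,x) \notin \LCyl_{v}(\tau,\rho;\Omg)$, then by definition the entire $v$-trajectory of $(t,x)$ for $|s| \leq \tau$ avoids every ball $B(\rho; t_0,x_0)$ with $(t_0,x_0) \in \Omg$, hence in particular stays in $(\bbR\times\bbR^3)\setminus\Omg \subseteq (\bbR\times\bbR^3)\setminus Z$, where $v=u$; so the $v$- and $u$-trajectories coincide and $(t,x) \notin \LCyl_{u}(\tau,\rho;\Omg)$. In that formulation the trajectories in play never meet $Z$, so no junction analysis is needed. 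You instead prove the direct inclusion $\LCyl_{v} \subseteq \LCyl_{u}$, which forces you to handle trajectories that do enter $Z$; your last-exit time $\sigma^{*}$ and the re-basing at $q=\gamma(\sigma^{*}) \in Z \subseteq \Omg$ with $h'=0$ resolve this correctly, with the continuity argument giving $v(q)=u(q)$ and $C^1$-uniqueness for $u$ identifying the tail $\gamma|_{[\sigma^{*},s]}$ with a $u$-trajectory based at a point of $\Omg$. (A small simplification: $q \in \overline{Z^{c}}$ and $v=u$ on $Z^{c}$ with both fields continuous already gives $v(q)=u(q)$, or one can just integrate the ODE on $(\sigma^{*},s]$ and pass to the limit, so the derivative-continuity step is not strictly needed.) The two proofs use the hypothesis $Z \subseteq \Omg$ differently: the paper uses it to guarantee that trajectories of points outside the cylinder never see $Z$, while you use it to supply a fresh basepoint inside $\Omg$. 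The paper's version is shorter; yours gives slightly finer information (each point of the $v$-cylinder lies either on a common trajectory from the original basepoint or on a $u$-trajectory emanating from a point of $Z$ itself), and your observation that only $Z \subseteq \Omg$ enters — not openness of $\Omg$ — is accurate, though the same is true of the paper's argument.
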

\begin{proof} 
By symmetry, it suffices to show that $\LCyl_{u} (\tau, \rho; \, \Omg) \subseteq \LCyl_{v} (\tau, \rho; \, \Omg)$, or equivalently, 
\begin{equation} \label{eq:MLMT:fsp4Flow:pf:1}
	(\bbR \times \bbR^{3}) \setminus \LCyl_{v} (\tau, \rho; \, \Omg) \subset (\bbR \times \bbR^{3}) \setminus \LCyl_{u} (\tau, \rho; \, \Omg).
\end{equation}

Let $(t,x) \in (\bbR \times \bbR^{3}) \setminus \LCyl_{v} (\tau, \rho; \, \Omg)$. By definition, this is equivalent to the statement 
\begin{equation*}
{}^{(v)} \Phi_{s}(t,x) \not \in B(\rho; t_{0}, x_{0}) \hbox{ for any } (t_{0}, x_{0}) \in \Omg \hbox{ and } \abs{s} \leq \tau.
\end{equation*}

In particular, ${}^{(v)} \Phi_{s}(t,x) \in (\bbR \times \bbR^{3}) \setminus \Omg$ for $\abs{s} \leq \tau$. Notice, however, that $v = u$ in the region $(\bbR \times \bbR^{3}) \setminus \Omg$. Therefore, $(t,x) \in (\bbR \times \bbR^{3}) \setminus \LCyl_{u} (\tau, \rho; \, \Omg)$, which proves \eqref{eq:MLMT:fsp4Flow:pf:1}. \qedhere
\end{proof}

\section{Basic Technical Outline}\label{sec:techOutline}
In this section, we recall the basic technical outline of the scheme and give a list of the error terms which arise.  

Let $(v,p,R)$ be a velocity field, pressure and stress tensor which satisfy the Euler-Reynolds equations \eqref{eq:euReynolds} with frequency energy levels below $(\Xi, e_v, e_R)$.  To perform the construction, we add corrections to the velocity and the pressure $v_1 = v + V$, $p_1 = p + P$ where the correction to the velocity is a sum of high frequency, divergence-free waves $V = \sum_I V_I$ which have the form
\ali{
 V_I &= e^{i \la \xi_I}( v_I + \de v_I) \label{eq:theCorrectionShape} \\
&= e^{i \la \xi_I} {\tilde v}_I \label{eq:actualAmplitude}
}
The phase function $\xi_I(t,x)$ and amplitude $v_I(t,x)$ are at disposal, but vary slowly in space relative to the large frequency parameter $\la$.  The small corrections $\de v_I$ are present to ensure that \eqref{eq:theCorrectionShape} is divergence-free, and also to make sure that each correction has vanishing linear and angular momentum.  
Each individual wave has a conjugate wave $V_{\bar{I}} = \bar{V}_I$ which oscillates in the opposite direction $\xi_{\bar I} = - \xi_I$ and has amplitude $v_{\bar{I}} = \bar{v}_I$, so that the overall correction is real-valued.

The corrected velocity and pressure now satisfy the system
\ali{
\pr_t v_1^l + \pr_j(v_1^j v_1^l) + \pr^l p_1 =&~ \pr_t V^l + \pr_j(v^j V^l) + \pr_j(V^j v^l) \label{eq:expandingTermsI} \\
&+ \sum_{J \neq {\bar I}} \pr_j(V_I^j V_J^l) + \pr_j \left[ \sum_I V_I^j {\bar V}_I^l + P \de^{jl} + R^{jl} \right] \label{eq:expandingTermsII} \\
\pr_j v_1^j =&~ 0 \notag
}
Our goal is to represent the terms on the right hand side of \eqref{eq:expandingTermsI}-\eqref{eq:expandingTermsII} as the divergence $\pr_j R_1^{jl}$ of a symmetric tensor $R_1^{jl}$ which is small and which satisfies appropriate bounds on its spatial and advective derivatives.  First it is necessary to define appropriate mollifications $v_\ep$ and $R_\ep$ of the given $v$ and $R$ so that the building blocks of the construction will be influenced only by the low frequency part of the given $(v,p,R)$.  These mollifications give rise to the following error term:
\ali{
 Q_M^{jl} &= (v^j - v_\ep^j) V^l + V^j (v^l - v_\ep^l) + (R^{jl} - R_\ep^{jl}).  \label{eq:mollifyTerm}
}
We now gather the remaining terms in \eqref{eq:expandingTermsI}-\eqref{eq:expandingTermsII}.  Expanding the first term in \eqref{eq:expandingTermsI} using the Ansatz \eqref{eq:theCorrectionShape} leads us to impose the transport equation\footnote{It is not clear that one should be forced to choose $0$ for the right hand side of \eqref{eq:transportForPhase}, but it is unclear how one could obtain better regularity through a different choice.  One can interpret equation \eqref{eq:transportForPhase} as an assumption that the high frequency features are carried by the coarse scale flow.  As the paper \cite{isett2} demonstrates, this behavior is forced by the Euler equations in a quantitative sense, so that some approximation to \eqref{eq:transportForPhase} is necessary.}
\ali{
\pr_t \xi_I + v_\ep^j \pr_j \xi_I &= 0 \label{eq:transportForPhase}
}
for the phase functions $\xi_I$.

Assuming \eqref{eq:transportForPhase} and using $\pr_j V_I^j = 0$, the remaining error terms in Equation \eqref{eq:expandingTermsI} then have the form
\ali{
\pr_j Q_T^{jl} =& \pr_t V^l + \pr_j(v_\ep^j V^l) + \pr_j(V^j v_\ep^l) \label{eq:solveTransportTerm} \\
=& \sum_I e^{i \la \xi_I} (\pr_t + v_\ep^j \pr_j) {\tilde v}_I^l \label{eq:transportTermI}  \\
&+ \sum_I e^{i \la \xi_I} {\tilde v}_I^j \pr_j v_\ep^l \label{eq:highLowTerm}
}
The term \eqref{eq:transportTermI} is referred to as the {\bf transport term} since it involves the advective derivative.  In contrast to the work of \cite{isett} in the periodic setting, it is necessary to keep the terms \eqref{eq:transportTermI}-\eqref{eq:highLowTerm} together for working in the whole space.  The reason is that $\pr_j(V^j v_\ep^l)$  by itself may fail to be orthogonal to rotational vector fields, even though it is guaranteed to have integral $0$ and is therefore orthogonal to constants (i.e. translations).  The combination $\pr_j(v_\ep^j V^l) + \pr_j(V^j v_\ep^l)$ on the other hand is already the divergence of a symmetric tensor, and therefore satisfies the necessary orthogonality condition discussed in Section \ref{sec:theMainIssue}.  Therefore, as long as we ensure that the term $\pr_t V$ also satisfies the necessary orthogonality conditions (that is, $V$ conserves both linear and angular momentum), one can hope to solve \eqref{eq:solveTransportTerm}.

We also isolate the {\bf high frequency interference terms} from \eqref{eq:expandingTermsII}, which we can gather in symmetric pairs so that once again the necessary orthogonality conditions are clearly satisfied
\ali{
\sum_{J \neq {\bar I}} \pr_j(V_I^j V_J^l) 
&= \fr{1}{2} \sum_{J \neq {\bar I}} \pr_j( V_J^j V_I^l + V_I^j V_J^l) \label{eq:theHighHighTermsAreSymm} \\
&= \fr{1}{2} \sum_{J \neq {\bar I}}  (V_J^j \pr_j V_I^l + V_I^j \pr_j V_J^l) 
}

To treat these terms, we draw on the idea introduced in \cite{deLSzeCts} of using Beltrami flows.  Following the treatment in \cite{isett}, this approach involves adding additional correction terms to the pressure
\ali{
 P &= P_0 + \sum_{J \neq {\bar I}} P_{I,J} \label{eq:pressureCorrection}
}
where $P_{I,J} = - \fr{1}{2} V_J \cdot V_I$, and imposing the ``microlocal Beltrami flow'' condition
\[ (i \nab \xi_I) \times v_I = |\nab \xi_I| v_I \]
so that the waves $V_I$ in \eqref{eq:theCorrectionShape} serve as curl eigenfunctions to leading order.  

After we apply the identity
\[ V_I \cdot \nab V_J + V_J \cdot \nab V_I = - V_I \times (\nab \times V_J ) - V_J \times ( \nab \times V_I ) + \nab V_I \cdot V_J \]
and add the gradients of the pressure terms $P_{I,J}$, the remainder of the high frequency interference terms \eqref{eq:theHighHighTermsAreSymm} can then be written as a main term which is made small after choosing sharp time cutoffs
\ali{
\pr_j Q_H^{jl} &= \fr{-1}{2}\sum_{J \neq {\bar I}}  \la e^{i \la(\xi_I + \xi_J) } \big[ v_I \times ( |\nab \xi_J| - 1 ) v_J + v_J \times (|\nab \xi_I| - 1) v_I \big] \label{eq:eqnForHighHigh}
}
plus lower order error terms involving the small corections $\de v_I$, which we express using \eqref{eq:actualAmplitude}
\ali{
\label{eq:eqnForSmallerHighHigh}
\begin{aligned}
\pr_j Q_{H'}^{jl} &= \fr{-1}{2} \sum_{J \neq {\bar I}} \la e^{i \la (\xi_I + \xi_J) } \big[ \de v_I \times [ (i \nab \xi_J) \times \tilde{v}_J] + \de v_J \times [ (i \nab \xi_I) \times \tilde{v}_I] \big] \\
&- \fr{1}{2} \sum_{J \neq {\bar I}} \la e^{i \la (\xi_I + \xi_J) } \big[ v_I \times[(i \nab \xi_J) \times  \de v_J] + v_J \times[(i \nab \xi_I) \times  \de v_I] \big]\\
&- \fr{1}{2} \sum_{J \neq {\bar I}} e^{i \la (\xi_I + \xi_J) } \big[ \tilde{v}_I \times ( \nab \times \tilde{v}_J) + \tilde{v}_J \times ( \nab \times \tilde{v}_I) \big]
\end{aligned}
}
We remark that our estimates for the terms \eqref{eq:eqnForHighHigh} and \eqref{eq:eqnForSmallerHighHigh} rely on a nonstationary phase argument, so it is important to check that we have uniform bounds on $\co{~|\nab(\xi_I + \xi_J)|^{-1} ~}$ for all pairs of indices $I, J$, $J \neq \bar{I}$ which interact in the construction.

The final term in \eqref{eq:expandingTermsII} is called the {\bf stress term} and takes the form
\ali{
Q_S^{jl} &= \sum_I ( V_I^j {\bar V}_I^l ) + P_0 \de^{jl} + R_\ep^{jl} \label{eq:stressTermI}
}
where $P_0$ is the low frequency part of the correction to the pressure \eqref{eq:pressureCorrection}.  The term $Q_S^{jl}$ is the only error term (including \eqref{eq:mollifyTerm}) which is of low frequency.  We expand \eqref{eq:stressTermI} using the Ansatz \eqref{eq:theCorrectionShape}, and to ensure that \eqref{eq:stressTermI} is small, we choose the amplitudes $P_0$ and $v_I$ so that the leading order term in \eqref{eq:stressTermI} cancels.  This choice leads to the {\bf stress equation} for the amplitudes:
\ali{
\sum_I v_I^j {\bar v}_I^l &= - P_0 \de^{jl} - R_\ep^{jl} \label{eq:stressEqn}
}
The role of the term $P_0$ in \eqref{eq:stressEqn} is essentially to ensure that the right hand side of \eqref{eq:stressEqn} is positive definite, and also to help prescribe the leading order term in the energy increment of the correction as in the estimate \eqref{eq:energyPrescribed}.  Note that equation \eqref{eq:stressEqn} leads to the estimates $v_I \sim |R|^{1/2}$ and $|P_0| \sim |R|$ for the amplitudes of the corrections indicated in inequalities \eqref{eq:Vco}, \eqref{eq:Pco}.

The remaining stress term is then given by
\ali{
Q_S^{jl} &= \sum_I ( \de v_I^j {\bar v}_I^l + v_I^j \overline{\de v}_I^l + \de v_I^j \overline{\de v}_I^l ) \label{eq:stressTermII}
}
Thus, the new stress takes the form
\ali{
R_1^{jl} &= Q_M^{jl} + Q_S^{jl} + Q_T^{jl} + Q_H^{jl} + Q_{H'}^{jl} \label{eq:theNewStressList}
}
where $Q_M^{jl}$ and $Q_S^{jl}$ are represented by equations \eqref{eq:mollifyTerm} and \eqref{eq:stressTermII} and where $Q_T$, $Q_H$ and $Q_{H'}$ are obtained by solving the elliptic equations \eqref{eq:solveTransportTerm}, \eqref{eq:eqnForHighHigh}, \eqref{eq:eqnForSmallerHighHigh}.  

We now proceed in Section \ref{sec:correctionShape} below to describe the correction in more detail.  In Section \ref{sec:controlStressSupport}, we will complete the outline of the scheme by indicating how the error terms in \eqref{eq:theNewStressList} are organized, and how the support of the error terms remains under control during the iteration.

\section{The Shape of the Corrections}\label{sec:correctionShape}
Our correction has the form of a sum of individual waves
\ali{
V^\ell &= \sum_I V_I^\ell
}
The individual waves are complex-valued and take the form
\ali{
V_I^\ell &= e^{i \la \xi_I} ( v_I^\ell + \de v_I^\ell) \label{eq:VIisPhaseAmplitude}
}
where the phase function $\xi_I(t,x)$ is allowed to be nonlinear, and the amplitude $v_I$ is complex-valued and required to satisfy $
v_I \in \langle \nab \xi_I \rangle^\perp$ so that the wave \eqref{eq:VIisPhaseAmplitude} is divergence free to leading order.  The nonlinear phase functions $\xi_I$ and amplitudes $v_I$ vary slowly in comparison to the large frequency parameter $\la$.  The correction $\de v_I$ in \eqref{eq:VIisPhaseAmplitude} is a lower order term defined in Equation \eqref{eq:lowerOrderAmplitude} below which is present so that each wave $V_I$ is exactly divergence free.  

In previous approaches, the divergence-free property was ensured by taking the wave $V_I$ to be the curl of a vector field
\ali{
 V_I &= \nab \times W_I \label{eq:curlForm}
}
as in \cite{isett} or by solving a divergence equation to correct the main term as in \cite{deLSzeCts}.  Here, we use waves of the form
\ali{
V_I &= \nab \times \nab \times Y_I \label{eq:doubleCurlForm}
}
where the potential $Y_I$ is given by
\ali{
Y_I = \fr{1}{\la^2} e^{i \la \xi_I} y_I, \qquad y_I = \fr{1}{|\nab \xi_I|^2} v_I \label{YIDef}
}
We impose the double-curl form \eqref{eq:doubleCurlForm} because our waves are required to be divergence free and also to ensure that the corrections have $0$ angular momentum.  Thus, the curl form \eqref{eq:curlForm} is also achieved, and it will be easy to see that the associated $W_I = \nab \times Y_I$ obeys all of the same estimates stated in inequalities \eqref{eq:Wco}-\eqref{eq:matWco} as in \cite{isett}.  With the Ansatz \eqref{eq:doubleCurlForm}, we have
\ali{
v_I &= [ (i \nab \xi_I) \times ]^2 ~y_I \\
\de v_I &= \fr{1}{\la} \nab \times \left[ (i \nab \xi_I) \times y_I + \fr{\nab \times y_I}{\la} \right] \label{eq:lowerOrderAmplitude}
}  
Our amplitudes are required to satisfy the ``microlocal Beltrami flow'' condition
\ali{
(i \nab \xi_I) \times v_I &= |\nab \xi_I| v_I \label{eq:microlocBeltrami}
}
so that \eqref{eq:VIisPhaseAmplitude} behaves to leading order like an eigenfunction of the curl operator with eigenvalue $\la |\nab \xi_I|$.  Condition \eqref{eq:microlocBeltrami} allows us to control interference terms between high frequency waves provided we include sharp time cutoffs which keep the phase gradients very close to $1$ in absolute value $|\nab \xi_I| \approx 1$.

To specify the amplitudes $v_I$ more precisely, we must first specify the index set $\II$ for the indices $I \in \II$.  The index $I \in \II$ has two parts $I = (k, f)$.  The discrete coordinate $k = (k_0, k_1, k_2, k_3) \in \Z \times \Z^3$ indicates the location of the wave $V_I$ in space time.  Namely, a wave with location index $k = k(I)$ will be located in a neighborhood of the point $(t(I), x(I)) = (k_0 \tau, k_1 \rho, k_2 \rho, k_3 \rho)$ and more specifically its support will be contained in a Lagrangian cylinder adapted to $v_\ep$
\ali{
\supp V_I &\subseteq \LCyl_{v_\ep}(\fr{2\tau}{3}, \rho; t(I), x(I)) \label{eq:controlOfVIsupp}
}
The waves $V_I$ will be arranged so that every given point and time $(t,x)$ has at most $2^4$ location indices $k$ for which the wave $V_{(k,f)}(t,x)$ may be nonzero.  The time scale $\tau > 0$ and the space scale $\rho > 0$ are small parameters which will be specified during the construction.

The other part of the index $I = (k,f)$ is the direction coordinate $f$, which specifies the direction of oscillation of the wave $V_{(k,f)}$.  This coordinate $f \in F$ belongs to a finite set $F$ of cardinality $|F| = 12$, which we take as in \cite{isett} to be the set of faces of a regular dodecahedron
 \[ F = \left\{ \pm\fr{(0, 1, \pm \varphi)}{\sqrt{1 + \varphi^2}}, \pm\fr{(1, \pm \varphi, 0)}{\sqrt{1 + \varphi^2}}, \pm\fr{(\pm \varphi, 0, 1)}{\sqrt{1 + \varphi^2}} \right\} \] 
with $\varphi = (1 + \sqrt{5})/2$ being the golden ratio.  Thus, each location index $k$ supports $12$ waves indexed by $(k, f)$ and the number of nonzero waves at a given point $(t,x)$ is bounded by $2^4 \cdot 12$.  The reason for the cardinality $|F| = 12$ is that $6$ independent directions are necessary in order to span the space of symmetric tensors in equation \eqref{eq:stressEqn}, and each direction $f$ must come with a conjugate direction $-f$ corresponding to the conjugate index ${\bar I} = (k, -f)$.

To explain the amplitude $v_I$ more precisely, we decompose $v_I = a_I + i b_I$ into its real and imaginary parts, which both take values in $a_I, b_I \in \langle \nab \xi_I \rangle^\perp$ pointwise.  The condition \eqref{eq:microlocBeltrami} is equivalent to the relationship
\ali{
 a_I &= - \fr{(\nab \xi_I) }{|\nab \xi_I|} \times b_I \label{eq:aIandbI}
}
The imaginary part is then represented in the form
\ali{
b_I^l &= \tilde{e}^{1/2}(t,x) \eta\left(\fr{t - t(I)}{\tau}\right) \psi_k(t,x) \ga_I \mathcal{P}_I^\perp (\nab \xi_{\si I})^l\label{eq:formOfbI}
}
Let us explain the terms appearing in equation \eqref{eq:formOfbI}.  The factor 
\ali{
 \eta\left(\fr{t - t(I)}{\tau}\right) &= \eta \left( \fr{t - k_0 \tau}{\tau} \right) \label{eq:timeCutoffDef}
}
is an element of a rescaled, quadratic partition of unity 
\ali{
 \sum_{y \in \Z} \eta^2(t - y) = 1 
}
that is used to glue local solutions of the homogeneous, quadratic equation \eqref{eq:stressEqn}.  Hence, the wave $V_{(k,f)}$ is supported in the time interval of size $\fr{2 \tau}{3}$ about $t(I) = k_0 \tau$ as desired.

Similarly, the factor $\psi_k(t,x)$ is an element of a partition of unity in space
\ali{
 \sum_{k_1, k_2, k_3} \psi_{(k_0, k_1, k_2, k_3)}^2(t,x) &= 1 \label{eq:spacePartition} 
}
which localizes $V_{(k,f)}$ to the cylinder $\LCyl_{v_\ep}( \fr{2}{3} \tau, \rho; t(I), x(I) )$.  More specifically, $\psi_k$ solves a transport equation
\ali{
(\pr_t + v_\ep^j \pr_j) \psi_k &= 0 \label{eq:spacePartitionTransport} \\
\psi_k(t(I), x) = \psi_k( k_0 \tau, x) &= {\bar \psi}_k(x)
}
whose initial conditions 
\ali{
 {\bar \psi}_k(x) = \eta \left( \fr{x_1 - k_1 \rho}{\rho} \right) \eta \left( \fr{x_2 - k_2 \rho}{\rho} \right) \eta \left( \fr{x_3 - k_3 \rho}{\rho} \right)  \label{eq:initialCondSpaceCutoff}
}
form a rescaled, quadratic partition of unity in space as in \eqref{eq:spacePartition}.  A partition of unity in space as in \eqref{eq:spacePartition},\eqref{eq:spacePartitionTransport} was first introduced in \cite{isett} to localize the construction.  There the purpose of $\psi_k$ is to allow for phase functions $\xi_I$ that exist on a coordinate chart of the torus without having critical points; this approach allows the proof of \cite{isett} to generalize to an arbitrary torus $\R^3 / \Ga$ without relying on the high multiplicity of the spectrum of $\nab \times$ on the particular torus $\R^3 / \Z^3$.  

Here we introduce the new element of including a small length scale $\rho$ on which the waves are localized.  Having such sharp cutoffs in space is natural in view of the goal of obtaining solutions with compact support.  We will also find in Section \ref{sec:solvingSymmDiv} that these cutoffs play a role in ensuring that our new method of solving the symmetric divergence equation obeys the correct bounds which eliminate the need for super-exponential growth of frequencies.  We will see that $\rho$ is chosen to be of size $\sim \Xi^{-1}$, the same length scale on which the building blocks $v_I$ and $\nab \xi_I$ vary.  

The factor 
\ali{
\PP_I^\perp(\nab \xi_{\si I})^l &= \pr^l \xi_{\si I} - \fr{(\nab \xi_{\si I} \cdot \nab \xi_I)}{|\nab \xi_I|^2} \pr^l \xi_I \label{eq:perpProjection}
}
is a vector field of size $\approx 1$ which takes values in the plane $\langle \nab \xi_I \rangle^\perp$.  This vector field was constructed by taking an orthogonal projection of one of the other phase gradients $\nab \xi_{\si I}$, $\si I = (k, \si f)$ which occupies the same location indexed by $k$, but ocillates in a different direction $\si f$.  The vector field \eqref{eq:perpProjection} is essentially the smoothest vector field of size $\approx 1$ taking values in $\langle \nab \xi_I \rangle^\perp$ that one can hope to construct.  Placing $\PP_I^\perp(\nab \xi_{\si I})^l$ in \eqref{eq:formOfbI} ensures that $b_I$ (and hence $a_I$ defined in \eqref{eq:aIandbI}) takes values in $\langle \nab \xi_I \rangle^\perp$.  The index $\si I \neq I$ is chosen to satisfy $\si {\bar I} = \overline{\si I}$, which ensures that $b_{\bar{I}} = - b_I$ and hence $a_{\bar{I}} = a_I$, so that $V_{{\bar I}} = {\bar V}_I$ is indeed a conjugate wave.

The factor $\tilde{e}^{1/2}(t,x)$ is a regularized version of the function $e^{1/2}(t,x)$ described in the statement of the Main Lemma (Lemma \ref{lem:mainLemma}).  Thus, we will show that
\ali{
 \tilde{e}^{1/2}(t,x) &\geq K e_R^{1/2} \label{eq:lowerBoundOnEtilde}
}
on a neighborhood of the support of $R^{jl}$ which will contain the support of $R_\ep^{jl}$.  The function $\tilde{e}^{1/2}(t,x)$ satisfies all the bounds stated in \eqref{ineq:goodEnergy}, and can also be differentiated in space an arbitrary number of times with good bounds.  From \eqref{eq:formOfbI}, the amplitude $v_I$ can be written in the form
\ali{
v_I &= {\tilde e}^{1/2} {\mathring v}_I \label{eq:renormalization}
}
The factor ${\tilde e}^{1/2}$ accounts for the size of the amplitudes $|v_I| \leq C e_R^{1/2}$ with $C$ depending on the constant $M$ in \eqref{ineq:goodEnergy}, while ${\mathring v}_I$ has size of the order $|{\mathring v}_I| \approx 1$.  The renormalization \eqref{eq:renormalization} leads to a renormalization of the stress equation \eqref{eq:stressEqn} for the renormalized amplitudes ${\mathring v}_I$.  We choose $P_0$ in \eqref{eq:stressEqn} to be 
\[ P_0 = - \fr{1}{3} {\tilde e}  + \fr{1}{3} R_\ep^{jl} \de_{jl} = - \fr{1}{3} {\tilde e}  + \fr{1}{3} \tx{tr } R_\ep \]
With this choice, the right hand side of the Stress Equation has a prescribed trace $\tilde{e}(t,x)$
\ali{
\sum_I v_I^j \bar{v}_I^l &= \tilde{e}(t,x) \fr{\de^{jl}}{3} - {\mathring R}_\ep^{jl} \label{eq:prescribedTrace}
}
Here ${\mathring R}_\ep^{jl}$ denotes the trace free part of $R^{jl}$.  The function $\tilde{e}$ turns out to be the main term in the increment to the energy (see Section \ref{sec:energyIncControl} below).  In terms of the renormalized amplitudes ${\mathring v}_I$, Equation \eqref{eq:prescribedTrace} becomes
\ali{
\sum_I {\mathring v}^j_I {\bar {\mathring v}}_I^l &= \fr{\de^{jl}}{3} + \varep^{jl} \label{eq:renormalizedStress} \\
\varep^{jl} &= - \fr{ {\mathring R}_\ep^{jl} }{{\tilde e}} \label{eqref:varepEqn}
}
The tensor $\varep^{jl}$ in \eqref{eqref:varepEqn} is bounded by $\co{ \varep^{jl} } = O(1/K)$ due to the lower bound $e(t,x) \geq K e_R$ assumed for $e(t,x)$ in \eqref{eq:lowerBoundOnEtilde}.  In Section \ref{sec:checkLowerBound} below, we verify that, on the support of $R_\ep$, the regularized function $\tilde{e}$ maintains the same lower bound satisfied by $e$.  As long as $K$ is larger than some absolute constant, this bound ensures that the term $\varep^{jk}$ in \eqref{eqref:varepEqn} is smaller than the term $\fr{\de^{jl}}{3}$ in \eqref{eq:renormalizedStress}, so that the right hand side of \eqref{eq:renormalizedStress} is positive definite and solutions ${\mathring v}_I$ to \eqref{eq:renormalizedStress} exist.  

We can rewrite Equation \eqref{eq:renormalizedStress} as a quadratic equation for the unknown coefficients $\ga_I$ appearing in \eqref{eq:formOfbI}, which all have size on the order of $\approx 1$.  It turns out that the coefficients $\ga_I$ can be written as
\ali{
 \ga_I &= \ga_f(\nab \xi_k, \varep^{jl})  \label{eq:gaIis}
}
for some smooth, real-valued functions $\ga_f$ depending only on the gradients of the phase functions occupying the same location $\nab \xi_I, I \in k \times F$ and the tensor $\varep^{jl}$ appearing in \eqref{eqref:varepEqn}.  In fact, only six different functions $\ga_f$ are used for the formula \eqref{eq:gaIis}, so that one is not worried about seeing an infinite multitude of constants in the construction.

The phase functions themselves are chosen to satisfy the transport equation
\ali{
(\pr_t + v_\ep^j \pr_j) \xi_I &= 0 \\
\xi_I(t(I), x) &= {\hat \xi}_I(x)
}
where the initial data ${\hat \xi}_I$ is a linear function whose gradient has absolute value $|\nab {\hat \xi}_I| = 1$.  The direction of the initial data $\hat{\xi}_{(k,f)}$ is obtained by taking the faces $f \in F$ of the dodecahedron, and applying different rotation matrices $O_{[k]}$ to these faces
\ali{
{\hat \xi}_{(k,f)}(x) &= f \cdot O_{[k]} ( x - x(I) ) \\
x(I) &= ( k_1 \rho, k_2 \rho, k_3 \rho )
}
Here we use a family of $2^4$ rotations $O_{[k]}$ depending on the equivalence class of $[k] \in (\Z/(2 \Z))^4$.  These rotations ensure that no two phase functions occupying adjacent location indices $k$ will oscillate in the same direction.  More precisely, they satisfy the following Proposition taken from \cite[Lemma 7.1]{isett}.
\begin{prop}\label{prop:bunchOfRotations} There exists a collection of $2^4$ rotations $O_{[k]}$ indexed by $[k] \in (\Z/(2 \Z))^4$ and a positive number $c > 0$ with the property that
\ali{
|f \circ O_{[k]} + f' \circ O_{[k']}| \geq c \qquad f, f' \in F \quad [k], [k'] \in (\Z/(2 \Z))^4
}
holds unless $f' = - f$ and $[k] = [k']$.
\end{prop}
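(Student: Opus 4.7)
The plan is to split into the trivial case $[k]=[k']$ and the substantive case $[k] \neq [k']$, and to handle the latter by a genericity argument in $SO(3)$. In the first case, $f\circ O_{[k]} + f'\circ O_{[k]} = (f+f')\circ O_{[k]}$ has norm exactly $|f+f'|$ because rotations are isometries. Since $F$ is a finite set with $F = -F$, the quantity $c_1 := \min\{|f+f'| : f, f' \in F,\ f' \neq -f\}$ is a strictly positive absolute constant depending only on the dodecahedron, so this case is automatic regardless of which rotations we choose.

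For $[k]\neq[k']$, set $R_{[k],[k']} := O_{[k]}O_{[k']}^{-1} \in SO(3)$. Using $-F = F$, the equation $f\circ O_{[k]} + f'\circ O_{[k']} = 0$ rearranges (after composing with $O_{[k']}^{-1}$) to $R_{[k],[k']}\cdot f' \in F$. So the requirement that this sum be nonzero for every $(f,f')\in F\times F$ is equivalent to the single condition $R_{[k],[k']}(F)\cap F = \emptyset$. For any fixed pair $(f_1,f_2)\in F\times F$, the locus $\{R\in SO(3) : Rf_1 = f_2\}$ is either empty or a coset of the stabilizer of $f_1$, which is isomorphic to $SO(2)$. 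Hence each such locus is a $1$-dimensional submanifold of the $3$-dimensional group $SO(3)$, and the "bad" set
\[ B := \{R\in SO(3) : R(F)\cap F \neq \emptyset\} \]
is a finite union (of at most $|F|^2 = 144$) of $1$-dimensional submanifolds, so it is closed and nowhere dense.

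Now I would choose the sixteen rotations inductively. Enumerate the classes $[k]^{(1)},\ldots,[k]^{(16)}$ in $(\Z/2\Z)^4$, and pick $O_{[k]^{(1)}}$ arbitrarily. Given $O_{[k]^{(1)}},\ldots,O_{[k]^{(m)}}$, the constraints on $O_{[k]^{(m+1)}}$ are the $m$ conditions $O_{[k]^{(m+1)}} O_{[k]^{(j)}}^{-1} \notin B$ for $j = 1,\ldots,m$, together with the analogous conditions $O_{[k]^{(j)}} O_{[k]^{(m+1)}}^{-1} \notin B$, which coincide because $B = B^{-1}$. Each of these excludes a nowhere dense closed subset, so the admissible set for $O_{[k]^{(m+1)}}$ is a dense open subset of $SO(3)$ (e.g.\ by the Baire category theorem, or by elementary dimension count). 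After sixteen such choices, we obtain rotations with $R_{[k],[k']} \notin B$ whenever $[k]\neq[k']$.

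Finally, since the set of "allowed" tuples $(k,f,k',f')$—those not of the excluded form $[k]=[k']$, $f' = -f$—is finite, and the function $(k,f,k',f') \mapsto |f\circ O_{[k]} + f'\circ O_{[k']}|$ never vanishes on this set by the previous paragraphs, the minimum
\[ c := \min |f\circ O_{[k]} + f'\circ O_{[k']}| \]
over allowed tuples is a strictly positive number with the required property. The only real point that needs care is the verification that $B$ is nowhere dense in $SO(3)$, which reduces to the standard fact that for a nonzero $f\in\R^3$ the stabilizer $\{R\in SO(3) : Rf = f\}$ is a copy of $SO(2)$ of codimension $2$ in $SO(3)$; the rest is a routine inductive selection argument.
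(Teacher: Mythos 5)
Your argument is correct, and it is worth noting that the paper does not actually prove this proposition: it is imported verbatim from Lemma 7.1 of \cite{isett}, so there is no in-paper proof to compare against. Your route is the natural genericity argument and stands on its own: the case $[k]=[k']$ is handled by isometry invariance and the finiteness of $F$, and for $[k]\neq[k']$ you correctly reduce nonvanishing of all sums $f\circ O_{[k]}+f'\circ O_{[k']}$ to the single condition that the relative rotation avoid the bad set $B=\{R\in SO(3): R(F)\cap F\neq\emptyset\}$, which is a finite union of cosets of point stabilizers (circles), hence closed and nowhere dense; the inductive selection of the sixteen rotations and the final compactness argument giving a uniform $c>0$ are both sound. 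One small bookkeeping remark: composing $f\circ O_{[k]}+f'\circ O_{[k']}=0$ with $O_{[k']}^{-1}$ gives, in vector form, $R^{T}f=-f'$ rather than $Rf'\in F$ on the nose, but since $F=-F$ and $B=B^{-1}$ the resulting criterion $R_{[k],[k']}(F)\cap F=\emptyset$ is exactly as you state, so this does not affect the proof. The only thing your write-up buys beyond the paper is self-containedness; the cited source proves the same statement, also by exploiting that the collision set is a lower-dimensional (hence negligible) subset of the rotation group, so the approaches are essentially of the same genre.
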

This arrangement will allow us to have uniform bounds on $|\nab(\xi_I + \xi_J)|^{-1} \leq A$, so that the phase functions $\xi_I + \xi_J$ appearing in \eqref{eq:eqnForHighHigh} remain uniformly nonstationary (see Proposition \ref{prop:statPhaseProp} below).

We refer to Section 7 of \cite{isett} for a full derivation of the construction.

\subsection{ A preliminary bound on the support of the new stress } \label{sec:controlStressSupport}

Having specified the construction in more detail, we can now briefly indicate how the support of the stress $R_1^{jl}$ calculated in \eqref{eq:theNewStressList} will remain under control during the iteration.  Here we explain the rationale for including sharp cutoffs in space $\psi_k$ in our definition of the amplitudes $v_I$.

The support of the terms $Q_M$ and $Q_S$ will be relatively easy to control, and one can see from equations \eqref{eq:mollifyTerm}, \eqref{eq:stressEqn} and \eqref{eq:stressTermII} that these terms will be supported in a neighborhood of the support of $R$ containing the union of the supports of the waves $V_I$ composing $V$.  We therefore focus on the term $Q_O^{jl} = Q_T^{jl} + Q_H^{jl} + Q_{H'}^{jl}$, which is obtained by solving the elliptic equation
\ali{
\pr_j Q_O^{jl} &= U^l \\
U^l &= \pr_t V^l + \pr_j (v_\ep^j V^l + V^j v_\ep^l) + \sum_{J \neq \bar{I}} \pr_j(V_I^j V_J^l + P_{IJ} \de^{jl}) \label{eq:exressOscForce}
}
We will construct $Q_O$ as a sum of individual parts $Q_O^{jl} = \sum_I Q_{O,I}^{jl}$.  Each individual part $Q_{O, I}^{jl}$ accounts for the wave $V_I^l$ and the interaction terms involving $V_I^l$ by solving the equation
\ali{
\pr_j Q_{O,I}^{jl} &= U_I^l \label{eq:solveIndividError} \\
U_I^l &= \pr_t V_I^l + \pr_j (v_\ep^j V_I^l + V_I^j v_\ep^l)  \\
&+ \fr{1}{4} \sum_{J:J \neq \bar{I}} \pr_j(V_I^j V_J^l + V_J^j V_I^l - V_I \cdot V_J \de^{jl} ) \label{eq:highFreqIndividInteractions}
}
where we recall the choice of $P_{I,J}$ in \eqref{eq:pressureCorrection}.

Note that the force term $U_I^l$ satisfies the orthogonality conditions necessary for \eqref{eq:solveIndividError} to admit a solution, as the individual waves $V_I$ are required to have $0$ linear and angular momentum at all times, and because we keep the interactions of line \eqref{eq:highFreqIndividInteractions} in a symmetric form.  Furthermore, observe that the support of $U_I$ is contained in the support of $V_I$.

Our method of solving the Equation \eqref{eq:solveIndividError} has the property that if the data $U_I^l$ is supported in an Eulerian cylinder $\ECyl_{v_\ep}(\bar{\tau}, \bar{\rho}; t_0, x_0)$ adapted to $v_\ep$ and furthermore $U_I^l$ satisfies the orthogonality conditions necessary for the existence of a solution, then the solution $Q_{O, I}^{jl}$ we construct is also supported in the same Eulerian cylinder $\ECyl_{v_\ep}(\bar{\tau}, \bar{\rho}; t_0, x_0)$.  From Lemma \ref{lem:cylinderEquiv} on the equivalence of Eulerian and Lagrangian cylinders, it follows from \eqref{eq:controlOfVIsupp} that
\ali{
\supp V_I \cup \supp Q_{O, I} &\subseteq \ECyl_{v_\ep}(\fr{2\tau}{3}, e^{\fr{2}{3} \| \nab v_\ep \|_0 \tau} \rho; t(I), x(I)) \label{eq:limitedWaves}
}
The containment \eqref{eq:limitedWaves} will play an important role in controlling the support of the overall stress $R_1$, which is achieved in Section \ref{sec:chooseLengthScaleOfConstruction}.

The containment \eqref{eq:limitedWaves} will also be essential for proving that only a limited number of waves $V_I$ and stress terms can be nonzero at any given point.  We summarize this basic property of the construction as a Proposition, which we prove in Section \ref{sec:boundOnInteractionTerms} after the parameters of the construction have been chosen.
\begin{prop}[Limited Interactions]\label{prop:limitedInteractions}
Let $\#(I)$ denote the number of indices $I'$ such that the support of $V_{I'}$ intersects the support of $V_I$, plus the number of stress terms $Q_{O,I'}$ whose supports intersect the support of $Q_{O, I}$.  Then $\#(I)$ is bounded by an absolute constant. 
\end{prop}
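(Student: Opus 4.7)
The plan is to reduce both types of intersection counts to a lattice counting problem on the location indices $k = (k_0, k_1, k_2, k_3) \in \Z \times \Z^3$, using the Eulerian cylinder containment \eqref{eq:limitedWaves} together with the bi-Lipschitz property of the flow provided by Lemma~\ref{lem:MLMT:basicEst4FlowFirst}. The fixed cardinality $|F| = 12$ of the direction set will then upgrade this lattice bound to a bound on the number of full indices $I' = (k', f')$.

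By \eqref{eq:limitedWaves}, both $\supp V_I$ and $\supp Q_{O,I}$ are contained in the Eulerian cylinder $\ECyl_{v_\ep}(2\tau/3,\rho_\star;t(I),x(I))$, where $\rho_\star = e^{(2/3)\|\nab v_\ep\|_{C^0}\tau}\rho$. The first step will therefore be to invoke the forthcoming choice of parameters in Section~\ref{sec:choosingParameters}, together with the standard mollification estimate $\|\nab v_\ep\|_{C^0} \lesssim \Xi e_v^{1/2}$ and the bound $\tau \lesssim \theta = \Xi^{-1} e_v^{-1/2}$, to ensure that the dimensionless quantity $\tau\|\nab v_\ep\|_{C^0}$ is bounded by an absolute constant. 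This will give $\rho_\star \leq C\rho$ and reduce the entire proposition to counting the indices $I'$ whose cylinder $\ECyl_{v_\ep}(2\tau/3,\rho_\star;t(I'),x(I'))$ intersects $\ECyl_{v_\ep}(2\tau/3,\rho_\star;t(I),x(I))$.

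I will then handle the temporal and spatial restrictions separately. Temporally, intersection of the two cylinders forces $|t(I) - t(I')| \leq 4\tau/3$, hence $|k_0 - k'_0| \leq 1$, leaving at most three values of $k'_0$ to consider. Spatially, for each admissible $k'_0$, I plan to transport the center of the $I'$-cylinder back to the reference time $t(I)$ along the flow of $v_\ep$ and apply the triangle inequality together with Lemma~\ref{lem:MLMT:basicEst4FlowFirst} to obtain
\[
\big| x(I) - \Phi'_{t(I) - t(I')}(t(I'), x(I')) \big| \leq \big( 1 + e^{\|\nab v_\ep\|_{C^0}\tau} \big)\rho_\star \leq C\rho.
\]
Since the map $x(I') \mapsto \Phi'_{t(I) - t(I')}(t(I'), x(I'))$ is bi-Lipschitz with constant $e^{|t(I) - t(I')|\|\nab v_\ep\|_{C^0}} = O(1)$ by the same lemma, the image of the spatial grid $\rho\Z^3$ (at time $t(I')$) remains a distorted grid of minimum spacing $\gtrsim \rho$, and hence only $O(1)$ of its points can lie within $C\rho$ of $x(I)$.

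The main obstacle I anticipate is exactly the a priori bound on $\tau\|\nab v_\ep\|_{C^0}$: without it, the exponential factor $e^{\|\nab v_\ep\|_{C^0}\tau}$ could be arbitrarily large and the Lipschitz constants for the flow could destroy the grid spacing, leading to an unbounded count. This is precisely why the argument is logically deferred until the parameters $\tau$ and $\rho$ are chosen in Section~\ref{sec:choosingParameters}. Once that input is secured, combining the at most three admissible values of $k'_0$, the $O(1)$ admissible spatial lattice offsets $(k'_1, k'_2, k'_3)$, and the $12$ possible directions $f' \in F$ yields the desired absolute bound on $\#(I)$.
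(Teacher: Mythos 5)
Your proposal is correct and follows essentially the same route as the paper: both reduce the intersection of supports, via the cylinder containments \eqref{eq:limitedWaves} and the bound $\tau\|\nab v_\ep\|_{C^0}\leq 1$, to counting lattice points $(k_0\tau,k_1\rho,k_2\rho,k_3\rho)$ near the flow trajectory of $(t(I),x(I))$, and then multiply by the $3$ admissible time slices and $|F|=12$ directions. The only difference is bookkeeping: the paper transports the single center $(t(I),x(I))$ forward (via the Eulerian/Lagrangian duality and Lemma~\ref{lem:cylContainmentProps}, landing the candidate centers in $\ECyl_{v_\ep}(4\tau/3,10\rho;t(I),x(I))$ whose time slices are balls of radius $10\rho$), whereas you transport each candidate center backward and invoke bi-Lipschitzness of the flow plus a packing argument — a valid but slightly more roundabout version of the same count.
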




\section{Choosing the Parameters} \label{sec:choosingParameters}
We now assume that we are given a solution $(v,p,R)$ to the Euler-Reynolds equation with frequency-energy levels below $(\Xi, e_v, e_R)$ to order $L$ in $C^0$ in the sense of Definition \ref{def:subSolDef}.  In Section \ref{sec:correctionShape}, we defined a correction of the form
\[ V_I = e^{i \la \xi_I} ( v_I + \de v_I) \]
up to the choice of several parameters in the construction.  These parameters include: 
the frequency parameter $\lambda$; the mollification parameter $\eps_{v}$ for $v_{\eps}$;
the mollification parameters $\eps_{t}, \eps_{x}$ for $R_{\eps}$ and $\tilde{e}^{1/2}$; the
time scale parameter $\tau$; and the length scale parameter $\rho$.

The purpose of this section is to specify our choices of these parameters. Moreover, we show that the support bounds \eqref{eq:goalForR1supp}, \eqref{eq:goalForVPsupp} and Proposition \ref{prop:limitedInteractions} hold under our choices, provided that \eqref{eq:limitedWaves} holds. We remark that \eqref{eq:limitedWaves} ultimately follows from our procedure of finding a compactly support solution to the symmetric divergence equation, which will be presented in Sections \ref{sec:stressEstimates}-\ref{sec:solvingSymmDiv}. 

The large frequency parameter $\la$ has the form
\ali{
\la &= B_\la N \Xi
}
where $N$ is the frequency growth parameter satisfying the conditions of Lemma \ref{lem:mainLemma}, and $B_\la$ is a large constant which will be chosen at the very end of the argument. 

\subsection{Defining the coarse scale velocity field}
To begin the construction, it is necessary to define a suitable regularization $v_\ep$ of the velocity field $v$.  We define 
\ali{
v_\ep &= \eta_{\ep_v} \ast \eta_{\ep_v} \ast v \label{eq:doubleMollifyv}
}
to be a double mollification of $v$ in the spatial variables at a length scale $\ep_v$.  Regularity in time for $v_\ep$ is established from the Euler-Reynolds equations, and having a double mollification is useful for proving the commutator estimate for $(\pr_t + v_\ep \cdot \nab) v_\ep$.  The most important requirement on the length scale $\ep_v$ is that $\ep_v^{-1}$ is smaller than $\la$, which ensures that the effective frequency of $v_\ep$ (or the cost of taking a spatial derivative) is small compared to $\la$.

Associated to the coarse scale velocity field $v_\ep$, we also define the coarse scale advective derivatives
\ali{
\Ddt := (\pr_t + v_\ep \cdot \nab) , \qquad \DDdt := (\pr_t + v_\ep \cdot \nab)^2
}

The regularization in Equation \eqref{eq:doubleMollifyv} gives rise to an error term of the form
\[ (v^j - v_\ep^j) V^l + V^j( v^l - v_\ep^l) \]
described in Equation \eqref{eq:mollifyTerm}.  The parameter $\ep_v$ is chosen in order to achieve a good estimate on the leading order part of this error term, which is given by
\ali{
Q_{M,1}^{jl} &= \sum_I e^{i \la \xi_I} [ (v^j - v_\ep^j) v_I^l + v_I^j( v^l - v_\ep^l) ]  \label{eq:leadingOrderMollifyTerm}
}
Strictly speaking, the amplitudes $v_I$ in \eqref{eq:leadingOrderMollifyTerm} depend on the choice of $v_\ep$.  However, the construction of Section \ref{sec:correctionShape}, in particular Equation \eqref{eq:formOfbI}, guarantees that the amplitudes obey an estimate
\ali{
\co{ \sum_I |v_I| } &\leq A \co{ \tilde{e}^{1/2} } \label{eq:aPrioriForAmplitudes}
}
as long as the lower bound $\tilde{e} \geq K e_R$ is satisfied on the support of $R_\ep$, and provided the phase gradients $\nab \xi_I$ remain within a certain distance of their initial values.  See Section 7 of \cite{isett}.

We construct the function $\tilde{e}^{1/2}(t,x)$ in Section \ref{sec:regularizeStressEnergy} by regularizing the function $e^{1/2}(t,x)$ given in Lemma \ref{lem:mainLemma}, so we expect to prove a bound of the type
\ali{
\co{ \tilde{e}^{1/2} } &\unlhd \co{ e^{1/2} } \label{ineq:weWillRegularizeE}
}
Here we recall the notation that the symbol $\unlhd$ denotes an inequality which has not been proven, but will be established later in the construction.  (In particular, there is no implied constant.)

Assuming \eqref{ineq:weWillRegularizeE}, the bound \eqref{eq:aPrioriForAmplitudes} implies an estimate
\ali{
\co{ \sum_I |v_I| } &\leq A M e_R^{1/2} \label{ineq:secondAmplitudeBound}
}
where $M$ is the constant in Lemma \ref{lem:mainLemma}.  Inequality \eqref{ineq:secondAmplitudeBound} implies that
\ali{
\co{Q_{M,1}^{jl}} &\leq A M e_R^{1/2} \co{ v - v_\ep }
}
We now choose the parameter $\ep_v$ in \eqref{eq:doubleMollifyv} to ensure that $Q_{M,1}$ obeys a bound which is consistent with a scheme aimed at the regularity $1/3$ (see Section 13 of \cite{isett})
\ali{
\co{Q_{M,1}^{jl}} &\unlhd \fr{e_v^{1/2} e_R^{1/2}}{200 N} \label{ineq:ourGoalForFirstMollTerm}
}
Using well-known estimates for mollifications (see Sections 14 and 15 of \cite{isett}), one has that
\ali{
\co{ v - v_\ep } &\leq A \ep_v^L \co{\nab^L v} \label{eq:standardEstimateMoll}
}
provided that the mollifying kernel $\eta_{\ep_v}$ satisfies vanishing moment conditions $\int h^a \eta_{\ep_v}(h) dh = 0$ for all multi-indices $1 \leq |a| < L$.

We achieve the estimate \eqref{ineq:ourGoalForFirstMollTerm} by taking $\ep_v$ of the form
\ali{
\ep_v &= a N^{-1/L} \Xi^{-1} \label{eq:theChoiceOfEpx}
}
where $a$ is a small constant depending on the $A$ and $M$ in inequalities \eqref{eq:aPrioriForAmplitudes}-\eqref{eq:standardEstimateMoll}.  Observe that $\ep_v^{-1} = N^{1/L} \Xi$ is smaller than $\la \approx N \Xi$ since we assume control over at least $L \geq 2$ derivatives in Lemma \ref{lem:mainLemma}.  We also note that the choice of $\ep_v$ here coincides up to a constant with the choice of parameter in Section 15 of \cite{isett}, which will allow us to quote the estimates from \cite{isett}.

\subsection{Defining the regularized stress and energy increment} \label{sec:regularizeStressEnergy}

In addition to defining the coarse scale velocity field $v_\ep$, we also require suitable regularizations of the energy increment $e(t,x)$ and the stress $R^{jl}(t,x)$.  These regularizations $\tilde{e}(t,x)$ and $R_\ep^{jl}(t,x)$ are used to define the amplitudes in Equations \eqref{eq:formOfbI} and \eqref{eq:gaIis} of Section \ref{sec:correctionShape}.

Our definition of $R_\ep$ follows the construction in Section 18 of \cite{isett}.  We first regularize $R$ in space using a double convolution $R_{\ep_x} = \eta_{\ep_x} \ast \eta_{\ep_x} \ast R$, and then regularize in time by averaging along the trajectories of the vector field $(\pr_t + v_\ep \cdot \nab)$ to form
\ali{
R_\ep^{jl}(t,x) &:= \int R_{\ep_x}^{jl}(\Phi_s(t,x)) \eta_{\ep_t}(s) ds \label{eq:defineRep}
}
The map $\Phi_s$ appearing in \eqref{eq:defineRep}, which we call the {\bf coarse scale flow}, is the one-parameter family of diffeomorphisms of $\R \times \R^3$ generated by the space-time vector field $(\pr_t + v_\ep \cdot \nab)$.  Namely, $\Phi_s(t,x) : \R \times \R \times \R^3 \to \R \times \R^3$ is the unique solution to the initial value problem
\ALI{
\fr{d}{ds} \Phi_s^0(t,x) = 1, \quad \fr{d}{ds} \Phi_s^i(t,x) = v_\ep^i(\Phi_s(t,x)), \quad  \Phi_0(t,x) &= (t,x)
}
The motivation for averaging along the coarse scale flow comes from the need to estimate the first advective derivative $\Ddt Q_T$ of the transport term $Q_T$ obtained from solving equation \eqref{eq:transportTermI}.  In particular, estimating $\Ddt Q_T$ requires estimates on the second advective derivatives of the amplitudes $v_I$, and therefore requires estimates on $\DDdt R_\ep$ and $\DDdt \tilde{e}$ by virtue of the construction of Section \ref{sec:correctionShape}.  The key fact which allows for estimates on the second advective derivative is the fact that $\Ddt$ commutes with pullback along the flow, and hence commutes with the averaging in \eqref{eq:defineRep}
\ali{
(\pr_t + v_\ep^a \pr_a) R_\ep^{jl}(t,x) &= \int \Ddtof{R_{\ep_x}^{jl}}(\Phi_s(t,x)) \eta_{\ep_t}(s) ds \notag \\
&= \int \fr{d}{ds} R_{\ep_x}^{jl}(\Phi_s(t,x)) \eta_{\ep_t}(s) ds \label{eq:Ddtisdds}
}
Integrating by parts in \eqref{eq:Ddtisdds} allows one to estimate $\DDdt R_\ep$, whereas estimating spatial derivatives requires preliminary estimates on the coarse scale flow $\Phi_s$.  These estimates are established in Section 18 of \cite{isett}.  There, the double-mollification in space plays a role in the commutator estimates for spatial derivatives $\nab^k \Ddt R_\ep$.  We will also give an alternative proof of the identity \eqref{eq:Ddtisdds} in Section \ref{sec:mainLemImpliesMainThm} below.

The parameters $\ep_x$ and $\ep_t$ have the form
\ali{
\ep_x = c N^{-1/L} \Xi^{-1}, \quad \ep_t = c N^{-1} \Xi^{-1} e_R^{-1/2}  \label{eq:RmollParamChoice}
}
where $c$ is a small constant which is chosen to ensure that the error term generated by the mollification satisfies the bound
\ali{
\co{ R - R_\ep } &\leq \fr{e_v^{1/2} e_R^{1/2}}{100 N} \label{eq:mollTermStressBound}
}
The important point about the parameter $\ep_t$ is that $\ep_t$ is smaller than the natural time scale $\Xi^{-1} e_v^{-1/2}$ within which the flow of $v_\ep$ remains under control.  This upper bound follows from the condition $N \geq \left(\fr{e_v^{1/2}}{e_R^{1/2}}\right)$.

As for the energy increment $e(t,x)$ we define the regularized energy increment $\tilde{e}$ by regularizing the square root of $e$ in essentially the same way.  Namely, we define
\ali{
\tilde{e}^{1/2}(t,x) &= \int (e^{1/2})_{\ep_x}(\Phi_s(t,x)) \eta_{\ep_t}(s) ds \label{eq:honestTildeE}
}
where $(e^{1/2})_{\ep_x} = \eta_{\ep_x} \ast \eta_{\ep_x} \ast e^{1/2}$ is a spatial mollification of $e^{1/2}$.  With this definition, the inequality \eqref{ineq:weWillRegularizeE} follows immediately.

Note that bounds we assume for $e^{1/2}(t,x)$ in \eqref{ineq:goodEnergy} are identical to those assumed for $R^{jl}$ in Definition \ref{def:subSolDef} up to a factor of $M e_R^{-1/2}$.  Therefore, all of the estimates for $\tilde{e}$ follow with the exact same proofs as the estimates for $R_\ep$.  In particular, we can again choose parameters $\ep_x$ and $\ep_t$ of the form \eqref{eq:RmollParamChoice} depending on the constant $M$ in \eqref{ineq:goodEnergy} in such a way that the estimate
\ali{
\co{e^{1/2} - \tilde{e}^{1/2} } &\leq \fr{e_v^{1/2}}{100 N} \label{eq:enIntError}
}
is satisfied.  

To ensure that $\tilde{e}$ is suitable for the construction, we now must check that the lower bound 
\ali{
\tilde{e}(t_0, x_0) &\geq K e_R \label{ineq:lowerBoundWeNeed}
}
is satisfied for $(t_0, x_0)$ on the support of $R_\ep$, where $\tilde{e} = (\tilde{e}^{1/2})^2$.  Inequality \eqref{ineq:lowerBoundWeNeed} is verified in Section \ref{sec:checkLowerBound} below, where additional constraints are imposed on the kernels $\eta_{\ep_x}$ and $\eta_{\ep_t}$.

\subsection{Checking the lower bound on the energy increment}\label{sec:checkLowerBound}

Here we verify that the square root of the regularized energy increment, which takes the form
\ali{
\tilde{e}^{1/2}(t,x) &= \int e^{1/2}(\Phi_s(t,x) + (0,h)) \eta_{\ep_x}(h) \eta_{\ep_t}(s) dh ds, \label{eq:explicitAverageTildee}
}
satisfies the lower bound $\tilde{e}^{1/2}(t,x) \geq K^{1/2} e_R^{1/2} $ for $(t,x)$ in the support of $R_\ep$.  Here we abuse notation by writing $\eta_{\ep_x}(h)$ to abbreviate the expression $\eta_{\ep_x} \ast \eta_{\ep_x}(h)$ coming from \eqref{eq:honestTildeE}.

  What we are given in the Main Lemma is that the function $e^{1/2}(t,x)$ being averaged in \eqref{eq:explicitAverageTildee} already satisfies the lower bound $e^{1/2} \geq K^{1/2} e_R^{1/2}$ on any $v$-adapted Eulerian cylinder $C_v(\Xi^{-1} e_v^{-1/2}, \Xi^{-1}; t_0, x_0)$ centered at a point $(t_0,x_0)$ in the support of $R$.

To ensure that the function $\tilde{e}^{1/2}$ inherits the necessary lower bound from $e^{1/2}$, we impose an additional assumption that both kernels in \eqref{eq:explicitAverageTildee} are non-negative
\ali{
 \eta_{\ep_t}, \eta_{\ep_x} \geq 0  \label{eqn:posConditKernels}
}
This assumption prohibits us from imposing the vanishing moment condition $\int h^a \eta_{\ep_x}(h) dh = 0$ for moments of second order $|a| = 2$, as it will be necessary for $\int |h|^2 \eta_{\ep_x}(h) dh > 0$.  As a consequence, we are forced to take $L = 2$ for our choice of $\ep_x$ in the choice of parameters \eqref{eq:RmollParamChoice} for $\tilde{e}^{1/2}$.  This choice of parameter results in slightly worse bounds for derivatives of $\tilde{e}^{1/2}$ compared to what would be achieved by a larger value of $L$, but these slightly weaker estimates do not affect the proof.  The key properties we maintain are the fact that $\ep_x^{-1} \approx N^{1/2} \Xi$ is smaller than the frequency $\la \approx N \Xi$ by a factor of $N^{1/2}$, and the factors of $\ep_x^{-1}$ do not appear in the estimates until more than two derivatives of $\tilde{e}^{1/2}$ are taken.

Assuming the conditions \eqref{eqn:posConditKernels}, we can now check that the lower bound $\tilde{e}^{1/2}(t_0, x_0) \geq K^{1/2} e_R^{1/2}$ holds for $(t_0, x_0)$ in the support of $R_\ep$ provided the constants in $\ep_t$ and $\ep_x$ are chosen appropriately small.  First we make a simple observation that the support of $R_\ep$ is contained in a Lagrangian cylindrical neighborhood of the support of $R$
\ali{
\supp R_\ep \subseteq \hat{\Ga}_{v_\ep}( \ep_t, \ep_x; \supp R ) \label{eq:supportRep}
}
The containment \eqref{eq:supportRep} follows immediately from the Definition \eqref{eq:defineRep} of $R_\ep$ and the Definition \ref{def:vLagCylinder} of a Lagrangian cylinder.

From the Definition \eqref{eq:explicitAverageTildee} and the condition that $\eta_{\ep_t}$ and $\eta_{\ep_x}$ are non-negative with $\int \eta_{\ep_t}(s) ds = \int \eta_{\ep_x}(h) dh = 1$, we know that the lower bound \eqref{ineq:lowerBoundWeNeed} is satisfied at a point $(t_1, x_1)$ provided that $e^{1/2}(t,x) \geq K^{1/2} e_R^{1/2}$ on the Eulerian cylinder $(t,x) \in \ECyl_{v_\ep}(\ep_t, \ep_x; t_1, x_1)$.   Combining this observation with \eqref{eq:supportRep} and the assumed lower bound \eqref{eq:lowBoundEoftx} on $e^{1/2}$, we obtain the desired lower bound \eqref{ineq:lowerBoundWeNeed} as a corollary of the following Lemma.
\begin{lem}\label{lem:cylindersIn}  If the constant $c$ in \eqref{eq:RmollParamChoice} is chosen sufficiently small, then
\ali{
\hat{C}_{v_\ep}( \ep_t, \ep_x; \hat{\Ga}_{v_\ep}(\ep_t, \ep_x; \supp R ) ) &\subseteq \hat{C}_v(\Xi^{-1} e_v^{-1/2}, \Xi^{-1}; \supp R)
}
\end{lem}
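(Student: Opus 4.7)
The plan is to prove the containment by chaining two ingredients already established in Section \ref{sec:prelimCyls}. First I would apply \eqref{eq:ECylLCylinECyl} with the vector field $v_\ep$ in place of $v$, and with $\tau_1 = \tau_2 = \ep_t$, $\rho_1 = \rho_2 = \ep_x$, taking unions over base points in $\supp R$, to collapse the nested cylinders on the left-hand side into
\[
\ECyl_{v_\ep}\bb(\, 2\ep_t,\ \ep_x \bb(1 + e^{2\ep_t \co{\nab v_\ep}}\bb)\,;\ \supp R\bb).
\]
Then I would apply the Cylinder Comparison Lemma (Lemma~\ref{lem:cylCompare}) with $u = v$ to pass from a $v_\ep$-adapted Eulerian cylinder to a $v$-adapted one, at the cost of enlarging the radius by $2\ep_t \co{v - v_\ep}\, e^{2\ep_t \co{\nab v_\ep}}$.

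What remains is to verify that, for $c$ in \eqref{eq:RmollParamChoice} small enough, the resulting duration is bounded by $\Xi^{-1} e_v^{-1/2}$ and the resulting radius by $\Xi^{-1}$. For the duration, the choice $\ep_t = c N^{-1}\Xi^{-1} e_R^{-1/2}$ together with the hypothesis $N \geq (e_v/e_R)^{3/2}$ of the Main Lemma gives $N^{-1} e_R^{-1/2} \leq N^{-2/3} e_v^{-1/2} \leq e_v^{-1/2}$ (using also $N \geq 1$), so $2\ep_t \leq 2c\,\Xi^{-1} e_v^{-1/2}$. The same computation combined with \eqref{bound:nabkv} shows $2\ep_t \co{\nab v_\ep} \leq 2c$, so the exponential factors are bounded by $e^{2c}$. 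The first part of the radius, $\ep_x(1 + e^{2c})$, is already a small multiple of $\Xi^{-1}$ in view of $\ep_x = c N^{-1/L}\Xi^{-1}$. For the comparison correction I would use the standard mollification estimate \eqref{eq:standardEstimateMoll} together with the choice \eqref{eq:theChoiceOfEpx} of $\ep_v$ to obtain $\co{v - v_\ep} \lesssim N^{-1} e_v^{1/2}$; combining with the bound for $\ep_t$ and one more application of $N \geq (e_v/e_R)^{3/2}$ yields
\[
2\ep_t \co{v - v_\ep}\, e^{2\ep_t\co{\nab v_\ep}} \lesssim c \cdot N^{-2}(e_v/e_R)^{1/2}\, \Xi^{-1} \lesssim c\, \Xi^{-1}.
\]
Summing the two radius contributions and taking $c$ small enough to force the total below $\Xi^{-1}$ completes the proof.

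I do not anticipate any real obstacle: this is a bookkeeping lemma whose only content is that the hypothesis $N \geq (e_v/e_R)^{3/2}$ from the Main Lemma is exactly enough to make every exponential factor $O(1)$ and every correction proportional to the small constant $c$, so that a single choice of small $c$ handles all terms simultaneously. The only point deserving slight care is that the Comparison Lemma must be applied after, not before, collapsing the nested cylinder, so that the outer radius to which the correction $2\ep_t \co{v-v_\ep}$ is added is already of size $\Xi^{-1}$ rather than of the much smaller size $\ep_x$.
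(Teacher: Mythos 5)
Your proof is correct and follows essentially the same route as the paper: collapse the nested cylinders via \eqref{eq:ECylLCylinECyl}, pass to $v$-adapted cylinders via the Cylinder Comparison Lemma \ref{lem:cylCompare}, and then use the parameter choices \eqref{eq:RmollParamChoice} together with $N \geq (e_v/e_R)^{3/2}$ to make the duration and radius small multiples of $\Xi^{-1}e_v^{-1/2}$ and $\Xi^{-1}$, finally taking $c$ small. The only (immaterial) difference is that you invoke the sharper mollification bound $\co{v - v_\ep} \lesssim N^{-1} e_v^{1/2}$, whereas the paper simply uses $\co{v - v_\ep} \leq e_v^{1/2}$ together with $e_v^{1/2}/(e_R^{1/2}N) \leq 1$.
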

\begin{proof}
According to Lemma \ref{lem:cylContainmentProps}, we have
\ALI{
\hat{C}_{v_\ep}( \ep_t, \ep_x; \hat{\Ga}_{v_\ep}(\ep_t, \ep_x; \supp R ) ) &\subseteq \ECyl_{v_\ep} ( 2 \ep_t, \ep_x ( 1 + e^{ \| \nab v_\ep \|_0 \ep_t}); \supp R ) \\
&\subseteq \ECyl_{v_\ep} ( 2 \ep_t, 3 \ep_x; \supp R )
}
for the appropriate choice of $c$ in \eqref{eq:RmollParamChoice}.  According to the Cylinder Comparison Lemma \ref{lem:cylCompare}, we have
\ALI{
\ECyl_{v_\ep} ( 2 \ep_t, 3 \ep_x; \supp R ) &\subseteq \ECyl_v( 2 \ep_t, 3 \ep_x + 2 \co{ v - v_\ep } \ep_t e^{ \| \nab v_\ep \|_0 \ep_t }; \supp R )
} 
Substituting the choice \eqref{eq:RmollParamChoice} and applying the estimates $\co{ v - v_\ep } \leq e_v^{1/2}$ and $\left( \fr{e_v^{1/2}}{e_R^{1/2} N} \right) \leq 1$, we have
\ALI{
\ECyl_{v_\ep} ( 2 \ep_t, 3 \ep_x; \supp R ) & \subseteq \ECyl_v( 2 c \Xi^{-1} e_v^{-1/2}, 6 c \Xi^{-1}; \supp R)
}
This establishes Lemma \ref{lem:cylindersIn}, and consequently the lower bound \eqref{ineq:lowerBoundWeNeed}, when $c$ is chosen to be a sufficiently small constant.
\end{proof}

\subsection{Choosing the time scale of the construction} \label{sec:timeCutoffs}

Having chosen the parameters for mollifying the velocity, energy increment and stress, we have now completely specified the building blocks in the construction up to the choice of three parameters.  The three parameters which remain are: the time scale $\tau$, which determines the lifespan of the time cutoffs $\eta\left( \fr{t - t(I)}{\tau}\right)$ of Equation \eqref{eq:timeCutoffDef} which enter into the amplitudes, the space scale $\rho$, which determines the size of the support in space for the initial data $\bar{\psi}_k(t(I), x)$ of the spatial cutoffs $\psi_k(t,x)$ in equation \eqref{eq:initialCondSpaceCutoff}, and the constant $B_\la$ in the definition of the frequency parameter $\la = B_\la N \Xi$.  Among these three, the first parameter we specify is the lifespan parameter $\tau$.

The parameter $\tau$ takes the form
\ali{
\tau &= b \Xi^{-1} e_v^{-1/2} \label{eq:tauForm}
}
where $b \leq 1$ is a small, dimensionless parameter which we will now specify.

The choice of the lifespan parameter $\tau$ is restricted by several aspects of the construction.  First of all, $\tau$ cannot be larger than a multiple of $\Xi e_v^{1/2}$ as the elements of the construction which are transported by $v_\ep$ cannot be controlled with good bounds for times larger than $\co{ \nab v_\ep }^{-1}$.  Secondly, it is necessary for the gradients of the phase functions to remain within a certain, finite distance $c_0$ of their initial values in order to ensure the construction is well-defined
\ali{
\co{ \nab \xi_I - \nab \hat{\xi}_I } &\unlhd c_0 \label{eq:cantLetPhaseGradsMove}
}
When the requirement \eqref{eq:cantLetPhaseGradsMove} is satisfied for a sufficiently small constant $c_0$, we may guarantee that the phase functions in the construction remain nonstationary, which is necessary for gaining cancellations while solving the equation $\pr_j Q^{jl} = e^{i \la \xi_I} u_I^l$ with oscillatory data.  Namely, we have the following Proposition
\begin{prop}[Nonstationary Phase]\label{prop:statPhaseProp}  There exists $b_0 >0$ and an absolute constant $A > 0$ such that for $\tau$ of the form \eqref{eq:tauForm} with $b < b_0$ we have
\ali{
\co{ ~|\nab \xi_I|^{-1} } + \co{~|\nab(\xi_I + \xi_J)|^{-1} } &\leq A \label{eq:nonstatPhaseProp}
}
for all indices $I$ and all pairs of indices $I, J$ with $J \neq \bar{I}$ whose supports intersect.
\end{prop}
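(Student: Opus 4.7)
The plan is to track how far $\nabla \xi_I$ drifts from its initial data $\nabla \hat \xi_I$ along the flow of $v_\ep$, and then reduce the bound on $|\nabla(\xi_I + \xi_J)|^{-1}$ to the separation statement of Proposition~\ref{prop:bunchOfRotations} applied at the initial time. First, differentiating $(\pr_t + v_\ep^a \pr_a)\xi_I = 0$ in space and using $\|\nabla v_\ep\|_{C^0} \leq \|\nabla v\|_{C^0} \leq \Xi e_v^{1/2}$ (inherited from \eqref{bound:nabkv} and the fact that convolution does not enlarge $C^0$ norms) gives the vector ODE
\[
\fr{d}{ds} (\nabla \xi_I)(\Phi_s) = -(\nabla v_\ep)^T (\nabla \xi_I)(\Phi_s)
\]
along each trajectory of $\pr_t + v_\ep \cdot \nab$. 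A standard Gronwall argument, combined with $|\nabla \hat \xi_I| = 1$ and the fact that the time cutoff $\eta((t-t(I))/\tau)$ confines our attention to $|s| \leq \tau = b \Xi^{-1} e_v^{-1/2}$, yields
\[
\bigl| \nabla \xi_I(t,x) - \nabla \hat\xi_I \bigr| \leq e^{b} - 1 \leq 2 b \qquad \text{on } \supp V_I,
\]
provided $b \leq b_0 \leq 1$.

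With this deviation estimate in hand, the first half of \eqref{eq:nonstatPhaseProp} is immediate: choosing $b_0$ so that $2 b_0 \leq 1/2$ gives $|\nabla \xi_I| \geq 1/2$, hence $\||\nabla \xi_I|^{-1}\|_{C^0} \leq 2$. For the second half, suppose $\supp V_I \cap \supp V_J \neq \emptyset$ with $J \neq \bar I$. Since each wave is supported in a Lagrangian cylinder of radius $\rho$ around its base point $x(I) = (k_1\rho, k_2\rho, k_3 \rho)$ (see \eqref{eq:controlOfVIsupp}), the location indices $k=k(I)$, $k'=k(J)$ are adjacent in $\Z^4$; in particular $[k], [k'] \in (\Z/2\Z)^4$ are well-defined, and the hypothesis $J \neq \bar I$ rules out exactly the case $f' = -f$ and $[k] = [k']$. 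Proposition~\ref{prop:bunchOfRotations} therefore applies at the initial time to give
\[
|\nabla \hat \xi_I + \nabla \hat \xi_J| = |f \circ O_{[k]} + f' \circ O_{[k']}| \geq c,
\]
with $c > 0$ an absolute constant. Combining this with the deviation bound for both $\xi_I$ and $\xi_J$ yields
\[
|\nabla(\xi_I + \xi_J)| \geq c - 4 b \geq c/2
\]
once $b_0$ is shrunk so that $4 b_0 \leq c/2$; hence $\||\nabla(\xi_I+\xi_J)|^{-1}\|_{C^0} \leq 2/c$, and \eqref{eq:nonstatPhaseProp} holds with $A = 2 + 2/c$.

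The main obstacle is not the analytic Gronwall step, which is elementary, but rather the bookkeeping needed to justify that the two phase functions $\xi_I$ and $\xi_J$ are simultaneously defined and controlled on the intersection of their supports, and that the initial location indices to which Proposition~\ref{prop:bunchOfRotations} is applied really are neighbors. This is handled by the fact that all estimates take place on Lagrangian cylinders $\hat\Ga_{v_\ep}(\tfrac{2\tau}{3}, \rho;\, t(I), x(I))$ on which the flow $\Phi_s$ is a smooth diffeomorphism for $|s| \leq \tau$, so the bound $\tau \co{\nab v_\ep} \leq b$ gives uniform control of all the transported quantities, and in particular lets us pull back the comparison to $s=0$ where the initial data is explicit.
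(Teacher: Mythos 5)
Your route is the same as the paper's: bound the drift of $\nabla \xi_I$ from its initial value over the lifespan $\tau = b\,\Xi^{-1} e_v^{-1/2}$ by integrating the transport equation (this is exactly the estimate \eqref{eq:perturbPhase}), invoke Proposition~\ref{prop:bunchOfRotations} for the initial data $\nabla\hat\xi_I$, $\nabla(\hat\xi_I+\hat\xi_J)$, and then shrink $b_0$. One small inaccuracy: $\|\nabla v_\ep\|_{C^0} \leq \|\nabla v\|_{C^0}$ is not literally true, since the mollifying kernels are required to have vanishing higher moments and hence are not non-negative; the correct statement is $\co{\nabla v_\ep} \leq C\,\Xi e_v^{1/2}$ (Proposition~\ref{prop:coarseScaleVEstimates}), and the extra constant is harmless because it is absorbed into the choice of $b_0$.

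The step that needs genuine repair is your claim that intersecting supports force $k(I)$ and $k(J)$ to be adjacent in $\Z^4$. For waves with different time indices this can fail: only $\nabla v_\ep$ is controlled, not $v_\ep$ itself, so over a time of order $\tau$ the Lagrangian cylinders can drift by $\co{v_\ep}\tau \gg \rho$, and a wave born at time $k_0\tau$ may overlap a wave born at $(k_0+1)\tau$ whose spatial base point lies many lattice cells away. This is precisely why the interaction bound in Section~\ref{sec:boundOnInteractionTerms} is phrased in terms of the Eulerian cylinder $\ECyl_{v_\ep}(\tfrac{4\tau}{3}, 10\rho; t(I), x(I))$, which follows the flow, rather than a Euclidean neighborhood of $x(I)$. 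The conclusion you actually need — that intersecting supports together with $J \neq \bar I$ exclude the bad case $f' = -f$, $[k]=[k']$ of Proposition~\ref{prop:bunchOfRotations} — is still true, but by a different case analysis: if $[k]=[k']$ then $k_0 \equiv k_0' \ (\mathrm{mod}\ 2)$, and since the time cutoffs of indices with $|k_0 - k_0'| \geq 2$ have disjoint supports, intersection forces $k_0 = k_0'$; for equal time indices the spatial cutoffs $\psi_k$, $\psi_{k'}$ are transported from the \emph{same} initial time by the \emph{same} field $v_\ep$, so their initial disjointness for spatial offsets $\geq 2$ is preserved, forcing $k = k'$, and then $f' = -f$ would mean $J = \bar I$. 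With that substitution your argument closes and coincides with the paper's proof.
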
 
By Proposition \ref{prop:bunchOfRotations}, the construction is arranged so that \eqref{eq:nonstatPhaseProp} is satisfied for by the initial data for the phase gradients $\nab \hat{\xi}_I$ and $\nab(\hat{\xi}_I + \hat{\xi}_J)$.
The bound \eqref{eq:nonstatPhaseProp} remains satisfied (with a larger constant) provided \eqref{eq:cantLetPhaseGradsMove} holds.  It is also necessary to impose \eqref{eq:cantLetPhaseGradsMove} with a possibly smaller constant $c_0$ to ensure that equation \eqref{eq:renormalizedStress} admits solutions in $\mathring{v}_I \in \langle \nab \xi_I \rangle^\perp$ with uniform bounds.  See Lemma 7.5 and Proposition 7.2 of \cite{isett}.

Assuming $\tau \leq \Xi^{-1} e_v^{-1/2}$, the estimate we obtain from the transport equation for $\nab \xi_I$ is
\ali{
\co{ \nab \xi_I - \nab \hat{\xi}_I } &\leq A \Xi e_v^{1/2} \tau = A b \label{eq:perturbPhase}
}
Therefore, all the aforementioned requirements on the phase gradients $\nab \xi_I$ of the construction can be guaranteed by choosing $\tau$ of the form \eqref{eq:tauForm}, where $b \leq b_0$ is an appropriately small constant such that the desired bound \eqref{eq:cantLetPhaseGradsMove} holds.

Choosing $b = b_0$ to be a small constant (or something close) would in principle be necessary to obtain the conjectured $1/3$ regularity of solutions for the type of convex integration scheme we consider.  However, the smallness of the parameter $\tau$ plays a crucial role in controlling the High Frequency Interference Terms, and for this reason we are forced to choose $b$ much smaller than a constant, ultimately leading to solutions with lesser regularity $1/5$.  This obstruction to higher regularity was studied in \cite{isett}, where it was observed that the Transport Term of Equation \eqref{eq:transportTermI}, which obeys the bound (see Section 19 of \cite{isett})
\ali{
 \co{ Q_T } &\leq C \la^{-1} \tau^{-1} e_R^{1/2} + \tx{ Lower order terms }, \\
&\leq C b^{-1} \fr{e_v^{1/2} e_R^{1/2}}{B_\la N } + \tx{ Lower order terms } \label{eq:boundForTransportMain}
}
can only be guaranteed to have the size $\fr{e_v^{1/2} e_R^{1/2}}{N}$ desired for the $1/3$ regularity if the $b$ chosen in \eqref{eq:tauForm} is taken to be a constant.  On the other hand, the High Frequency Interference Terms, which obey the bound
\ali{
\co{ Q_H } &\leq C ~ b e_R + \tx{ Lower order terms } \label{eq:boundForHighMain}
}
require $b$ to be significantly smaller than a constant in order for an improvement in the error to be observed.  The estimate \eqref{eq:boundForHighMain} arises from Equation \eqref{eq:eqnForHighHigh}, which shows that $Q_H$ will only be small provided the terms $\co{ |\nab \xi_I| - 1}$ are small.  Optimizing between \eqref{eq:boundForTransportMain} and \eqref{eq:boundForHighMain} leads to the choice
\ali{
 b &= b_0 B_\la^{-1/2} \left( \fr{e_v^{1/2}}{e_R^{1/2} N} \right)^{1/2}  \label{eq:choiceOfb}
}
Now the only parameters which remain to be chosen are the length scale $\rho$ and the large parameter $B_\la$.

\subsection{Choosing the length scale and controlling the support of $R_1$} \label{sec:chooseLengthScaleOfConstruction}

A new feature of our construction is the presence of a small length scale parameter $\rho$ which determines the size of the region on which the spatial cutoffs $\psi_k(t,x)$ are supported.  The purpose of these sharp cutoffs is to control the supports of the corrections $V^{l}$, $P$ and the new stress $R_1^{jl}$ obtained at the end of each stage of the iteration, which are required to stay within a neighborhood of the support of the energy increment $e^{1/2}(t,x)$ according to Lemma \ref{lem:mainLemma}, i.e.,
\ali{
\supp V & \subseteq \ECyl_{v}(\Xi^{-1} e_{v}^{-1/2}, \Xi^{-1}; \supp e), \label{eq:goalForSuppV} \\
\supp P & \subseteq \ECyl_{v}(\Xi^{-1} e_{v}^{-1/2}, \Xi^{-1}; \supp e), \label{eq:goalForSuppP} \\
 \supp R_1 &\subseteq  \ECyl_{v}( \Xi^{-1} e_v^{-1/2}, \Xi^{-1}; \supp e). \label{eq:goalForSuppR1}
}

We will take the parameter $\rho$ to have the form
\ali{
\rho &= c_\rho \Xi^{-1}  \label{eq:rhoDef}
}
where $c_\rho$ is a small constant associated to $\rho$ which we choose here so that the containments \eqref{eq:goalForSuppV}-\eqref{eq:goalForSuppR1} can be guaranteed. Note that these containments are identical to \eqref{eq:goalForR1supp} and \eqref{eq:goalForVPsupp} in the Main Lemma.

\begin{rem}\label{rem:spaceScaleRemark}
Before we proceed to choose $\rho$, it is important to point out that length scales significantly smaller than \eqref{eq:rhoDef} would be forbidden for a construction aimed at proving the conjectured $1/3$ regularity.  Namely, the presence of sharp space cutoffs at scale $\rho$ gives rise to a term of size
\ali{
\co{ Q_S } &\leq \rho^{-1} (N \Xi)^{-1} e_R + \ldots \label{eq:affectOnStress}
}
within the stress term $Q_S$ defined in \eqref{eq:stressTermII}.  That is, $Q_S$ is schematically of size \[ |Q_S| \sim \sum_I \fr{|\nab v_I| \cdot |v_I|}{\la}, \] and terms of size \eqref{eq:affectOnStress} appear when the derivative hits the spatial cutoff\footnote{The same estimate also arises from the term \eqref{eq:eqnForSmallerHighHigh}.}.  Ideally, the bound \eqref{eq:affectOnStress} should be of size $\fr{e_v^{1/2} e_R^{1/2}}{N}$ to obtain $1/3$-H\"{o}lder solutions (see Section 13 of \cite{isett}), and this requirement gives restrictions on the use of length scales smaller than \eqref{eq:rhoDef}. \qedsymbol
\end{rem}

We now proceed to estimate the support of $R_1$ in terms of the parameter $\rho$.  As discussed in Section \ref{sec:controlStressSupport}, the term composing $R_1$ with the largest support is the term $Q_O^{jl} = Q_T^{jl} + Q_H^{jl} + Q_{H'}^{jl}$, which is obtained as a solution to the elliptic equation $\pr_j Q_O^{jl} = U^l$.  According to the containment \eqref{eq:limitedWaves}, the term $Q_O^{jl} = \sum_I Q_{O, I}^{jl}$ is obtained as a sum of localized pieces, with
\ali{
 \supp Q_{O, I} &\subseteq \ECyl_{v_\ep}(\tau, e^{\| \nab v_\ep \|_0 \tau} \rho; t(I), x(I)) \label{eq:limitedWaves2}
}
The term $Q_{O, I}$ is nonzero only when the wave $V_I$ is nonzero, so we now study the conditions under which $V_I$ is nonzero.

By construction, the support of each wave $V_I$ is contained in the support of its spatial cutoff $\psi_k$ and its corresponding time cutoff $\eta_{k_0}(t)$, which together are supported on some Lagrangian cylinder 
\ali{
 \supp V_I &\subseteq \LCyl_{v_\ep}(\fr{2\tau}{3}, \rho; t(I), x(I)) \label{sub:firstBoundSuppVI}
} 
A wave $V_I$ can only be nonzero if the cylinder supporting $V_I$ intersects the support of $\tilde{e}^{1/2}$, implying that the terms $V_I$ and $Q_{O,I}$ are nontrivial only when
\[ (t(I), x(I)) \in \ECyl_{v_\ep}(\tau, \rho; \supp \tilde{e} ) \]
by the duality \eqref{iff:EuLagDuality} between Eulerian and Lagrangian cylinders.
Thus from \eqref{eq:limitedWaves2}, we have 
\ali{
\supp Q_{O,I} &\subseteq \ECyl_{v_\ep}(\tau, A \rho; \ECyl_{v_\ep}(\tau, \rho; \supp \tilde{e} ) )
}
Here the constant $A$ is an absolute constant which changes from line to line, and we have used the fact that $e^{\| \nab v\|_0 \tau} \leq e^1 \leq A$ is bounded. By Lemma \ref{lem:cylContainmentProps}, the right-hand side is bounded by
\ali{
\supp Q_{O,I} &\subseteq \ECyl_{v_\ep}(2 \tau, A \rho; \supp \tilde{e} ) 
}

From the definition of $\tilde{e}$, we have 
\[ \supp \tilde{e} \subseteq \LCyl_{v_\ep}(\ep_t, \ep_x; \supp e), \] 
and it follows from Lemma \ref{lem:cylContainmentProps} that
\ali{
\supp Q_{O,I} &\subseteq \ECyl_{v_\ep}( 2 \tau + \ep_t, A \rho + A \ep_x; \supp e)
}
where $A$ is an absolute constant coming from the bound $\| \nab v\|_0 \ep_t \leq \Xi e_v^{1/2} \ep_t \leq 1$.  From the cylinder comparison Lemma \ref{lem:cylCompare}, we obtain
\ali{
\supp Q_{O,I} &\subseteq \ECyl_{v}( 2 \tau + \ep_t, A \rho + A \ep_x + A \co{ v - v_\ep } (\tau + \ep_t); \supp e) \label{eq:bigger:cyl:supp}
}
Using the estimate $\co{v - v_\ep} \leq e_v^{1/2}$ guaranteed in line \eqref{eq:theChoiceOfEpx}, we can therefore guarantee the bound
\ali{
\supp Q_{O,I} &\subseteq \ECyl_{v}( \Xi^{-1} e_v^{-1/2}, \Xi^{-1}; \supp e)
}
after possibly choosing smaller constants $c$, $c_\rho$ and $b_0$ in the definitions 
\eqref{eq:RmollParamChoice}, \eqref{eq:rhoDef}, \eqref{eq:tauForm} and \eqref{eq:choiceOfb} for the parameters $\ep_x$, $\ep_t$, $\rho$ and $\tau$.  We also see that the sum $Q_O = \sum_I Q_{O,I}$ has the same bound on its support from $\supp Q_O \subseteq \bigcup_I Q_{O, I}$.  Finally, it is clear that the other terms $Q_M$ and $Q_S$ contributing to $R_1$ in \eqref{eq:mollifyTerm} and \eqref{eq:stressEqn} have even smaller support.  Therefore the containment \eqref{eq:goalForSuppR1} has been guaranteed. 

By construction, these choices also guarantee that 
\begin{equation*}
	\supp \widetilde{e} \subseteq \ECyl_{v}(\Xi^{-1} e_{v}^{-1/2}, \Xi^{-1}; \supp e),
\end{equation*}
which implies the desired containments \eqref{eq:goalForSuppV}-\eqref{eq:goalForSuppP} for $V$ and $P$.

\subsection{Bounding the number of interaction terms}\label{sec:boundOnInteractionTerms}

Having chosen the time and length scales of the construction, we can now verify Proposition \ref{prop:limitedInteractions}, which states that each wave $V_I$ and stress term $Q_{O,I}$ shares support with a bounded number of distinct indices.

First, for a given index $I$, let $\#(I)$ denote the number of indices $I'$ such that the support of $V_{I'}$ intersects the support of $V_I$.  Recall from \eqref{sub:firstBoundSuppVI} that each wave is contained in a cylinder $\supp V_I \subseteq \LCyl_{v_\ep}(\fr{2\tau}{3}, \rho; t(I), x(I))$.  Therefore, if $V_J$ is a wave whose support intersects the support of $V_I$, the cylinders corresponding to the two waves intersect, and by \eqref{iff:EuLagDuality} we have
\ali{
(t(J), x(J)) &\subseteq \ECyl_{v_\ep}\left( \fr{2\tau}{3}, \rho; \LCyl_{v_\ep}\left(\fr{2\tau}{3}, \rho; t(I), x(I)\right) \right) \notag
}
By Lemma \ref{lem:cylContainmentProps} and the bound $\| \nab v_\ep \|_0 \tau \leq 1$, we have
\ali{
(t(J), x(J)) &\subseteq \ECyl_{v_\ep}\left(\fr{4\tau}{3}, 10 \rho; t(I), x(I)\right) \label{incl:latticeIsInCylinder}
}
The number of lattice points $(t(J), x(J)) = (k_0 \tau, k_1 \rho, k_2 \rho, k_3 \rho)$ with $k_i \in \Z$ which can belong to a cylinder \eqref{incl:latticeIsInCylinder} is clearly bounded, and so is the number of indices $J = (k_0, k_1, k_2, k_3, f) \in \Z^4 \times F$ which occupy such locations, since at most a finite number $|F|$ indices $J$ share a given location index $k$.  Thus, the number of waves $\#(I)$ which interact with $V_I$ is bounded by an absolute constant.

To finish the proof of Proposition \ref{prop:limitedInteractions}, it suffices to bound the number of stress terms $Q_{O,I}$ occupying a given point.  This number is bounded by following the same line of reasoning, but considering the Eulerian cylinders in \eqref{eq:limitedWaves2} containing the support of $Q_{O,I}$, and applying the corresponding bound \eqref{eq:LCylECylinECyl} in Lemma \ref{lem:cylContainmentProps}.

\section{Estimates for the Corrections} \label{sec:correctionEstimates}
In this section, we verify the estimates stated in the Main Lemma (Lemma \ref{lem:mainLemma}) concerning the corrections $V$ and $P$.
More precisely, we establish the estimates
\begin{align} 
	\nrm{\nb^{k} v_{1}}_{C^{0}} \unlhd & (\Xi')^{k} (e'_{v})^{1/2} & k=1, \ldots, L \label{eq:CE:goal4v1}\\
	\nrm{\nb^{k} p_{1}}_{C^{0}} \unlhd & (\Xi')^{k} e'_{v} & k=1, \ldots, L \label{eq:CE:goal4p1}
\end{align}
concerning the frequency and energy levels of $v_{1} = v + V$ and $p_{1} = p +P$, with $(\Xi', e'_{v}) = (C_{0} N \Xi, e_{R})$. We also prove the bounds \eqref{eq:Vco}-\eqref{eq:matWco}, \eqref{eq:Pco}-\eqref{eq:matPco} for the corrections $V$ and $P$, respectively, and the local energy increment bounds \eqref{eq:energyPrescribed}, \eqref{eq:DtenergyPrescribed}.
The estimates considered in this Section will also prepare us for estimating the resulting stress $R_{1}$ in the next two Sections.


First we state the bounds satisfied by the elements of the construction obtained from solving a transport equation.  We recall the following estimates were established for the phase gradients $\nab \xi_I$ in the construction of \cite{isett}.  To state the estimates, it will be convenient to use the notation $y_+ := \max \{ y, 0 \}$.
\begin{prop}[Transport Estimates]\label{prop:transportEstimates}  Let $L \geq 2$ be as in Lemma \ref{lem:mainLemma}. There exist constants $C_a$ such that for all $a \geq 1$ and $0 \leq r \leq 2$, the bounds
\ali{
\Xi^{-1} \co{ \nab^a \left( \Ddt \right)^r \nab \psi_k } + \co{ \nab^a \left( \Ddt \right)^r \nab \xi_I } &\leq C_a \Xi^a ( \Xi e_v^{1/2} )^r N^{(a + (r - 1)_+ + 1 - L)_+/L} \label{eq:boundForTransport}
}
are satisfied.  Here $\nab^a$ denotes any spatial derivative of order $a$.  Moreover, if $D^{(a,r)}$ denotes any derivative of the form
\ali{
D^{(a,r)} &= \nab^{a_1} (\pr_t + v_\ep \cdot \nab)^{r_1} \nab^{a_2} (\pr_t + v_\ep \cdot \nab)^{r_1} \nab^{a_3} \label{eq:notationForAdvSpaceDeriv}
}
with $a = a_1 + a_2 + a_3, r = r_1 + r_2$, $a_i, r_i \geq 0$ and $r \leq 2$, we also have the bound
\ali{
\Xi^{-1} \co{ D^{(a,r)} \nab \psi_k } + \co{ D^{(a,r)} \nab \xi_I } &\leq C_a \Xi^a ( \Xi e_v^{1/2} )^r N^{(a + (r - 1)_+ + 1 - L)_+/L} \label{eq:genBoundForTransport}
}
\end{prop}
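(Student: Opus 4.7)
The strategy is induction on the total derivative order, exploiting that both $\xi_I$ and $\psi_k$ satisfy pure transport equations $\tfrac{\bar D}{\pr t}\xi_I = 0$ and $\tfrac{\bar D}{\pr t}\psi_k = 0$. The key algebraic identity is the commutator $[\tfrac{\bar D}{\pr t}, \nab_j] = -(\nab_j v_\ep^k)\nab_k$, which shows that any advective derivative acting on $\nab\xi_I$ (resp.\ $\nab\psi_k$) can be algebraically exchanged for spatial derivatives of $v_\ep$ paired with lower-order spatial derivatives of $\nab\xi_I$ (resp.\ $\nab\psi_k$). The whole estimate thereby reduces to controlling spatial derivatives, which is achieved by Gronwall along the coarse-scale flow.

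First I would record the bounds on $v_\ep$ that enter the argument. From the frequency-energy hypotheses \eqref{bound:nabkv}--\eqref{bound:dtnabkR}, standard mollification estimates at scale $\ep_v \sim N^{-1/L}\Xi^{-1}$, and the Euler-Reynolds equation $(\pr_t + v\cdot\nab)v^l = -\pr^l p + \pr_j R^{jl}$ convolved with the mollifier, one obtains
\[
\|\nab^k v_\ep\|_{C^0} \leq C_k \Xi^k e_v^{1/2} N^{(k-L)_+/L},\qquad
\|\nab^k \tfrac{\bar D}{\pr t} v_\ep\|_{C^0} \leq C_k \Xi^{k+1} e_v N^{(k+1-L)_+/L},
\]
the second via a commutator estimate controlling the discrepancy between mollification and the operator $\bar D/\pr t$. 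These are the bounds established in Section 18 of \cite{isett}, which I would quote directly.

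Next I treat the purely spatial case $r = 0$. Differentiating the transport equation $a$ times yields, schematically,
\[
\tfrac{\bar D}{\pr t}(\nab^{a+1}\xi_I) = -\sum_{j=1}^{a+1} c_{a,j}\, \nab^{j} v_\ep \cdot \nab^{a+2-j}\xi_I,
\]
and analogously for $\nab^{a+1}\psi_k$. Since $C^0$ norms are preserved by the flow of $v_\ep$, Gronwall over $|t-t(I)|\leq \tau \aleq \Xi^{-1}e_v^{-1/2}$ yields the desired bound once the right-hand side is controlled inductively. The initial data $\nab\hat\xi_I$ has size $1$ with vanishing higher derivatives, while the rescaled cutoffs $\bar\psi_k$ contribute the factor $\Xi^{a+1}$ reflecting $\rho = c_\rho \Xi^{-1}$. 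The $N$-power $(a+1-L)_+/L$ arises from the top-order term $j = a+1$, where $\nab^{a+1} v_\ep$ first exhibits a mollification-scale loss. For the advective case, apply $\tfrac{\bar D}{\pr t}$ to the identity $\tfrac{\bar D}{\pr t}\nab\xi_I = -\nab v_\ep\cdot\nab\xi_I$ and use the commutator once more to rewrite $\tfrac{\bar D}{\pr t}\nab v_\ep = \nab\tfrac{\bar D}{\pr t}v_\ep - \nab v_\ep\cdot\nab v_\ep$. This expresses $(\tfrac{\bar D}{\pr t})^2\nab\xi_I$ as a sum of products of $\nab v_\ep$, $\nab\tfrac{\bar D}{\pr t}v_\ep$ and spatial derivatives of $\nab\xi_I$, all controlled by the previous steps. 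The general estimate \eqref{eq:genBoundForTransport} then follows by iterating this commutator-based reduction.

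The main technical point is matching the exponent $(a+(r-1)_+ + 1 - L)_+/L$. The shift $(r-1)_+$ reflects that the first advective derivative is ``free'': it is traded for a single factor $\nab v_\ep$ whose size entails no mollification-scale loss beyond what is already counted by $a$ spatial derivatives. A second advective derivative, in contrast, produces the genuinely new objects $\nab^{k}\tfrac{\bar D}{\pr t} v_\ep$, which carry an extra $N^{1/L}$ loss once $k+1$ exceeds $L$. Verifying that the worst term in the expansion precisely saturates this exponent --- and not more --- is the primary technical obstacle.
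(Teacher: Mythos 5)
Your proposal is correct and takes essentially the same route as the paper, which defers the details to Sections 16--17 of \cite{isett}: Gronwall applied to the spatially differentiated transport equations gives the $r=0,1$ cases, and the second advective derivative is reduced via the commutator $[\Ddt,\nab]=-\nab v_\ep\cdot\nab$ to the bounds on $\nab^k \Ddt v_\ep$ coming from the mollified Euler--Reynolds equations (Proposition \ref{prop:coarseScaleVEstimates}). Your exponent bookkeeping, including the $(r-1)_+$ shift and the source of the $N^{1/L}$ losses, matches the paper's argument.
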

According to Proposition \ref{prop:transportEstimates}, every spatial derivative costs at most $|\nab| \leq N^{1/L} \Xi$ in the estimate, and each coarse scale advective derivative costs at most $| \Ddt  | \leq \Xi e_v^{1/2}$.  In particular, as $L \geq 2$, the cost of a derivative $|\nab|$ is smaller than the frequency parameter $\la \approx N \Xi$ by a factor of $N^{-(1 - 1/L)} \leq N^{-1/2}$, which means that the terms $\psi_k$ and $\xi_I$ can be regarded as having frequency less than $\la$.  Also, since we have imposed that $L \geq 2$, it is important to note that the factors $N^{1/L}$ do not appear in the estimate until at least two derivatives have been taken.

Proposition \ref{prop:transportEstimates} was established for the phase gradients $\nab \xi_I$ in Section 17 of \cite{isett}.  The estimates for $0 \leq r \leq 1$ are fairly straightforward consequences of the transport equation
\ali{
(\pr_t + v_\ep^j \pr_j) \pr^l \xi_I &= - \pr^l v_\ep^j \pr_j \xi_I \label{eq:phaseGradTrans}
}
satisfied by the phase gradients.  For example, the estimate for spatial derivatives $\co{ \nab^\ga  \nab \xi_I } \leq C_\ga N^{(|\ga| + 1 - L)_+ / L} \Xi$ for times $\tau \leq \Xi^{-1} e_v^{-1/2}$ follows from \eqref{eq:phaseGradTrans} and the estimate 
\[ \co{\nab^\ga v_\ep} \leq C_\a N^{(\ga - L)_+/L} \Xi^\ga e_v^{1/2}, \qquad \ga \geq 1 \] 
by a Gronwall inequality argument.  The same argument, presented in Section 17.2 of \cite{isett}, also applies to establish the estimate \eqref{eq:boundForTransport} for the cutoff gradients $\nab \psi_k$.  

The estimates for second advective derivatives $\left( \Ddt \right)^2$ of $\nab \xi_I$ and $\nab \psi_k$ are more subtle, and require estimates for $(\pr_t + v_\ep \cdot \nab) v_\ep$ and its spatial derivatives.  In this case, the estimates for $v_\ep = \eta_{\ep_v} \ast \eta_{\ep_v} \ast v$ are obtained by commuting the mollifier $\eta_{\ep+\ep} = \eta_{\ep_v} \ast \eta_{\ep_v}$ with the Euler Reynolds equations, to obtain
\ali{
(\pr_t + v_\ep^k \pr_k) v_\ep^j &= \eta_{\ep + \ep} \ast ( \pr^j p + \pr_k R^{kj} ) + Z^j \label{eq:commutedEREqns} \\
Z^j &= v_\ep^k \pr_k v_\ep^j - \eta_{\ep+\ep} \ast(v^k \pr_k v^j) \label{eq:commutatorTerm}
}
The estimates for the right hand side now follow from the Definition \ref{def:subSolDef} of frequency energy levels, and a commutator estimate for the term \eqref{eq:commutatorTerm}.  These estimates are in fact the only part of the argument where the estimates \eqref{bound:nabkp} for the pressure gradient play an important role.  The estimates obtained in this way for the velocity field are
\begin{prop}[Coarse Scale Velocity Estimates]\label{prop:coarseScaleVEstimates}  Let $L \geq 2$ be as in Lemma \ref{lem:mainLemma}.  The vector field $v_\ep$ defined in \eqref{eq:doubleMollifyv} satisfies the bounds
\ali{
 \co{ \nab^a v_\ep} &\leq C_a \Xi^{a} e_v^{1/2}N^{(a-L)_+/L}, \quad a \geq 1 \\
 \co{ \nab^a(\pr_t + v_\ep \cdot \nab) v_\ep } &\leq C_a \Xi^{1 + a} e_v N^{(1 + a - L)_+/L}, \qquad a \geq 0
}
\end{prop}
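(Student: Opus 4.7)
The proof would split naturally along the two estimates stated. For the purely spatial bound on $\nab^a v_\ep$, I would rely on standard mollifier theory applied to $v_\ep = \eta_{\ep_v} \ast \eta_{\ep_v} \ast v$. When $a \leq L$, all derivatives can be placed on $v$ (commuting with convolution), so by \eqref{bound:nabkv} one gets $\co{\nab^a v_\ep} \leq \co{\nab^a v} \leq \Xi^a e_v^{1/2}$. When $a > L$, only $L$ derivatives can land on $v$; the remaining $a - L$ must be absorbed by the mollifier, each at cost $\ep_v^{-1} \sim N^{1/L}\Xi$, producing the factor $N^{(a-L)/L}$ and matching the claimed bound.

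For the advective estimate, I would follow precisely the strategy indicated in the surrounding text: apply $\eta_{\ep+\ep}$ to the Euler--Reynolds equation for $v$ and then add and subtract $v_\ep^k \pr_k v_\ep^l$ to arrive at the commuted equation \eqref{eq:commutedEREqns}, with commutator $Z^l$ given by \eqref{eq:commutatorTerm}. Taking $a$ spatial derivatives reduces the problem to estimating three pieces separately. The mollified pressure and stress contributions $\nab^a \eta_{\ep+\ep} \ast \pr^l p$ and $\nab^a \eta_{\ep+\ep} \ast \pr_k R^{kl}$ are handled exactly as in the spatial estimate above, using \eqref{bound:nabkp}, \eqref{bound:nabkR} and $e_R \leq e_v$ to yield the bound $C_a \Xi^{a+1} e_v N^{(a+1-L)_+/L}$.

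The main step, and the reason the double mollification in \eqref{eq:doubleMollifyv} is used rather than a single one, is the estimate for $\nab^a Z^l$. I would use the classical Constantin--E--Titi commutator identity to write $Z^l$ as a double integral that manifestly vanishes to second order in the translation parameter, so that schematically $\co{Z^l} \lesssim \ep_v^2 \co{\nab v}^2 \leq N^{-2/L} e_v$. Since the claimed budget is $C_a \Xi^{1+a} e_v N^{(1+a-L)_+/L}$, this leaves slack of a factor $N^{-2/L}$ at $a = 0$. For higher $a$, one distributes the $a$ spatial derivatives among the two $v$-factors and the two mollifier kernels inside $Z^l$; a derivative falling on $v$ costs $\Xi$ up to the $L$-th and $N^{1/L}\Xi$ thereafter, and the double convolution allows at least one derivative per factor to be commuted freely onto $v$ without touching the mollifier.

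The obstacle I expect requires the most care is the bookkeeping for $\nab^a Z^l$ at large $a$: one must check that \emph{every} distribution of the $a$ derivatives among the pair of $v$-factors and the pair of mollifier kernels respects the $N^{(1+a-L)_+/L}$ budget, using only the assumed control $\co{\nab^k v} \leq \Xi^k e_v^{1/2}$ for $k \leq L$. The two powers of $\ep_v = a N^{-1/L} \Xi^{-1}$ gained from the commutator structure, together with the fact that the two separate convolutions of \eqref{eq:doubleMollifyv} allow derivatives to be split symmetrically between the two factors in the product $v^k \pr_k v^l$, give precisely enough room for the combinatorics to close. Once this is verified, combining the three estimates gives the stated bound and completes the proof.
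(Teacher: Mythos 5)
Your proposal follows essentially the same route as the paper, which proves Proposition \ref{prop:coarseScaleVEstimates} exactly by mollifier estimates for the spatial bounds together with the commuted Euler--Reynolds equation \eqref{eq:commutedEREqns}, the frequency--energy level bounds \eqref{bound:nabkp}--\eqref{bound:nabkR} for the mollified pressure and stress terms, and a quadratic (Constantin--E--Titi type) commutator estimate for $Z$ in \eqref{eq:commutatorTerm} (the details being deferred to Section 16 of \cite{isett}). The only slip is cosmetic: since $Z = v_\ep\cdot\nab v_\ep - (v\cdot\nab v)_\ep$ carries a derivative, the commutator bound is $\co{Z} \lesssim \ep_v^2 \co{\nab v}\,\co{\nab^2 v} \sim N^{-2/L}\Xi e_v$ rather than $\ep_v^2\co{\nab v}^2$, which still leaves the $N^{-2/L}$ slack you claim against the budget $\Xi e_v$ at $a=0$, so the conclusion is unaffected.
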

We refer to Section 16 of \cite{isett} for the proof.  

We also state the bounds satisfied for the terms $\tilde{e}$ and $R_\ep$ which were defined in Section \ref{sec:regularizeStressEnergy} using a mollification along the flow of $v_\ep$.  For compactness, we use the notation of line \eqref{eq:notationForAdvSpaceDeriv} and also
\[ (r \geq b) = \begin{cases} 1 &\tx{if } r \geq b \\ 0 &\tx{if } r < b \end{cases} \]
\begin{prop}[Stress and Energy Increment estimates]\label{prop:eRepEstimates}  Let $L = 2$.  Then for every $a \geq 0$ and $0 \leq r \leq 2$, there is a constant $C_a$ such that
\ali{
e_R^{1/2} \co{ D^{(a,r)} \tilde{e}^{1/2} } + \co{ D^{(a,r)} R_\ep } &\leq C_a \Xi^a e_R (\Xi e_v^{1/2})^{(r \geq 1)} ( N \Xi e_R^{1/2} )^{(r \geq 2)} N^{(a + 1 - L)_+/L}  \label{eq:stressEnergyBound}
}
\end{prop}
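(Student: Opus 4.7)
The plan is to establish the estimates for $R_\ep$ and $\tilde{e}^{1/2}$ by exploiting the key feature of the definition \eqref{eq:defineRep}, \eqref{eq:honestTildeE}: the averaging is performed along trajectories of the coarse scale flow $\Phi_s$, so that the coarse scale advective derivative $\Ddt$ commutes with pullback by $\Phi_s$. Since the estimates for $\tilde{e}^{1/2}$ are identical to those for $R_\ep$ up to a factor of $M e_R^{-1/2}$ (compare the bounds \eqref{ineq:goodEnergy} to those of Definition~\ref{def:subSolDef}), I will describe only the argument for $R_\ep$.

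First I would dispose of the pure-advective estimates (that is, $a = 0$, $0 \leq r \leq 2$). For $r = 0$ one has the trivial bound $\co{R_\ep} \leq \co{R} \leq e_R$. For $r = 1$, since $\Phi_s$ is the flow of $\Ddt$, the chain rule gives $\Ddt[f\circ\Phi_s] = (\Ddt f)\circ\Phi_s$ for any $f$, and hence
\ali{
\Ddt R_\ep(t,x) = \int (\Ddtof{R_{\ep_x}})(\Phi_s(t,x)) \, \eta_{\ep_t}(s) \, ds. \notag
}
A standard commutator argument (Section~18 of \cite{isett}) shows that $\co{\Ddt R_{\ep_x}} \leq \co{\Ddt R} + (\text{commutator}) \leq C\,\Xi e_v^{1/2} e_R$, using \eqref{bound:dtnabkR} for the main term and the spatial mollification properties together with Proposition~\ref{prop:coarseScaleVEstimates} for the commutator. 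For $r = 2$, I would apply the commutation once more and then use the identity $\frac{d}{ds}[(\Ddt R_{\ep_x})\circ\Phi_s] = (\Ddt^2 R_{\ep_x})\circ\Phi_s$ in order to integrate by parts in $s$, yielding
\ali{
\Ddt^2 R_\ep(t,x) = - \int (\Ddtof{R_{\ep_x}})(\Phi_s(t,x)) \, \eta_{\ep_t}'(s) \, ds. \notag
}
Then $\co{\Ddt^2 R_\ep} \leq \|\eta_{\ep_t}'\|_{L^1} \co{\Ddt R_{\ep_x}} \leq C\,\ep_t^{-1} \Xi e_v^{1/2} e_R = C\, N\Xi e_R^{1/2} \cdot \Xi e_v^{1/2} e_R$, using the choice \eqref{eq:RmollParamChoice}. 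This matches the claimed bound and explains why the second advective derivative never costs more than one factor of $\ep_t^{-1}$: the other advective derivative is absorbed onto $R$ itself at a cheaper cost $\Xi e_v^{1/2}$.

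To pass from pure advective to mixed derivatives $D^{(a,r)}$, I would commute the spatial derivatives through both the pullback by $\Phi_s$ and the convolution in $\ep_x$. Pulling $\nab^a$ through $\Phi_s$ produces polynomials in $\nab^b \Phi_s'$ for $1 \leq b \leq a$, which by Lemma~\ref{lem:MLMT:basicEst4FlowFirst} together with Proposition~\ref{prop:coarseScaleVEstimates} and the choice $\ep_t \leq \Xi^{-1} e_v^{-1/2}$ obey
\ali{
\co{\nab \Phi_s'} \leq e^{|s| \co{\nab v_\ep}} \leq C, \qquad \co{\nab^b \Phi_s'} \leq C_b \, \Xi^{b-1} N^{(b-L)_+/L}, \quad b \geq 2, \notag
}
after a Gronwall argument applied to the ODE for higher derivatives of the flow. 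The spatial mollification $\eta_{\ep_x}\ast\eta_{\ep_x}\ast R$ contributes the factors $\Xi^a N^{(a+1-L)_+/L}$, exactly as in \eqref{bound:nabkR}: up to the order $L$ one uses \eqref{bound:nabkR} directly, and beyond this order each extra derivative is paid at the cost $\ep_x^{-1} \sim N^{1/L}\Xi$ using the mollifier bound $\|\nab \eta_{\ep_x}\|_{L^1} \lesssim \ep_x^{-1}$. For mixed derivatives $D^{(a,r)}$ where the $r$ advective derivatives and $a$ spatial derivatives are interlaced, I would commute the spatial derivatives outward: each commutator $[\nab, \Ddt] = -(\nab v_\ep)\cdot\nab$ is controlled by Proposition~\ref{prop:coarseScaleVEstimates} and costs at most one factor of $\Xi e_v^{1/2}$ per spatial derivative pulled across, which is dominated by the factors already present in the claimed bound.

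The main technical point in the argument, already identified in \cite{isett}, is that one cannot simply bound $\Ddt^2 R_\ep$ by integrating by parts twice in $s$ (which would give the suboptimal factor $\ep_t^{-2}$); the improvement $\Xi e_v^{1/2}\cdot\ep_t^{-1}$ rather than $\ep_t^{-2}$ is exactly what makes the choice \eqref{eq:RmollParamChoice} consistent with \eqref{bound:dtnabkR} at the level of $R_\ep$, and it is enforced by the structural identity that $\Ddt$ commutes with pullback by its own flow. All other ingredients (commutator estimates for the spatial mollification, flow map derivative bounds, Gronwall) are routine given Propositions~\ref{prop:transportEstimates} and~\ref{prop:coarseScaleVEstimates}, and the full technical execution follows the pattern of Section~18 of \cite{isett}, specialized to $L = 2$.
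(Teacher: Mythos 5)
Your proposal is correct and follows essentially the same route as the paper, which simply defers to the mollification-along-the-flow machinery of Section 18 of \cite{isett}: the commutation of $\Ddt$ with pullback by its own flow, a single integration by parts in $s$ (costing one factor $\ep_t^{-1} \sim N \Xi e_R^{1/2}$ rather than $\ep_t^{-2}$) for the second advective derivative, flow-map and mollification commutator estimates for the mixed derivatives, and the reduction of $\tilde{e}^{1/2}$ to the $R_\ep$ case via the observation that $e^{1/2}$ obeys the same bounds as $e_R^{-1/2}R$. The only ingredient you leave implicit is the reason for the restriction $L=2$ (the non-negative kernels forced by the lower-bound argument of Section \ref{sec:checkLowerBound}), but that is an input to the proposition rather than part of its proof.
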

Proposition \ref{prop:eRepEstimates} was established in Section 18 of \cite{isett} for the term $R_\ep$.  A large part of the work goes into estimating the coarse scale flow $\Phi_s$ associated to $v_\ep$, and into establishing basic properties of mollification along the flow.  Since the function $e^{1/2}$ that was regularized to form $\tilde{e}^{1/2}$ obeys the same estimates as those assumed for $e_R^{-1/2} R$, the same estimates follow for $\tilde{e}^{1/2}$.  The restriction to $L = 2$ (which was not present in \cite{isett}) arises from the considerations in Section \ref{sec:checkLowerBound}.

From Propositions \ref{prop:transportEstimates} and \ref{prop:eRepEstimates}, we obtain estimates for the basic building blocks of the construction


\begin{prop}[Amplitude estimates]\label{prop:ampEstimates}  For $L = 2$, the amplitudes $v_I$ satisfy the bounds
\ali{
 \co{D^{(a,r)} v_I } &\leq C_a \Xi^a e_R^{1/2} \tau^{-r} N^{(a + 1 - L)_+/L}  \label{eq:amplitudeEstimates} \\
\co{D^{(a,r)} \de v_I } &\leq C_a B_\la^{-1} N^{-1} \Xi^a e_R^{1/2} \tau^{-r} N^{(a + 2 - L)_+/L} \label{eq:tinyCorrectionEstimates}
}
for $a \geq 0$ and $0 \leq r \leq 2$.
\end{prop}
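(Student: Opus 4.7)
The plan is to differentiate the explicit factored form of the amplitudes given in Section~\ref{sec:correctionShape} and to estimate each factor using the previously-established propositions. From \eqref{eq:aIandbI}--\eqref{eq:formOfbI}, both $a_I$ and $b_I$ are products of five types of factors: the renormalization $\tilde{e}^{1/2}$, the rescaled time cutoff $\eta((t-t(I))/\tau)$, the transported spatial cutoff $\psi_k$, the scalar coefficient $\ga_I = \ga_f(\nab \xi_k, \varep^{jl})$, and algebraic combinations of the phase gradients $\nab \xi_I$, $\nab \xi_{\si I}$ and the bounded reciprocal $|\nab \xi_I|^{-1}$ (which is controlled by Proposition~\ref{prop:statPhaseProp}). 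Expanding $D^{(a,r)}v_I$ by a Leibniz-type rule that incorporates the commutators $[\nab, \Ddt] = (\nab v_\ep)\cdot\nab$, the desired bound \eqref{eq:amplitudeEstimates} reduces to a uniform estimate of the same form on each individual factor, with the size factor $e_R^{1/2}$ concentrated entirely on $\tilde{e}^{1/2}$.

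First I would handle $\tilde{e}^{1/2}$ using Proposition~\ref{prop:eRepEstimates}, after observing that the two extra advective cost factors $\Xi e_v^{1/2}$ (for $r\geq 1$) and $N\Xi e_R^{1/2}$ (for $r\geq 2$) appearing there are absorbed into $\tau^{-r}$. The first absorption is immediate from \eqref{eq:tauForm} with $b\leq 1$; the second reduces to $\tau^{-1}\geq N\Xi e_R^{1/2}$, which follows from \eqref{eq:choiceOfb} provided $b_0^2 B_\la^{-1}$ is taken small enough. The time cutoff contributes $C\tau^{-r}$ at most (advective derivatives cost $\tau^{-1}$, spatial derivatives annihilate it); and $\psi_k$ together with the phase-gradient factors obey \eqref{eq:genBoundForTransport} from Proposition~\ref{prop:transportEstimates}, with the $(\Xi e_v^{1/2})^r$ cost again absorbed into $\tau^{-r}$.

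For the coefficient $\ga_I = \ga_f(\nab \xi_k, \varep^{jl})$ I would apply the Fa\`a di Bruno chain rule. The arguments $\nab \xi_k$ are controlled by Proposition~\ref{prop:transportEstimates}, while $\varep^{jl} = -\mathring{R}_\ep^{jl}/\tilde{e}$ is a smooth function of $R_\ep$ and $\tilde{e}$ whose denominator is bounded below by $K e_R$ on $\supp R_\ep$ thanks to \eqref{ineq:lowerBoundWeNeed}. Another application of Proposition~\ref{prop:eRepEstimates} then controls $D^{(a,r)} \varep^{jl}$, and composition with the smooth $\ga_f$ produces bounds of the same form. Summing over all Leibniz distributions, the worst contribution is the one where all $a$ spatial derivatives concentrate on a single factor, which yields exactly \eqref{eq:amplitudeEstimates}.

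Finally, \eqref{eq:tinyCorrectionEstimates} follows from the identity \eqref{eq:lowerOrderAmplitude}: each summand in $\de v_I$ is $\la^{-1} = (B_\la N \Xi)^{-1}$ or $\la^{-2}$ times an expression of the same type as $v_I$, but with one or two extra spatial derivatives (and harmless additional factors of $|\nab \xi_I|^{-2}$). Applying \eqref{eq:amplitudeEstimates} with $a$ shifted to $a+1$ produces the prefactor $B_\la^{-1} N^{-1}$ and shifts the exponent on $N$ from $(a+1-L)_+/L$ to $(a+2-L)_+/L$, as required. The main bookkeeping obstacle throughout is the case analysis that verifies every advective cost arising from Propositions~\ref{prop:transportEstimates} and \ref{prop:eRepEstimates} is dominated by the appropriate power of $\tau^{-1}$; this is precisely the constraint that pins down the choice of $b$ in \eqref{eq:choiceOfb}, and will have to be consistent with the final choice of $B_\la$ at the end of the argument.
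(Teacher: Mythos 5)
Your overall route is the paper's route: differentiate the factored expressions \eqref{eq:formOfbI}, \eqref{eq:aIandbI} with the product/chain rule, feed in Propositions~\ref{prop:transportEstimates} and \ref{prop:eRepEstimates} (plus the lower bound \eqref{ineq:lowerBoundWeNeed} for $\varep^{jl}$), and get \eqref{eq:tinyCorrectionEstimates} from \eqref{eq:lowerOrderAmplitude} by shifting $a \mapsto a+1$; the paper does exactly this, deferring the bookkeeping to Sections 20--21 of \cite{isett}. However, the one step where you make the absorption of advective costs precise is stated incorrectly. The inequality $\tau^{-1} \geq N \Xi e_R^{1/2}$ does \emph{not} follow from \eqref{eq:choiceOfb} by taking $b_0^2 B_\la^{-1}$ small: since $\tau^{-1} = b_0^{-1} B_\la^{1/2} {\bf b}^{-1} \Xi e_v^{1/2}$ and ${\bf b}^{-2} = N e_R^{1/2}/e_v^{1/2}$, one has $N \Xi e_R^{1/2} / \tau^{-1} = b_0 B_\la^{-1/2} {\bf b}^{-1}$, so the claimed inequality is equivalent to ${\bf b} \geq b_0 B_\la^{-1/2}$, i.e.\ to an \emph{upper} bound on $N$, which cannot be arranged by any choice of the constants $b_0, B_\la$ (the Lemma allows $N$ arbitrarily large). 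What is true, and what you actually need, is the product comparison: the $r=2$ cost from \eqref{eq:stressEnergyBound} satisfies $(\Xi e_v^{1/2})(N \Xi e_R^{1/2}) = b_0^2 B_\la^{-1} \tau^{-2} \leq \tau^{-2}$, because the first advective derivative comes with the slack factor $b = b_0 B_\la^{-1/2}{\bf b}$ relative to $\tau^{-1}$, and this slack exactly cancels the excess ${\bf b}^{-1}$ of the second. So the absorption must be done jointly over the two advective derivatives, not factor-by-factor.

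A related point is glossed over in your Leibniz bookkeeping: it is not quite true that every factor obeys a bound of the same form as \eqref{eq:amplitudeEstimates}, so "worst case: all spatial derivatives on one factor" is not literally the reduction. For the phase gradients and cutoffs, Proposition~\ref{prop:transportEstimates} gives the correction $N^{(a + (r-1)_+ + 1 - L)_+/L}$, which for $r = 2$, $L = 2$, $a \geq 1$ exceeds the target $N^{(a+1-L)_+/L}$ by a factor $N^{1/2}$. This is again rescued by slack in the advective cost: $(\Xi e_v^{1/2})^2 = b_0^2 B_\la^{-1} {\bf b}^2 \tau^{-2}$ with ${\bf b}^2 \leq N^{-2/3}$ (using $N \geq (e_v/e_R)^{3/2}$), so ${\bf b}^2 N^{1/2} \leq 1$ and the combined estimate closes. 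The conclusion of the Proposition is therefore correct by your method, but the case analysis has to track where the advective derivatives land together with the $N^{1/L}$ losses, rather than treating the spatial and advective costs of each factor independently; this is precisely the bookkeeping carried out in \cite{isett}.
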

The estimates for $v_I$ follow from Propositions \ref{prop:transportEstimates} and \ref{prop:eRepEstimates} after repeated applications of the chain and product rule using the expressions \eqref{eq:formOfbI} and \eqref{eq:aIandbI} for the real and imaginary parts of $v_I$.  The estimates \eqref{eq:tinyCorrectionEstimates} for the small correction terms $\de v_I$ then follow from the estimates \eqref{eq:amplitudeEstimates} for $v_I$ and the estimates for $\nab \xi_I$ of Proposition \ref{prop:transportEstimates} using the expression \eqref{eq:lowerOrderAmplitude} for $\de v_I$.  The details are carried out in Sections 20 and 21 of \cite{isett}, although there the correction $\de v_I$ has a slightly different form.  The main point is that, schematically, $\de v_I$ has the form 
\[ \de v_I \sim \fr{1}{\la} \nab v_I + \fr{1}{\la^2} \nab^2 v_I\]
up to some factors involving phase gradients.  The first derivative $\nab$ hitting $v_I$ costs a factor of $|\nab| \leq \Xi$ compared to the bound $\co{ v_I } \leq e_R^{1/2}$, whereas the factor $\fr{1}{\la}$ gains a factor of $(B_\la N \Xi)^{-1}$ in the estimate, and the additional term involving $\fr{\nab^2}{\la^2}$ is lower order.  The additional restriction to $L = 2$ in the estimates arises from the considerations in Section \ref{sec:checkLowerBound} as in Proposition \ref{prop:eRepEstimates}.  This restriction does not affect the final conclusion of the Main Lemma.

The bounds \eqref{eq:Vco}-\eqref{eq:matVco} and \eqref{eq:Pco}-\eqref{eq:matPco} stated in Lemma \ref{lem:mainLemma} for the corrections $V$ and $P$ to the velocity and pressure, are straightforward applications of Propositions \ref{prop:transportEstimates}-\ref{prop:ampEstimates}.  Furthermore, the frequency and energy level bounds \eqref{eq:CE:goal4v1} and \eqref{eq:CE:goal4p1} for $v_{1} = v + V$ and $p_{1} = p + P$ (with $C_{0} > 1$ sufficiently large) follow in a similar manner.
A key point in this implication is Proposition \ref{prop:limitedInteractions} which states that only a bounded number of waves can interact at any point.  The relevant arguments are carried out in Section 22 of \cite{isett}.  The estimate \eqref{eq:goalForVPsupp} on the support of $V$ and $P$ has been established during the proof of the containment \eqref{eq:goalForR1supp} in Section \ref{sec:chooseLengthScaleOfConstruction}, as we have that $\supp V \cup \supp P \subseteq \supp \tilde{e}$.  The estimates \eqref{eq:Wco}-\eqref{eq:matWco} for the potential $W = \sum_I \nab \times Y_I$ defined in line \eqref{eq:doubleCurlForm} are also straightforward applications of the same estimates, even though our terms $W_I = \nab \times Y_I$ have a slightly different form than the corresponding terms in \cite{isett}.

Regarding the corrections, the only parts of Lemma \ref{lem:mainLemma} which do not follow from the proof of the Main Lemma of \cite{isett} are the estimates \eqref{eq:energyPrescribed}-\eqref{eq:DtenergyPrescribed} concerning the local energy increments.  We now turn to the proof of these estimate in the following Section \ref{sec:energyIncControl} below.

\subsection{Local estimates on the energy increment} \label{sec:energyIncControl}

Here we verify the estimate \eqref{eq:energyPrescribed} on the energy increment of the solution.

Let $\psi(x)$ be a smooth test function on $\R^3$ with compact support and let $t \in \R$.  We wish to estimate the error in prescribing the energy estimate.  The main point is that, if we expand $V = \sum_I V_I$ into individual waves, the main interactions come from conjugate waves $I, \bar{I}$
\ali{
\int |V|^2(t,x) \psi(x) dx &= \sum_{I,J} \int V_I \cdot V_J \psi(x) dx \\
&= E_{1} + E_2 + E_3 \\
E_1 &= \sum_I \int |v_I|^2(t,x) \psi(x) dx \label{eq:mainEnergyIncrement} \\
E_2 &= \sum_I \int ( v_I^j \overline{\de v}_I^l + \de v_I^j \bar{v}_I^l + \de v_I^j \overline{\de v}_I^l ) \de_{jl} \psi(x) dx \\ 
E_3 &= \sum_{J \neq \bar{I}} \int e^{i \la(\xi_I + \xi_J)} \tilde{v}_I \cdot \tilde{v}_J \psi(x) dx
}
Taking the trace of \eqref{eq:prescribedTrace}, we see that the main term \eqref{eq:mainEnergyIncrement} is equal to
\ali{
E_1 &= \int \tilde{e}(t,x) \psi(x) dx
}
The term $E_1$ gives rise to the main term in \eqref{eq:energyPrescribed}, with an error bounded by
\ali{
| \int e(t,x) \psi(x) dx - \int \tilde{e}(t,x) \psi(x) dx | &\leq \int |e^{1/2}(t,x) - \tilde{e}^{1/2}(t,x)| e^{1/2}(t,x) |\psi(x)| dx \notag \\
&+ \int |e^{1/2}(t,x) - \tilde{e}^{1/2}(t,x)| \tilde{e}^{1/2}(t,x) |\psi(x)| dx \notag \\
&\leq C \fr{e_R^{1/2} e_v^{1/2}}{N} \| \psi \|_{L^1} \label{eq:prescribeApproxEnInc}
}
from \eqref{eq:enIntError}.  The term $E_2$ is bounded by
\ALI{
 |E_2(t)| &\leq C \fr{e_R}{B_\la N} \| \psi \|_{L^1} 
}
from Proposition \ref{prop:ampEstimates}, and finally $E_3$ is estimated by integration by parts
\ali{
E_3 &= \fr{1}{i \la} \sum_{J \neq \bar{I}} \int \fr{\pr^a(\xi_I + \xi_J)}{|\nab(\xi_I + \xi_J)|^2} \pr_a e^{i \la(\xi_I + + \xi_J)} \tilde{v}_I \cdot \tilde{v}_J \psi(x) dx \\
&= \fr{-1}{i \la} \sum_{J \neq \bar{I}} \int e^{i \la(\xi_I + \xi_J)} \pr_a\left[ \fr{\pr^a(\xi_I + \xi_J)}{|\nab(\xi_I + \xi_J)|^2} \tilde{v}_I \cdot \tilde{v}_J \psi(x) \right] dx \\
| E_3 | &\leq C \fr{1}{B_\la N \Xi} ( \Xi e_R \| \psi \|_{L^1} + e_R \| \nab \psi \|_{L^1}  ) \label{eq:intByPartsEnergyEst}
}
Here we use Proposition \ref{prop:limitedInteractions} to bound the number of interacting waves, and also take advantage of the uniform bounds on $\co{~|\nab(\xi_I + \xi_J)|^{-1} }$ for nonconjugate interacting waves $I,J$ in Proposition \ref{prop:statPhaseProp}.

Estimate \eqref{eq:intByPartsEnergyEst} concludes the proof of \eqref{eq:energyPrescribed}.  We omit the proof of \eqref{eq:DtenergyPrescribed}, which is similar, since the estimate \eqref{eq:DtenergyPrescribed} is not needed in any of the applications of Lemma \ref{lem:mainLemma} considered here.

\section{Estimates for the new Stress} \label{sec:stressEstimates}
To complete the proof of the Main Lemma (Lemma \ref{lem:mainLemma}), we must calculate the new stress $R_1$ and establish the following estimates
\ali{
\co{ \nab^k R_1 } &\unlhd (\Xi')^k e_R', \quad &k= 0, \ldots, L \label{eq:goalNewStress} \\
\co{ \nab^k (\pr_t + v_1 \cdot \nab) R_1 } &\unlhd (\Xi')^k (\Xi' (e_v')^{1/2}) e_R', \quad &k = 0, \ldots, L - 1 \\
(\Xi', e_v', e_R') &= \left(C_{0} N \Xi, e_R, \fr{e_v^{1/4} e_R^{3/4}}{N^{1/2}} \right) & \label{eq:newFreqEnLevels1}
}
Recall from Section \ref{sec:techOutline} that the new stress is composed of several terms
\ali{
R_1^{jl} &= Q_M^{jl} + Q_S^{jl} + Q_T^{jl} + Q_H^{jl} + Q_{H'}^{jl} \label{eq:theNewStressListRepeated}
}
For the terms $Q_M$ and $Q_S$, we can appeal to \cite{isett} Section 25, where the estimates  \eqref{eq:goalNewStress}-\eqref{eq:newFreqEnLevels1} are verified for essentially identical terms.  The only difference in our case is the presence of sharper cutoffs $\psi_k$ and a regularized energy increment $\tilde{e}^{1/2}$ which do not affect the estimates.  We are therefore left with the terms $Q_T$, $Q_H$ and $Q_{H'}$ calculated in \eqref{eq:solveTransportTerm}, \eqref{eq:eqnForHighHigh}, \eqref{eq:eqnForSmallerHighHigh}.  

As outlined in Section \ref{sec:controlStressSupport}, these terms are calculated by solving the symmetric divergence equation with high frequency data
\ali{
\pr_j Q_{O,I}^{jl} &= U_I^l \label{eq:solveIndividError2} \\
U_I^l &= U_{T,I}^l + \sum_{J: J \neq \bar{I}} U_{H,IJ}^l  \label{eq:individErrorTermsAgain} \\
U_{T,I}^l &= \pr_t V_I^l + \pr_j (v_\ep^j V_I^l + V_I^j v_\ep^l) \label{eq:individTransportTerm} \\
U_{H,IJ}^l &= \fr{1}{4} \pr_j(V_I^j V_J^l + V_J^j V_I^l - 2 V_I \cdot V_J \de^{jl} ) 
}
The terms $U_{T,I}^l$, $U_{H,IJ}^l$ consist of the individual terms in the summations \eqref{eq:solveTransportTerm}, \eqref{eq:eqnForHighHigh}-\eqref{eq:eqnForSmallerHighHigh}.  

A key point in solving the equation \eqref{eq:solveIndividError2} is that we expect to gain a factor $\la^{-1}$ in the estimate $\co{ Q_{O,I} } \leq \la^{-1} \co{U_I}$ up to lower order terms, because the data on the right hand side has high frequency $\la$.  For example, the transport term \eqref{eq:individTransportTerm} has the form
\ali{ 
U_{T,I}^l &= e^{i \la \xi_I} u_I^l \\
u_I^l &= (\pr_t + v_\ep^j \pr_j) {\tilde v}_I^l + {\tilde v}_I^j \pr_j v_\ep^l \label{eq:transportTermIind}  
}
Furthermore, we desire a solution $Q_{O, I}$ to \eqref{eq:solveIndividError2} which also has compact support around the support of $U_I$.  Concerning the support of the waves, note that the terms in \eqref{eq:individErrorTermsAgain} have the common feature that they are supported in the cylinder
\[ \supp U_{T, I} \cup \supp U_{H,IJ} \subseteq \supp V_I \subseteq \ECyl_{v_\ep}(\frac{2}{3}\tau, A \rho; t(I), x(I))\]
with $A = e^{\frac{2}{3} \nrm{\nb v_{\eps}}_{0} \tau}$ as discussed in Section \ref{sec:chooseLengthScaleOfConstruction}.  Our solution $Q_{O, I}$ will have support in the same cylinder, from which \eqref{eq:limitedWaves} follows.

Before we can find a compactly supported solution $Q_{O,I}$ to \eqref{eq:solveIndividError2}, it is necessary to check that the terms $U_I^l$ satisfy the orthogonality conditions necessary to solve \eqref{eq:solveIndividError2}.  For the terms $U_{H,IJ}^l$ and the term $\pr_j (v_\ep^j V_I^l + V_I^j v_\ep^l)$ in $U_{T,I}^l$, the orthogonality conditions are obvious as both terms have already been represented as the divergence of a symmetric tensor with compact support.  For the term $\pr_t V_I^l$, the orthogonality conditions follow from our technique of taking $V_I$ of double curl form.  Namely, if $K^l$ is any solution to the equation $\pr_j K_l + \pr_l K_j = 0$ on $\R^n$, then $K^l$ is a linear combination of translational and rotational vector fields, and in particular its second derivative $\nab^2 K$ vanishes.  It follows that
\ali{
\int \pr_t V_I \cdot K dx &= \fr{d}{dt} \int \nab \times \nab \times Y_I \cdot K dx \\
&= \fr{d}{dt} \int Y_I \cdot \nab \times \nab \times K dx = 0
}
Thus, there is no immediate obstruction to obtaining a compactly supported solution $Q_{O,I}$ to \eqref{eq:solveIndividError2}.

\begin{rem}\label{rem:relationToAngMoment}  There is a second approach to designing building blocks $V_I$ satisfying the orthogonality conditions $\int \pr_t V_I \cdot K dx = 0$ which is more closely related to the discussion in Section \ref{sec:theMainIssue}.  Namely, one can also consider building blocks of the form $V_I = \nab \times W_I + \de V_I$ where the ``corrector term'' $\de V_I$ is a small correction which accounts for the possibility that the term $\nab \times W_I$ may fail to conserve angular momentum.  A simple way to design suitable corrector terms $\de V_I$ is to use approximate rotation fields 
\[ \de V_I = \sum_{1 \leq i < j \leq 3} c_{I, (ij)}(t) \phi\Big(\fr{|x - \bar{x}_I(t)|}{\rho}\Big) \Om_{(ij)}(x - \bar{x}_I(t)) \]
such as those introduced in line \eqref{eq:approxAngularFields} of the proof of Proposition \ref{prop:conserveLaw}, where $\bar{x}_I(t)$ is a point which moves along the coarse scale flow.  This approach could be considered more natural from a geometric point of view and leads to an interesting alternative proof of the Main Lemma, but it requires further estimates to implement so we have taken the simpler route of taking $V_I$ to have double curl form.
\end{rem}

It now remains to construct a solution to equation \eqref{eq:solveIndividError2} and to establish the oscillatory estimate $\co{ Q_{O,I} } \leq \la^{-1} \co{U_I}$ up to lower order terms.   These tasks are taken up in Sections \ref{sec:applyParametrix} and \ref{sec:solvingSymmDiv} below.


\subsection{Applying the parametrix} \label{sec:applyParametrix}
Here we consider the general problem of finding compactly supported solutions to the symmetric divergence equation
\ali{
\pr_j Q^{jl} &= e^{i \la \xi} u^l  \label{eq:symmDivWithOsc}
}
where the right hand side is supported on a cylinder $\ECyl_{v_\ep}(\frac{2}{3} \tau, A \rho; t(I), x(I))$ and satisfies the necessary orthogonality conditions for a solution to exist.  In our applications, the phase function $\xi$ is either one of the phase functions $\xi_I$ or the sum $\xi_I + \xi_J$ of two interacting, nonconjugate phase functions.  In every case, the amplitude $u^l$ turns out to satisfy the estimates
\ali{
\co{ \nab^k u^l } + \tau \co{ \nab^k \Ddt u^l } &\leq C_k B_\la^{-1/2} \la (N^{1/2} \Xi )^k e_R', \qquad k \geq 0 \label{eq:theAmplitudeSize} 
}
where $e_R' = \fr{e_v^{1/4} e_R^{3/4}}{N^{1/2}}$ is the target size of the new stress $R_1$ expressed in \eqref{eq:newFreqEnLevels1}.  The amplitudes $u^l$ are also supported in a cylinder
\[ \supp u^l \subseteq \ECyl_{v_\ep}(\fr{2 \tau}{3}, A \rho; t(I), x(I) ) \]
of size $\rho \sim c_\rho \Xi^{-1}$, where $c_\rho$ is the constant chosen in Section \ref{sec:chooseLengthScaleOfConstruction}.  See Section 26 of \cite{isett} for details, particularly Section 26.2.  Here the factors of $N^{1/2}$ come from the factors of $N^{1/2}$ in the estimates of Section \ref{sec:correctionEstimates}.

In solving the first order, elliptic equation \eqref{eq:symmDivWithOsc}, we expect the solution $Q$ to gain a factor $\la^{-1}$ in the estimate $\co{Q} \leq \la^{-1} \co{ u }$ modulo lower order terms.  In \cite{deLSzeCts}, \cite{deLSzeHoldCts}, De Lellis and Sz\'{e}kelyhidi gave an approach to obtaining this cancellation based on the method of nonstationary phase.  The approach we take here follows the approach in \cite{isett}, which is a slight adaptation of the method in \cite{deLSzeCts, deLSzeHoldCts} generalized to nonlinear phase functions.  The main distinction is that the approach we take does not involve proving that the operators $R^{jl}[U]$ we construct for solving the equation \eqref{eq:symmDivWithOsc} exhibit cancellation when the input $U$ has the form $U^l = e^{i \la \xi} u^l$.  Instead, we obtain the necessary cancellation through a parametrix expansion of the solution.  We also avoid the use of Schauder estimates, which would impose a super-exponential growth of frequencies in the iteration by requiring $C^\a$ rather than $C^0$ control of the data.

To begin, we write down a first order approximate solution to \eqref{eq:symmDivWithOsc} of the form
\ali{
 Q_{(1)}^{jl} &= \fr{1}{\la} e^{i \la \xi} q_{(1)}^{jl} \label{eq:firstOrderParam}
}
where the amplitude $q_{(1)}^{jl}$ is a symmetric tensor solving the underdetermined linear equation
\ali{
i \pr_j \xi q_{(1)}^{jl} &= u^l \label{eq:undetLinEqn}
}
pointwise. Following \cite{isett}, we begin constructing a solution to \eqref{eq:undetLinEqn} by first decomposing $u^l$ into
\[ u^l = u_\perp^l + \fr{(u \cdot \nab \xi)}{|\nab \xi|^2} \pr^l \xi = u_\perp^l + u_\parallel^l, \]
so that $ u_\perp \in \langle \nab \xi \rangle^\perp$ and $u_\parallel \in \langle \nab \xi \rangle$ pointwise.  We then define
\ali{
 q_{(1)}^{jl} &= -i ( q_\perp^{jl} + q_\parallel^{jl} ) = q^{jl}(\nab \xi)[u], \label{eq:defnQjlMap}
}
where the tensors
\[ 
 q_\perp^{jl} = \fr{1}{|\nab \xi|^2}(\pr^j \xi u_\perp^l + \pr^l \xi u_\perp^j), \quad q_\parallel^{jl} = \fr{(u \cdot \nab \xi)}{|\nab \xi|^2} \de^{jl} \]
solve $\pr_j \xi q_\perp^{jl} = u_\perp^l$ and $\pr_j \xi q_\parallel^{jl} = u_\parallel^l$ pointwise.

The important properties of the map defined in\footnote{Another example of a satisfactory map $q^{jl}$ can be read off from the symbol of the operator in Definition 4.2 of  \cite{deLSzeCts}.  Our construction of \eqref{eq:defnQjlMap} can likewise be regarded as giving the symbol of an order $-1$ operator which solves the symmetric divergence equation.} \eqref{eq:defnQjlMap} are that $q^{jl}(\nab \xi)[u]$ is linear in $u$, homogeneous of degree $-1$ in $\nab \xi$, and smooth outside of $\nab \xi = 0$.  Thus, the main term $Q_{(0)}$ in \eqref{eq:firstOrderParam} obeys the desired estimate $\co{ Q_{(1)} } \leq C \la^{-1} \co{u}$, using the uniform bounds on $\co{ ~|\nab \xi|^{-1}~ }$ which are satisfied by all the phase functions involved in the construction.  We can then construct an exact solution to \eqref{eq:symmDivWithOsc} of the form $Q^{jl} = Q_{(1)} + \widetilde{Q}_{(1)}^{jl}$ by letting $\widetilde{Q}_{(1)}^{jl}$ solve the equation $\pr_j \widetilde{Q}_{(1)}^{jl} = - \fr{1}{\la} e^{i \la \xi} \pr_j q_{(1)}^{jl}$, noting that the right hand side now has a smaller amplitude than before thanks to the factor of $1/\la$.  

To improve on the first order expansion \eqref{eq:firstOrderParam}, we build the solution to \eqref{eq:symmDivWithOsc} as an approximate solution plus an error
\ali{
Q^{jl} &= Q_{(D)}^{jl} + \widetilde{Q}_{(D)}^{jl}  \label{eq:paramPlusRemainder} \\
Q_{(D)}^{jl} &= \sum_{k = 1}^{D} \fr{1}{\la} e^{i \la \xi} q_{(k)}^{jl} \label{eq:parametrixTerm}
}
The amplitude $q_{(k)}^{jl}$ of the $k$'th term is obtained by solving the linear equation
\ali{
i \pr_j \xi q_{(k)}^{jl} = u_{(k)}^l, \quad u_{(1)}^l = u^l, \qquad u_{(k+1)}^l = \fr{-1}{\la} \pr_j q_{(k)}^{jl}
}
using the function $q_{(k)}^{jl} = q^{jl}(\nab \xi)[u_{(k)}]$ defined in \eqref{eq:defnQjlMap}.  For $Q^{jl}$ to be a solution of \eqref{eq:symmDivWithOsc}, the remainder term in \eqref{eq:paramPlusRemainder} must be chosen to solve the equation
\ali{
\pr_j \widetilde{Q}_{(D)}^{jl} &= e^{i \la \xi} u_{(D+1)}^l \label{eq:getRidOfError}
}

Thanks to the estimate \eqref{eq:theAmplitudeSize}, the bounds on the amplitude $u_{(k)}$ become smaller with each iteration of the parametrix by a factor of
\ali{
\fr{|\nab|}{\la} &\leq C B_\la^{-1} \fr{N^{1/2} \Xi}{N \Xi} \leq C B_\la^{-1} N^{-1/2}
}
After taking $D$ terms in the expansion, the bounds for $u_{(D)}^l$ have the form
\ali{
\co{ \nab^k u_{(D)}^l } + \tau \co{ \nab^k \Ddt u_{(D)}^l } &\leq C_k B_\la^{-D} N^{-D/2} B_\la^{-1/2} \la (N^{1/2} \Xi )^k e_R'
}
In particular, since $\la = B_\la N \Xi$, for $D \geq 2$ we have 
\ali{
\co{ u_{(D)}^l } + \tau \co{ \Ddt u_{(D)} } &\leq C_k B_\la^{-1}  N^{-D/2 + 1} \Xi e_R' \label{eq:afterDIterations}
}
Our goal is to make sure the solution $\widetilde{Q}_{(D)}^{jl}$ to \eqref{eq:getRidOfError} has $C^0$ norm bounded by a multiple of $e_R'$.  In previous constructions of H\"{o}lder continuous solutions on the torus, it has been necessary to assume a super-exponential growth of frequencies (i.e. $N \geq \Xi^\eta$ for some $\eta > 0$), so that the estimate \eqref{eq:afterDIterations} gains a power of $N^{-D/2 + 1} \leq N^{-1} \Xi^{-1} \approx \la^{-1}$ once $D$ is chosen sufficiently large.  
In our case, however, we will gain a smallness factor of $\rho \sim \Xi^{-1}$ from our new method of solving Equation \eqref{eq:getRidOfError}, thus eliminating the apparent need for super-exponential growth of frequencies.  

We take $D=3$, which leaves us with the following estimate for the amplitude in \eqref{eq:getRidOfError}
\ali{
\co{ u_{(D+1)}^l } + \tau \co{ \Ddt u_{(D+1)} } &\leq C B_\la^{-1} \Xi e_R' \label{eq:coBounduD}
}
This choice of $D$ leads also to the estimates
\ali{
\co{ \nab^k u_{(D+1)}^l } + \tau \co{ \nab^k \Ddt u_{(D+1)}^l } &\leq C_k B_\la^{-1} (N^{1/2} \Xi )^k \Xi e_R'
}
The data $U_{(D+1)}^l = e^{i \la \xi} u_{(D+1)}^l$ on the right hand side of \eqref{eq:getRidOfError} now obeys the estimates
\ali{
\co{ \nab^k U_{(D+1)}^l } + \tau \co{ \nab^k \Ddt U_{(D+1)}^l } &\leq C_k B_\la^{-1} (B_\la N \Xi)^k \Xi e_R'
}
According to Theorem \ref{thm:inv4divEq}, there is a solution $\widetilde{Q}_{(D)}^{jl}$ to the equation \eqref{eq:getRidOfError} with support in the same Eulerian cylinder
\[ \supp \widetilde{Q}_{(D)}^{jl} \subseteq \ECyl_{v_\ep}(\fr{2 \tau}{3}, A \rho; t(I), x(I) ) \]
such that $\widetilde{Q}_{(D)}^{jl}$ obeys the estimates
\ali{
\co{ \nab^k \widetilde{Q}_{(D)}^{jl} } + \tau \co{ \nab^k \Ddt \widetilde{Q}_{(D)}^{jl} } &\leq C_k B_\la^{-1} (B_\la N \Xi)^k e_R' \label{eq:boundForStressSoln}
}
We emphasize in particular that the estimate for the solution of Theorem \ref{thm:inv4divEq} gains a factor of $A \rho \sim \Xi^{-1}$, which is consistent with dimensional analysis of the equation.

If $B_\la$ is sufficiently large, then we can guarantee that each term $\widetilde{Q}^{jl}_{(D)}$ has size bounded by $\co{ \widetilde{Q}^{jl}_{(D)} } \leq \fr{1}{B} e_R'$ where $B$ can be any large constant.  In particular, by Proposition \ref{prop:limitedInteractions} on limited interactions, we can guarantee that the sum of all stress terms $\widetilde{Q}^{jl}_{(D)}$ obtained by this procedure is bounded uniformly by $\fr{1}{500} e_R'$, and that the bound \eqref{eq:boundForStressSoln} is also satisfied for the sum of these terms.

On the other hand, the parametrix term \eqref{eq:parametrixTerm} also satisfies the same estimate \eqref{eq:boundForStressSoln}, with the main term contribution coming from the first term $Q_{(1)}^{jl}$, and the number of such $Q_{(D)}^{jl}$ which are nonzero at any given point is likewise bounded.  Choosing $B_\la$ sufficiently large, we can therefore guarantee that the entire contribution $Q_O^{jl} = \sum_I Q_{O,I}^{jl}$ to the stress $R_1$ obeys the estimates
\ali{
\co{ Q_O } &\leq \fr{1}{40} e_R'  \label{eq:boundForQO}
}
At last, we choose $B_\la$ so that the bound \eqref{eq:boundForQO} is satisfied, which implies the desired bound for
\ali{
 \co{ R_1 } &\leq \co{ Q_M } + \co{Q_S} + \co{ Q_O } \leq e_R'
}
With the construction fully determined, it now remains to check that the spatial and advective derivatives of $R_1$ obey the bounds demanded by the Main Lemma (Lemma \ref{lem:mainLemma}).

With the above choice of $B_\la$, we obtain
\ali{
\co{ \nab^k Q_{O}^{jl} } + \tau \co{ \nab^k \Ddt Q_{O}^{jl} } &\leq C_k (N \Xi)^k e_R' \label{eq:boundForStressSolnO}
}
The bound \eqref{eq:boundForStressSolnO} is clearly enough to conclude that the new frequency-energy levels are satisfied for the spatial derivatives of $Q_O$, as the cost of a spatial derivative is at most $|\nab| \leq C_{0} N \Xi$.  Also, the cost of taking an advective derivative is bounded by
\ali{
 \left|\Ddt\right| &\leq \tau^{-1} = C \left( \fr{e_v^{1/2}}{e_R^{1/2} N} \right)^{-1/2} \Xi e_v^{1/2}
}
which is no larger than the required estimate
\ali{
 \left|\Ddt\right| &\unlhd \Xi' (e_v')^{1/2} = C_{0} N \Xi e_R^{1/2} \label{eq:advecDerivBoundWeNeed}
}
thanks to the condition $N \geq \left( \fr{e_v^{1/2}}{e_R^{1/2}} \right)$.  From \eqref{eq:advecDerivBoundWeNeed}, it is straightforward to conclude the necessary bounds on 
\[ (\pr_t + v_1 \cdot \nab) Q_{O} = \Ddt Q_O + (v - v_\ep) \cdot \nab Q_O + V \cdot \nab Q_O \]
using the spatial derivative bounds \eqref{eq:boundForStressSolnO}.  Namely, the derivative $(\pr_t + v_1 \cdot \nab)$ costs at most
\ali{
| (\pr_t + v_1 \cdot \nab) | &\leq \left|\Ddt\right| + | (v - v_\ep) \cdot \nab| + | V \cdot \nab| \\
&\leq \left|\Ddt\right| + C \fr{e_v^{1/2}}{N} (N \Xi)  +C e_R^{1/2} (N \Xi ) \\
&\leq C N \Xi e_R^{1/2}
}
\noindent as desired. One can then take spatial derivatives up to order $L - 1$ for each term at a cost of at most $|\nab| \leq C_0 N \Xi$ per derivative as desired, which is carried out in detail in Sections 24 - 26 of \cite{isett}.  Combining the above estimates with the bounds for the terms $Q_M$ and $Q_S$ already estimated in \cite{isett}, we conclude our proof of the Main Lemma (Lemma \ref{lem:mainLemma}). \qedhere

\section{Solving the Symmetric Divergence Equation}\label{sec:solvingSymmDiv}
We now present our method of solving the underdetermined elliptic equation from which we recover the new stress in the construction.  
The analysis in this Section is independent of the earlier part of this paper, and in particular holds on $\bbR \times \bbR^{d}$ for any $d \geq 2$.

For a symmetric tensor $R^{j \ell} = R^{\ell j}$ and vector field $U^{\ell}$ on $\bbR^{d}$, consider the divergence equation
\begin{equation} \label{eq:symmDivEq}
	\rd_{j} R^{j \ell} = U^{\ell}.
\end{equation}
In what follows, \eqref{eq:symmDivEq} will be referred to as the \emph{symmetric divergence equation}. 

\subsection{Main result for the symmetric divergence equation} 

The following is our main result regarding compactly supported solutions to the symmetric divergence equation \eqref{eq:symmDivEq}.

\begin{thm}[Compactly supported solutions to the symmetric divergence equation] \label{thm:inv4divEq}
Let $A, N, \Xi, e_{v}$ be positive numbers, $L \geq 1$ be a positive integer and $v_{\eps} = (v_{\eps}^{1}, \ldots, v_{\eps}^{d})$ be a vector field on $\bbR \times \bbR^{d}$ such that 
\begin{equation} \label{eq:inv4divEq:est4nbv}
	\nrm{\nb^{\bt} v_{\eps}}_{C^{0}_{t,x}} \leq A \Xi^{\abs{\bt}} e_{v}^{1/2}, \qquad  1 \leq \abs{\bt} \leq L.
\end{equation}

Furthermore, let $U^{\ell}$ be a vector field with zero linear and angular momenta, i.e.,
\begin{align} 
	\int U^{\ell}(t,x) \, \ud x =&0, \label{eq:inv4divEq:hyp:1} \\
	\int (x^{j} U^{\ell} - x^{\ell} U^{j})(t,x) \, \ud x =& 0 \label{eq:inv4divEq:hyp:2}
\end{align}
for all $t$, and such that
\begin{equation} \label{eq:inv4divEq:supp4U}
	\supp U \subseteq \ECyl_{v_{\eps}}(\bar{\tau}, \bar{\rho}; t(I), x(I)),
\end{equation}
for some $(t(I), x(I)) \in \bbR \times \bbR^{d}$ and $0 < \bar{\tau} \leq \Xi^{-1} e_{v}^{-1/2}$. 
Assume also that for
\begin{equation*}
	\Lmb > 0, \qquad
	0 < \hat{\tau} \leq \Xi^{-1} e_{v}^{-1/2},
\end{equation*}
the vector field $U$ obeys the estimates
\begin{equation} \label{eq:inv4divEq:hyp:3} 
\begin{aligned}
	\nrm{\nb^{\bt} U}_{C_{t,x}^{0}} \leq& A \Lmb^{\abs{\bt}}	\qquad \abs{\bt} = 0, \ldots, L, \\
	\nrm{\nb^{\bt} (\rd_{t} + v_{\eps} \cdot \nb) U}_{C_{t,x}^{0}} \leq& A \hat{\tau}^{-1} \Lmb^{\abs{\bt}}	\qquad \abs{\bt} = 0, \ldots, L-1.
\end{aligned}
\end{equation}

Then there exists a solution $R^{j \ell}[U]$ to the symmetric divergence equation \eqref{eq:symmDivEq}, depending linearly on $U^{\ell}$, with the following properties:
\begin{enumerate}
\item The support of $R^{j \ell}[U]$ stays in the cylinder $\ECyl_{v_{\eps}}(\bar{\tau}, \bar{\rho}; t(I), x(I))$, i.e.,
\begin{equation} \label{eq:inv4divEq:supp4Rjl}
	\supp R^{j \ell}[U] \subseteq \ECyl_{v_{\eps}}(\bar{\tau}, \bar{\rho}; t(I), x(I)).
\end{equation}

\item There exists $C > 0$ such that for $\abs{\bt} = 0, \ldots, L$,
\begin{equation} \label{eq:inv4divEq:1}
	\nrm{\nb^{\bt} R^{j \ell}[U]}_{C^{0}_{t,x}} \leq C A \bar{\rho} \sum_{m=0}^{\abs{\bt}}  \bar{\rho}^{-(\abs{\bt}-m)} \Lmb^{m}
\end{equation}

\item There exists $C > 0$ such that for $\abs{\bt} = 0, \ldots, L-1$,
\begin{equation} \label{eq:inv4divEq:2}
	\nrm{\nb^{\bt} (\rd_{t} + v_{\eps} \cdot \nb) R^{j \ell}[U]}_{C^{0}_{t,x}} \leq C A \hat{\tau}^{-1} \bar{\rho} \sum_{m_{0}+m_{1}+m_{2} = \abs{\bt}} \Xi^{m_{0}} \bar{\rho}^{-m_{1}} \Lmb^{m_{2}}
\end{equation}
where the sum is over all triplets of non-negative integers $(m_{0}, m_{1}, m_{2})$ such that $m_{0} + m_{1} + m_{2} = \abs{\bt}$.
\end{enumerate}
\end{thm}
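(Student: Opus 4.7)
The strategy is to build $R^{j\ell}[U]$ as a smooth average, over centers $y$ transported along the coarse-scale flow of $v_\eps$, of a ``basic'' pointwise solution operator $R_0^{j\ell}[U;\,y]$ that has compactly supported image but a mild singularity at $y$. This follows the blueprint sketched in Section~\ref{sec:angMom:symmDiv}, adapted to the present time-dependent, transport-compatible setting.

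\emph{Step 1 (the basic operator).} For a fixed time slice and reference point $y$, I will construct $R_0^{j\ell}[U(t,\cdot);\,y]$ by translating the origin to $y$ and Taylor expanding $\widehat{U}^\ell(\xi)$ about $\xi=0$ to second order. The zeroth-order term vanishes by \eqref{eq:inv4divEq:hyp:1}, and the first-order coefficient $\partial^j_\xi\widehat{U}^\ell(0) = -i\int (x-y)^j U^\ell\,dx$ is symmetric in $(j,\ell)$ precisely by \eqref{eq:inv4divEq:hyp:2}; this is the compatibility condition dual to the classification of Killing vector fields on $\bbR^d$ via the identity \eqref{eq:usingSymmDerivFormula}. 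I will absorb the symmetric first-order piece into a fixed smooth symmetric compactly supported tensor bump multiplied by this first moment of $U$, and the second-order Taylor remainder will transform back to physical space as a Bogovskii-type ray integral yielding a symmetric tensor that solves \eqref{eq:symmDivEq} pointwise, with support in a bounded neighborhood of $\{y\}\cup\supp U$ and only a mild singularity at $y$.

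\emph{Step 2 (flow-adapted averaging).} Let $\Phi_s=(t+s,\Phi_s')$ denote the flow of $\partial_t+v_\eps\cdot\nabla$, and fix a non-negative $\phi\in C_c^\infty(B(0,1))$ with $\int\phi=1$. Then I will set
\[
R^{j\ell}[U](t,x) := \int_{\bbR^d} R_0^{j\ell}\!\bigl[U(t,\cdot);\,\Phi'_{t-t(I)}(t(I),x(I))+h\bigr](x)\,\bar\rho^{-d}\phi(h/\bar\rho)\,dh.
\]
By linearity of $R_0$ in $U$ and its pointwise solution property, $R^{j\ell}[U]$ is a smooth symmetric tensor solving \eqref{eq:symmDivEq}. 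The crucial property is that the center $y(t,h):=\Phi'_{t-t(I)}(t(I),x(I))+h$ is itself transported by $\partial_t+v_\eps\cdot\nabla$, so the operator will commute with the advective derivative up to commutator terms involving $v_\eps$. The support bound \eqref{eq:inv4divEq:supp4Rjl} will follow by combining \eqref{eq:inv4divEq:supp4U}, the fact that $\{y(t,h):|h|\leq\bar\rho\}$ lies in the time slice at $t$ of $\ECyl_{v_{\eps}}(\bar\tau,\bar\rho;t(I),x(I))$, and Lemmas~\ref{lem:cylinderEquiv} and~\ref{lem:cylContainmentProps}, absorbing harmless constants into $\bar\rho$.

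\emph{Step 3 (estimates and the main obstacle).} For \eqref{eq:inv4divEq:1} I will differentiate under the integral: each derivative landing on $\phi(h/\bar\rho)$ or on the $y$-dependence of the kernel costs $\bar\rho^{-1}$, each derivative landing on $U$ costs $\Lmb$ by \eqref{eq:inv4divEq:hyp:3}, and the prefactor $\bar\rho$ reflects the single factor of $(x-y)$ in the basic formula together with the bound $|x-y(t,h)|\lesssim\bar\rho$ on $\supp R_0$. For \eqref{eq:inv4divEq:2} I will write
\[
(\partial_t+v_\eps\cdot\nabla) R^{j\ell}[U] = R^{j\ell}\!\bigl[(\partial_t+v_\eps\cdot\nabla)U\bigr] + \bigl[\partial_t+v_\eps\cdot\nabla,\,R^{j\ell}\bigr](U);
\]
the first piece contributes $\hat\tau^{-1}$ via \eqref{eq:inv4divEq:hyp:3}, while the transport of $y(t,h)$ ensures that the commutator expands into terms bringing down $m_0$ derivatives of $v_\eps$ (each costing $\Xi$ by \eqref{eq:inv4divEq:est4nbv}), $m_1$ on $\phi(h/\bar\rho)$ (each $\bar\rho^{-1}$), and $m_2$ on $U$ (each $\Lmb$), matching the triple sum in \eqref{eq:inv4divEq:2}. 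The main technical obstacle is the explicit construction of $R_0^{j\ell}[U;y]$ as an \emph{exact symmetric} pointwise solution: this is the second-order analogue of the scalar Bogovskii derivation and requires a careful symmetrization that uses \eqref{eq:inv4divEq:hyp:2} in an essential way while simultaneously keeping the support localized near $\{y\}\cup\supp U$.
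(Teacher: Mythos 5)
Your proposal is essentially the paper's own construction: a second-order Taylor expansion of $\widehat{U}$ at $\xi=0$ (the zeroth moment and the antisymmetric first moment killed by \eqref{eq:inv4divEq:hyp:1}--\eqref{eq:inv4divEq:hyp:2}, the remainder symmetrized by the algebraic identity behind \eqref{eq:usingSymmDerivFormula}) produces Bogovskii-type ray-integral operators with a point singularity, which you then average over centers translated along the central coarse-scale trajectory --- exactly the paper's weight $\zeta$ transported by $\overline{v}_{\eps}(t)$ --- so that the operator commutes with $\rd_t+\overline{v}_{\eps}\cdot\nb$ and the advective bound reduces to the commutator with $(v_{\eps}-\overline{v}_{\eps})\cdot\nb$, which is controlled because $\abs{v_{\eps}-\overline{v}_{\eps}}\lesssim \bar{\rho}\,\Xi e_v^{1/2}$ on the cylinder and $\Xi e_v^{1/2}\leq \hat{\tau}^{-1}$. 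The only points to tighten are minor: the paper keeps the symmetric first-order piece as the ray-integral term $r^{j\ell}_{0}$ rather than a moment-times-bump correction, and the support must lie in the \emph{same} cylinder \eqref{eq:inv4divEq:supp4Rjl} --- which follows from convexity of the cross-section (the ray integrals are supported on segments joining centers in the $\bar{\rho}$-ball to $\supp U$), not from ``absorbing harmless constants into $\bar{\rho}$.''
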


\begin{rem} 
This theorem should be compared with Theorem 27.1 in \cite{isett}, which was proved by solving a transport equation\footnote{We remark that the method used in \cite{isett} seems to be very special to the torus.  A key ingredient in this approach is that the transport by a divergence free vector field preserves the integral zero property, which is the only necessary condition to solve the symmetric divergence equation on $\T^3$.  On the other hand, the orthogonality conditions from angular momentum conservation seem to prevent such an approach from applying to the whole space.} using a Helmholtz-type solution operator.  The key difference is, of course, that the present theorem preserves the support property \eqref{eq:inv4divEq:supp4U} whereas Theorem 27.1 in \cite{isett} does not. We would like to point out two more differences:
\begin{enumerate}
\item Theorem 27.1 in \cite{isett} gives estimates in $L^{p}_{x}$ with $1 < p < \infty$ (more specifically, $p = 4$), whereas the present theorem operates directly in $C^{0}_{t,x}$.
\item Whereas Theorem 27.1 in \cite{isett} gains a derivative (by Calder\`on-Zygmund theory), we do not prove gain of derivative in Theorem \ref{thm:inv4divEq}. On the other hand, we \emph{do} gain a factor of $\bar{\rho}$ (i.e., the spatial scale of $U^{\ell}$), which is crucial for removing the super-exponential growth assumption $N \geq \Xi^{\eta}$ in the Main Lemma.
\end{enumerate}

\end{rem}

\subsection{Derivation of the solution operator}
The purpose of this subsection is to give a derivation of the solution operator $R^{j\ell}[U]$ for \eqref{eq:symmDivEq} in Theorem \ref{thm:inv4divEq}. 

For the moment, we shall omit the time variable and work entirely on $\bbR^{d}$. Let $U^{\ell}= U^{\ell}(x)$ be a vector field supported on some ball $B(\bar{\rho}; x_{0})$. For simplicity, we will furthermore assume that $U$ is smooth and $x_{0} = 0$. 

Our first idea is to use the Fourier transform and Taylor expand $\widehat{U}^{\ell}(\xi)$ about the frequency origin $\xi = 0$ in the Fourier space.  We will then try to write the terms of the Taylor expansion as the divergence of a symmetric tensor, up to some terms evaluated at $\xi = 0$. Translating the resulting formula to the physical space, we shall obtain a solution operator which possess the desired (physical space) support property, albeit with a mild singularity at $0$.

Indeed, we first compute
\begin{align*}
\widehat{U}^{\ell}(\xi) 
= & 	\widehat{U}^{\ell}(0) + \bb( \int_{0}^{1} \rd^{k} \widehat{U}^{\ell} (\sgm \xi) \, \ud \sgm \bb) \xi_{k} \\
= & 	\widehat{U}^{\ell}(0) + \frac{1}{2}\bb( \int_{0}^{1} (\rd^{k} \widehat{U}^{\ell} + \rd^{\ell} \widehat{U}^{k}) (\sgm \xi) \, \ud \sgm \bb) \xi_{k} 
	+ \frac{1}{2}\bb( \int_{0}^{1} (\rd^{k} \widehat{U}^{\ell} - \rd^{\ell} \widehat{U}^{k}) (\sgm \xi) \, \ud \sgm \bb) \xi_{k} \\
= & 	\widehat{U}^{\ell}(0) + \frac{1}{2} (\rd^{k} \widehat{U}^{\ell} - \rd^{\ell} \widehat{U}^{k})(0) \xi_{k} 
	+ \frac{1}{2}\bb( \int_{0}^{1} (\rd^{k} \widehat{U}^{\ell} + \rd^{\ell} \widehat{U}^{k}) (\sgm \xi) \, \ud \sgm \bb) \xi_{k} \\
	&+ \frac{1}{2} \bb( \int_{0}^{1} (1-\sgm) (\rd^{j} \rd^{k} \widehat{U}^{\ell} - \rd^{j} \rd^{\ell} \widehat{U}^{k})(\sgm \xi) \, \ud \sgm \bb) \xi_{j} \xi_{k}.
\end{align*}

Note that the third term on the right-hand side is (formally) the Fourier transform of a divergence of a symmetric tensor. The last term may also be written as a sum of two terms of such type by the following calculation, which is inspired by the closely related physical space identity \eqref{eq:usingSymmDerivFormula}:
\begin{align*}
\frac{1}{2} \bb( \int_{0}^{1} (1-\sgm) (\rd^{j} \rd^{k} \widehat{U}^{\ell} - \rd^{j} \rd^{\ell} \widehat{U}^{k})(\sgm \xi) \, \ud \sgm \bb) \xi_{j} \xi_{k}
= & 	\frac{1}{2} \bb( \int_{0}^{1} (1-\sgm) \xi_{k} (\rd^{j} \rd^{k} \widehat{U}^{\ell} + \rd^{\ell} \rd^{k} \widehat{U}^{j})(\sgm \xi) \, \ud \sgm \bb) \xi_{j} \\
& - \bb( \int_{0}^{1} (1-\sgm) \xi_{k} (\rd^{\ell} \rd^{j} \widehat{U}^{k})(\sgm \xi) \, \ud \sgm \bb) \xi_{j} \\
\end{align*}

Therefore, assuming
\begin{equation} \label{eq:zeroMomentum}
\begin{gathered}
	\widehat{U}^{\ell} (0) = \int U^{\ell}(x) \, \ud x= 0, \\
	 \bb( \frac{1}{i} \rd^{k} \widehat{U}^{\ell} - \frac{1}{ i} \rd^{\ell} \widehat{U}^{k} \bb) (0) = \int (x^{\ell} U^{k} - x^{k} U^{\ell})(x) \, \ud x = 0,
\end{gathered}
\end{equation}
which is equivalent to the assumptions \eqref{eq:inv4divEq:hyp:1}, \eqref{eq:inv4divEq:hyp:2}  on $U$, the following formula for $\widehat{U}^{\ell}(\xi)$ holds:
\begin{align*}
	\widehat{U}^{\ell} (\xi)
	=&\frac{1}{2}\bb( \int_{0}^{1} (\rd^{j} \widehat{U}^{\ell} + \rd^{\ell} \widehat{U}^{j}) (\sgm \xi) \, \ud \sgm \bb) \xi_{j} \\
	&	+ \frac{1}{2} \bb( \int_{0}^{1} (1-\sgm) \xi_{k} (\rd^{j} \rd^{k} \widehat{U}^{\ell} + \rd^{\ell} \rd^{k} \widehat{U}^{j})(\sgm \xi) \, \ud \sgm \bb) \xi_{j} \\
	&	 - \bb( \int_{0}^{1} (1-\sgm) \xi_{k} (\rd^{\ell} \rd^{j} \widehat{U}^{k})(\sgm \xi) \, \ud \sgm \bb) \xi_{j} \\
\end{align*}

Let us define $r^{j \ell}[U] := r^{j\ell}_{0}[U] + r^{j \ell}_{1}[U] + r^{j \ell}_{2}[U]$, where
\begin{align}
	r^{j\ell}_{0}[U]
	= & \calF^{-1} [\frac{1}{2}\bb( \int_{0}^{1} \bb( \frac{1}{i} \rd^{j} \widehat{U}^{\ell} + \frac{1}{i} \rd^{\ell}\widehat{U}^{j} \bb) (\sgm \xi) \, \ud \sgm \bb)] \\
	r^{j \ell}_{1}[U] 
	=& \calF^{-1} [ \frac{1}{2} \bb( \int_{0}^{1} (1-\sgm) (i \xi_{k}) (- \rd^{j} \rd^{k} \widehat{U}^{\ell} - \rd^{\ell} \rd^{k} \widehat{U}^{j} )(\sgm \xi) \, \ud \sgm \bb)], \\
	r^{j \ell}_{2}[U]
	= & \calF^{-1}[ - \bb( \int_{0}^{1} (1-\sgm) (i \xi_{k}) ( - \rd^{j} \rd^{\ell} \widehat{U}^{k} )(\sgm \xi) \, \ud \sgm \bb) ]. 
\end{align}

Computing the inverse Fourier transform, we arrive at the formal formulae
\begin{align}
	r^{j \ell}_{0}[U]
	=& - \frac{1}{2} \int_{0}^{1} \bb( \frac{x^{j}}{\sgm} U^{\ell}(\frac{x}{\sgm}) + \frac{x^{\ell}}{\sgm} U^{j}(\frac{x}{\sgm}) \bb) \, \frac{\ud \sgm}{\sgm^{d}}, \label{eq:rjl0:formal}\\
	r^{j \ell}_{1}[U]
	=& \frac{1}{2}  \frac{\rd}{\rd x^{k}} \int_{0}^{1} (1-\sgm)  \bb( \frac{x^{j} x^{k}}{\sgm^{2}} U^{\ell}(\frac{x}{\sgm}) + \frac{x^{\ell} x^{k}}{\sgm^{2}} U^{j}(\frac{x}{\sgm}) \bb) \, \frac{\ud \sgm}{\sgm^{d}}, \label{eq:rjl1:formal}\\
	r^{j \ell}_{2}[U]
	= & -  \frac{\rd}{\rd x^{k}} \int_{0}^{1} (1-\sgm) \bb(\frac{x^{\ell} x^{j}}{\sgm^{2}} U^{k}(\frac{x}{\sgm}) \bb) \, \frac{\ud \sgm}{\sgm^{d}} .\label{eq:rjl2:formal}
\end{align}
Thus, for $a = 0,1,2$, the values of $r^{jl}_a[U]$ at a point $x \in \R^3$ are given formally as weighted integrals of $U$ and $\nab U$ along the ray emanating from $x$ away from the origin.

In fact, when interpreted correctly, these expressions already give us a \emph{distributional} solution to \eqref{eq:symmDivEq} with the desired support property
\begin{equation} \label{eq:supp4rjl}
	\supp r^{j \ell} \subseteq B(\bar{\rho}; 0),
\end{equation}
but with a singularity at $x=0$. Indeed, given a test function $\varphi \in C^{\infty}_{c}$, we will define
\begin{align}
	\brk{r^{j \ell}_{0}[U], \varphi} 
	:=& - \lim_{\dlt \to 0+} \frac{1}{2} \int_{\dlt}^{1} \int \bb( \frac{x^{j}}{\sgm} U^{\ell}(\frac{x}{\sgm}) + \frac{x^{\ell}}{\sgm} U^{j}(\frac{x}{\sgm}) \bb) \varphi(x) \, \ud x \frac{\ud \sgm}{\sgm^{d}}, \label{eq:rjl0} \\
	\brk{r^{j \ell}_{1}[U], \varphi}
	:=& - \frac{1}{2} \lim_{\dlt \to 0+} \int_{\dlt}^{1} (1-\sgm)  \int \bb( \frac{x^{j} x^{k}}{\sgm^{2}} U^{\ell}(\frac{x}{\sgm}) + \frac{x^{\ell} x^{k}}{\sgm^{2}} U^{j}(\frac{x}{\sgm}) \bb) \rd_{k} \varphi(x) \, \ud x \frac{\ud \sgm}{\sgm^{d}}, \label{eq:rjl1}\\
	\brk{r^{j \ell}_{2}[U], \varphi}
	:= & \lim_{\dlt \to 0+} \int_{\dlt}^{1} (1-\sgm) \int \bb(\frac{x^{\ell} x^{j}}{\sgm^{2}} U^{k}(\frac{x}{\sgm}) \bb) \rd_{k} \varphi(x) \, \ud x \frac{\ud \sgm}{\sgm^{d}} .\label{eq:rjl2}
\end{align}

These are well-defined (tempered) distributions on $\bbR^{d}$. Indeed, by a simple change of variables, we see that
\begin{equation*}
	\abs{\brk{r^{j \ell}_{0}[U], \varphi}} \leq C_{U, \bar{\rho}} \nrm{\varphi}_{C^{0}_{x}}, \quad
	\abs{\brk{r^{j \ell}_{1}[U], \varphi}} + \abs{\brk{r^{j \ell}_{2}[U], \varphi}} \leq C_{U, \bar{\rho}} \nrm{\nb \varphi}_{C^{0}_{x}}.
\end{equation*}

The support property \eqref{eq:supp4rjl} and smoothness outside $\set{x=0}$ follow immediately from the definition. Moreover, a straightforward computation with distributions shows that 
\begin{equation}
	\rd_{j} r^{j \ell}[U] = U^{\ell} - \bb( \int U^{\ell}(x) \, \ud x \bb) \dlt_{0} - \frac{1}{2} \bb( \int (x^{\ell} U^{j} - x^{j} U^{\ell})(x) \, \ud x \bb) \rd_{j} \dlt_{0}.
\end{equation}

Thus, under the assumption \eqref{eq:zeroMomentum}, we see that $r^{j\ell}[U]$ is a distributional solution to \eqref{eq:symmDivEq}.

Unfortunately, $r^{j\ell}[U]$ as defined above apparently has a singularity at $x=0$. We will overcome this difficulty by exploiting translation invariance of \eqref{eq:symmDivEq}; more precisely, we will conjugate $r^{j \ell}[U]$ by translations and take a smooth average of the resulting formulae, ultimately `smearing out' the singularity.

Given $y \in \bbR^{d}$, let us conjugate the operators $r^{j\ell}_{0}$, $r^{j\ell}_{1}$ and $r^{j\ell}_{2}$ by translation by $y$. Then we are led to the conjugated operator ${}^{(y)} r^{j \ell} = {}^{(y)} r^{j \ell}_{0} + {}^{(y)} r^{j \ell}_{1} + {}^{(y)} r^{j \ell}_{2}$, which is formally defined by
\begin{align}
	{}^{(y)} r^{j \ell}_{0}[U]
	=& - \frac{1}{2} \int_{0}^{1}  \frac{(x-y)^{j}}{\sgm} U^{\ell}(\frac{x-y}{\sgm}+y) + \frac{(x-y)^{\ell}}{\sgm} U^{j}(\frac{x-y}{\sgm}+y)  \, \frac{\ud \sgm}{\sgm^{d}}, \label{eq:yrjl0}\\
	{}^{(y)} r^{j \ell}_{1}[U]
	=& \frac{1}{2} \frac{\rd}{\rd x^{k}} \int_{0}^{1} (1-\sgm)   \frac{(x-y)^{j} (x-y)^{k}}{\sgm^{2}} U^{\ell}(\frac{x-y}{\sgm}+y)  \, \frac{\ud \sgm}{\sgm^{d}} \label{eq:yrjl1}\\
	& + \frac{1}{2}  \frac{\rd}{\rd x^{k}} \int_{0}^{1} (1-\sgm)   \frac{(x-y)^{\ell} (x-y)^{k}}{\sgm^{2}} U^{j}(\frac{x-y}{\sgm}+y)  \, \frac{\ud \sgm}{\sgm^{d}}, \notag \\
	{}^{(y) }r^{j \ell}_{2}[U]
	= & - \frac{\rd}{\rd x^{k}} \int_{0}^{1} (1-\sgm)  \frac{(x-y)^{\ell} (x-y)^{j}}{\sgm^{2}} U^{k}(\frac{x-y}{\sgm}+y)  \, \frac{\ud \sgm}{\sgm^{d}}. \label{eq:yrjl2}
\end{align}

These are to be interpreted as in \eqref{eq:rjl0}--\eqref{eq:rjl2} as distributions. Note that as long as $y \in B(\bar{\rho}; 0)$, the distribution ${}^{(y)} r^{j\ell}[U]$ satisfies the desired support property \eqref{eq:supp4rjl}. Motivated by this consideration, let us take a smooth function $\zeta(y)$ which is supported in $B(\bar{\rho}; 0)$ and satisfies 
\ali{
\int \zeta(y) dy &= 1 
}
We now define the solution operator ${}^{(\zeta)} \widetilde{R}^{j \ell}[U]$ by averaging ${}^{(y)} r^{j\ell}[U]$ against $\zeta$, i.e.,
\begin{equation} 
	{}^{(\zeta)} \widetilde{R}^{j \ell}[U](x) = \int  {}^{(y)} r^{j \ell}(x) \zeta(y) \, \ud y.
\end{equation}
We will finally obtain the solution operator $R^{j \ell}[U]$ of Theorem \ref{thm:inv4divEq} by making an appropriate choice of $\zeta$ depending on time.



From the discussion above, we see that ${}^{(\zeta)} \widetilde{R}^{j \ell}[U]$ inherits the desirable properties of $r^{j \ell}[U]$. Indeed, assuming \eqref{eq:zeroMomentum}, ${}^{(\zeta)} \widetilde{R}^{j \ell}[U]$ is a (distributional) solution to \eqref{eq:symmDivEq} satisfying the support property
\begin{equation} \label{eq:supp4RjlTilde}
	\supp {}^{(\zeta)} \widetilde{R}^{j \ell}[U] \subseteq B(\bar{\rho}; 0).
\end{equation}

As we shall see below, thanks to averaging with respect to $\zeta$, ${}^{(\zeta)} \widetilde{R}^{j \ell}[U]$ will moreover turn out to be smooth in the spatial variables provided $U$ is smooth as well (see in particular the calculations \eqref{eq:theZTrickAppears} and \eqref{eq:R2actuallyr2} below).

\subsection{Formula for $R^{j \ell}[U]$ and basic properties}
Let $U^{\ell}$ be a vector field satisfying the hypotheses \eqref{eq:inv4divEq:hyp:1}, \eqref{eq:inv4divEq:hyp:2} and \eqref{eq:inv4divEq:supp4U} of Theorem \ref{thm:inv4divEq}. Denote by $\overline{v}_{\eps}(t)$ the value of the coarse scale velocity $v_{\eps}$ at $\Phi_{t-t(I)}(t(I), x(I))$. For $t \in [t(I)-\bar{\tau}, t(I)+\bar{\tau}]$, this point is exactly the center of the cross-section $\ECyl_{v_{\eps}}(\bar{\tau}, \bar{\rho}; t(I), x(I)) \cap \set{t} \times \bbR^{d}$.  

Recall from the previous subsection that we need to choose a (spatially) smooth function $\zeta$ with integral $1$ in order to determine our solution operator for \eqref{eq:symmDivEq}. We shall define a function $\zeta = \zeta(t,x)$ adapted to $\ECyl_{v_{\eps}}(\bar{\tau}, \bar{\rho}; t(I), x(I))$ according to the following procedure: given a smooth function $\widetilde{\zeta} = \widetilde{\zeta}(x)$ with $\supp \widetilde{\zeta} \subseteq B(\bar{\rho}; x(I))$ and $\int \widetilde{\zeta}(x) dx = 1$, let $\zeta$ be the solution to the transport equation
\begin{equation} 
\left\{
\begin{aligned}
	(\rd_{t} + \overline{v}^{j}_{\eps}(t) \rd_{j}) \zeta(t,x) =& 0 \qquad \hbox{ for } t \in [t(I)-\bar{\tau}, t(I)+\bar{\tau}], \\
	\zeta(t(I), x) =& \widetilde{\zeta}(x).
\end{aligned}
\right.
\end{equation}

Note that $\zeta$ satisfies the support property
\begin{equation} \label{eq:supp4zeta}
	\supp \zeta \subseteq \ECyl_{v_{\eps}}(\bar{\tau}, \bar{\rho}; t(I), x(I)).
\end{equation}
and also satisfies $\int_{\R^d} \zeta(t,y) dy = 1$ at all times $t$.

Moreover, choosing $\widetilde{\zeta}$ to be a bump function adapted to $B(\bar{\rho}; x(I))$, the following estimates hold for $\zeta$:
\begin{equation} \label{eq:bnd4zeta}
	\nrm{\nb^{\bt} \zeta}_{C^{0}_{t,x}} \leq C_{\bt} \bar{\rho}^{-d-\abs{\bt}} \qquad \hbox{ for all } \abs{\bt} \geq 0.
\end{equation}

We are now ready to define the solution operator $R^{j \ell}[U]$. Let $R^{j \ell}[U] := R^{j \ell}_{0}[U] + R^{j \ell}_{1}[U] + R^{j \ell}_{2}[U]$, where
\begin{align} 
R^{j \ell}_{0}[U] 
=& 	-\frac{d}{2} \int_{0}^{1} \int \zeta(t, y) \frac{(x-y)^{j}}{\sgm} U^{\ell}(t, \frac{x-y}{\sgm} + y) \, \frac{\ud y}{\sgm^{d}} \ud \sgm \label{eq:Rjl0}\\
&	-\frac{d}{2} \int_{0}^{1} \int \zeta(t, y) \frac{(x-y)^{\ell}}{\sgm} U^{j}(t, \frac{x-y}{\sgm} + y) \, \frac{\ud y}{\sgm^{d}} \ud \sgm, \notag \\
R^{j \ell}_{1}[U]
=&	\frac{1}{2} \int_{0}^{1} \int (\rd_{k} \zeta)(t, y) \frac{(x-y)^{j} (x-y)^{k}}{\sgm^{2}} U^{\ell}(\frac{x-y}{\sgm} + y) \, \frac{\ud y}{\sgm^{d}} \ud \sgm \label{eq:Rjl1} \\
& + \frac{1}{2} \int_{0}^{1} \int (\rd_{k} \zeta)(t, y) \frac{(x-y)^{\ell} (x-y)^{k}}{\sgm^{2}} U^{j}(\frac{x-y}{\sgm} + y) \, \frac{\ud y}{\sgm^{d}} \ud \sgm, \notag \\
R^{j \ell}_{2}[U]
=&	- \int_{0}^{1} \int (\rd_{k} \zeta)(t, y) \frac{(x-y)^{j} (x-y)^{\ell}}{\sgm^{2}} U^{k}(\frac{x-y}{\sgm} + y) \, \frac{\ud y}{\sgm^{d}} \ud \sgm. \label{eq:Rjl2}
\end{align}

\begin{rem}  One can alternatively define the solution operator $R^{j \ell}[U]$ by letting $\zeta$ solve the transport equation
\begin{equation} 
\label{eq:transportMeasure}
\left\{
\begin{aligned}
	\rd_{t} \zeta + \rd_j( v^{j}_{\eps} \zeta ) =& 0 \qquad \hbox{ for } t \in [t(I)-\bar{\tau}, t(I)+\bar{\tau}], \\
	\zeta(t(I), x) =& \widetilde{\zeta}(x).
\end{aligned}
\right.
\end{equation}
This approach specifies that $\zeta$ is transported as a measure, and the property $\int_{\R^d} \zeta(t,y) dy = 1$ is preserved by the transport equation.

This alternative construction is very natural from a Lagrangian point of view, whereas the method we have chosen is more Eulerian in nature.  The resulting operators obey essentially the same estimates for the length and time scales in our application, so there is no clear advantage of one approach over the other.  It is also unclear whether one approach is more natural for our purpose, as the problem of solving the symmetric divergence equation with good transport properties is in some sense both Eulerian and Lagrangian at the same time.  Here we have chosen the more Eulerian approach since it leads to a shorter proof in our context, although the approach based on solving \eqref{eq:transportMeasure} leads to an interesting alternative proof of boundedness for the commutator $[\Ddt, R^{jl}]$.  \qedsymbol
\end{rem}

The following Proposition summarizes the basic properties of the operator $R^{j \ell}[U]$.
\begin{prop} \label{prop:Rjl}
Let $U^{\ell}$ be a vector field on $\bbR \times \bbR^{d}$ satisfying the hypotheses \eqref{eq:inv4divEq:hyp:1}, \eqref{eq:inv4divEq:hyp:2} and \eqref{eq:inv4divEq:supp4U} of Theorem \ref{thm:inv4divEq}. Define $R^{j \ell}[U] := R^{j \ell}_{0}[U] + R^{j \ell}_{1}[U] + R^{j \ell}_{2}[U]$ by \eqref{eq:Rjl0}, \eqref{eq:Rjl1} and \eqref{eq:Rjl2}. Then $R^{j \ell}[U]$ possesses the following properties:
\begin{enumerate}
\item $R^{j \ell}[U]$ is symmetric in $j, \ell$ and depends linearly on $U$.
\item $R^{j \ell}[U]$ solves the symmetric divergence equation, i.e.,
\begin{equation*}
	\rd_{j} R^{j \ell}[U] = U^{\ell}.
\end{equation*}
\item $R^{j \ell}[U]$ has the support property
\begin{equation} \label{eq:supp4Rjl}
	\supp R^{j \ell}[U] \subseteq \ECyl_{v_{\eps}}(\bar{\tau}, \bar{\rho}; t(I), x(I)).
\end{equation}
\item The following differentiation formulae hold for $R^{j\ell}_{a}[U]$ $(a=0,1,2)$:
\begin{align} 
\nb^{\bt} R^{j \ell}_{0}[U] 
=& 	-\frac{d}{2} \sum_{\bt_{1} + \bt_{2} = \bt} \int_{0}^{1} \int (\nb^{\bt_{1}}\zeta)(t, y) \frac{(x-y)^{j}}{\sgm} (\nb^{\bt_{2}} U^{\ell})(t, \frac{x-y}{\sgm} + y) \, \frac{\ud y}{\sgm^{d}} \ud \sgm \label{eq:dRjl0}\\
&	-\frac{d}{2} \sum_{\bt_{1} + \bt_{2} = \bt} \int_{0}^{1} \int (\nb^{\bt_{1}}\zeta)(t, y) \frac{(x-y)^{\ell}}{\sgm} (\nb^{\bt_{2}} U^{j})(t, \frac{x-y}{\sgm} + y) \, \frac{\ud y}{\sgm^{d}} \ud \sgm, \notag \\
\nb^{\bt} R^{j \ell}_{1}[U]
=&	\frac{1}{2} \sum_{\bt_{1} + \bt_{2} = \bt} \int_{0}^{1} \int (\nb^{\bt_{1}} \rd_{k} \zeta)(t, y) \frac{(x-y)^{j} (x-y)^{k}}{\sgm^{2}} (\nb^{\bt_{2}} U^{\ell})(\frac{x-y}{\sgm} + y) \, \frac{\ud y}{\sgm^{d}} \ud \sgm \label{eq:dRjl1}\\
& + \frac{1}{2} \sum_{\bt_{1} + \bt_{2} = \bt} \int_{0}^{1} \int (\nb^{\bt_{1}} \rd_{k} \zeta)(t, y) \frac{(x-y)^{\ell} (x-y)^{k}}{\sgm^{2}} (\nb^{\bt_{2}} U^{j})(\frac{x-y}{\sgm} + y) \, \frac{\ud y}{\sgm^{d}} \ud \sgm, \notag \\
\nb^{\bt} R^{j \ell}_{2}[U]
=&	- \sum_{\bt_{1} + \bt_{2} = \bt} \int_{0}^{1} \int (\nb^{\bt_{1}} \rd_{k} \zeta)(t, y) \frac{(x-y)^{j} (x-y)^{\ell}}{\sgm^{2}} (\nb^{\bt_{2}} U^{k})(\frac{x-y}{\sgm} + y) \, \frac{\ud y}{\sgm^{d}} \ud \sgm. \label{eq:dRjl2}
\end{align}
where the summations are over all pairs of multi-indices $(\bt_{1}, \bt_{2})$ such that $\bt_{1} + \bt_{2} = \bt$.

\item Define the \emph{approximate advective derivative} $\approxMd$ to be $\approxMd := \rd_{t} + \overline{v}_{\eps}(t) \cdot \nb$.  Then $R^{j \ell}$ commutes with $\approxMd$, i.e.,
\begin{equation} \label{eq:approxMdRjl}
	\approxMd R_{a}^{j\ell}[U] = R^{j \ell}_{a}[\approxMd U] \qquad a =0,1,2.
\end{equation} 

\end{enumerate}
\end{prop}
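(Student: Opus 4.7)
The plan is to verify the five assertions in turn, using the explicit formulas \eqref{eq:Rjl0}--\eqref{eq:Rjl2} together with the distributional derivation of the operator carried out in the previous subsection. Parts (1), (3), (4) are essentially direct manipulations of the integral representations; parts (2) and (5) rest on more specific algebraic and geometric structure. For (1), symmetry in $(j,\ell)$ is manifest: in $R^{j\ell}_{0}$ and $R^{j\ell}_{1}$ the two summands are organized so that swapping $j \leftrightarrow \ell$ just interchanges them, and in $R^{j\ell}_{2}$ the factor $(x-y)^{j}(x-y)^{\ell}$ is automatically symmetric. Linearity in $U$ is immediate. For (3), fix $t$ and set $\bar{x}(t) := \Phi'_{t-t(I)}(t(I),x(I))$, the center of the $t$-cross-section of $\ECyl_{v_{\eps}}(\bar\tau,\bar\rho;t(I),x(I))$. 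By \eqref{eq:supp4zeta} and \eqref{eq:inv4divEq:supp4U}, both $\supp\zeta(t,\cdot)$ and $\supp U(t,\cdot)$ lie in $B(\bar\rho;\bar x(t))$. A non-vanishing integrand thus forces $y \in B(\bar\rho;\bar x(t))$ and $w := (x-y)/\sgm + y \in B(\bar\rho;\bar x(t))$, whence $x = (1-\sgm) y + \sgm w$ lies in the same ball by convexity. This yields the $t$-slice of \eqref{eq:supp4Rjl}.

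For (4), make the change of variables $y \mapsto h = (x-y)/\sgm$, so that the integrand in $R^{j\ell}_{0}$ becomes $\zeta(t, x-\sgm h)\, h^{j}\, U^{\ell}(t, x + (1-\sgm) h)$, with parallel transformations for $R^{j\ell}_{1}$ and $R^{j\ell}_{2}$. The $x$-dependence is now transparent, the Leibniz rule distributes $\nb^{\bt}$ onto $\zeta$ and $U$, and reverting the substitution gives \eqref{eq:dRjl0}--\eqref{eq:dRjl2}. For (2), the same change of variables also removes the apparent singularity at $x=y$; one then computes $\rd_{j}$ inside the integral, integrates by parts in $h$ (or equivalently in $\sgm$), and recovers the Fourier-derived identity
\begin{equation*}
\rd_{j}{}^{(y)} r^{j \ell}[U] = U^{\ell} - \Big( \int U^{\ell} \, \ud x \Big) \dlt_{y} - \tfrac{1}{2} \Big( \int (x^{\ell} U^{j}-x^{j} U^{\ell}) \, \ud x \Big) \rd_{j}\dlt_{y}
\end{equation*}
as a distribution. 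Averaging against $\zeta(t,y) \, \ud y$ (with $\int \zeta \, \ud y = 1$) and invoking the orthogonality hypotheses \eqref{eq:inv4divEq:hyp:1}--\eqref{eq:inv4divEq:hyp:2} eliminates the two correction terms, leaving $\rd_{j} R^{j \ell}[U] = U^{\ell}$. The main obstacle is the careful bookkeeping for (2): because the unaveraged ${}^{(y)} r^{j\ell}[U]$ has a mild singularity at $x=y$, the identity must be verified by pairing with test functions and using dominated convergence to justify taking $\rd_{j}$ under the integral.

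For (5), the key is a comoving-frame reduction. Introduce $\tilde x := x - \bar x(t) + x(I)$, $\tilde y := y - \bar x(t) + x(I)$, and $\widetilde{U}^{\ell}(t, z) := U^{\ell}(t, z + \bar x(t) - x(I))$. Because $\zeta$ solves its defining transport equation with spatially constant velocity $\bar v_{\eps}(t)$, it satisfies $\zeta(t, y) = \tilde\zeta(\tilde y)$, that is, $\zeta$ becomes \emph{time-independent} in the comoving variables. Since $x - y = \tilde x - \tilde y$ and $(x-y)/\sgm + y = (\tilde x - \tilde y)/\sgm + \tilde y + \bar x(t) - x(I)$, substitution transforms the defining formulas into $R^{j \ell}_{a}[U](t,x) = \widetilde{R}^{j \ell}_{a}[\widetilde{U}](t, \tilde x)$, where $\widetilde{R}^{j \ell}_{a}$ is the same formula with $\tilde\zeta$ in place of $\zeta$. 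Under $(t, x) \mapsto (t, \tilde x)$ the approximate advective derivative $\approxMd$ becomes $\rd_{t}|_{\tilde x}$, which now acts inside the integrand only on $\widetilde{U}$; a short chain-rule calculation gives $\rd_{t} \widetilde{U} = \widetilde{\approxMd U}$, and combining this with the transformation law yields $\approxMd R^{j \ell}_{a}[U] = R^{j \ell}_{a}[\approxMd U]$, which is \eqref{eq:approxMdRjl}.
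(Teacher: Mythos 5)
Your handling of items (1), (3), (4) and (5) is correct and essentially the paper's route in light disguise: the convexity argument for the support, the substitution $h=(x-y)/\sgm$ for the differentiation formulae (which plays exactly the role of the paper's translation trick together with Lemma \ref{lem:babyKeyLp}), and the comoving-frame reduction for \eqref{eq:approxMdRjl} (equivalent to the paper's differentiation along the approximate flow, using $\approxMd \nb^{\bt}\zeta = 0$) are all fine.

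The gap is in item (2). What your argument proves is that the averaged distribution $\int \zeta(t,y)\, {}^{(y)}r^{j\ell}[U(t,\cdot)]\, \ud y$ solves the symmetric divergence equation; you then conclude $\rd_{j}R^{j\ell}[U]=U^{\ell}$ as if $R^{j\ell}[U]$ were \emph{by definition} this average. It is not: in the proposition $R^{j\ell}[U]$ is defined by the explicit formulas \eqref{eq:Rjl0}--\eqref{eq:Rjl2}, whereas ${}^{(y)}r^{j\ell}$ in \eqref{eq:yrjl0}--\eqref{eq:yrjl2} carries outer derivatives $\rd/\rd x^{k}$ and factors $(1-\sgm)$ that do not appear in \eqref{eq:Rjl0}--\eqref{eq:Rjl2}. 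Proving that the two coincide is precisely the computational core of the paper's proof: one exchanges the outer $x$-derivatives with the $y$-average, rewrites $(\rd_{k}U^{k})(t,\tfrac{x-y}{\sgm}+y) = -\tfrac{\sgm}{1-\sgm}\,\rd_{y^{k}}\big[U^{k}(t,\tfrac{x-y}{\sgm}+y)\big]$, and integrates by parts in $y$ (see \eqref{eq:R2actuallyr2} and the ensuing identities). Moreover, the individual averaged pieces do \emph{not} equal the $R^{j\ell}_{a}[U]$; terms get redistributed among them, which is why the coefficient $-\tfrac{1}{2}$ of \eqref{eq:rjl0:formal} becomes the $-\tfrac{d}{2}$ of \eqref{eq:Rjl0} only after the contributions $-\tfrac{d+1}{2}$ and $+1$ from the other two pieces are collected. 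So the identification is a genuine computation, not bookkeeping, and as written your item (2) establishes the divergence identity for a different operator than the one defined in the statement. To close the gap, either carry out this identification, or bypass ${}^{(y)}r^{j\ell}$ altogether and compute $\rd_{j}R^{j\ell}[U]$ directly from \eqref{eq:Rjl0}--\eqref{eq:Rjl2} via your differentiation formulae, integrating by parts in $y$ and $\sgm$ and using \eqref{eq:inv4divEq:hyp:1}--\eqref{eq:inv4divEq:hyp:2} to kill the boundary moments.
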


\begin{proof} 
Symmetry in $j, \ell$, linear dependence on $U$ and the support property \eqref{eq:supp4Rjl} may be easily read off from the definition \eqref{eq:Rjl0}--\eqref{eq:Rjl2}. Next, we prove the differentiation formulae \eqref{eq:dRjl0}--\eqref{eq:dRjl2} and \eqref{eq:approxMdRjl}. 

To justify the various calculations to follow (such as differentiating under the integral sign), the following lemma, whose proof will be given in the next subsection, will be useful:
\begin{lem} \label{lem:babyKeyLp}
Let $\widetilde{\zeta}$ be a non-negative smooth function with $\supp \widetilde{\zeta} \subseteq B(\bar{\rho}; x_{0})$ such that 
\begin{equation} \label{eq:babyKeyLp:hyp}
	\nrm{\widetilde{\zeta}}_{C^{0}_{x}} \leq C_{\bt} A \bar{\rho}^{-d}
\end{equation}
for some $A > 0$. Then for any $k \geq 0$ and $f \in L^{\infty}_{x}$ supported in $B(\bar{\rho}; x_{0})$, we have
\begin{equation} \label{eq:babyKeyLp}
	\sup_{x \in \bbR^{d}, \sgm \in [0,1]} \abs{\int \widetilde{\zeta}(y) \bb( \frac{\abs{x-y}}{\sgm} \bb)^{k} f(\frac{x-y}{\sgm} + y)\, \frac{\ud y}{\sgm^{d}}} \leq C_{k} A \bar{\rho}^{k} \nrm{f}_{L^{\infty}_{x}}.
\end{equation}
\end{lem}

In order to establish \eqref{eq:dRjl0}, it suffices to prove the case $\abs{\bt} =1$, i.e.,
\begin{equation} \label{eq:dRjl0:pf:1}
\begin{aligned}
	\rd_{m} R^{j \ell}_{0}[U](t,x) 
	=& - \frac{d}{2} \int_{0}^{1} \int (\rd_{m} \zeta)(t, y) \frac{(x-y)^{j}}{\sgm} U^{\ell}(t, \frac{x-y}{\sgm} + y) \, \frac{\ud y}{\sgm} \ud \sgm \\
	&- \frac{d}{2} \int_{0}^{1} \int \zeta(t, y) \frac{(x-y)^{j}}{\sgm} (\rd_{m} U^{\ell})(t, \frac{x-y}{\sgm} + y) \, \frac{\ud y}{\sgm} \ud \sgm \\
	& + \hbox{(Symmetric terms in $j, \ell$)}.
\end{aligned}\end{equation}

The case of $\abs{\bt} >1$ will follow from an induction argument, using similar ideas. To prove \eqref{eq:dRjl0:pf:1}, we first proceed as follows:
\begin{align}
\label{eq:theZTrickAppears}
\begin{split}
	\frac{\rd}{\rd x^{m}} R^{j \ell}_{0}[U](t,x)
	= & \frac{\rd}{\rd z^{m}} R^{j \ell}_{0}[U](t,x+z) \bb\vert_{z=0} \\
	= & -\frac{d}{2} \frac{\rd}{\rd z^{m}} \bb\vert_{z=0}  \int_{0}^{1} \int \zeta(t, y) \frac{(x+z-y)^{j}}{\sgm} U^{\ell}(t, \frac{x+z-y}{\sgm} + y) \, \frac{\ud y}{\sgm^{d}} \ud \sgm \\
&	-\frac{d}{2} \frac{\rd}{\rd z^{m}} \bb\vert_{z=0} \int_{0}^{1} \int \zeta(t, y) \frac{(x+z-y)^{\ell}}{\sgm} U^{j}(t, \frac{x+z-y}{\sgm} + y) \, \frac{\ud y}{\sgm^{d}} \ud \sgm,  
\end{split}
\end{align}

Let us concentrate on the first term on the right-hand side; the other term is symmetric to the first one in $j, \ell$. Making a change of variable $\widetilde{y} = y-z$, we get
\begin{align*}
-\frac{d}{2} & \frac{\rd}{\rd z^{m}} \bb\vert_{z=0}  \int_{0}^{1} \int \zeta(t, y) \frac{(x+z-y)^{j}}{\sgm} U^{\ell}(t, \frac{x+z-y}{\sgm} + y) \, \frac{\ud y}{\sgm^{d}} \ud \sgm \\
& = -\frac{d}{2} \frac{\rd}{\rd z^{m}} \bb\vert_{z=0}  \int_{0}^{1} \int \zeta(t, \widetilde{y}+z) \frac{(x-\widetilde{y})^{j}}{\sgm} U^{\ell}(t, \frac{x-\widetilde{y}}{\sgm} + \widetilde{y}+z) \, \frac{\ud \widetilde{y}}{\sgm^{d}} \ud \sgm
\end{align*}

Now differentiating under the integral sign, which is justified by \eqref{eq:supp4zeta}, \eqref{eq:bnd4zeta}, Lemma \ref{lem:babyKeyLp} and the smoothness of $U$, we get the desired formula \eqref{eq:dRjl0:pf:1}.

The proofs of \eqref{eq:dRjl1} and \eqref{eq:dRjl2} are similar and thus omitted. The formula \eqref{eq:approxMdRjl} is also proved in a similar manner, starting from 
\begin{equation*}
	\approxMd R^{j\ell}_{a}[U](t,x) = \frac{\ud}{\ud s} R^{j\ell}_{a}[U](t+s, x+\int_{t}^{s} \overline{v}_{\eps}(s') \, \ud s') \bb\vert_{s=0}
\end{equation*}
for $a=0,1,2$. We also use the fact that $\approxMd \nb^{\bt} \zeta = 0$ for any $\abs{\bt} \geq 0$ by construction. We omit the details.

Now, it only remains to prove that $R^{j \ell}[U]$ is a (distributional) solution to \eqref{eq:symmDivEq} under the assumptions \eqref{eq:inv4divEq:hyp:1} and \eqref{eq:inv4divEq:hyp:2}. For this purpose, it suffices to show that
\begin{equation*}
	R^{j \ell}[U](t,x) = {}^{(\zeta(t, \cdot))} \widetilde{R}^{j \ell}[U(t, \cdot)](x) .
\end{equation*}
where ${}^{(\zeta(t, \cdot))} \widetilde{R}^{j \ell}[U(t, \cdot)]$ has been defined in the previous subsection.

To arrive at the formulae \eqref{eq:Rjl0}--\eqref{eq:Rjl2}, we need to integrate by parts the derivative $\rd_{k}$ on the outside of \eqref{eq:yrjl1} and \eqref{eq:yrjl2} after averaging against $\zeta(y)$. More precisely, consider the expression
\begin{equation*}
	\widetilde{R}^{j \ell}_{2}[U](t,x) := \int \zeta(t, y) {}^{(y)} r^{j\ell}_{2}[U(t, \cdot)](x) \, \ud y.
\end{equation*}

Using \eqref{eq:supp4zeta}, \eqref{eq:bnd4zeta}, Lemma \ref{lem:babyKeyLp} and the differentiation formulae that we established, it is not difficult to justify the following chain of identities:
\begin{align}
\label{eq:R2actuallyr2}
\begin{split}
\widetilde{R}^{j \ell}_{2}[U](t,x)
=& -\frac{\rd}{\rd x^{k}} \int_{0}^{1} \int (1-\sgm) \zeta(t,y) \frac{(x-y)^{\ell} (x-y)^{j}}{\sgm^{2}} U^{k}(t, \frac{x-y}{\sgm}+y)  \, \frac{\ud y}{\sgm^{d}} \ud \sgm \\
=& -\int_{0}^{1} \int (1-\sgm) (\rd_{k} \zeta)(t,y) \frac{(x-y)^{\ell} (x-y)^{j}}{\sgm^{2}} U^{k}(t, \frac{x-y}{\sgm}+y)  \, \frac{\ud y}{\sgm^{d}} \ud \sgm \\
& - \int_{0}^{1} \int (1-\sgm) \zeta(t,y) \frac{(x-y)^{\ell} (x-y)^{j}}{\sgm^{2}} (\rd_{k} U^{k})(t, \frac{x-y}{\sgm}+y)  \, \frac{\ud y}{\sgm^{d}} \ud \sgm.
\end{split}
\end{align}

Note that 
\begin{equation*}
(\rd_{k} U^{k})(t, \frac{x-y}{\sgm}+y) = - \frac{\sgm}{1-\sgm} \frac{\rd}{\rd y^{k}} \bb[ U^{k}(t, \frac{x-y}{\sgm} + y) \bb]
\end{equation*}
which may be integrated by parts in $y$. As a result, we arrive at the formula
\begin{align*}
\widetilde{R}^{j \ell}_{2}[U](t,x)
=& - \int_{0}^{1} \int (\rd_{k} \zeta)(t,y) \frac{(x-y)^{\ell} (x-y)^{j}}{\sgm^{2}} U^{k}(t, \frac{x-y}{\sgm}+y)  \, \frac{\ud y}{\sgm^{d}} \ud \sgm \\
& + \int_{0}^{1} \int \zeta(t,y) \frac{(x-y)^{\ell}}{\sgm} U^{j}(t, \frac{x-y}{\sgm}+y)  \, \frac{\ud y}{\sgm^{d}} \ud \sgm \\
&+ \int_{0}^{1} \int \zeta(t,y) \frac{(x-y)^{j}}{\sgm} U^{\ell}(t, \frac{x-y}{\sgm}+y)  \, \frac{\ud y}{\sgm^{d}} \ud \sgm.
\end{align*}

Similarly, we compute
\begin{align*}
\widetilde{R}^{j \ell}_{1}[U](t,x)
:=& 	\int \zeta(t,y) {}^{(y)} r^{j \ell}_{1}[U(t, \cdot)](x) \, \ud y \\
=&	\int_{0}^{1} \int (\rd_{k} \zeta)(t,y)  \frac{(x-y)^{j} (x-y)^{k}}{\sgm^{2}} U^{\ell}(t, \frac{x-y}{\sgm}+y)  \, \frac{\ud y}{\sgm^{d}} \ud \sgm \\
&	- \frac{d+1}{2} \int_{0}^{1} \int \zeta(t,y) \frac{(x-y)^{\ell}}{\sgm} U^{j}(t, \frac{x-y}{\sgm}+y)  \, \frac{\ud y}{\sgm^{d}} \ud \sgm \\
& + \hbox{(Symmetric terms in $j, \ell$)},
\end{align*}
and
\begin{align*}
\widetilde{R}^{j \ell}_{0}[U](t,x)
:=& \int \zeta(t,y) {}^{(y)} r^{j \ell}_{0}[U(t, \cdot)](x) \, \ud y \\
=& - \frac{1}{2}\int_{0}^{1} \int \zeta(t,y) \frac{(x-y)^{\ell}}{\sgm} U^{j}(t, \frac{x-y}{\sgm}+y)  \, \frac{\ud y}{\sgm^{d}} \ud \sgm \\
&- \frac{1}{2}\int_{0}^{1} \int \zeta(t,y) \frac{(x-y)^{j}}{\sgm} U^{\ell}(t, \frac{x-y}{\sgm}+y)  \, \frac{\ud y}{\sgm^{d}} \ud \sgm.
\end{align*}

It therefore follows that 
\begin{equation*}
{}^{(\zeta(t, \cdot))} \widetilde{R}^{j \ell}[U(t, \cdot)](x) 
= \widetilde{R}^{j \ell}_{0}[U](t,x) + \widetilde{R}^{j \ell}_{1}[U](t,x) + \widetilde{R}^{j \ell}_{2}[U](t,x)
= R^{j \ell}[U](t,x),
\end{equation*}
as desired.
\end{proof}

\subsection{Estimates for the solution operator and proof of Theorem \ref{thm:inv4divEq}}
In this subsection, we begin by deriving a key technical lemma (Lemma \ref{lem:keyLp}) which allows us to derive $L^{p}$ estimates for the operator $R^{j\ell}[U]$ (Lemma \ref{lem:LpBnd4Rjl}). Next, we use Proposition \ref{prop:Rjl} and Lemma \ref{lem:keyLp} to establish various commutator estimates. Using the results developed so far, a proof of Theorem \ref{thm:inv4divEq} is given at the end.

\begin{lem} \label{lem:keyLp}
Given $\bar{\rho} > 0$, let $\widetilde{\zeta}$ be a non-negative smooth function with $\supp \widetilde{\zeta} \subseteq B(\bar{\rho}; x_{0})$ such that 
\begin{equation} \label{eq:keyLp:hyp}
	\nrm{\widetilde{\zeta}}_{C^{0}_{x}} \leq A \bar{\rho}^{-d}
\end{equation}
for some $A > 0$. Then the following statements hold:

\begin{enumerate}
\item For any $k \geq 0$ and $f \in L^{\infty}_{x}$ supported in $B(\bar{\rho}; x_{0})$, we have
\begin{equation} \label{eq:keyLp:1}
	\sup_{x \in \bbR^{d}, \sgm \in [0,1]} \abs{\int \widetilde{\zeta}(y) \bb( \frac{\abs{x-y}}{\sgm} \bb)^{k} f(\frac{x-y}{\sgm} + y)\, \frac{\ud y}{\sgm^{d}}} \leq C_{k} A \bar{\rho}^{k} \nrm{f}_{L^{\infty}_{x}}.
\end{equation}

\item Moreover for any $k \geq 0$ and $f \in L^{p}_{x}$ $(1 \leq p \leq \infty)$ supported in $B(\bar{\rho}; x_{0})$, we have
\begin{equation} \label{eq:keyLp:2}
	\nrm{\int_{0}^{1} \int \widetilde{\zeta}(y) \bb( \frac{\abs{x-y}}{\sgm} \bb)^{k} f(\frac{x-y}{\sgm} + y)\, \frac{\ud y}{\sgm^{d}} \ud \sgm}_{L^{q}_{x}}  \leq C_{k, p, q} A \bar{\rho}^{k+\frac{d}{q} - \frac{d}{p}} \nrm{f}_{L^{p}_{x}} 
\end{equation}
provided that $\max \set{0, \frac{d}{p} - 1} \leq \frac{d}{q} \leq \frac{d}{p}$ and $\frac{d}{p} -1 \neq \frac{d}{q}$.
\end{enumerate}
\end{lem}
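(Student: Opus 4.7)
The plan is to first change variables $h = (x-y)/\sigma$ in the inner integral, which rewrites
\[
I(x,\sigma) := \int \widetilde{\zeta}(y) \Big(\tfrac{|x-y|}{\sigma}\Big)^{k} f\Big(\tfrac{x-y}{\sigma}+y\Big) \tfrac{dy}{\sigma^{d}} = \int \widetilde{\zeta}(x - \sigma h)\,|h|^{k}\,f(x + (1-\sigma) h)\,dh.
\]
The crucial geometric observation is that on the common support of $\widetilde{\zeta}(x-\sigma h)$ and $f(x+(1-\sigma) h)$, both base points lie in $B(\bar{\rho}; x_{0})$ and their difference equals $h$; hence $|h| \leq 2\bar{\rho}$ uniformly in $\sigma \in [0,1]$. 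For Part (1), this gives $(|h|/\sigma)^{k} \leq (2\bar{\rho})^{k}$ on the support, and the residual integral $\int |\widetilde{\zeta}(x - \sigma h)| \mathbf{1}_{\{|h| \leq 2\bar{\rho}\}}\, dh$ is bounded by $\|\widetilde{\zeta}\|_{L^\infty} |B(2\bar{\rho})| \leq C A$ using hypothesis \eqref{eq:babyKeyLp:hyp}. Pulling out $\|f\|_{L^\infty}$ yields \eqref{eq:keyLp:1}.

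For Part (2), I would integrate over $\sigma \in [0,1]$ and perform the successive changes of variable $z = x + (1-\sigma) h$ followed by $r = 1/(1-\sigma)$, producing the kernel representation $J(x) := \int_{0}^{1} I(x,\sigma)\,d\sigma = \int K(x,z)\,f(z)\,dz$ with
\[
K(x,z) = \frac{1}{|x-z|^{d-1}} \int_{|x-z|}^{\infty} \widetilde{\zeta}(z + t\omega)\, t^{k+d-2}\, dt, \qquad \omega := \tfrac{x-z}{|x-z|}.
\]
Geometrically, $K(x,z)$ averages $\widetilde{\zeta}$ along the ray emanating from $z$ through $x$. The compact support of $\widetilde{\zeta}$ restricts $t$ to an interval of length $\leq 2\bar{\rho}$, and combined with $\|\widetilde{\zeta}\|_{L^{\infty}} \leq A\bar{\rho}^{-d}$ this yields the pointwise bound
\[
|K(x,z)| \leq C_{k,d}\, A\, \bar{\rho}^{k-1}\, |x-z|^{-(d-1)}\, \mathbf{1}_{\{|x-z| \leq 2\bar{\rho}\}}.
\]
Consequently $|J(x)| \leq C_{k,d}\, A\, \bar{\rho}^{k-1}\, (\widetilde{K}_0 * |f|)(x)$, where $\widetilde{K}_0(y) := |y|^{-(d-1)}\mathbf{1}_{\{|y| \leq 2\bar{\rho}\}}$ is a truncated Riesz-type kernel of order $1$.

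The estimate \eqref{eq:keyLp:2} then follows from Young's convolution inequality: $\|\widetilde{K}_0 * g\|_{L^q} \leq \|\widetilde{K}_0\|_{L^r}\|g\|_{L^p}$ with $1/r + 1/p = 1 + 1/q$. A direct computation gives $\|\widetilde{K}_0\|_{L^r} = C \bar{\rho}^{1 + d/q - d/p}$ precisely when $r < d/(d-1)$. The constraint $1 \leq r$ matches $d/q \leq d/p$, while $r < d/(d-1)$ matches $d/q > d/p - 1$; the excluded equality $d/q = d/p - 1$ corresponds to $r = d/(d-1)$, the classical Hardy-Littlewood-Sobolev endpoint at which $\widetilde{K}_0 \notin L^{d/(d-1)}$. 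Multiplying by the $\bar{\rho}^{k-1}$ factor produces the target bound $C_{k,p,q}\, A\, \bar{\rho}^{k + d/q - d/p}$.

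The main subtlety, rather than a true obstacle, lies in justifying the substitution $z = x + (1-\sigma) h$ in the presence of the Jacobian $(1-\sigma)^{-d}$ singular at $\sigma = 1$. This singularity is harmless because the compact support of $\widetilde{\zeta}$ enforces convergence in the $t$-integration, producing a kernel $K(x,z)$ that is regular away from $x = z$ and exhibits only the expected order-$1$ Riesz-type singularity along the diagonal. One could alternatively derive \eqref{eq:keyLp:2} by interpolating between the endpoints $L^{1} \to L^{1}$ (proved by Fubini after substituting $u = x + (1-\sigma) h$ in the $x$-integral), $L^\infty \to L^\infty$ (from Part (1)), and the Hölder endpoint $L^{p} \to L^{\infty}$ for $p > d$, but the direct kernel approach above is cleaner and better adapted to the gain of $\bar{\rho}$ needed in the application to Theorem~\ref{thm:inv4divEq}.
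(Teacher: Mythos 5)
Your proposal is correct, but it takes a genuinely different route from the paper's, particularly for part (2). The paper first reduces both parts to $k=0$ by the triangle inequality (your observation that $|h|\le 2\bar{\rho}$ on the common support of $\widetilde{\zeta}(x-\sigma h)$ and $f(x+(1-\sigma)h)$ is exactly this reduction in disguise); it then proves part (1) by playing two changes of variables against each other (a $C A\sigma^{-d}$ bound and a $CA(1-\sigma)^{-d}$ bound, combined via the case split $\sigma\le 1/2$, $\sigma\ge 1/2$), and proves part (2) by interpolating the operator between four estimates: $L^{1}\to L^{1}$ (Fubini), $L^{1}\to L^{q_{\delta}}$ with $d/q_{\delta}=d-1+\delta$, $L^{\infty}\to L^{\infty}$, and $L^{p_{\delta}}\to L^{\infty}$ with $d/p_{\delta}=1-\delta$; the $\delta$-losses account precisely for the excluded endpoint $d/q=d/p-1$. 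You instead integrate out $\sigma$ explicitly and exhibit the Schwartz kernel $K(x,z)$ as a ray average of $\widetilde{\zeta}$ carrying a truncated order-one Riesz singularity, then conclude with Young's inequality; your computation of $K$, of $\|\,|y|^{-(d-1)}\mathbf{1}_{\{|y|\le 2\bar{\rho}\}}\|_{L^{r}}\sim\bar{\rho}^{1+d/q-d/p}$, and the translation of the Young conditions $r\ge 1$, $r<d/(d-1)$ into the stated range of $(p,q)$ are all correct. Your route is more direct, gives the whole range in one stroke, and makes the order-one smoothing (and the gain of $\bar{\rho}$) structurally transparent; the paper's interpolation avoids computing any kernel and treats the two degenerations ($\sigma\to 0$, $\sigma\to 1$) separately. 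Your part (1) is likewise a simpler argument than the paper's single-$\sigma$ case analysis.

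One small point to patch: the pointwise bound $|K(x,z)|\le C_{k,d}A\bar{\rho}^{k-1}|x-z|^{-(d-1)}\mathbf{1}_{\{|x-z|\le 2\bar{\rho}\}}$ is not true for arbitrary $z$. If $z$ lies far from $B(\bar{\rho};x_{0})$, the ray $\{z+t\omega : t\ge |x-z|\}$ can still meet $\supp\widetilde{\zeta}$, but then $t\sim |z-x_{0}|$ rather than $t\le 2\bar{\rho}$, and $|x-z|$ may well exceed $2\bar{\rho}$ with $K(x,z)\neq 0$. Both the bound $t^{k+d-2}\le C\bar{\rho}^{k+d-2}$ and the vanishing of $K$ for $|x-z|>2\bar{\rho}$ use $|z-x_{0}|\le\bar{\rho}$, i.e. $z\in\supp f$. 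Since $K$ is only ever paired with $f(z)$, this is harmless, but you should state the restriction (equivalently, insert a factor $\mathbf{1}_{B(\bar{\rho};x_{0})}(z)$ into the kernel bound) before invoking Young. Note also that the step $t^{k+d-2}\le C\bar{\rho}^{k+d-2}$ uses $k+d-2\ge 0$, i.e. $d\ge 2$, which is the standing assumption of this section of the paper.
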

For the applications to the main theorems of the paper, we will only require the $L^\infty$ type estimate \eqref{eq:keyLp:1}.  We include the bound \eqref{eq:keyLp:2} as well because we believe this estimate to be of independent interest, and also to illustrate the robustness of the method.  We also believe it is an interesting question whether one can establish the scaling critical case $\frac{d}{p} -1 = \frac{d}{q}$ of \eqref{eq:keyLp:2}.
\begin{proof} 
We begin by reducing both \eqref{eq:keyLp:1} and \eqref{eq:keyLp:2} to the case $k = 0$. Indeed, suppose that the $k=0$ case holds for both inequalities. By the triangle inequality, we have
\begin{equation*}
	\frac{\abs{x-y}}{\sgm} \leq \abs{\frac{x-y}{\sgm} + y - x_{0}} + \abs{y - x_{0}}.
\end{equation*}

Note that, within the integral, the first and second terms on the right-hand side are $\leq \bar{\rho}$ by the support properties of $f$ and $\widetilde{\zeta}$, respectively. This implies 
\begin{equation*}
\bb( \frac{\abs{x-y}}{\sgm} \bb)^{k} \leq 2^{k} \bar{\rho}^{k},
\end{equation*}
which implies the $k > 0$ case of \eqref{eq:keyLp:1} and \eqref{eq:keyLp:2}, respectively.

Next, let us prove \eqref{eq:keyLp:1} in the case $k=0$. Let us start with the bound
\begin{equation} \label{eq:keyLp:pf:1:1}
	\sup_{x} \abs{\int \widetilde{\zeta}(y)  f(\frac{x-y}{\sgm} + y)\, \frac{\ud y}{\sgm^{d}}} 
	\leq C\sgm^{-d} \bar{\rho}^{d} \nrm{\widetilde{\zeta}}_{L^{\infty}} \nrm{f} _{L^{\infty}_{x}}
	= C A \sgm^{-d} \nrm{f} _{L^{\infty}_{x}}.
\end{equation}

This estimate degenerates as $\sgm \to 0$. On the other hand, making the change of variables
\begin{equation} \label{eq:keyLp:pf:1:2}
	z = \frac{x-y}{\sgm} + y
\end{equation}
we have
\begin{equation} \label{eq:keyLp:pf:1:3}
\begin{aligned}
\sup_{x} \abs{\int \widetilde{\zeta}(y)  f(\frac{x-y}{\sgm} + y)\, \frac{\ud y}{\sgm^{d}}} 
&= \sup_{x} \abs{\int \widetilde{\zeta} \bb( \frac{1}{1-\sgm} x - \frac{\sgm}{1-\sgm} z \bb)  f(z)  \, \frac{\ud z}{(1-\sgm)^{d}}} \\
&	\leq C (1-\sgm)^{-d} \bar{\rho}^{d} \nrm{\widetilde{\zeta}}_{L^{\infty}} \nrm{f} _{L^{\infty}_{x}}
	= C A (1-\sgm)^{-d} \nrm{f} _{L^{\infty}_{x}}.
\end{aligned}
\end{equation}

Combining \eqref{eq:keyLp:pf:1:1} and \eqref{eq:keyLp:pf:1:3}, we obtain \eqref{eq:keyLp:1}.

We are now left to establish \eqref{eq:keyLp:2} in the case $k=0$. For this purpose, define
\begin{equation*}
L_{\widetilde{\zeta}}[f] := \int_{0}^{1} \int \widetilde{\zeta}(y) f(\frac{x-y}{\sgm} + y)\, \frac{\ud y}{\sgm^{d}} \ud \sgm
\end{equation*}

The idea is to interpolate the following $L^{p}$ estimates. 
 \begin{align}
	\nrm{L_{\widetilde{\zeta}}[f]}_{L^{1}_{x}} & \leq C A \nrm{f}_{L^{1}_{x}}  \label{eq:keyLp:pf:2:1}\\
	\nrm{L_{\widetilde{\zeta}}[f]}_{L^{q_{\dlt}}_{x}} & \leq C_{\dlt} A \bar{\rho}^{-1+\dlt} \nrm{f}_{L^{1}_{x}}, \quad \frac{d}{q_{\dlt}} = d - 1 + \dlt,\label{eq:keyLp:pf:2:2}\\
	\nrm{L_{\widetilde{\zeta}}[f]}_{L^{\infty}_{x}} & \leq C A \nrm{f}_{L^{\infty}_{x}} \label{eq:keyLp:pf:2:3}\\
	\nrm{L_{\widetilde{\zeta}}[f]}_{L^{\infty}_{x}} & \leq C_{\dlt} A \bar{\rho}^{-1+\dlt} \nrm{f}_{L^{p_{\dlt}}_{x}}, \quad \frac{d}{p_{\dlt}} = 1 - \dlt. \label{eq:keyLp:pf:2:4}
\end{align}

We remark that the support assumption on $f$ can be removed by replacing $L_{\widetilde{\zeta}}[f]$ by $\widetilde{L}_{\widetilde{\zeta}}[f] := L_{\widetilde{\zeta}}[1_{B(\bar{\rho};x_{0})} f]$. For convenience, we shall work with $L_{\widetilde{\zeta}}[f]$ below.

The first estimate, namely \eqref{eq:keyLp:pf:2:1}, follows from Fubini. For \eqref{eq:keyLp:pf:2:2}, we estimate using the change of variables \eqref{eq:keyLp:pf:1:2} and Minkowski as follows:
\begin{align*}
	\nrm{L_{\widetilde{\zeta}}[f]}_{L^{q_{\dlt}}_{x}} 
	& \leq  \int \int_{0}^{1} \nrm{\widetilde{\zeta} \bb( \frac{1}{1-\sgm} x - \frac{\sgm}{1-\sgm} z \bb)}_{L^{q_{\dlt}}_{x}}  \abs{f(z)}  \, \frac{\ud z}{(1-\sgm)^{d}} \ud \sgm  \\
	& \leq \int_{0}^{1} (1-\sgm)^{\frac{d}{q_{\dlt}} - d} \, \ud \sgm \nrm{\widetilde{\zeta}}_{L^{q_{\dlt}}_{x}} \nrm{f}_{L^{1}_{z}} \\
	& \leq C_{\dlt} A \bar{\rho}^{-1+\dlt} \nrm{f}_{L^{1}_{x}}.
\end{align*}
where we have used the fact that $d/q_{\dlt} - d = -1 + \dlt > -1$ on the last line.  

Next, \eqref{eq:keyLp:pf:2:3} follows by simply integrating \eqref{eq:keyLp:1} in $\sgm$ from $0$ to $1$. Finally, \eqref{eq:keyLp:pf:2:4} is established as follows, using a similar idea as before:
\begin{align*}
	\nrm{L_{\widetilde{\zeta}}[f]}_{L^{\infty}_{x}} 
	& = \sup_{x} \abs{\int \int_{0}^{1} \widetilde{\zeta} \bb( \frac{1}{1-\sgm} x - \frac{\sgm}{1-\sgm} z \bb)  f(z) \, \frac{\ud z}{(1-\sgm)^{d}} \ud \sgm} \\
	& \leq  \bar{\rho}^{d-1+\dlt} \int_{0}^{1/2} (1-\sgm)^{-d} \, \ud \sgm \nrm{\widetilde{\zeta}}_{L^{\infty}_{z}} \nrm{f}_{L^{p_{\dlt}}_{z}} \\
	& \phantom{\leq} + r \int_{1/2}^{1} (1-\sgm)^{-d} \bb(\frac{1-\sgm}{\sgm}\bb)^{\frac{d}{p'_{\dlt}}}\, \ud \sgm \nrm{\widetilde{\zeta}}_{L^{p'_{\dlt}}_{z}} \nrm{f}_{L^{p_{\dlt}}_{z}} \\
	& \leq C_{\dlt} A \bar{\rho}^{-1+\dlt} \nrm{f}_{L^{p_{\dlt}}_{x}}. \qedhere
\end{align*}
\end{proof}

As a consequence of the previous lemma and the differentiation formulae \eqref{eq:dRjl0}--\eqref{eq:dRjl2} and \eqref{eq:approxMdRjl}, we obtain the following $L^{p}$ estimates for $R^{j \ell}$ and the commutator between $\nb^{\bt}$ and $R^{j \ell}$.
\begin{lem} [$L^{p}$--$L^{q}$ bounds for $R^{j \ell}$] \label{lem:LpBnd4Rjl}
Let $U^{\ell}$ be a smooth vector field on $\bbR \times \bbR^{d}$ satisfying the hypotheses \eqref{eq:inv4divEq:hyp:1}, \eqref{eq:inv4divEq:hyp:2} and \eqref{eq:inv4divEq:supp4U} of Theorem \ref{thm:inv4divEq}. Define $R^{j \ell}[U] := R^{j \ell}_{0}[U] + R^{j \ell}_{1}[U] + R^{j \ell}_{2}[U]$ by \eqref{eq:Rjl0}, \eqref{eq:Rjl1} and \eqref{eq:Rjl2}. Then for every $1 \leq p \leq \infty$, we have
\begin{equation} \label{eq:LpBnd4Rjl}
	\nrm{R^{j \ell}[U]}_{C^{0}_{t}L^{q}_{x}} \leq C_{p, q} \bar{\rho}^{1+\frac{d}{q}-\frac{d}{p}} \nrm{U}_{C^{0}_{t}L^{p}_{x}}
\end{equation}
provided that $\max \set{0, \frac{d}{p} - 1} \leq \frac{d}{q} \leq \frac{d}{p}$ and $\frac{d}{p} -1 \neq \frac{d}{q}$. 
\end{lem}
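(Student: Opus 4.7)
The plan is to estimate the three pieces $R^{j \ell}_{0}[U]$, $R^{j \ell}_{1}[U]$, $R^{j \ell}_{2}[U]$ of the solution operator separately at each fixed time $t$ by means of Lemma \ref{lem:keyLp}(2), and then combine the estimates and take a supremum in $t$. The analytic content of the desired bound is already packaged inside Lemma \ref{lem:keyLp}, so the task reduces to matching scales in the explicit formulas \eqref{eq:Rjl0}--\eqref{eq:Rjl2}.

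First I will note that at any fixed $t$ both $\zeta(t, \cdot)$ and $U(t, \cdot)$ are supported in the common spatial ball $B(\bar{\rho}; x_{0}(t))$, where $x_{0}(t)$ is the center of the cross-section of $\ECyl_{v_{\eps}}(\bar{\tau}, \bar{\rho}; t(I), x(I))$ at time $t$; this is immediate from \eqref{eq:inv4divEq:supp4U} and \eqref{eq:supp4zeta}, and places us in the hypotheses of Lemma \ref{lem:keyLp} on each time slice. For $R^{j \ell}_{0}[U]$ I will estimate $\abs{(x-y)^{j}} \leq \abs{x-y}$ and apply \eqref{eq:keyLp:2} with $\widetilde{\zeta} = \zeta(t, \cdot)$ and exponent $k=1$; the hypothesis \eqref{eq:keyLp:hyp} is met with $A = C$ by \eqref{eq:bnd4zeta} at $\abs{\bt}=0$, producing exactly the factor $\bar{\rho}^{1 + d/q - d/p}$. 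For $R^{j \ell}_{1}$ and $R^{j \ell}_{2}$ I will instead bound $\abs{(x-y)^{j}(x-y)^{k}} \leq \abs{x-y}^{2}$ and apply \eqref{eq:keyLp:2} with $\widetilde{\zeta} = \rd_{k} \zeta(t, \cdot)$ and $k=2$; now $A = C \bar{\rho}^{-1}$ by \eqref{eq:bnd4zeta} at $\abs{\bt}=1$, but the extra $\bar{\rho}^{-1}$ is exactly compensated by the additional power of $\abs{x-y}/\sgm \lesssim \bar{\rho}$ picked up from raising the exponent from $k=1$ to $k=2$, again yielding the factor $\bar{\rho}^{1 + d/q - d/p}$.

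One minor wrinkle is that $\rd_{k} \zeta$ is signed, while Lemma \ref{lem:keyLp} is stated for non-negative $\widetilde{\zeta}$; this is cosmetic, since its proof uses only $\nrm{\widetilde{\zeta}}_{L^{r}}$ norms and extends verbatim to signed cutoffs by inserting absolute values inside the integral before estimating. Summing the three bounds, absorbing the $j \leftrightarrow \ell$ symmetric terms, and taking the supremum in $t$ produces \eqref{eq:LpBnd4Rjl}. I do not foresee any genuine obstacle here: the heart of the argument has already been carried out in the derivation of \eqref{eq:Rjl0}--\eqref{eq:Rjl2} and in Lemma \ref{lem:keyLp}, so the present step amounts to bookkeeping scales.
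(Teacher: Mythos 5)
Your proposal is correct and follows essentially the same route as the paper, which deduces the lemma by applying Lemma \ref{lem:keyLp} on each time slice to the formulas \eqref{eq:Rjl0}--\eqref{eq:Rjl2}, using the support property \eqref{eq:supp4zeta} and the bounds \eqref{eq:bnd4zeta} (with $k=1$, $A\sim 1$ for $R^{j\ell}_{0}$ and $k=2$, $A\sim\bar{\rho}^{-1}$ for $R^{j\ell}_{1}, R^{j\ell}_{2}$, exactly as you indicate); the paper omits these details. Your remark that non-negativity of $\widetilde{\zeta}$ is inessential is also right, since the proof of Lemma \ref{lem:keyLp} only uses the $C^{0}$ bound and the support of $\widetilde{\zeta}$.
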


\begin{lem}[Commutator between $\nb^{\bt}$ and $R^{j \ell}$] \label{lem:CommNbRjl}
Let $U^{\ell}$ and $R^{j \ell}[U]$ be as in the hypotheses of Lemma \ref{lem:LpBnd4Rjl}. 
Then for every multi-index $\bt$ and $1 \leq p \leq \infty$, we have
\begin{equation} \label{eq:CommNbRjl}
	\nrm{[\nb^{\bt}, R^{j \ell}][U]}_{C^{0}_{t}L^{q}_{x}} \leq C_{\bt, p, q} \bar{\rho}^{1+\frac{d}{q}-\frac{d}{p}} \sum_{\bt_{1}+\bt_{2} = \bt : \bt_{2} \neq \bt} \bar{\rho}^{-\abs{\bt_{1}}} \nrm{\nb^{\bt_{2}} U}_{C^{0}_{t}L^{p}_{x}}
\end{equation}
provided that $\max \set{0, \frac{d}{p} - 1} \leq \frac{d}{q} \leq \frac{d}{p}$ and $\frac{d}{p} -1 \neq \frac{d}{q}$. \end{lem}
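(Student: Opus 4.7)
The plan is to exploit the Leibniz-type differentiation formulae \eqref{eq:dRjl0}--\eqref{eq:dRjl2} from Proposition \ref{prop:Rjl} to decompose $\nb^{\bt} R^{j\ell}[U]$ into the sum $R^{j\ell}[\nb^{\bt}U] + \hbox{(commutator terms)}$, and then to control each commutator term by Lemma \ref{lem:keyLp} using the bounds \eqref{eq:bnd4zeta} on the derivatives of $\zeta$. The key algebraic observation is that the contribution in \eqref{eq:dRjl0}--\eqref{eq:dRjl2} with $\bt_{1} = 0$ (so that no derivative falls on $\zeta$) reproduces exactly $R^{j\ell}_{a}[\nb^{\bt}U]$ for $a=0,1,2$. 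Consequently,
\begin{equation*}
	[\nb^{\bt}, R^{j\ell}_{a}][U] = \sum_{\bt_{1} + \bt_{2} = \bt,\ \bt_{2} \neq \bt} T^{j\ell}_{a, \bt_{1}, \bt_{2}}[U],
\end{equation*}
where each $T^{j\ell}_{a, \bt_{1}, \bt_{2}}[U]$ is one of the integrals appearing in \eqref{eq:dRjl0}--\eqref{eq:dRjl2} with $\nb^{\bt_{1}}\zeta$ (or $\nb^{\bt_{1}}\rd_{k}\zeta$) in place of $\zeta$ (or $\rd_{k}\zeta$) and $\nb^{\bt_{2}}U$ in place of $U$.

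Next, I would estimate each $T^{j\ell}_{a, \bt_{1}, \bt_{2}}[U]$ in $C^{0}_{t}L^{q}_{x}$ by invoking Lemma \ref{lem:keyLp}. For $a = 0$ the relevant power is $k=1$, while for $a = 1, 2$ the power is $k = 2$; in both cases the factor of $\abs{x-y}/\sigma$ is homogeneous and matches the scaling $\bar{\rho}^{k}$. Using the pointwise bound \eqref{eq:bnd4zeta}, at each fixed $t$ the function $\nb^{\bt_{1}}\zeta(t, \cdot)$ (or $\nb^{\bt_{1}}\rd_{k}\zeta(t,\cdot)$) is supported in a ball of radius $\bar{\rho}$ about the flow-center and is bounded in $L^{\infty}_{x}$ by $C \bar{\rho}^{-d - \abs{\bt_{1}}}$ (resp.\ $C\bar{\rho}^{-d - \abs{\bt_{1}} - 1}$). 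Thus Lemma \ref{lem:keyLp} applies with $A = C\bar{\rho}^{-\abs{\bt_{1}}}$ (resp.\ $C\bar{\rho}^{-\abs{\bt_{1}} - 1}$), and the output of the $L^{p}$--$L^{q}$ bound \eqref{eq:keyLp:2} becomes
\begin{equation*}
	\nrm{T^{j\ell}_{a, \bt_{1}, \bt_{2}}[U]}_{C^{0}_{t}L^{q}_{x}}
	\leq C_{\bt_{1}, p, q}\, \bar{\rho}^{\,1 + \frac{d}{q} - \frac{d}{p} - \abs{\bt_{1}}} \nrm{\nb^{\bt_{2}} U}_{C^{0}_{t}L^{p}_{x}}
\end{equation*}
in all three cases $a = 0, 1, 2$ (the extra $\bar{\rho}$ and the extra $\bar{\rho}^{-1}$ from, respectively, the higher power $k = 2$ and the additional derivative on $\zeta$ cancel neatly in the $a = 1, 2$ terms). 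Summing over $a \in \{0,1,2\}$ and over pairs $(\bt_{1}, \bt_{2})$ with $\bt_{1} + \bt_{2} = \bt$ and $\bt_{2} \neq \bt$ yields exactly \eqref{eq:CommNbRjl}.

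The main obstacle I foresee is the hypothesis in Lemma \ref{lem:keyLp} that $\widetilde{\zeta}$ be non-negative, which fails for $\nb^{\bt_{1}}\zeta$ whenever $\abs{\bt_{1}} \geq 1$. However, an inspection of the proof of Lemma \ref{lem:keyLp} shows that non-negativity is not actually used: each step only invokes $\nrm{\widetilde{\zeta}}_{L^{\infty}}$ (or $\nrm{\widetilde{\zeta}}_{L^{p}}$) through the triangle inequality and change of variables, so the same bounds go through with $\widetilde{\zeta}$ replaced by a signed function of the same size (alternatively, one may decompose $\nb^{\bt_{1}}\zeta = (\nb^{\bt_{1}}\zeta)_{+} - (\nb^{\bt_{1}}\zeta)_{-}$ and apply the non-negative version separately). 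A small bookkeeping check is also needed to verify the range condition $\max\{0, \tfrac{d}{p} - 1\} \leq \tfrac{d}{q} \leq \tfrac{d}{p}$ with $\tfrac{d}{p} - 1 \neq \tfrac{d}{q}$ is preserved verbatim, which it is since the same $(p,q)$ pair is used throughout.
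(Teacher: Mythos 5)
Your proposal is correct and follows essentially the same route as the paper, which proves the lemma by applying Lemma \ref{lem:keyLp} termwise to the differentiation formulae \eqref{eq:dRjl0}--\eqref{eq:dRjl2} with the bounds \eqref{eq:supp4zeta}, \eqref{eq:bnd4zeta} on $\zeta$; you have simply filled in the details the paper omits, including the correct bookkeeping of the powers $k=1,2$ and $A \sim \bar{\rho}^{-\abs{\bt_1}}$ versus $\bar{\rho}^{-\abs{\bt_1}-1}$. Your observation that the non-negativity hypothesis on $\widetilde{\zeta}$ in Lemma \ref{lem:keyLp} is not actually needed (only the $L^\infty$ bound and support enter) is accurate and consistent with how the paper itself uses the lemma elsewhere with $\abs{\nb\zeta}$, $\abs{\nb^{(2)}\zeta}$ in place of $\zeta$.
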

These lemmas follow immediately by applying Lemma \ref{lem:keyLp} to the differentiation formulae \eqref{eq:dRjl0}--\eqref{eq:dRjl2} on each time slice, keeping in mind the properties \eqref{eq:supp4zeta} and \eqref{eq:bnd4zeta} of $\zeta$. We omit the details.

In preparation for estimating the advective derivative of $R^{j \ell}[U]$, we prove the following general commutator estimate
\begin{lem}[Commuting with vector fields] \label{lem:commuteWithVecFlds} Let  $R^{j \ell}[U]$ be as in Lemma \ref{lem:LpBnd4Rjl}, and let $Z$ and $\widetilde{U}^\ell$ be smooth vector fields on $\bbR^{d}$.  Then
\ali{
\nrm{ [Z \cdot \nab, R^{jl}][\widetilde{U}] }_{L_x^q} &\leq C_{p,q} \bar{\rho}^{1 + \fr{d}{q} - \fr{d}{p}}( \bar{\rho}^{-1} \co{Z} + \co{\nab Z} )  \nrm{\widetilde{U}}_{L^{p}_{x}} \label{eq:genCommuteBound}
}
provided that $\max \set{0, \frac{d}{p} - 1} \leq \frac{d}{q} \leq \frac{d}{p}$ and $\frac{d}{p} -1 \neq \frac{d}{q}$.
\end{lem}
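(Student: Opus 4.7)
\medskip

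\noindent\textbf{Proof proposal.} My plan is to reduce the commutator to a sum of terms each of which is directly controlled by Lemmas \ref{lem:keyLp} and \ref{lem:LpBnd4Rjl}, exploiting the explicit integral representation \eqref{eq:Rjl0}--\eqref{eq:Rjl2} and the differentiation formulae \eqref{eq:dRjl0}--\eqref{eq:dRjl2}. Applied with $|\beta|=1$, these formulae give
\[
\pr_m R^{j\ell}_a[\widetilde U] = R^{j\ell}_a[\pr_m \widetilde U] + S^{j\ell}_{a,m}[\widetilde U],\qquad a=0,1,2,
\]
where $S^{j\ell}_{a,m}[\widetilde U]$ is an integral operator of exactly the same type as $R^{j\ell}_a$ but with an extra derivative landing on $\zeta$. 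Consequently,
\[
[Z\cdot\nb,R^{j\ell}_a][\widetilde U] = Z^m(x)\, S^{j\ell}_{a,m}[\widetilde U] + \bb(Z^m(x)\,R^{j\ell}_a[\pr_m\widetilde U] - R^{j\ell}_a[Z^m\pr_m\widetilde U]\bb).
\]
The first bracket is pointwise dominated by $\|Z\|_{C^0}$ times a kernel of the same homogeneity as $R^{j\ell}_a$ but with $\zeta$ replaced by $\nb\zeta$, which costs a factor of $\bar\rho^{-1}$; applying Lemma \ref{lem:keyLp} to each $\sigma$-slice and integrating in $\sigma$ gives the $\bar\rho^{-1}\|Z\|_{C^0}$ contribution.

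The second bracket is the kernel commutator
\[
\int_0^1\!\!\int [Z^m(x) - Z^m(\tfrac{x-y}{\sigma}+y)]\,\zeta(y)\,K^{j\ell}_a(x-y,\sigma)\,(\pr_m\widetilde U)(\tfrac{x-y}{\sigma}+y)\,\tfrac{dy}{\sigma^d}\,d\sigma,
\]
where $K^{j\ell}_a$ collects the polynomial weights $(x-y)^j/\sigma$, $(x-y)^j(x-y)^k/\sigma^2$, etc. To remove the derivative on $\widetilde U$, I would use the identity
\[
(\pr_m\widetilde U)(\tfrac{x-y}{\sigma}+y) = \tfrac{\sigma}{\sigma-1}\,\pr_{y^m}\bb[\widetilde U(\tfrac{x-y}{\sigma}+y)\bb]
\]
and integrate by parts in $y$. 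The resulting $\pr_{y^m}$ distributes by the product rule onto three factors: (i) $-Z^m(\tfrac{x-y}{\sigma}+y)$ produces $-\tfrac{\sigma-1}{\sigma}(\nb\cdot Z)(\tfrac{x-y}{\sigma}+y)$, whose prefactor cancels $\tfrac{\sigma}{\sigma-1}$ exactly to leave a bound $\|\nb Z\|_{C^0}$; (ii) $\zeta(y)$, which produces $\nb\zeta$ and combines with the mean-value estimate
\[
|Z^m(x) - Z^m(\tfrac{x-y}{\sigma}+y)| \leq \|\nb Z\|_{C^0}\,\tfrac{1-\sigma}{\sigma}|x-y|
\]
(using $x - (\tfrac{x-y}{\sigma}+y) = \tfrac{\sigma-1}{\sigma}(x-y)$), so that $\tfrac{\sigma}{1-\sigma}\cdot\tfrac{1-\sigma}{\sigma}=1$ and we are left with $\|\nb Z\|_{C^0}\,|x-y|$ against a kernel with one extra derivative on $\zeta$; (iii) the polynomial $(x-y)^j$ produces a $-\delta_m^j/\sigma$, and the same cancellation reduces the finite-difference in $Z$ to $\|\nb Z\|_{C^0}\tfrac{|x-y|}{\sigma}$ against the original $\zeta$ kernel. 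In each case the integrand fits the template of Lemma \ref{lem:keyLp} (with $k=1$ or $k=2$), and the weight $|x-y|/\sigma$ bounded by $2\bar\rho$ on the support yields a net factor $\bar\rho$ from the $L^p\!\to\!L^q$ inequality. The same argument, with minor combinatorial adjustments for the extra $(x-y)^k/\sigma$ in \eqref{eq:Rjl1}--\eqref{eq:Rjl2}, handles $R^{j\ell}_1$ and $R^{j\ell}_2$.

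The hard part will be purely organizational: keeping track of the various $\sigma$- and polynomial-factors after integration by parts and verifying that the prefactors $\tfrac{\sigma}{\sigma-1}$ are always neutralized either by differentiating $Z^m(\tfrac{x-y}{\sigma}+y)$ (which lowers the homogeneity by precisely $\tfrac{\sigma-1}{\sigma}$) or by the $\tfrac{1-\sigma}{\sigma}|x-y|$ factor from the mean-value theorem applied to $Z$. Once this bookkeeping is complete, summing the contributions over $a=0,1,2$ and the three product-rule subterms, and applying Lemma \ref{lem:keyLp} to each piece with the appropriate value of $k$, yields \eqref{eq:genCommuteBound} with the advertised dependence on $\bar\rho^{-1}\|Z\|_{C^0}+\|\nb Z\|_{C^0}$.
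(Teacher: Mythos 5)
Your proposal is correct and follows essentially the same route as the paper's own proof: you use the differentiation formulae \eqref{eq:dRjl0}--\eqref{eq:dRjl2} to split the commutator into the piece where the derivative falls on $\zeta$ (bounded by $\bar{\rho}^{-1}\co{Z}$ via Lemma \ref{lem:keyLp}) and the kernel-difference piece involving $Z(x)-Z(\tfrac{x-y}{\sgm}+y)$ against $\pr_m\widetilde{U}$, which you then treat exactly as the paper does, writing $(\pr_m\widetilde{U})(\tfrac{x-y}{\sgm}+y)=\tfrac{\sgm}{\sgm-1}\pr_{y^m}[\widetilde{U}(\tfrac{x-y}{\sgm}+y)]$, integrating by parts in $y$, and cancelling the singular prefactor either against the chain-rule factor $\tfrac{\sgm-1}{\sgm}$ or against the mean-value bound $|Z(x)-Z(z)|\leq \co{\nb Z}\tfrac{1-\sgm}{\sgm}|x-y|$ before applying Lemma \ref{lem:keyLp}. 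The bookkeeping you flag is exactly what the paper carries out for $R^{j\ell}_{0}$ (with the $a=1,2$ cases left as analogous), so no gap remains.
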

\begin{proof}
We claim that, for $R^{j\ell}_0$ and $R^{j\ell}_a$, $a = 1,2$, the following pointwise estimates hold
\begin{align} 
\label{eq:keyCommdv:pf:worst:1}
\begin{split}
\abs{[Z \cdot \nab, R^{j \ell}_{0} ] [\widetilde{U}](x)}
&\leq C \co{Z} ~\int_{0}^{1} \int \abs{\nb \zt( y)} \bb( \frac{\abs{x-y}}{\sgm} \bb) \abs{\widetilde{U}( \frac{x-y}{\sgm} + y)} \frac{\ud y}{\sgm^{d}} \ud \sgm \\
&+ C \co{\nab Z} \int_{0}^{1} \int \abs{\zt( y)} \bb( \frac{\abs{x-y}}{\sgm} \bb) \abs{\widetilde{U}( \frac{x-y}{\sgm} + y)} \frac{\ud y}{\sgm^{d}} \ud \sgm \\
&+ C \co{\nab Z} \int_{0}^{1} \int \abs{\nb \zt( y)} \bb( \frac{\abs{x-y}}{\sgm} \bb)^2 \abs{\widetilde{U}( \frac{x-y}{\sgm} + y)} \frac{\ud y}{\sgm^{d}} \ud \sgm
\end{split} \\
\label{eq:keyCommdv:pf:worst:2}
\begin{split}
\abs{[Z \cdot \nab, R^{j \ell}_{a} ] [\widetilde{U}](x)}
&\leq C \co{Z} ~\int_{0}^{1} \int \abs{\nb^{(2)} \zt( y)} \bb( \frac{\abs{x-y}}{\sgm} \bb)^2 \abs{\widetilde{U}( \frac{x-y}{\sgm} + y)} \frac{\ud y}{\sgm^{d}} \ud \sgm \\
&+ C \co{\nab Z} \int_{0}^{1} \int \abs{\nb \zt( y)} \bb( \frac{\abs{x-y}}{\sgm} \bb)^2 \abs{\widetilde{U}( \frac{x-y}{\sgm} + y)} \frac{\ud y}{\sgm^{d}} \ud \sgm \\
&+ C \co{\nab Z} \int_{0}^{1} \int \abs{\nb^{(2)} \zt( y)} \bb( \frac{\abs{x-y}}{\sgm} \bb)^3 \abs{\widetilde{U}( \frac{x-y}{\sgm} + y)} \frac{\ud y}{\sgm^{d}} \ud \sgm
\end{split} 
\end{align}
From these claims, the desired estimate \eqref{eq:genCommuteBound} follows by Lemma \ref{lem:keyLp}.  We remark that the variable $t$ plays no role in the proof.

The estimates \eqref{eq:keyCommdv:pf:worst:1} and \eqref{eq:keyCommdv:pf:worst:2} 
are all proved similarly; we give a detailed proof of \eqref{eq:keyCommdv:pf:worst:1}, and omit the details for the latter.  We begin by applying the differentiation formula \eqref{eq:dRjl0} to compute
\ali{
[Z \cdot \nab, R^{j \ell}_{0} ][\widetilde{U}] =& - \fr{d}{2} \int_0^1\int Z^k(x) \pr_k \zeta(y) \frac{(x-y)^{j}}{\sgm} \widetilde{U}^\ell( \frac{x-y}{\sgm} + y ) \frac{\ud y}{\sgm^{d}} \ud \sgm \label{eq:termComesOKZnabComm1} \\
&- \fr{d}{2} \int_0^1\int Z^k(x) \pr_k \zeta(y) \frac{(x-y)^{\ell}}{\sgm} \widetilde{U}^j( \frac{x-y}{\sgm} + y ) \frac{\ud y}{\sgm^{d}} \ud \sgm \label{eq:termComesOKZnabComm2} \\
&- \fr{d}{2} \int_0^1\int \zeta(y) \frac{(x-y)^{j}}{\sgm} (Z^k(x) - Z^k(z)) \pr_k \widetilde{U}^\ell ( \frac{x-y}{\sgm} + y ) \frac{\ud y}{\sgm^{d}} \ud \sgm \label{eq:termNeedToIntByPartsZk1} \\
&- \fr{d}{2} \int_0^1\int \zeta(y) \frac{(x-y)^{\ell}}{\sgm} (Z^k(x) - Z^k(z)) \pr_k \widetilde{U}^j ( \frac{x-y}{\sgm} + y ) \frac{\ud y}{\sgm^{d}} \ud \sgm \label{eq:termNeedToIntByPartsZk2}
}
Here we write $z = \frac{x-y}{\sgm} + y$ for the argument of $U$.  

The terms \eqref{eq:termComesOKZnabComm1}, \eqref{eq:termComesOKZnabComm2} are immediately seen to verify \eqref{eq:keyCommdv:pf:worst:1}, so it only remains to estimate the latter terms.  We will focus on the term \eqref{eq:termNeedToIntByPartsZk1} since the last term is treated identically.  

Starting with the identity
\ALI{
\pr_k U^\ell(z) = \pr_k U\left( \frac{x-y}{\sgm} + y\right) = - \fr{\si}{(1-\si)} \fr{\pr}{\pr y^k} \left[ U\left( \frac{x-y}{\sgm} + y \right)\right],
}
we integrate by parts in $y$ to obtain 
\ali{
\eqref{eq:termNeedToIntByPartsZk1} =& - \fr{d}{2} \int_0^1\int \pr_k\zeta(y) \frac{(x-y)^{j}}{\sgm} \fr{\si (Z^k(x) - Z^k(z))}{(1 - \si)} \widetilde{U}^\ell ( \frac{x-y}{\sgm} + y ) \frac{\ud y}{\sgm^{d}} \ud \sgm \\
&- \fr{d}{2} \int_0^1\int \zeta(y) \de_k^{j} \fr{(Z^k(x) - Z^k(z))}{(1- \si)} \widetilde{U}^\ell ( \frac{x-y}{\sgm} + y ) \frac{\ud y}{\sgm^{d}} \ud \sgm \\
&- \fr{d}{2} \int_0^1\int \zeta(y) \frac{(x-y)^{j}}{\sgm} \pr_k Z^k(z) \widetilde{U}^\ell ( \frac{x-y}{\sgm} + y ) \frac{\ud y}{\sgm^{d}} \ud \sgm
}
The estimates \eqref{eq:keyCommdv:pf:worst:1} now follow from the identity 
\[ \fr{z - x}{1 - \si} = \fr{x - y}{\si} \] 
and the pointwise bound
\[ \left(\fr{\abs{Z^k(x) - Z^k(z)}}{(1- \si)}\right) \leq \co{ \nab Z } \left( \fr{|z - x|}{(1 - \si)} \right) \qedhere \] 

\end{proof}

The key tool in estimating the advective derivative of $R^{j \ell}[U]$ will be the following estimate for the commutator $[(v_{\eps} - \overline{v}_{\eps}) \cdot \nb, R^{j \ell}]$, which we derive from the commutator estimates of Lemmas \ref{lem:CommNbRjl} and \ref{lem:commuteWithVecFlds}.
\begin{lem}[Commutator between $(v_{\eps} - \overline{v}_{\eps}) \cdot \nb$ and $R^{j \ell}$] \label{lem:keyComm}
Let $U^{\ell}$ and $R^{j \ell}[U]$ be as in the hypotheses of Lemma \ref{lem:LpBnd4Rjl}. Then for every multi-index $\bt$ with $|\bt| \leq L - 1$ and $1 \leq p \leq \infty$, we have
\begin{equation} \label{eq:keyComm}
\begin{aligned}
	& \nrm{\nb^{\bt} [(v_{\eps} - \overline{v}_{\eps}) \cdot \nb, R^{j \ell}][U]}_{C^{0}_{t}L^{q}_{x}} \\
		& \qquad \leq C_{\bt, p, q} \Xi e_{v}^{1/2} \bar{\rho}^{1+\frac{d}{q}-\frac{d}{p}} \sum_{J_{0}+J_{1}+J_{2} = \abs{\bt}} 
				\bar{\rho}^{-J_{0}} \Xi^{J_{1}}  \nrm{\nb^{(J_{2})} U}_{C^{0}_{t}L^{p}_{x}},
\end{aligned}
\end{equation}
provided that $\max \set{0, \frac{d}{p} - 1} \leq \frac{d}{q} \leq \frac{d}{p}$ and $\frac{d}{p} -1 \neq \frac{d}{q}$. The summation is over all triplets of non-negative integers $(J_{0}, J_{1}, J_{2})$ such that $J_{0} + J_{1} + J_{2} = \abs{\bt}$.
\end{lem}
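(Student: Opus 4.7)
The plan is to set $Z := v_{\eps} - \overline{v}_{\eps}$ and exploit that $Z(t,\cdot)$ vanishes at $\Phi_{t-t(I)}(t(I), x(I))$, the center of each spatial cross-section of $\ECyl_{v_\eps}(\bar{\tau}, \bar{\rho}; t(I), x(I))$. Combined with \eqref{eq:inv4divEq:est4nbv} and the mean value theorem, this yields
\begin{equation*}
	\nrm{Z}_{C^{0}_{t,x}(\supp \zeta)} \leq A \Xi e_{v}^{1/2} \bar{\rho}, \qquad
	\nrm{\nb^{\ga} Z}_{C^{0}_{t,x}} \leq A \Xi^{|\ga|} e_{v}^{1/2} \ \text{ for } |\ga| \geq 1.
\end{equation*}
The extra factor of $\bar{\rho}$ in the first bound is the source of the prefactor $\Xi e_{v}^{1/2}$ in \eqref{eq:keyComm}; we will also use repeatedly that $Z^{k} \nb_{k} U$ and $Z^{k} \nb_{k} R^{j\ell}[U]$ remain spatially supported in $\ECyl_{v_\eps}(\bar{\tau}, \bar{\rho}; t(I), x(I))$.

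Next, I apply Leibniz together with the identity $\nb^{\bt} R^{j\ell}[W] = R^{j\ell}[\nb^{\bt} W] + [\nb^{\bt}, R^{j\ell}][W]$ to obtain
\begin{equation*}
	\nb^{\bt}[Z \cdot \nb, R^{j \ell}][U] = \sum_{\bt_{1} + \bt_{2} = \bt} \binom{\bt}{\bt_{1}} \bb\{ (\nb^{\bt_{1}} Z^{k}) \nb_{k} \nb^{\bt_{2}} R^{j\ell}[U] - R^{j\ell}\bb[(\nb^{\bt_{1}} Z^{k}) \nb_{k} \nb^{\bt_{2}} U\bb] \bb\} - [\nb^{\bt}, R^{j\ell}]\bb[Z^{k} \nb_{k} U\bb].
\end{equation*}
The $\bt_{1} = 0$ summand is reorganized, using the same commutator identity, as $[Z \cdot \nb, R^{j\ell}][\nb^{\bt} U] + Z \cdot \nb [\nb^{\bt}, R^{j\ell}][U]$; the first piece is controlled by Lemma \ref{lem:commuteWithVecFlds} (with $\widetilde{U} = \nb^{\bt} U$), while the second is bounded by multiplying the pointwise bound for $|Z|$ on the support by the $L^{q}$ bound for $\nb[\nb^{\bt}, R^{j\ell}][U]$ obtained from Lemma \ref{lem:CommNbRjl}. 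For $\bt_{1} \neq 0$, I pull $\nb^{\bt_{1}} Z$ out in $L^{\infty}$ via $\nrm{\nb^{\bt_{1}} Z}_{C^{0}} \leq A \Xi^{|\bt_{1}|} e_{v}^{1/2}$ and bound the remaining factor either by Lemma \ref{lem:LpBnd4Rjl} (for the $R^{j\ell}[\cdots]$ piece) or by splitting $\nb^{|\bt_{2}|+1} R^{j\ell}[U] = R^{j\ell}[\nb^{|\bt_{2}|+1} U] + [\nb^{|\bt_{2}|+1}, R^{j\ell}][U]$ and invoking Lemmas \ref{lem:LpBnd4Rjl}, \ref{lem:CommNbRjl}. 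Finally, the outer commutator $[\nb^{\bt}, R^{j\ell}][Z^{k} \nb_{k} U]$ is handled by Lemma \ref{lem:CommNbRjl} followed by a further Leibniz expansion of $\nb^{\bt_{2}'}(Z^{k} \nb_{k} U)$; the subterms in which $Z$ carries no derivative again benefit from the $\bar{\rho}$-smallness.

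The main obstacle is the combinatorial bookkeeping of matching every term produced by the nested Leibniz expansions to the target summand $\bar{\rho}^{-J_{0}} \Xi^{J_{1}} \nrm{\nb^{J_{2}} U}_{L^{p}}$ with $J_{0} + J_{1} + J_{2} = |\bt|$. The conversion is unambiguous: each derivative on $\zeta$ (arising through Lemma \ref{lem:CommNbRjl}) contributes a factor $\bar{\rho}^{-1}$ and thus increments $J_{0}$; when $|\bt_{1}| \geq 1$ derivatives fall on $Z$, the factor $\Xi^{|\bt_{1}|} e_{v}^{1/2}$ is absorbed as the prefactor $\Xi e_{v}^{1/2}$ times $\Xi^{|\bt_{1}| - 1}$, contributing $|\bt_{1}| - 1$ to $J_{1}$, while an undifferentiated $Z$ instead supplies the full prefactor $\Xi e_{v}^{1/2} \bar{\rho}$; and the residual derivatives fall on $U$ and contribute to $J_{2}$. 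Summing the finitely many terms and absorbing the binomial coefficients into $C_{\bt, p, q}$ yields \eqref{eq:keyComm}. No ideas beyond those already developed in Lemmas \ref{lem:LpBnd4Rjl}--\ref{lem:commuteWithVecFlds} are needed.
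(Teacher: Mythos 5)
Your proposal is correct and takes essentially the same route as the paper's proof: the same key smallness $|v_{\eps}-\overline{v}_{\eps}|\lesssim \bar{\rho}\,\Xi e_{v}^{1/2}$ on the cylinder, the same top-level splitting into the outer commutator $[\nb^{\bt},R^{j\ell}][Z\cdot\nb U]$ plus Leibniz terms, and the same reliance on Lemmas \ref{lem:LpBnd4Rjl}, \ref{lem:CommNbRjl} and \ref{lem:commuteWithVecFlds} (including the identity $\rd_{k}[\nb^{\bt},R^{j\ell}][U]=[\rd_{k}\nb^{\bt},R^{j\ell}][U]-[\rd_{k},R^{j\ell}][\nb^{\bt}U]$ behind your bound on $\nb[\nb^{\bt},R^{j\ell}][U]$). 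The only cosmetic difference is that the paper feeds every summand through Lemma \ref{lem:commuteWithVecFlds} with $Z=\nb^{\bt_{1}}(v_{\eps}-\overline{v}_{\eps})$, whereas you invoke it only for the undifferentiated-$Z$ term and estimate the $\bt_{1}\neq 0$ terms directly, which also closes the bookkeeping for \eqref{eq:keyComm} since the factor $\Xi^{\abs{\bt_{1}}}e_{v}^{1/2}$ already supplies the prefactor $\Xi e_{v}^{1/2}$ there.
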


\begin{proof} 
In $ \nb^{\bt} [(v_{\eps} - \overline{v}_{\eps}) \cdot \nb, R^{j \ell}][U] $, we will find that the worst case occurs when all derivatives fall on $U$, or when all the derivatives fall on the vector field $Y := (v_{\eps} - \overline{v}_{\eps})$.  
  
	Observe that, for $(t,x) \in \ECyl_{v_{\eps}}(\bar{\tau}, \bar{\rho}; t(I), x(I))$ we have the estimates
\ali{
\co{ \nab^\ga Y } &\leq C\Xi^{|\ga|} e_v^{1/2} \qquad 1 \leq |\ga| \leq L \label{eq:C0nabYestimates} \\
|Y (t,x)|  &\leq C \bar{\rho} \Xi e_v^{1/2} \label{eq:C0Yestimate}
}
Let us now decompose $\nb^{\bt} [(v_{\eps} - \overline{v}_{\eps}) \cdot \nb, R^{j \ell}][U] = \nb^{\bt} [Y \cdot \nb, R^{j \ell}][U]$ as 
\ali{
\nb^{\bt} [Y \cdot \nb, R^{j \ell}][U] &= \nab^\b \left( Y^k \pr_k R^{j\ell}[U] \right) - \nab^\b R^{jl}[Y^k \pr_k U] \\
= & \nab^\b \left( Y^k \pr_k R^{j\ell}[U] \right) - R^{jl}[ \nab^\b(Y^k \pr_k U) ] \label{eq:willExpandCommutator} \\
&- [\nab^\b, R^{jl}][Y^k \pr_k U] \label{eq:nabBcommutatorU}
}
The term \eqref{eq:nabBcommutatorU} can be estimated using Lemma \ref{lem:CommNbRjl} by
\ali{
\label{eq:theFirstNbBCommutator}
\nrm{\eqref{eq:nabBcommutatorU}}_{C^{0}_{t} L^{q}_{x}} &\leq C \bar{\rho}^{1 + \fr{d}{q} - \fr{d}{p}} \sum_{\substack{\b_1 + \b_2 + \b_3 = \b \\ \b_1 \neq 0}} \bar{\rho}^{- \b_1} \co{\nb^{\b_2} Y} \| \nb^{\b_3 + 1} U \|_{C^{0}_{t} L^{p}_{x}} 
}
We separate out the cases $\b_2 = 0$ and $1 \leq |\b_2| \leq L - 1$ according to estimates \eqref{eq:C0nabYestimates}-\eqref{eq:C0Yestimate}.  In every case, we obtain
\ali{
\nrm{\eqref{eq:nabBcommutatorU}}_{C^{0}_{t} L^{q}_{x}} &\leq C \bar{\rho}^{1 + \fr{d}{q} - \fr{d}{p}} \Xi e_v^{1/2} \sum_{J_0 + J_1 + J_2 = \b} \bar{\rho}^{- J_0} \Xi^{J_1} \| \nb^{(J_2)}  U \|_{C^{0}_{t} L^{p}_{x}} 
}
We estimate the term \eqref{eq:willExpandCommutator} by first expanding into terms of the form
\ali{
\label{eq:expandedFirstCommutator}
\begin{split}
\eqref{eq:willExpandCommutator} &= \sum_{\b_1 + \b_2 = \b} \nb^{\b_1} Y^k \pr_k \nb^{\b_2} R^{jl}[U] - R^{jl}[\nab^{\b_1} Y^k \pr_k \nb^{\b_2} U] \\
&= \sum_{\b_1 + \b_2 = \b} E_{\b_1, \b_2}
\end{split}
}
Each term on the right hand side of \eqref{eq:expandedFirstCommutator} can be expanded as follows
\ali{
E_{\b_1, \b_2} =& [\nb^{\b_1} Y \cdot \nab, R^{jl}][\nb^{\b_2} U] \label{eq:newCommutatorForm} \\
& + \nb^{\b_1} Y^k \pr_k [\nb^{\b_2}, R^{jl}][U] \label{eq:prkCommutatorTerm}
}
We now express \eqref{eq:prkCommutatorTerm} as a sum of commutators
\ali{
\eqref{eq:prkCommutatorTerm} &= \nb^{\b_1} Y^k [\pr_k\nb^{\b_2}, R^{jl}][U] - \nb^{\b_1} Y^k [\pr_k, R^{jl}][\nb^{\b_2} U] \label{eq:nowprkIsACommutator}
}
Each term of the form \eqref{eq:prkCommutatorTerm} can now be bounded using Lemma \ref{lem:CommNbRjl} by 
\ALI{
\nrm{\eqref{eq:prkCommutatorTerm}}_{C^{0}_{t} L^{q}_{x}} &\leq C \bar{\rho}^{1 + \fr{d}{q} - \fr{d}{p}} \sum_{\substack{J_0 + J_1 + J_2 = |\b| + 1  \\ J_1 + J_2 \neq |\b| + 1}} \bar{\rho}^{- J_0} \co{\nb^{(J_1)} Y} \| \nb^{(J_2)} U \|_{C^{0}_{t} L^{p}_{x}} 
}
The bound \eqref{eq:keyComm} for this term now follows from \eqref{eq:C0nabYestimates}-\eqref{eq:C0Yestimate}.

The remaining terms from \eqref{eq:newCommutatorForm} all have a commutator form $[Z \cdot \nab, R^{jl}][\widetilde{U}]$ where $Z = \nb^{\b_1} Y$ and $\widetilde{U} = \nb^{\b_2} U$.  Appling Lemma \ref{lem:commuteWithVecFlds}, we have
\ali{
\nrm{\eqref{eq:prkCommutatorTerm} }_{C^{0}_{t} L^{q}_{x}} &\leq  C \bar{\rho}^{1 + \fr{d}{q} - \fr{d}{p}} \sum_{J_1 + J_2 = |\b|} (\bar{\rho}^{-1} \co{\nab^{(J_1)} Y} + \co{\nab^{(J_1 + 1)} Y}) \| \nb^{(J_2)} U \|_{C^{0}_{t} L^{p}_{x}}
}
Note that at most $|\b| + 1 \leq L$ derivatives fall on $Y$.  Recalling once more the estimates \eqref{eq:C0nabYestimates}-\eqref{eq:C0Yestimate}, we obtain Lemma \ref{lem:keyComm}.
\end{proof}

We are now ready to give a proof of Theorem \ref{thm:inv4divEq}.
\begin{proof} [Proof of Theorem \ref{thm:inv4divEq}]
In view of Proposition \ref{prop:Rjl} and Lemmas \ref{lem:LpBnd4Rjl}, \ref{lem:CommNbRjl} with $p=q=\infty$, we are only left to establish the estimate \eqref{eq:inv4divEq:2}. The idea is to write the advective derivative as
\begin{equation*}
	\rd_{t} + v_{\eps} \cdot \nb = \approxMd + (v_{\eps} - \overline{v}_{\eps})^{k} \rd_{k} \, .
\end{equation*}

Then using the fact that $[\approxMd, R^{j \ell}] = 0$, for any multi-index $\bt$ with $0 \leq \abs{\bt} \leq L-1$, we have
\begin{equation*}
	\nb^{\bt} (\rd_{t} + v_{\eps} \cdot \nb) R^{j \ell} [U] = \nb^{\bt} ( R^{j \ell} [(\rd_{t} + v_{\eps} \cdot \nb) U] ) + \nb^{\bt} [(v_{\eps} - \overline{v}_{\eps})^{k} \rd_{k}, R^{j \ell}][U].
\end{equation*}

Applying Lemmas \ref{lem:LpBnd4Rjl}--\ref{lem:keyComm} with $p = q = \infty$ and using \eqref{eq:inv4divEq:hyp:3}, the desired estimate \eqref{eq:inv4divEq:2} follows. \qedhere
\end{proof}

\section{Perturbations of Smooth Euler Flows}\label{sec:mainLemImpliesMainThm}
\newcommand{\HolderPert}{\dlt} 

In this section, we illustrate how the Main Lemma (Lemma \ref{lem:mainLemma}) can be used to establish Theorem \ref{thm:eulerOnRn:cptPert} on the perturbation of smooth Euler flows. 
The basic strategy is the same as in Section 11 of \cite{isett} and the construction in \cite{deLSzeHoldCts}; namely, we iterate the Main Lemma to produce a sequence of solutions $(v_{(k)}, p_{(k)}, R_{(k)})$ to the Euler-Reynolds equations, which converges to a solution $(v, p)$ to the Euler equations as $k \to \infty$ with the desired properties. However, there are a few notable differences compared to \cite{isett}:
\begin{enumerate}
\item As discussed in Section \ref{sec:theMainLemma}, the condition $N \geq \Xi^{\eta}$ in the Main Lemma of \cite{isett}, which forced the frequency $\Xi_{(k)}$ to grow double-exponentially in $k$, is absent from our Main Lemma.
We are therefore able to choose frequencies which grow only exponentially in $k$; see \eqref{eq:MLMT:parameterEvolution:Xi}. 
Having this property makes our solutions closer to the physical picture of turbulence, as discussed in \S \ref{subsec:intro:turbulence}.  We also remark that the exponential growth of frequency makes our proof of Theorem \ref{thm:eulerOnRn:cptPert} simpler compared to that in \cite{isett}, as the evolution laws for the parameters \eqref{eq:MLMT:parameterEvolution:Xi}, \eqref{eq:MLMT:parameterEvolution:ev} and \eqref{eq:MLMT:parameterEvolution:eR} are more straightforward. In particular, we need not rely on the `matrix technology' of \cite[\S 11.2.4]{isett}.

\item In \cite{isett}, an energy function $e_{(k)}(t)$ had to be constructed at each step in order to apply the Main Lemma. In the present case, we need to construct instead an appropriate energy density function $e_{(k)}(t,x)$ (whose integral in $x$ is the energy function $e_{(k)}(t)$), which in particular satisfies a point-wise upper bound \eqref{ineq:goodEnergy} for its advective derivative. In order to achieve \eqref{ineq:goodEnergy}, we employ the machinery of mollification along the flow of $v_{(k)}$. Note, however, that the only apriori information on $v_{(k)}$ we have is that $\nb^{m} v_{(k)} \in C^{0}_{t,x}$ for $m=1, \ldots, L$ (from its frequency and energy levels), which is far weaker than those on $v_{\eps}$ in the previous applications of mollification along the flow. This information turns out to be just sufficient for our construction; see Subsections \ref{subsec:MLMT:moll} and \ref{subsec:MLMT:construction}.

\item In order to ensure the compact support property, we need to keep track of how the support of $(v_{(k)} - v_{(0)}, p_{(k)} - p_{(0)}, R_{(k)})$ enlarges at each step. The Main Lemma (see \eqref{eq:lowBoundEoftx}, \eqref{eq:goalForR1supp}, \eqref{eq:goalForVPsupp}) states that the support of $(v_{(k+1)} - v_{(0)}, p_{(k+1)} - p_{(0)}, R_{(k+1)})$ is contained in a $v_{(k)}$-adapted cylindrical neighborhood of the support of $(v_{(k)} - v_{(0)}, p_{(k)} - p_{(0)}, R_{(k)})$, with duration $\aeq \tht_{(k)}$ and base radius $\aeq \Xi_{(k)}^{-1}$.  
A technical nuance here is that the cylinders are adapted to a different velocity field $v_{(k)}$ at each step. We get around this issue by choosing $e_{(k)}(t,x)$ carefully so that, roughly speaking, the edge of the support of $(v_{(k+1)} - v_{(0)}, p_{(k+1)} - p_{(0)}, R_{(k+1)})$ lies outside the support of $v_{(k)} - v_{(0)}$.  This strategy allows us to describe the enlargement of support in terms of $v_{(0)}$-adapted cylindrical neighborhoods at all steps. Then, in view of the exponential decay of $\tht_{(k)}, \Xi^{-1}_{(k)}$, the desired compact support property follows. For details, see \S\S \ref{subsubsec:ED}, \ref{subsubsec:supp}.

\item Using the new local estimate \eqref{eq:energyPrescribed} for the energy increment in the Main Lemma, we show that the constructed solution $(v, p)$ cannot be identical to any $W^{1/5, 1}_{x}$ or $C^{1/5}_{x}$ vector field on any spatial ball contained in $\Omg_{(0)}$. This fact implies Statement 3 in the Theorem \ref{thm:eulerOnRn:cptPert}.
\end{enumerate}

In Subsection \ref{subsec:MLMT:moll}, we discuss the procedure of mollification along the flow of a vector field with limited regularity, which is used to construct the energy density function $e_{(k)}^{1/2}$. In Subsection \ref{subsec:MLMT:reduceMT}, we reduce the proof of Theorem \ref{thm:eulerOnRn:cptPert} to constructing a sequence $(v_{(k)}, p_{(k)}, R_{(k)})$ of solutions to the Euler-Reynolds system 
that satisfies certain claims, i.e., Claims \ref{claim:MLMT:zeroR}--\ref{claim:MLMT:nontrivial}. In Subsection \ref{subsec:MLMT:construction}, we present the construction of the sequence $(v_{(k)}, p_{(k)}, R_{(k)})$, and in Subsection \ref{subsec:MLMT:verifyClaims}, we verify the claims made in Subsection \ref{subsec:MLMT:reduceMT} with such sequence, thereby concluding the proof of Theorem \ref{thm:eulerOnRn:cptPert}.

\subsection{Mollification along the flow of a vector field with limited regularity} \label{subsec:MLMT:moll}

Let $L \geq 1$, and $v = (v^{1}, v^{2}, v^{3})$ be a vector field on $\bbR \times \bbR^{3}$ whose frequency and energy levels are below $(\Xi, e_{v})$ to order $L$ in $C^{0}$ ($L \geq 1$), in the sense that the following estimate holds.
\begin{equation} \label{eq:MLMT:freqEnergy4v}
	\nrm{\nb^{m} v}_{C^{0}_{t,x}} \leq \Xi^{k} e_{v}^{1/2} \hskip5em m = 1, \ldots, L.
\end{equation}

Recall that \emph{the flow of $v$} is the map ${}^{(v)} \Phi_{s}(t,x) = (t+s, {}^{(v)} \Phi'_{s}(t,x) )$, where ${}^{(v)} \Phi'_{s}$ is the unique solution to the ODE
\begin{equation} \label{eq:MLMT:ODE4Phi}
	\rd_{s} {}^{(v)} \Phi'_{s}(t,x) = v(t+s, {}^{(v)} \Phi'_{s}(t, x)), \quad 
	{}^{(v)} \Phi'_{0}(t,x) = x.
\end{equation}  

As $\nb v$ is uniformly bounded on $\bbR \times \bbR^{3}$ by \eqref{eq:MLMT:freqEnergy4v}, ${}^{(v)} \Phi'_{s}(t,x)$ extends indefinitely in $s$. By continuous dependence on parameters for ODEs, it follows that ${}^{(v)} \Phi'_{s}(t,x), \rd_{s} {}^{(v)} \Phi'_{s}(t,x)$ are continuous in $(t,x, s) \in \bbR \times \bbR^{3} \times \bbR$. Moreover, by differentiating the ODE \eqref{eq:MLMT:ODE4Phi} in $x$, we have that $\nb^{m} ({}^{(v)} \Phi'_{s})$ is continuous in $(t,x,s)$ for $m = 1, \ldots L$. In fact, the following Lemma can be read off from \cite[Proof of Proposition 18.1]{isett}.

\begin{lem} \label{lem:MLMT:est4DPhi}
Let $v$ be a vector field on $\bbR \times \bbR^{3}$ whose frequency and energy levels are below $(\Xi, e_{v})$ to order $L$ in $C^{0}$ ($L \geq 1$), in the sense that \eqref{eq:MLMT:freqEnergy4v} holds. Then for every $1 \leq m \leq L$, there exist constants $C_{a,1}, C_{a,2} > 0$ such that $\nb^{m}({}^{(v)} \Phi'_{s})$ obeys the estimate
\begin{equation} \label{eq:MLMT:est4DPhi}
	\abs{\nb^{m} ({}^{(v)}\Phi'_{s})(t,x)} \leq C_{m,1} e^{C_{m,2} \Xi e_{v}^{1/2} s} \Xi^{m-1}.
\end{equation}
\end{lem}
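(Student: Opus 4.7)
The plan is to prove \eqref{eq:MLMT:est4DPhi} by induction on $m$, using the fact that $\nb^{m}({}^{(v)} \Phi'_{s})$ satisfies a linear ODE in $s$ obtained by differentiating \eqref{eq:MLMT:ODE4Phi} in $x$. The overall strategy is standard: at each order, isolate the principal term proportional to $\nb^{m}({}^{(v)} \Phi'_{s})$ (which provides the Gronwall coefficient), bound the inhomogeneity via Fa\`a~di~Bruno using the inductive hypothesis, and close the estimate with Gronwall's inequality. The dimensional factor $\Xi^{m-1}$ (rather than $\Xi^m$) will fall out of a bookkeeping identity in the Fa\`a~di~Bruno expansion, consistent with the fact that ${}^{(v)} \Phi'_s$ has the dimension of length.

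For the base case $m=1$, differentiating \eqref{eq:MLMT:ODE4Phi} in $x$ yields
\[
	\rd_{s} \nb {}^{(v)} \Phi'_{s}(t,x) = \nb v(t+s, {}^{(v)} \Phi'_{s}(t,x)) \cdot \nb {}^{(v)} \Phi'_{s}(t,x),
	\qquad \nb {}^{(v)} \Phi'_{0} = \mathrm{Id}.
\]
Since $\nrm{\nb v}_{C^{0}} \leq \Xi e_{v}^{1/2}$ by \eqref{eq:MLMT:freqEnergy4v}, Gronwall's inequality yields $|\nb {}^{(v)} \Phi'_{s}| \leq C_{1,1} e^{C_{1,2} \Xi e_{v}^{1/2} s}$, which is \eqref{eq:MLMT:est4DPhi} for $m=1$ with $\Xi^{0} = 1$.

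For the inductive step, assume \eqref{eq:MLMT:est4DPhi} holds for all orders less than $m$, where $2 \leq m \leq L$. Applying $\nb^{m}$ to \eqref{eq:MLMT:ODE4Phi} and using Fa\`a~di~Bruno gives
\[
	\rd_{s} \nb^{m} {}^{(v)} \Phi'_{s} = (\nb v)({}^{(v)} \Phi'_{s}) \cdot \nb^{m} {}^{(v)} \Phi'_{s} + \sum_{k=2}^{m} \sum_{\substack{j_{1} + \cdots + j_{k} = m \\ j_{i} \geq 1}} c_{j_{1}, \ldots, j_{k}} \, (\nb^{k} v)({}^{(v)} \Phi'_{s}) \cdot \prod_{i=1}^{k} \nb^{j_{i}} {}^{(v)} \Phi'_{s},
\]
with initial condition $\nb^{m} {}^{(v)} \Phi'_{0} = 0$ (since $\Phi'_{0} = \mathrm{Id}$). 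Using the hypothesis $\nrm{\nb^{k} v}_{C^{0}} \leq \Xi^{k} e_{v}^{1/2}$ and the inductive bound $|\nb^{j_{i}} {}^{(v)} \Phi'_{s}| \leq C_{j_{i},1} \Xi^{j_{i}-1} e^{C_{j_{i},2} \Xi e_{v}^{1/2} s}$ (valid since $1 \leq j_{i} < m$), each term in the double sum is bounded by
\[
	\Xi^{k} e_{v}^{1/2} \prod_{i=1}^{k} C_{j_{i},1} \Xi^{j_{i}-1} e^{C_{j_{i},2} \Xi e_{v}^{1/2} s} \leq C \, \Xi^{m} e_{v}^{1/2} \, e^{\alpha_{m} \Xi e_{v}^{1/2} s},
\]
where the factor $\Xi^{m}$ arises from the bookkeeping identity $k + \sum_{i=1}^{k}(j_{i}-1) = m$, and $\alpha_{m}$ denotes the maximum of $\sum_{i} C_{j_{i}, 2}$ over the finitely many partitions considered. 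Gronwall's inequality applied to the scalar inequality
\[
	\Bigl| \rd_{s} \abs{\nb^{m} {}^{(v)} \Phi'_{s}} \Bigr| \leq \Xi e_{v}^{1/2} \, \abs{\nb^{m} {}^{(v)} \Phi'_{s}} + C \Xi^{m} e_{v}^{1/2} e^{\alpha_{m} \Xi e_{v}^{1/2} s}
\]
then yields $|\nb^{m} {}^{(v)} \Phi'_{s}| \leq C_{m,1} \Xi^{m-1} e^{C_{m,2} \Xi e_{v}^{1/2} s}$ for some $C_{m,2} \geq \max(\alpha_{m}, 1) + 1$, completing the induction.

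The argument is essentially an exercise in the Gronwall--Fa\`a~di~Bruno paradigm, and I expect no serious analytic obstacle: existence and continuity of $\nb^{m} {}^{(v)} \Phi'_{s}$ for $1 \leq m \leq L$ follow from classical ODE theory since $v$ is $C^{L}$ in space with uniform bounds on $\nb^{k} v$ for $1 \leq k \leq L$. The only point deserving care is the combinatorial identity $k + \sum_{i}(j_{i}-1) = m$ that produces the correct power $\Xi^{m-1}$; this is precisely what allows the estimate to be dimensionally consistent.
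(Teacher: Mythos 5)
Your proof is correct, and it is essentially the same argument as the one the paper relies on: the paper simply cites \cite[Proof of Proposition 18.1]{isett}, which is exactly this induction on $m$ via differentiating the flow ODE, bounding the chain-rule (Fa\`a~di~Bruno) terms with the hypotheses $\nrm{\nb^{k} v}_{C^{0}} \leq \Xi^{k} e_{v}^{1/2}$, and closing with Gronwall, where the vanishing initial data for $m \geq 2$ and the integration of the exponential produce the gain of $(\Xi e_{v}^{1/2})^{-1}$ that yields $\Xi^{m-1}$. No gaps.
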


It is also true that $\rd_{t} ({}^{(v)} \Phi'_{s})$ is continuous. However, this property does not follow directly by differentiating \eqref{eq:MLMT:ODE4Phi}, as we have not assumed anything about $\rd_{t} v$. Rather, it is a consequence of the following Lemma.

\begin{lem} \label{lem:MLMT:diffAlongFlow:Phi}
Let $v$ be a vector field on $\bbR \times \bbR^{3}$ whose frequency and energy levels are below $(\Xi, e_{v})$ to order $L$ in $C^{0}$ ($L \geq 1$), in the sense that \eqref{eq:MLMT:freqEnergy4v} holds. Then for every $(t,x,s) \in \bbR \times \bbR^{3} \times \bbR$ and $\sgm \in \bbR$, we have
\begin{equation} \label{eq:MLMT:diffAlongFlow:Phi:1}
	{}^{(v)} \Phi_{s}({}^{(v)} \Phi_{\sgm}(t,x)) = {}^{(v)} \Phi_{s+\sgm}(t, x).
\end{equation}

Moreover, $\rd_{t} ({}^{(v)} \Phi'_{s}(t, x))$ is continuous in $(t,x, s) \in \bbR \times \bbR^{3} \times \bbR$, and
\begin{equation} \label{eq:MLMT:diffAlongFlow:Phi:2}
	(\rd_{t} + v(t,x) \cdot \nb) {}^{(v)} \Phi'_{s}(t,x) = \rd_{s} {}^{(v)} \Phi'_{s}(t,x) = v(t, {}^{(v)} \Phi'_{s}(t,x)).
\end{equation}
\end{lem}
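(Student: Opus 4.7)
The group property \eqref{eq:MLMT:diffAlongFlow:Phi:1} is the standard consequence of uniqueness for the ODE \eqref{eq:MLMT:ODE4Phi}. Since $\nabla v \in L^\infty$ by \eqref{eq:MLMT:freqEnergy4v}, $v$ is spatially Lipschitz with uniform constant, so for each fixed initial datum the solution of \eqref{eq:MLMT:ODE4Phi} is unique. Comparing the two curves $s \mapsto {}^{(v)}\Phi_s({}^{(v)}\Phi_\sigma(t,x))$ and $s \mapsto {}^{(v)}\Phi_{s+\sigma}(t,x)$, both solve \eqref{eq:MLMT:ODE4Phi} with the same value at $s=0$, so they coincide for all $s$, giving \eqref{eq:MLMT:diffAlongFlow:Phi:1}.

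The substance is the identity \eqref{eq:MLMT:diffAlongFlow:Phi:2}. The obstacle is that we have no control whatsoever over $\partial_t v$, so one cannot simply differentiate the integral equation $\Phi'_s(t,x) = x + \int_0^s v(t+\sigma, \Phi'_\sigma(t,x))\,d\sigma$ in $t$. The way around this is to use the group law to \emph{convert} a $t$-increment into an $s$-increment plus a spatial shift. Specializing \eqref{eq:MLMT:diffAlongFlow:Phi:1} appropriately, one obtains
\begin{equation*}
    \Phi'_s(t+h, x) = \Phi'_{s+h}\bigl(t,\, \Phi'_{-h}(t+h, x)\bigr)
\end{equation*}
for all $h \in \mathbb{R}$. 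The two variables on the right-hand side both move only in directions for which we already have good control: $s$-derivatives are governed by the ODE, and spatial derivatives are controlled by Lemma \ref{lem:MLMT:est4DPhi}.

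I would then compute $\partial_t \Phi'_s$ at $(t,x)$ by splitting
\begin{equation*}
    \Phi'_s(t+h, x) - \Phi'_s(t, x) = \bigl[\Phi'_{s+h}(t, \Phi'_{-h}(t+h,x)) - \Phi'_{s+h}(t, x)\bigr] + \bigl[\Phi'_{s+h}(t, x) - \Phi'_s(t, x)\bigr].
\end{equation*}
The second bracket is a pure $s$-increment; dividing by $h$ and using \eqref{eq:MLMT:ODE4Phi} it tends to $v(t+s, \Phi'_s(t,x))$. For the first bracket, the mean value theorem applied to the $C^1_x$ map $\Phi'_{s+h}(t, \cdot)$, together with the elementary estimate (via \eqref{eq:MLMT:ODE4Phi}) $h^{-1}(\Phi'_{-h}(t+h, x) - x) \to -v(t,x)$ and the continuity of $\nabla \Phi'$ provided by Lemma \ref{lem:MLMT:est4DPhi}, shows that this term divided by $h$ tends to $-\nabla \Phi'_s(t,x) \cdot v(t,x)$. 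Adding the two limits gives
\begin{equation*}
    \partial_t \Phi'_s(t,x) = v(t+s, \Phi'_s(t,x)) - v(t,x) \cdot \nabla \Phi'_s(t,x),
\end{equation*}
which rearranges to the desired identity \eqref{eq:MLMT:diffAlongFlow:Phi:2} (with the right-hand side understood as $v(t+s, \Phi'_s(t,x))$, matching $\partial_s \Phi'_s$). Continuity of $\partial_t \Phi'_s$ in $(t,x,s)$ is then read off from this formula, using the already established joint continuity of $\Phi'_s$, $\nabla \Phi'_s$, and $v$. The main delicate point of the argument is the first bracket: one must argue simultaneously that $\Phi'_{-h}(t+h,x) \to x$ with a computable rate and that $\nabla_y \Phi'_{s+h}$ is continuous in all its arguments in a neighborhood of $(t,x)$, which is precisely what Lemma \ref{lem:MLMT:est4DPhi} and the standing hypothesis $L \geq 1$ afford.
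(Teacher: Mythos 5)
Your proof is correct and follows essentially the same route as the paper: the group law \eqref{eq:MLMT:diffAlongFlow:Phi:1} by ODE uniqueness, and then \eqref{eq:MLMT:diffAlongFlow:Phi:2} by exploiting that identity to trade $t$-increments for $s$-increments and spatial shifts, which is exactly the paper's (much terser) ``differentiate \eqref{eq:MLMT:diffAlongFlow:Phi:1} at $\sgm=0$ and use the ODE.'' Your explicit splitting of the increment, together with the observation that the last expression in \eqref{eq:MLMT:diffAlongFlow:Phi:2} should read $v(t+s, {}^{(v)}\Phi'_{s}(t,x))$, simply fills in the details the paper leaves implicit.
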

\begin{proof} 
Equation \eqref{eq:MLMT:diffAlongFlow:Phi:1} can be proved by differentiating both sides by $s$, and observing that both sides solve the same ODE with the same data at $s=0$. Then the continuity of $\rd_{t} ( {}^{(v)} \Phi'_{s})$ and \eqref{eq:MLMT:diffAlongFlow:Phi:2} follow by differentiating at $\sgm=0$ and using the ODE \eqref{eq:MLMT:ODE4Phi}. \qedhere
\end{proof}

\begin{rem} 
When $v$ is smooth, \eqref{eq:MLMT:diffAlongFlow:Phi:2} is equivalent to the fact that the Lie derivative of the vector field $\rd_{t} + v \cdot \nb$ along itself is zero, which is equivalent also to the fact that the commutator of $\rd_{t} + v \cdot \nb$ with itself is zero. The discussion on the process of mollification along the flow in \cite{isett} depended on these facts. Here, since $v$ only has limited regularity (in particular, $v$ is only continuous in $t$), our approach relies instead on the integrated version \eqref{eq:MLMT:diffAlongFlow:Phi:1}.
\end{rem}

Given a smooth function $F$ on $\bbR \times \bbR^{3}$ with compact support, we define its \emph{mollification $\moll{v}{F}_{\tMP, \spMP}$ in space and along the flow of $v$} by the formula
\begin{equation} \label{eq:MLMT:mollifyDefn}
\moll{v}{F}_{\tMP, \spMP}(t,x) := \iint F( {}^{(v)} \Phi_{s} (t,x) + (0, h)) \, \eta_{\spMP}(h) \eta_{\tMP}(s) \, \ud h \ud s.
\end{equation}
where $\tMP, \spMP$ are mollification parameters, $\eta_{\tMP}(s) = \frac{1}{\tMP} \eta_{0}(s / \tMP)$ and $\eta_{\spMP}(h) = \frac{1}{\spMP^{3}} \eta_{1}(h / \spMP)$. Here, $\eta_{0}, \eta_{1}$ are smooth, compactly supported functions on $\bbR$ and $\bbR^{3}$, respectively, such that $\int_\R \eta_{0} \, \ud t = \int_{\R^3} \eta_{1} \, \ud x = 1$, $\supp \, \eta_{0} \subseteq \set{t : \abs{t} \leq 1}$ and $\supp \, \eta_{1} \subseteq \set{x : \abs{x} \leq 1}$.   

The main result of this subsection is Proposition~\ref{prop:MLMT:est4moll} below regarding the regularity of $\moll{v}{F}_{\tMP, \spMP}$ when $v$ merely satisfies $\nb^{m} v \in C^{0}_{t,x}$ for $m = 1, \ldots L$.  
In this Proposition, we consider not only smooth functions $F$, but also locally integrable functions, as our construction involves applying  formula \eqref{eq:MLMT:mollifyDefn} to a function which belongs to $L_{t,x}^\infty$ (the characteristic function of a measurable subset of $\R \times \R^3$).  Within the proof of Proposition~\ref{prop:MLMT:est4moll} below, we show that the formula \eqref{eq:MLMT:mollifyDefn} gives a well defined, continuous function of $(t,x)$ whenever $F$ is locally integrable, and we establish bounds on the regularity of $\moll{v}{F}_{\tMP, \spMP}$  under the assumption that $F$ belongs to $L^{\infty}_{t,x}$.  In particular, the value of \eqref{eq:MLMT:mollifyDefn} is well-defined at every point $(t,x)$ and is independent of the almost-everywhere equivalence class of $F$.    


\begin{prop} \label{prop:MLMT:est4moll}
Let $v$ be a vector field on $\bbR \times \bbR^{3}$ whose frequency and energy levels are below $(\Xi, e_{v})$ to order $L$ in $C^{0}$ ($L \geq 1$), in the sense that \eqref{eq:MLMT:freqEnergy4v} holds. Then for every locally integrable $F$ on $\bbR \times \bbR^{3}$, the following statements hold.
\begin{enumerate}
\item For $0 \leq k \leq 1$, $0 \leq m+k \leq L$, $\nb^{m} \rd_{t}^{k} (\moll{v}{F}_{\tMP, \spMP})$ is continuous in $(t,x) \in \bbR \times \bbR^{3}$.
\item Suppose furthermore that $F \in L^{\infty} (\bbR \times \bbR^{3})$. Then there exist constants $C_{1}, C_{2} > 0$, which depends only on $L$, such that for every $1 \leq m \leq L$, the following quantitative estimates hold.
\begin{align} 
	\nrm{\nb^{m} (\moll{v}{F}_{\tMP, \spMP})}_{C^{0}_{t,x}} 
	\leq & C_{1} e^{C_{2} \Xi e_{v}^{1/2} \tMP} \big[ (\spMP)^{-m} + \Xi^{m} \big] \nrm{F}_{L^{\infty}_{t,x}} \label{eq:MLMT:est4moll:1} \\
	\nrm{\nb^{m-1} (\rd_{t} + v \cdot \nb) (\moll{v}{F}_{\tMP, \spMP})}_{C^{0}_{t,x}} 
	\leq & C_{1} e^{C_{2} \Xi e_{v}^{1/2} \tMP} \big[ (\spMP)^{-(m-1)} + \Xi^{(m-1)} \big] \tMP ^{-1} \nrm{F}_{L^{\infty}_{t,x}} \label{eq:MLMT:est4moll:2} 
\end{align}
\end{enumerate}
\end{prop}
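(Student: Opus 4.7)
\medskip

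\noindent\textbf{Proof proposal for Proposition~\ref{prop:MLMT:est4moll}.}

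The starting point is a change of variables that shifts the mollification off of the low--regularity function $F$ and onto the smooth kernels. Making the substitution $y = {}^{(v)}\Phi'_s(t,x) + h$ in \eqref{eq:MLMT:mollifyDefn} (with $s$ frozen), we rewrite
\[
\moll{v}{F}_{\tMP,\spMP}(t,x) = \iint F(t+s,y)\,\eta_{\spMP}\big(y - {}^{(v)}\Phi'_s(t,x)\big)\,\eta_{\tMP}(s)\,dy\,ds.
\]
In this form the $(t,x)$-dependence enters only through the smooth composition $\eta_{\spMP}(y - {}^{(v)}\Phi'_s(t,x))$, which by Lemmas~\ref{lem:MLMT:est4DPhi} and \ref{lem:MLMT:diffAlongFlow:Phi} is $C^L$ in $x$ and $C^1$ in $t$, with quantitative bounds. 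Since $F$ is locally integrable and the kernel has compact support in $(s,y)$ for each fixed $(t,x)$ (and depends continuously on $(t,x)$), dominated convergence yields continuity of the resulting integral, and differentiation under the integral sign may be justified as long as the differentiated kernel remains uniformly integrable on the support. This immediately yields Part (1) for the pure spatial derivatives $\nb^m$, $m\le L$. For the single time derivative case $k=1$, $m+1\le L$, I would write $\rd_t \nb^m G = \nb^m[(\rd_t + v\cdot\nb)G] - \nb^m[v\cdot\nb G]$ (where $G = \moll{v}{F}$), handle the advective piece with the integration--by--parts trick below, and handle $\nb^m[v\cdot\nb G]$ using the already-established continuity of $\nb^{m+1}G$ and the assumption $\nb v\in C^0$.

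For the spatial estimate \eqref{eq:MLMT:est4moll:1}, I apply Faà di Bruno to $\nb^m[\eta_{\spMP}(y - {}^{(v)}\Phi'_s(t,x))]$. Each term has the form $(\nb^j\eta_{\spMP})(y - {}^{(v)}\Phi'_s)\cdot \prod_{i=1}^{j}\nb^{a_i}{}^{(v)}\Phi'_s$ with $j\ge 1$ and $\sum a_i = m$ (each $a_i\ge 1$). Lemma~\ref{lem:MLMT:est4DPhi} bounds the product of flow--derivative factors in $C^0$ by $C\,e^{C\Xi e_v^{1/2}\tMP}\,\Xi^{m-j}$ (using $\sum(a_i-1)=m-j$ and $|s|\le\tMP$), while integration in $y$ against $F\in L^\infty$ loses a factor $\nrm{\nb^j\eta_{\spMP}}_{L^1}\le C\spMP^{-j}$. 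Summing, $\sum_{j=1}^m \spMP^{-j}\Xi^{m-j}\le C(\spMP^{-m}+\Xi^m)$, giving \eqref{eq:MLMT:est4moll:1}.

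The advective estimate \eqref{eq:MLMT:est4moll:2} is where the key identity \eqref{eq:MLMT:diffAlongFlow:Phi:2} of Lemma~\ref{lem:MLMT:diffAlongFlow:Phi} is crucial. Viewing $F(t+s,{}^{(v)}\Phi'_s(t,x)+h)$ as $F \circ \Psi_s$ where $\Psi_s(t,x) = {}^{(v)}\Phi_s(t,x)+(0,h)$, the chain rule together with \eqref{eq:MLMT:diffAlongFlow:Phi:2} gives $(\rd_t + v\cdot\nb)[F\circ \Psi_s] = \rd_s[F\circ\Psi_s]$ in the distributional sense. Applying this under the integral and then integrating by parts in $s$ produces
\[
(\rd_t + v\cdot\nb)\moll{v}{F}_{\tMP,\spMP}(t,x) = -\iint F(t+s,y)\,\eta_{\spMP}\big(y-{}^{(v)}\Phi'_s(t,x)\big)\,\eta'_{\tMP}(s)\,dy\,ds,
\]
where the pointwise bound $\nrm{\eta'_{\tMP}}_{L^1(ds)}\le C\tMP^{-1}$ produces the crucial gain of $\tMP^{-1}$. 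Applying $\nb^{m-1}$ to this expression and repeating the Faà di Bruno analysis above (with $m$ replaced by $m-1$) gives the sum $\sum_{j=0}^{m-1}\spMP^{-j}\Xi^{(m-1)-j}\le C(\spMP^{-(m-1)}+\Xi^{m-1})$, yielding \eqref{eq:MLMT:est4moll:2}.

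The main obstacle I anticipate is the rigorous justification of the integration by parts in $s$ when $F$ is only $L^\infty$ (not smooth). The clean way is to first approximate $F$ by a sequence of smooth compactly supported $F_n$ with $F_n\to F$ in $L^1_{\mathrm{loc}}$ and $\nrm{F_n}_{L^\infty}\le\nrm{F}_{L^\infty}$, so that the identity is classical at the level of $F_n$ and passes to the limit $n\to\infty$ pointwise via dominated convergence (using the smoothness and compact support of $\eta_{\spMP}\,\eta'_{\tMP}$ and the uniform bounds on ${}^{(v)}\Phi'_s$). All quantitative bounds pass to the limit with the same constants, and the continuity in $(t,x)$ is preserved because the limit holds locally uniformly.
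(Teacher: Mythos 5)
Your proposal is correct and follows essentially the same route as the paper: change variables so the $(t,x)$-dependence sits on the smooth kernel (continuity and differentiation under the integral by dominated convergence), Fa\`a di Bruno plus the flow bounds of Lemma~\ref{lem:MLMT:est4DPhi} and Young's inequality for \eqref{eq:MLMT:est4moll:1}, the identity expressing $(\rd_t+v\cdot\nb)\moll{v}{F}_{\tMP,\spMP}$ as integration against $-\eta'_{\tMP}$ for the $\tMP^{-1}$ gain in \eqref{eq:MLMT:est4moll:2}, and smooth approximation of $F$ to handle the low regularity. The only cosmetic difference is that the paper obtains the advective-derivative identity by composing with the flow, shifting the $s$-variable and differentiating at $\sigma=0$ (so no chain rule is ever applied to $F$), whereas you derive it by chain rule and integration by parts for smooth $F$ and then pass to the limit, which is an equivalent justification.
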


\begin{proof} [Proof of Proposition \ref{prop:MLMT:est4moll}]
For convenience, we shall omit $(v)$ in ${}^{(v)} \Phi_{s}$. We first observe that the value of $\moll{v}{F}_{\tMP, \spMP}(t,x)$ is well-defined at each point $(t,x) \in \R \times \R^3$ whenever $F$ is a locally integrable function.
Let $(t,x)$ be any point in $\R \times \R^3$.  By making a change of variables $(s, h) \mapsto (\sgm, y) = (t+s, \Phi'_{s}(t,x) + h)$ in the integral~\eqref{eq:MLMT:mollifyDefn}, we may write
 \begin{align}
	\moll{v}{F}_{\tMP, \spMP}(t,x) = \int F(\sgm, y) \psi(\sgm, y ; t,x) \, \ud \sgm \ud y. \label{eq:afterCOVformula}
\end{align}
for all smooth functions $F : \R \times \R^3 \to \R$.  In formula \eqref{eq:afterCOVformula}, the function
\begin{equation*} 
	\psi(\sgm,y; t, x) = \eta_{\tMP}(\sgm-t) \eta_{\spMP}(y - \Phi'_{\sgm-t}(t,x)).
\end{equation*}
is a bounded, measurable function of $(\si, y)$ with compact support.  From this observation, the formula~\eqref{eq:afterCOVformula} is well-defined at each point $(t,x) \in \R \times \R^3$ for any locally integrable function by approximation from the case where $F$ is a smooth function.  Furthermore, the function $\psi(\si, y; t,x)$ is bounded in $L_{\si, y}^\infty$ uniformly in $(t,x)$, the value $\psi(\si, y; t,x)$ at every point $(\si, y)$ depends continuously on $(t,x)$, and the support of $\psi$ remains uniformly bounded whenever $(t,x)$ range over a bounded subset of $\R \times \R^3$.  Combining these observations, an application of the dominated convergence theorem shows that Formula~\eqref{eq:afterCOVformula} defines a continuous function of $(t,x)$ whenever $F$ is locally integrable.  Statement 1 now follows similarly by differentiating under the integral sign, using the dominated convergence theorem to pass to the limit in each sequence of difference quotients. 

Next, we turn to Statement 2. Let $\bt$ be a multi-index with $\abs{\bt} = m$. Then differentiating under the integral sign (which is justified for $F$ smooth) and applying the chain rule, we see that 
\begin{equation*}
	\nb^{\bt} [\moll{v}{F}_{\tMP, \spMP} ] (t,x)=  \int \nb^{\bt} [(\eta_{\spMP} \ast F)(\Phi_{s}(t,x))] \eta_{\tMP}(s) \, \ud s
\end{equation*}
is a linear combination of terms of the form
\begin{equation} \label{eq:MLMT:est4moll:pf:1}
	\int [ \rd_{j_{1}} \cdots \rd_{j_{K}} (\eta_{\spMP} \ast F) ] ( \Phi_{s}(t,x) ) \prod_{i=1}^{K} \nb^{\bt_{i}} \Phi^{j_{i}}_{s} (t,x) \, \eta_{\tMP}(s) \, \ud s
\end{equation}
where $0 \leq K \leq m$ and $\bt_{1}, \ldots, \bt_{K}$ are multi-indices such that $\bt_{1} + \cdots + \bt_{K} = \bt$. Using Lemma \ref{lem:MLMT:est4DPhi}, the standard convolution estimate
\begin{equation*}
\rd_{j_{1}} \cdots \rd_{j_{K}} (\eta_{\spMP} \ast F) \leq C (\spMP)^{-K} \nrm{F}_{L^{\infty}_{t,x}},
\end{equation*}
and the fact that $\int \abs{\eta_{\tMP}} \, \ud s \leq C$ (independent of $\tMP$),
we see that the $C^{0}$ norm of \eqref{eq:MLMT:est4moll:pf:1} is bounded from above by
\begin{align*}
	 \leq C e^{C \Xi e_{v}^{1/2} \tMP} (\spMP)^{-K} \nrm{F}_{L^{\infty}_{t,x}} \prod_{i=1}^{K}\Xi^{\abs{\bt_{i}} - 1} 
	 \leq C e^{C \Xi e_{v}^{1/2} \tMP} (\Xi^{m}+(\spMP)^{-m})  \nrm{F}_{L^{\infty}_{t,x}}
\end{align*}
where the last inequality follows from Young's inequality, using the fact that $0 \leq K \leq m$ and $\abs{\bt_1} + \cdots + \abs{\bt_K} = m$. This estimate proves \eqref{eq:MLMT:est4moll:1}. To prove \eqref{eq:MLMT:est4moll:2}, note that 
\begin{equation} \label{eq:MLMT:diffAlongFlow:moll}
	(\rd_{t} + v(t,x) \cdot \nb) \moll{v}{F}_{\tMP, \spMP}(t,x) = - \iint F( \Phi_{s}(t,x) + (0, h)) \eta_{\spMP}(h) \frac{\ud}{\ud s} \eta_{\tMP}(s) \, \ud h \ud s.
\end{equation}

Indeed, for every $\sgm \in \bbR$, we have
\begin{equation*}
	\moll{v}{F}_{\tMP, \spMP}(\Phi_{\sgm}(t,x)) = \iint F( \Phi_{s+\sgm}(t,x) + (0, h)) \eta_{\spMP}(h) \eta_{\tMP}(s) \, \ud h \ud s,
\end{equation*}
by \eqref{eq:MLMT:diffAlongFlow:Phi:1}. Making a change of variable $s' = s+\sgm$ and differentiating at $\sgm = 0$, we obtain \eqref{eq:MLMT:diffAlongFlow:moll}. Then \eqref{eq:MLMT:est4moll:2} can be proved in a similar manner as before, using the fact that $\int \abs{\frac{\ud}{\ud s} \eta_{\tMP}} \, \ud s \leq C (\tMP)^{-1}$.  The estimates of Proposition~\ref{prop:MLMT:est4moll} for $F \in L^\infty$ now follow by from the case where $F$ is smooth by a straightforward approximation argument, as in the proof of continuity of $\moll{v}{F}_{\tMP, \spMP}$. \qedhere
\end{proof}

Proposition \ref{prop:MLMT:est4moll} will be used later to obtain the desired \emph{upper} bounds for the energy density. In order to obtain the desired lower bound, we need to know about the locality of the mollification $\moll{v}{F}_{\tMP, \spMP}$. This property can be described succinctly by using Eulerian cylinders adapted to $v$ (Definition \ref{def:vCylinder}), as the following lemma shows. 
\begin{lem} [Locality of the mollification] \label{lem:MLMT:locality4Mollification}
Let $v$ be a vector field on $\bbR \times \bbR^{3}$ whose frequency and energy levels are below $(\Xi, e_{v})$ to order $L$ in $C^{0}$ ($L \geq 1$), in the sense that \eqref{eq:MLMT:freqEnergy4v} holds. Also, let $F$ be a locally integrable function on $\bbR \times \bbR^{3}$ and $\tMP, \spMP > 0$. 
Then for every $(t,x) \in \bbR \times \bbR^{3}$, the mollification $\moll{v}{F}_{\tMP, \spMP}(t,x)$ depends only on the values of $v$ and $F$ on $\ECyl_{v}(\tMP, \spMP; t, x)$. Furthermore, the advective derivative $(\rd_{t} + v \cdot \nb) [\moll{v}{F}_{\tMP, \spMP}] (t,x)$ also depends only on the values of $v$ and $F$ on $\ECyl_{v}(\tMP, \spMP; t, x)$ as well.
\end{lem}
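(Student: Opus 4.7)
The plan is to observe that the stated locality is almost tautological from the definition of the mollification, once one matches the support of the mollifying kernels against the definition of the Eulerian cylinder $\ECyl_v(\tMP, \spMP; t, x)$. The only ingredient beyond this is the uniqueness of ODE trajectories for $v$, which reduces the dependence of the map $s \mapsto {}^{(v)} \Phi_s(t,x)$ on $v$ to a dependence on the restriction of $v$ to the axis of the cylinder.

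First I would unpack \eqref{eq:MLMT:mollifyDefn}. Since $\eta_{\tMP}$ and $\eta_{\spMP}$ are supported in $\{|s| \leq \tMP\}$ and $\{|h| \leq \spMP\}$ respectively, the integration in $(s,h)$ is effectively restricted to the box $\{|s| \leq \tMP,\ |h| \leq \spMP\}$. For every such $(s,h)$, the evaluation point ${}^{(v)} \Phi_s(t,x) + (0,h)$ lies in $\ECyl_v(\tMP, \spMP; t, x)$ by the very Definition~\ref{def:vCylinder} of the Eulerian cylinder. Thus the integrand sees $F$ only through its restriction to $\ECyl_v(\tMP, \spMP; t, x)$.

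Next I would address the dependence on $v$. The curve $s \mapsto {}^{(v)} \Phi_s(t,x)$, for $|s| \leq \tMP$, is by construction the $h = 0$ axis of $\ECyl_v(\tMP, \spMP; t, x)$ and is the unique solution of the ODE \eqref{eq:MLMT:ODE4Phi} starting at $(t,x)$. Because $\nabla v$ is uniformly bounded by \eqref{eq:MLMT:freqEnergy4v}, this ODE has a unique solution. Hence if $\tilde{v}$ is any vector field agreeing with $v$ on $\ECyl_v(\tMP, \spMP; t, x)$, the curve $s \mapsto {}^{(\tilde v)} \Phi_s(t,x)$ satisfies the same ODE as $s \mapsto {}^{(v)} \Phi_s(t,x)$ as long as it remains inside the cylinder; by uniqueness and continuity in $s$ the two trajectories coincide on $|s| \leq \tMP$. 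Combining this with the first step, the value $\moll{v}{F}_{\tMP, \spMP}(t,x)$ depends only on the restrictions of $v$ and $F$ to $\ECyl_v(\tMP, \spMP; t, x)$.

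Finally, for the advective derivative I would invoke the identity \eqref{eq:MLMT:diffAlongFlow:moll} derived in the proof of Proposition~\ref{prop:MLMT:est4moll}, namely
\[
(\pr_t + v(t,x) \cdot \nb) \moll{v}{F}_{\tMP, \spMP}(t,x) = - \iint F( {}^{(v)} \Phi_{s}(t,x) + (0, h)) \, \eta_{\spMP}(h) \, \tfrac{\ud}{\ud s} \eta_{\tMP}(s) \, \ud h \ud s.
\]
Since $\tfrac{\ud}{\ud s} \eta_{\tMP}$ remains supported in $\{|s| \leq \tMP\}$, the identical support analysis applies. The only additional datum on the left-hand side is the single value $v(t,x)$, which is trivially determined by the restriction of $v$ to $\ECyl_v(\tMP, \spMP; t, x)$, since $(t,x)$ is the center of that cylinder. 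There is no genuinely hard step here; the main point to be careful about is ensuring the ODE uniqueness argument (in the second paragraph) is stated cleanly, since that is the one place where the conclusion is not immediate from the definition of $\moll{v}{F}_{\tMP, \spMP}$.
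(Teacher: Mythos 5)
Your proof is correct and follows essentially the same route as the paper, whose proof of Lemma \ref{lem:MLMT:locality4Mollification} simply cites the definition \eqref{eq:MLMT:mollifyDefn}, the identity \eqref{eq:MLMT:diffAlongFlow:moll}, and the support properties of $\eta_{\tMP}, \eta_{\spMP}$. The ODE-uniqueness step you spell out (that the $v$-trajectory through $(t,x)$ lies in the cylinder where any comparison field $\tilde v$ agrees with $v$, so by uniqueness of the flow the trajectories coincide for $|s|\leq\tMP$) is exactly the part the paper leaves implicit, and your handling of it is sound.
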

\begin{proof} 
This follows from the definition \eqref{eq:MLMT:mollifyDefn}, the identity \eqref{eq:MLMT:diffAlongFlow:moll} and our choice of $\eta_{\tMP}, \eta_{\spMP}$. \qedhere
\end{proof}

\subsection{Reduction of Theorem \ref{thm:eulerOnRn:cptPert}}  \label{subsec:MLMT:reduceMT}
We are now ready to begin the proof of Theorem \ref{thm:eulerOnRn:cptPert}. In this Subsection, we reduce the proof of Theorem \ref{thm:eulerOnRn:cptPert} to constructing a sequence $(v_{(k)}, p_{(k)}, R_{(k)})$ of solutions to the Euler-Reynolds system that satisfies certain claims (Claims \ref{claim:MLMT:zeroR}--\ref{claim:MLMT:nontrivial}).

From the hypotheses of Theorem \ref{thm:eulerOnRn:cptPert}, recall that we are given positive numbers $\eps, \HolderPert > 0$, a smooth solution $(v_{(0)}, p_{(0)})$ to the incompressible Euler equations on $\bbR \times \bbR^{3}$ and pre-compact open sets $\Omg_{(0)}, \calU$ such that $\Omg_{(0)} \neq \emptyset$ and
\begin{equation} \label{}
	\overline{\Omg_{(0)}} \subseteq \calU .
\end{equation}

From these inputs, we shall produce in the following Subsections (Subsections \ref{subsec:MLMT:construction} and \ref{subsec:MLMT:verifyClaims}) a sequence $(v_{(k)}, p_{(k)}, R_{(k)})$ of solutions to the Euler-Reynolds system which satisfies the following Claims: 

\begin{claim}[Vanishing of the Euler-Reynolds stress] \label{claim:MLMT:zeroR}
The Euler-Reynolds stress $R_{(k)}$ converges uniformly to zero, i.e., $\nrm{R_{(k)}}_{C^{0}} \to 0$ as $k \to \infty$.
\end{claim}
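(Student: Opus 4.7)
The plan is to iteratively apply the Main Lemma (Lemma~\ref{lem:mainLemma}) with a carefully chosen sequence of parameters $(\Xi_{(k)}, e_v^{(k)}, e_R^{(k)})$ and frequency growth parameters $N_{(k)}$, arranging that both the recurrence forced by the Main Lemma and a geometric decay of $e_R^{(k)}$ hold simultaneously. Since $\nrm{R_{(k)}}_{C^0} \leq e_R^{(k)}$ by Definition~\ref{def:subSolDef}, the claim will follow once $e_R^{(k)} \to 0$.

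Fix $\alpha \in (0,1/5)$ (one may take $\alpha = 1/5 - \epsilon$ with the $\epsilon$ of Theorem~\ref{thm:eulerOnRn:cptPert}). I will look for a constant frequency growth factor $N_{(k)} \equiv N$ and geometric parameters
\[
\Xi_{(k)} = \Xi_{(0)} B^{k}, \qquad e_R^{(k)} = e_R^{(0)} B^{-2\alpha k}, \qquad e_v^{(k+1)} = e_R^{(k)},
\]
with a common ratio $B>1$. From \eqref{eq:theNewEnergyLevel} one has $\Xi_{(k+1)} = C_0 N \Xi_{(k)}$ and $e_R^{(k+1)} = (e_v^{(k)})^{1/4}(e_R^{(k)})^{3/4} N^{-1/2}$. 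Substituting the ansatz and matching exponents gives the two algebraic relations $B = C_0 N$ and $N = B^{5\alpha}$, with simultaneous solution $B = C_0^{1/(1-5\alpha)}$ and $N = B^{5\alpha}$. Note that $B>1$ precisely because $\alpha < 1/5$ — this is the usual threshold barrier for convex integration schemes of this type. Furthermore, the condition $N \geq (e_v^{(k)}/e_R^{(k)})^{3/2} = B^{3\alpha}$ required by \eqref{eq:conditionsOnN2} holds automatically, since $5\alpha > 3\alpha$ and $B > 1$. Critically, our version of the Main Lemma imposes no supplementary $N \geq \Xi^{\eta}$ constraint, so this constant choice of $N$ is admissible and yields merely exponential (rather than double-exponential) growth of $\Xi_{(k)}$.

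With the parameters fixed, the inductive step will proceed as follows: given $(v_{(k)}, p_{(k)}, R_{(k)})$ with frequency and energy levels below $(\Xi_{(k)}, e_v^{(k)}, e_R^{(k)})$, select an energy density function $e_{(k)}(t,x)$ satisfying the hypotheses \eqref{eq:lowBoundEoftx}--\eqref{ineq:goodEnergy} at the $k$-th level, and apply Lemma~\ref{lem:mainLemma} to produce $(v_{(k+1)}, p_{(k+1)}, R_{(k+1)})$ with frequency and energy levels below the next generation. Claim~\ref{claim:MLMT:zeroR} then follows immediately from $\nrm{R_{(k)}}_{C^0} \leq e_R^{(k)} = e_R^{(0)} B^{-2\alpha k} \longrightarrow 0$.

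The main obstacle is not the parameter arithmetic above but the construction, at each step $k$, of the energy density function $e_{(k)}(t,x)$. It must be of size comparable to $e_R^{(k)}$, bounded below by $K e_R^{(k)}$ on the $v_{(k)}$-adapted Eulerian cylindrical neighborhood of $\supp R_{(k)}$ from \eqref{eq:lowBoundEoftx}, and must satisfy the uniform estimates on its spatial derivatives and on its advective derivative along $v_{(k)}$ recorded in \eqref{ineq:goodEnergy}. This is delicate because a priori $v_{(k)}$ only has $L$ controlled spatial derivatives in $C^0$ and no bound on $\rd_t v_{(k)}$. My strategy, carried out in Subsection~\ref{subsec:MLMT:construction}, is to take $e_{(k)}^{1/2}$ as a mollification along the flow of $v_{(k)}$ of a bounded function concentrated in a slightly enlarged neighborhood of $\supp R_{(k)}$; Proposition~\ref{prop:MLMT:est4moll} then supplies the bounds \eqref{ineq:goodEnergy}, while Lemma~\ref{lem:MLMT:locality4Mollification} controls the support. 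The location of $\supp e_{(k)}$ must be chosen to vanish just outside $\supp(v_{(k)} - v_{(0)})$, so that the cylindrical support enlargements produced by Lemma~\ref{lem:mainLemma} can be tracked uniformly against $v_{(0)}$-adapted cylinders across all $k$ — this is precisely what will make the companion Claims compatible with Claim~\ref{claim:MLMT:zeroR}.
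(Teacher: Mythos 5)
Your argument is correct and is essentially the paper's own: the paper iterates Lemma~\ref{lem:mainLemma} with the parameter rules $e_{v,(k+1)} = e_{R,(k)}$, $e_{R,(k+1)} = e_{R,(k)}/Z$, $N = Z^{5/2}$, $\Xi_{(k+1)} = C_0 Z^{5/2}\Xi_{(k)}$ (your ansatz is the same family after the relabeling $Z = B^{2\alpha}$, $N = Z^{5/2}$), builds $e_{(k)}$ by mollification along the flow of $v_{(k)}$ exactly as you outline via Proposition~\ref{prop:MLMT:est4moll} and Lemma~\ref{lem:MLMT:locality4Mollification}, and then proves Claim~\ref{claim:MLMT:zeroR} in one line from $\nrm{R_{(k)}}_{C^0} \le e_{R,(k)} \to 0$. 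The only cosmetic difference is that the paper keeps $Z$ as a free parameter to be taken large later (for the Hölder, support, and energy claims) rather than pinning it through $B = C_0^{1/(1-5\alpha)}$, which changes nothing for this particular claim.
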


\begin{claim}[Compact support in space-time] \label{claim:MLMT:cptSupp}
There exists a pre-compact set $\Omg_{(\infty)} \subseteq \bbR \times \bbR^{3}$ such that $\overline{\Omg_{(\infty)}} \subseteq \calU$ and for every $k \geq 0$,
\begin{equation*}
\supp (v_{(k)} - v_{(0)}, \, p_{(k)} - p_{(0)}) \subseteq \Omg_{(\infty)}.
\end{equation*}
\end{claim}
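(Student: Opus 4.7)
The plan is to track inductively a sequence of nested open sets $\Omg_{(k)} \subseteq \R \times \R^3$ containing the supports of the differences, and show that they stabilize inside a fixed pre-compact subset of $\calU$. Set $S_{(k)} := \supp(v_{(k)} - v_{(0)}) \cup \supp(p_{(k)} - p_{(0)}) \cup \supp R_{(k)}$. Starting from $S_{(0)} \subseteq \overline{\Omg_{(0)}} \subseteq \calU$ (ensured by the construction of the initial stress), my goal is to produce, for each $k$, an open set $\Omg_{(k)}$ with $\overline{\Omg_{(k)}} \subseteq \calU$, $S_{(k)} \subseteq \Omg_{(k)}$, and
\[
\Omg_{(k+1)} \subseteq \ECyl_{v_{(0)}}(\tht_{(k)}, \Xi_{(k)}^{-1}; \Omg_{(k)}),
\]
where $\tht_{(k)} = \Xi_{(k)}^{-1} e_{v,(k)}^{-1/2}$ is the time-scale appearing in the Main Lemma at stage $k$. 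Granted this recursion, since the construction in \S\ref{subsec:MLMT:construction} will choose $\Xi_{(k)}$ to grow exponentially and $e_{v,(k)}$ to decay geometrically (so that $\tht_{(k)}$ and $\Xi_{(k)}^{-1}$ are summable with exponentially small total), the set $\Omg_{(\infty)} := \bigcup_{k} \Omg_{(k)}$ will be pre-compact, and by choosing the constants conservatively at stage $0$ it will satisfy $\overline{\Omg_{(\infty)}} \subseteq \calU$.

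The first key step is to choose the energy density $e_{(k)}(t,x)$ in the Main Lemma with $\supp e_{(k)}$ slightly enlarging $\Omg_{(k)}$, precisely so that the required lower bound $e_{(k)} \geq K e_{R,(k)}$ holds on $\ECyl_{v_{(k)}}(\tht_{(k)}, \Xi_{(k)}^{-1}; \supp R_{(k)})$. By the Main Lemma, the supports of $V_{(k)} := v_{(k+1)} - v_{(k)}$, $P_{(k)} := p_{(k+1)} - p_{(k)}$ and $R_{(k+1)}$ are then contained in $\ECyl_{v_{(k)}}(\tht_{(k)}, \Xi_{(k)}^{-1}; \supp e_{(k)})$. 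Combined with $S_{(k)} \subseteq \Omg_{(k)}$, this gives $S_{(k+1)} \subseteq \Omg_{(k)} \cup \ECyl_{v_{(k)}}(\tht_{(k)}, \Xi_{(k)}^{-1}; \supp e_{(k)})$.

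The second key step is to replace the $v_{(k)}$-adapted Eulerian cylinder by the corresponding $v_{(0)}$-adapted cylinder, so that the recursion is formulated with respect to a single fixed vector field. This is the analog of Lemma \ref{lem:MLMT:fsp4Flow} for Eulerian cylinders: since $v_{(k)} = v_{(0)}$ on $(\R \times \R^3) \setminus S_{(k)}$ and $S_{(k)} \subseteq \Omg_{(k)}$, the flow trajectories of $v_{(k)}$ through points near $\Omg_{(k)}$ coincide with those of $v_{(0)}$ as long as they remain outside $\Omg_{(k)}$. By choosing $\Omg_{(k)}$ slightly larger than $S_{(k)}$ (a fattening by an amount much smaller than the previous enlargement scale), we ensure that any trajectory that escapes $\Omg_{(k)}$ has already left the region where $v_{(k)}$ and $v_{(0)}$ differ, and thus
\[
\ECyl_{v_{(k)}}(\tht_{(k)}, \Xi_{(k)}^{-1}; \supp e_{(k)}) \subseteq \ECyl_{v_{(0)}}(\tht_{(k)}, \Xi_{(k)}^{-1}; \Omg_{(k)}).
\]
This yields the desired recursion. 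Alternatively, by the Cylinder Comparison Lemma \ref{lem:cylCompare} one can directly estimate the discrepancy between the two cylinders by $\tht_{(k)} \nrm{v_{(k)} - v_{(0)}}_{C^0} \leq \tht_{(k)} \sum_{j < k} \nrm{V_{(j)}}_{C^0}$, which by \eqref{eq:Vco} and the decay of the energy levels $e_{R,(j)}$ is harmless.

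The main obstacle is keeping the enlargement summable while simultaneously satisfying the lower bound on $e_{(k)}$ required by \eqref{eq:lowBoundEoftx}. The choice of $e_{(k)}$ must balance two competing demands: its support must be large enough to cover $\ECyl_{v_{(k)}}(\tht_{(k)}, \Xi_{(k)}^{-1}; \supp R_{(k)})$ so that the lower bound holds there, yet small enough that the resulting new support bound $\ECyl_{v_{(0)}}(\tht_{(k)}, \Xi_{(k)}^{-1}; \supp e_{(k)})$ produces only a geometrically small enlargement of $\Omg_{(k)}$. Since the outer cylinder and the inner cylinder defining $e_{(k)}$ are at the same scale $(\tht_{(k)}, \Xi_{(k)}^{-1})$, the total enlargement is a bounded multiple of $\tht_{(k)} + \Xi_{(k)}^{-1}$, which summed over $k$ stays inside any prescribed neighborhood of $\overline{\Omg_{(0)}}$ provided the initial constants are chosen small enough. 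Choosing these constants relative to $\dist(\overline{\Omg_{(0)}}, \partial \calU)$ at the start of the iteration yields $\overline{\Omg_{(\infty)}} \subseteq \calU$, completing the claim.
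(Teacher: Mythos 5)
Your overall strategy is the same as the paper's: inductively enlarge a support set $\Omg_{(k)}$ by cylinders at scale $(\tht_{(k)},\Xi_{(k)}^{-1})$, transfer from $v_{(k)}$-adapted to $v_{(0)}$-adapted cylinders using $v_{(k)}=v_{(0)}$ outside $\Omg_{(k)}$, and use the geometric decay of $\tht_{(k)},\Xi_{(k)}^{-1}$ together with a conservative base-case choice of $\Xi_{(0)}$ to stay inside $\calU$; in the paper this is all built into \S\ref{subsec:MLMT:construction} (especially \S\ref{subsubsec:supp}), after which the Claim itself is a one-liner with $\Omg_{(\infty)}=\cup_k\Omg_{(k)}\subseteq\widetilde{\Omg}_{(0)}$.

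The one step whose justification does not work as written is your ``Eulerian analog of Lemma \ref{lem:MLMT:fsp4Flow}.'' The Eulerian cylinder $\ECyl_{v_{(k)}}(\tht_{(k)},\Xi_{(k)}^{-1};\supp e_{(k)})$ is generated by $v_{(k)}$-trajectories of base points lying in $\supp e_{(k)}$, i.e.\ \emph{inside} (a neighborhood of) $\Omg_{(k)}$, where $v_{(k)}$ and $v_{(0)}$ genuinely differ from time zero; so the remark that ``trajectories coincide once they escape $\Omg_{(k)}$'' says nothing about exactly the trajectories that build the cylinder, and moreover the containment you write has identical parameters on both sides even though $\supp e_{(k)}$ is a fattening of $\Omg_{(k)}$ and the flows deviate. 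An Eulerian analog of Lemma \ref{lem:MLMT:fsp4Flow} is in fact true, but it must be proved the way that lemma is proved — through the complement, using the duality \eqref{iff:EuLagDuality}: a point is outside $\ECyl_{v_{(0)}}(\tau,\rho;\Omg)$ iff its Lagrangian tube avoids $\Omg$, and such a tube lies where the two fields agree, hence is also a $v_{(k)}$-tube avoiding $\Omg$. The paper sidesteps this by converting to Lagrangian cylinders via Lemma \ref{lem:cylinderEquiv} and Lemma \ref{lem:cylContainmentProps}, applying Lemma \ref{lem:MLMT:fsp4Flow} verbatim to define $\Omg_{(k+1)}=\LCyl_{v_{(0)}}(4\tht_{(k)},2000\Xi_{(k)}^{-1};\Omg_{(k)})$, and absorbing all conversion losses in generous constants, with the nested budget sets $\widetilde{\Omg}_{(k)}$ and the scale decay \eqref{eq:MLMT:scaleDecrease} guaranteeing $\overline{\Omg_{(\infty)}}\subseteq\calU$. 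Your fallback via the Cylinder Comparison Lemma \ref{lem:cylCompare} does close the gap quantitatively: the extra radius $\tht_{(k)}\nrm{v_{(k)}-v_{(0)}}_{C^0}\lesssim \Xi_{(0)}^{-1}C_0^{-k}Z^{-2k}$ is geometrically summable under the parameter evolution \eqref{eq:MLMT:parameterEvolution:Xi}--\eqref{eq:MLMT:parameterEvolution:eR}, so with that route (or the complement argument) your proof is sound. Finally, note that your ``choose $e_{(k)}$ with support slightly enlarging $\Omg_{(k)}$'' elides the point that $e_{(k)}$ must also satisfy the advective-derivative bounds \eqref{ineq:goodEnergy}, which is why the paper defines it by mollification along the flow of $v_{(k)}$ of the characteristic function of $\ECyl_{v_{(k)}}(2\tht_{(k)},2\Xi_{(k)}^{-1};\Omg_{(k)})$ in \S\ref{subsubsec:ED}.
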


\begin{claim}[H\"older regularity of the solution] \label{claim:MLMT:HolderReg}
For $\alp = \frac{1}{5} - \eps$, the sequence $(v_{(k)}, p_{(k)})$ is Cauchy in $C^{\alp}_{t,x} \times C^{2 \alp}_{t,x}$ as $k \to \infty$. Moreover, for every $k \geq 0$, we have
\begin{equation} \label{eq:MLMT:smallHolder}
\nrm{v_{(k)} - v_{(0)}}_{C^{\alp}_{t,x}} + \nrm{p_{(k)} - p_{(0)}}_{C^{2\alp}_{t,x}} \leq \frac{\HolderPert}{2}.
\end{equation}
\end{claim}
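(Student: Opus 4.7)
The plan is to apply the Main Lemma iteratively with an appropriate sequence of frequency--energy parameters $(\Xi_{(k)}, e_{v,(k)}, e_{R,(k)})$ and frequency growth factors $N_{(k)}$, extract quantitative $C^\alpha_{t,x}$ bounds on each increment $V_{(k)} := v_{(k+1)} - v_{(k)}$ and $P_{(k)} := p_{(k+1)} - p_{(k)}$ by Gagliardo--Nirenberg interpolation between the $C^0$ estimates \eqref{eq:Vco}, \eqref{eq:Pco} and full space-time $C^1$ bounds, and then recognize the resulting sum as a geometrically decaying series. Writing $v_{(k)} - v_{(0)} = \sum_{j=0}^{k-1} V_{(j)}$ in telescoping form will yield both the Cauchy property and the smallness bound \eqref{eq:MLMT:smallHolder} simultaneously.

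First I would produce a bound on the full space-time gradient of $V_{(k)}$. The spatial half is immediate from \eqref{eq:nabVco}: $\|\nb V_{(k)}\|_{C^0} \leq C N_{(k)} \Xi_{(k)} e_{R,(k)}^{1/2} \leq C \Xi_{(k+1)} e_{v,(k+1)}^{1/2}$ in the new levels. For the ordinary time derivative I would write
\[
\rd_t V_{(k)} = (\rd_t + v_{(k)} \cdot \nb) V_{(k)} - v_{(k)} \cdot \nb V_{(k)},
\]
bound the first term by \eqref{eq:matVco}, and handle the second by controlling $\|v_{(k)}\|_{C^0(\overline{\calU})}$. Since every $V_{(j)}$ has support in the pre-compact $\Omg_{(\infty)}\subseteq\calU$ by Claim~\ref{claim:MLMT:cptSupp} and $v_{(0)}$ is smooth hence bounded on $\overline{\calU}$, a telescoping bound gives
\[
\|v_{(k)}\|_{C^0(\overline{\calU})} \leq \|v_{(0)}\|_{C^0(\overline{\calU})} + C \sum_{j \geq 0} e_{R,(j)}^{1/2},
\]
which is uniformly bounded once the $C^0$ series converges. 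Interpolation then produces the target estimates
\[
\|V_{(k)}\|_{C^\alpha_{t,x}} \leq C \Xi_{(k+1)}^\alpha e_{R,(k)}^{1/2}, \qquad \|P_{(k)}\|_{C^{2\alpha}_{t,x}} \leq C \Xi_{(k+1)}^{2\alpha} e_{R,(k)},
\]
valid as long as $2\alpha < 1$, where the pressure analog uses \eqref{eq:Pco}--\eqref{eq:matPco} in the same way.

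Next I would analyze the recursion \eqref{eq:theNewEnergyLevel}. Choosing $N_{(k)} \equiv N$ constant and large, iterating the map shows the ratio $\sgm_{(k)} := e_{R,(k)}^{1/2}/e_{v,(k)}^{1/2}$ converges to the fixed point $\sgm_\infty = N^{-1/5}$, at which the Main Lemma's admissibility condition $N \geq (e_v/e_R)^{3/2} = \sgm^{-3}$ reduces to the harmless $N \geq N^{3/5}$. At this fixed point the levels decay geometrically as $e_{R,(k)}^{1/2} \aeq e_{R,(0)}^{1/2} N^{-k/5}$ while $\Xi_{(k)} \aeq (C_0 N)^k \Xi_{(0)}$ grows geometrically, so
\[
\|V_{(k)}\|_{C^\alpha_{t,x}} \leq C\, e_{R,(0)}^{1/2} \bb( C_0^{\alpha} N^{\alpha - 1/5} \bb)^k,
\]
with the analogous ratio for $P_{(k)}$. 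For $\alpha = 1/5 - \eps$ this ratio is strictly less than one once $N$ is chosen large enough depending on $\eps$ and the universal constant $C_0$, giving summability and hence the Cauchy property in $C^\alpha_{t,x} \times C^{2\alpha}_{t,x}$. The smallness bound \eqref{eq:MLMT:smallHolder} then follows by initializing with $e_{R,(0)}^{1/2}$ sufficiently small, since the entire sum is dominated by a constant multiple of $e_{R,(0)}^{1/2}$.

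The main obstacle will be the mild circularity in controlling $\|v_{(k)}\|_{C^0(\overline{\calU})}$ by the telescoping series before we know it converges. I plan to resolve this by running the argument in two stages: extract the $C^0$ summability of $\sum e_{R,(j)}^{1/2}$ purely from the parameter recursion (which is decoupled from any $v_{(j)}$-dependent quantity), use this to obtain the uniform $C^0$ bound on $v_{(k)}$, and only then perform the space-time interpolation to upgrade to $C^\alpha_{t,x}$. A secondary bookkeeping point is arranging the initial levels $(\Xi_{(0)}, e_{v,(0)}, e_{R,(0)})$ compatibly with the stress $R_{(0)}$ produced at step zero (a task also required for Claims~\ref{claim:MLMT:zeroR} and \ref{claim:MLMT:cptSupp}, so to be handled once in the construction of Subsection~\ref{subsec:MLMT:construction}); no further adjustment is needed here since the bound depends only on the fixed geometric ratio and on $e_{R,(0)}^{1/2}$, which is free to be taken as small as desired by rescaling the initial energy increment.
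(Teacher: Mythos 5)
Most of your argument runs parallel to the paper's own proof: you telescope $v_{(k)}-v_{(0)}=\sum_j V_{(j)}$, interpolate each increment between the $C^0$ bounds \eqref{eq:Vco}, \eqref{eq:Pco} and full space-time gradient bounds obtained by writing $\rd_t$ as the advective derivative minus $v_{(k)}\cdot\nb$ (using the compact support and a uniform $C^0$ bound on $v_{(k)}$ whose summability is extracted from the parameter recursion alone, exactly as the paper does), and then sum a geometric series whose ratio is controlled because the regularity exponent $\alp=1/5-\eps$ sits strictly below the exponent dictated by the evolution of $(\Xi_{(k)},e_{R,(k)})$. Your fixed-point analysis of $\sgm_{(k)}=e_{R,(k)}^{1/2}/e_{v,(k)}^{1/2}$ with constant $N$ is a legitimate variant of the paper's bookkeeping, which instead locks the ratio $e_{v,(k)}/e_{R,(k)}=Z$ from the start and sets $N_{(k)}=Z^{5/2}$; at your fixed point $\sgm_\infty=N^{-1/5}$ the two schemes coincide.

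The genuine gap is in how you obtain the smallness \eqref{eq:MLMT:smallHolder}. You assert that the total sum is $\leq C\,e_{R,(0)}^{1/2}$ and that $e_{R,(0)}$ ``is free to be taken as small as desired by rescaling the initial energy increment,'' with $N$ already fixed. This is false for a nonzero background $v_{(0)}$: the Main Lemma's admissibility condition $N\geq (e_{v,(0)}/e_{R,(0)})^{3/2}$ forces $e_{R,(0)}\geq e_{v,(0)}N^{-2/3}$, and the frequency--energy requirement $\nrm{\nb v_{(0)}}_{C^0}\leq \Xi_{(0)}e_{v,(0)}^{1/2}$ forces $e_{v,(0)}\geq \Xi_{(0)}^{-2}\nrm{\nb v_{(0)}}_{C^0}^2$, so with $N$ and $\Xi_{(0)}$ fixed, $e_{R,(0)}$ has a positive lower bound and cannot be shrunk at will (the ``initial energy increment'' $e_{(0)}$ is not an independent dial here; its scale is tied to an admissible $e_{R,(0)}$). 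If you instead shrink $e_{R,(0)}$ by sending $\Xi_{(0)}\to\infty$, your displayed bound $\nrm{V_{(k)}}_{C^\alp_{t,x}}\leq C\,e_{R,(0)}^{1/2}\bb(C_0^\alp N^{\alp-1/5}\bb)^k$ is no longer legitimate as stated, because the constant silently absorbs the factor $\Xi_{(0)}^{\alp}$ coming from $\Xi_{(k+1)}^{\alp}$ in the interpolation; one must then check that $\Xi_{(0)}^{\alp}e_{R,(0)}^{1/2}\lesssim \Xi_{(0)}^{\alp-1}\nrm{\nb v_{(0)}}_{C^0}N^{-1/3}\to 0$, which does work since $\alp<1$ but is absent from your argument. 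The paper sidesteps this entirely by fixing $\Xi_{(0)}$ independently of the large parameter (possible since $R_{(0)}=0$), setting $e_{v,(0)}=1$, $e_{R,(0)}=Z^{-1}$, $N_{(k)}=Z^{5/2}$, and letting a single large $Z$ produce both the summable ratio $C_0^{2\alp}Z^{-5\eps}<1$ and the arbitrary smallness of the whole sum; you should either adopt that coupling or carry out the $\Xi_{(0)}$-dependence explicitly.
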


We state Claims \ref{claim:MLMT:energyIncrease}-\ref{claim:MLMT:nontrivial}  
 using the notation 
\ALI{
I[\Om_{(0)}] &:= \{ t \in \R ~:~ \Om_{(0)} \cap \{ t \} \times \R^3 \neq \emptyset \} \\
S_{t_*}[\Om_{(0)}] &:= \{ x \in \R^3 ~:~ (t_\star, x) \in \Om_{(0)} \}.
}

\begin{claim}[Increase of local energy] \label{claim:MLMT:energyIncrease}
For every $t_{\star} \in I[\Omg_{(0)}]$ and smooth, compactly supported function $\psi$ such that $\psi \equiv 1$ on $S_{t_{\star}} [\calU]$, we have
\begin{equation} \label{eq:energyIncrease}
	\int \psi(x) \frac{\abs{v_{(k+1)}(t_{\star}, x)}^{2}}{2} \, \ud x >  \int \psi(x) \frac{\abs{v_{(k)}(t_{\star}, x)}^{2}}{2} \, \ud x
\end{equation}
for every $k \geq 0$.

\end{claim}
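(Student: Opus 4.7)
The strategy is to write $V_{(k)} := v_{(k+1)} - v_{(k)}$ for the correction added at step $k$ of the iteration and to expand
\[
\int \psi \frac{|v_{(k+1)}|^2 - |v_{(k)}|^2}{2} \ud x = \frac{1}{2}\int \psi |V_{(k)}|^2 \ud x + \int \psi\, v_{(k)} \cdot V_{(k)} \ud x.
\]
I will show that the quadratic term is strictly positive with an explicit lower bound $\gamma_{(k)} > 0$, and that the cross term is of significantly smaller order. Both bounds will be driven by the Main Lemma and the new local estimate \eqref{eq:energyPrescribed}.

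For the quadratic piece, \eqref{eq:energyPrescribed} at step $k$ identifies $\int \psi |V_{(k)}|^2 \ud x$ with $\int \psi(x) e_{(k)}(t_\star, x) \ud x$ up to an error of order $e_{v,(k)}^{1/2} e_{R,(k)}^{1/2}/N_{(k)}$ times $\nrm{\psi}_{L^1} + \Xi_{(k)}^{-1}\nrm{\nb \psi}_{L^1}$. The construction of the energy density in \S\ref{subsec:MLMT:construction} will produce $e_{(k)}$ so that $\supp e_{(k)}(t_\star, \cdot) \subseteq S_{t_\star}[\calU]$, where $\psi \equiv 1$, and so that $\int e_{(k)}(t_\star, x) \ud x \geq \gamma_{(k)} \gtrsim e_{R,(k)}\, |S_{t_\star}[\Omg_{(0)}]|$. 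Condition \eqref{eq:conditionsOnN2} $N_{(k)} \geq (e_{v,(k)}/e_{R,(k)})^{1/2}$ then forces the error to be negligible compared to $\gamma_{(k)}$. For the cross term, I decompose $V_{(k)} = \sum_I e^{i\lambda_{(k)}\xi_I}\tilde{v}_I$ and integrate by parts once against the oscillating phase, as in the derivation of \eqref{eq:intByPartsEnergyEst}, to gain a factor of $\lambda_{(k)}^{-1} = (B_\la N_{(k)} \Xi_{(k)})^{-1}$. Using the non-stationarity of $\nb \xi_I$ (Proposition \ref{prop:statPhaseProp}), the amplitude bound $\co{\tilde{v}_I} \lesssim e_{R,(k)}^{1/2}$ (Proposition \ref{prop:ampEstimates}), Proposition \ref{prop:limitedInteractions}, the elementary bound $\co{\nb v_{(k)}} \leq \Xi_{(k)} e_{v,(k)}^{1/2}$, and the uniform boundedness of $\nrm{v_{(k)}}_{L^\infty(\supp \psi)}$ furnished by Claim \ref{claim:MLMT:HolderReg} (together with smoothness of $v_{(0)}$), this cross term is also of order $e_{v,(k)}^{1/2} e_{R,(k)}^{1/2}/N_{(k)}$ and therefore much smaller than $\gamma_{(k)}$ once $N_{(k)}$ is large enough.

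The main obstacle is the engineering of the function $e_{(k)}(t,x)$ in the following subsection so that it simultaneously satisfies (i) the pointwise lower bound \eqref{eq:lowBoundEoftx} on the $v_{(k)}$-cylindrical enlargement of $\supp R_{(k)}$, (ii) the mixed space and advective derivative estimates \eqref{ineq:goodEnergy}, (iii) the support bound $\supp e_{(k)} \subseteq \calU$ at every time $t \in I[\Omg_{(0)}]$, and (iv) the spatial integral lower bound $\gamma_{(k)}$ above. The natural prescription is to take $e_{(k)}^{1/2}$ to be a mollification along the flow of $v_{(k)}$ (in the sense of Subsection \ref{subsec:MLMT:moll}) of $C\, e_{R,(k)}^{1/2}\, \chi_{(k)}(t,x)$, where $\chi_{(k)}$ is a smoothed indicator of an appropriately chosen thickening of $\Omg_{(0)}$ inside $\calU$. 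The bounds \eqref{ineq:goodEnergy} then follow from Proposition \ref{prop:MLMT:est4moll}, the support containment (iii) from Lemma \ref{lem:MLMT:locality4Mollification}, and property (iv) is immediate once the thickening of $\Omg_{(0)}$ exceeds the mollification scale while remaining inside $\calU$.
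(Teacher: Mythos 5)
Your overall decomposition (quadratic term identified with $\int\psi e_{(k)}$ via \eqref{eq:energyPrescribed}, plus a cross term gaining a factor $N_{(k)}^{-1}$) is the same skeleton as the paper's proof, but as written there are two quantifier gaps that prevent the argument from closing. First, the claim is asserted for \emph{every} smooth compactly supported $\psi$ with $\psi\equiv 1$ on $S_{t_\star}[\calU]$, while your error bounds scale with $\nrm{\psi}_{L^1}$, $\nrm{\nb\psi}_{L^1}$ and $\nrm{v_{(k)}}_{L^\infty(\supp\psi)}$, none of which is controlled over that class; since $N_{(k)}=Z^{5/2}$ is a single constant fixed once and for all (it does not grow in $k$), the phrase ``once $N_{(k)}$ is large enough'' cannot absorb a $\psi$-dependent constant. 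The paper removes this dependence at the outset: by Claim~\ref{claim:MLMT:cptSupp} all $v_{(k)}$ coincide with $v_{(0)}$ outside $\calU$, so replacing $\psi$ by another admissible $\psi'$ changes both sides of \eqref{eq:energyIncrease} by the same amount, and it suffices to treat one fixed $\psi_{t_\star}$ per time with $\nrm{\psi_{t_\star}}_{L^1}+\nrm{\nb\psi_{t_\star}}_{L^1}$ bounded uniformly in $t_\star$. You need this reduction (or the equivalent observation that $V_{(k)}$, $W_{(k)}$ and $e_{(k)}(t_\star,\cdot)$ are supported where $\psi\equiv 1$) before your error estimates can be compared to the main term.

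Second, your main-term lower bound $\gamma_{(k)}\gtrsim e_{R,(k)}\,\abs{S_{t_\star}[\Omg_{(0)}]}$ is not uniform in $t_\star$: the slice measure can be arbitrarily small as $t_\star$ approaches the ends of $I[\Omg_{(0)}]$, whereas $Z$ must be fixed before $t_\star$ is given, so a single $Z$ cannot beat the $C Z^{-2}e_{R,(k)}$-sized errors for all $t_\star$. The paper's fix is to use the region where the lower bound \eqref{eq:MLMT:chooseED:lower} holds, namely $\ECyl_{v_{(k)}}(\tht_{(k)},\Xi_{(k)}^{-1};\Omg_{(k)})\supseteq \ECyl_{v_{(0)}}(\tht_{(0)},\Xi_{(0)}^{-1};\Omg_{(0)})$, which at time $t_\star$ contains the full ball $B(\Xi_{(0)}^{-1};x_\star)$ about any $(t_\star,x_\star)\in\Omg_{(0)}$; this yields $\int\psi_{t_\star}e_{(k)}(t_\star)\,\ud x\geq c\,e_{R,(k)}$ with $c$ depending only on $K$ and $\Xi_{(0)}$, uniformly in $t_\star$ and $k$, after which taking $Z$ large closes the claim. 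Two smaller points: for the cross term the paper does not reopen the wave decomposition at all — it uses only the packaged conclusions $V_{(k)}=\nb\times W_{(k)}$ and \eqref{eq:Wco}, integrates by parts so that $\int(\nb\psi_{t_\star}\times v_{(k)})\cdot W_{(k)}\,\ud x$ vanishes because $\nb\psi_{t_\star}=0$ on $\supp W_{(k)}$, and bounds the remaining term by $\co{\nb\times v_{(k)}}\,\co{W_{(k)}}\,\nrm{\psi_{t_\star}}_{L^1}\leq C Z^{-2}e_{R,(k)}$; your nonstationary-phase route gives the same size but invokes Propositions~\ref{prop:statPhaseProp}, \ref{prop:limitedInteractions}, \ref{prop:ampEstimates}, which are internal to the proof of the Main Lemma rather than part of its statement. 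Finally, the paper's energy density is the flow mollification of the sharp indicator of $\ECyl_{v_{(k)}}(2\tht_{(k)},2\Xi_{(k)}^{-1};\Omg_{(k)})$, not of a smoothed thickening of $\Omg_{(0)}$; if you substitute the latter you must still verify the lower bound \eqref{eq:lowBoundEoftx} on a cylindrical neighborhood of $\supp R_{(k)}\subseteq\Omg_{(k)}$ for every $k$, which requires the nesting and geometric-decay argument of Section~\ref{subsubsec:supp} rather than being immediate.
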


\begin{claim}[Irregularity of the solution] \label{claim:MLMT:nontrivial}
For any $t_{\star} \in I[\Omg_{0}]$ and $B(\rho_{\star}; x_{\star}) \subseteq S_{t_{\star}}[\Omg_{(0)}]$, let $\psi = \psi(x)$ be a smooth function on $\bbR^{3}$ such that $\supp \psi \subseteq B(\rho_{\star}; x_{\star})$, $\psi \geq 0$ and $\int \psi(x) \, \ud x = 1$. Then for every $u \in W^{1/5, 1}_{x}(B(\rho_{\star}; x_{\star}))\cup C^{1/5}_{x}(B(\rho_{\star}; x_{\star}))$, there exists $k_{\star} = k_{\star}(\rho_{\star}, t_{\star}, x_{\star}, v_{(0)}, \psi, u) \geq 0$ such that
\begin{equation} \label{eq:nontrivial}
	\int \psi(x) \frac{\abs{(v_{(k+1)} - u)(t_{\star}, x)}^{2}}{2} \, \ud x >  \int \psi(x) \frac{\abs{(v_{(k)} - u)(t_{\star}, x)}^{2}}{2} \, \ud x
\end{equation}
holds for all $k \geq k_{\star}$.
\end{claim}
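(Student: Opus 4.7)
The plan is to expand
\begin{equation*}
\Delta_{k} := \int \psi \frac{\abs{v_{(k+1)} - u}^{2}}{2} \, \ud x - \int \psi \frac{\abs{v_{(k)} - u}^{2}}{2} \, \ud x = \frac{1}{2} \int \psi \abs{V_{(k)}}^{2} \, \ud x + \int \psi (v_{(k)} - u) \cdot V_{(k)} \, \ud x,
\end{equation*}
where $V_{(k)} := v_{(k+1)} - v_{(k)}$ is the correction produced by the Main Lemma at step $k$, and to show that the main term $\tfrac{1}{2} \int \psi \abs{V_{(k)}}^{2}$ strictly dominates the cross term for $k$ large. Combining the local energy prescription \eqref{eq:energyPrescribed} with the lower bound $\widetilde{e}_{(k)}(t_{\star}, \cdot) \geq K e_{R,(k)}$ on $\supp \psi$ (which we will arrange in Subsection~\ref{subsec:MLMT:construction}, since $B(\rho_{\star}; x_{\star}) \subseteq \Omg_{(0)}$ remains inside the region where the prescribed energy is active at every stage) and using $\int \psi \, \ud x = 1$ yields $\tfrac{1}{2} \int \psi \abs{V_{(k)}}^{2} \geq (K/2) e_{R,(k)}$ once $k$ is large enough that the error $C_{\psi} e_{v,(k)}^{1/2} e_{R,(k)}^{1/2} / N_{(k)}$ in \eqref{eq:energyPrescribed} is absorbed.

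\textbf{Cross-term estimates.} Split $v_{(k)} - u = (v_{(k)} - v_{(0)}) + (v_{(0)} - u)$. Writing $V_{(k)} = \sum_{I} e^{i \lambda_{(k)} \xi_{I}^{(k)}} \widetilde{v}_{I}^{(k)}$, one integration by parts in the phase, together with the uniform lower bound $\abs{\nb \xi_{I}^{(k)}} \gtrsim 1$ from Proposition~\ref{prop:statPhaseProp}, yields $\big| \int \psi v_{(0)} \cdot V_{(k)} \big| \lesssim C(\psi, v_{(0)}) \lambda_{(k)}^{-1} e_{R,(k)}^{1/2}$ for the smooth background. Applying the same device to each interaction $\int \psi V_{(j)} \cdot V_{(k)}$ in $v_{(k)} - v_{(0)} = \sum_{j < k} V_{(j)}$, and noting that the combined phase gradient satisfies $\abs{\lambda_{(j)} \nb \xi^{(j)} + \lambda_{(k)} \nb \xi^{(k)}} \gtrsim \lambda_{(k)}$ when $j < k$, produces bounds $\lesssim \lambda_{(k)}^{-1} \lambda_{(j)} e_{R,(j)}^{1/2} e_{R,(k)}^{1/2}$. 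The sum over $j$ is geometric and dominated by $j = k-1$, contributing at most $O(N_{(k-1)}^{-4/5}) \cdot e_{R,(k)}$.

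\textbf{The $u$-contribution.} Using an extension operator, regard $u$ as a compactly supported element of $C^{1/5}(\bbR^{3})$ or $W^{1/5,1}(\bbR^{3})$, and write $u = u_{\veps} + (u - u_{\veps})$ where $u_{\veps}$ is a standard spatial mollification at scale $\veps$. The standard estimates $\nrm{u - u_{\veps}}_{L^{1}(\supp \psi)} \leq C \veps^{1/5}$ and $\nrm{\nb u_{\veps}}_{L^{1}(\supp \psi)} \leq C \veps^{-4/5}$, with constants depending on $\nrm{u}_{C^{1/5}}$ or $\nrm{u}_{W^{1/5,1}}$, combined with a H\"older bound for the remainder against $\psi V_{(k)} \in L^{\infty}$ and one integration by parts in the phase for the mollified piece, give
\begin{equation*}
\Big| \int \psi u \cdot V_{(k)} \, \ud x \Big| \lesssim \veps^{1/5} e_{R,(k)}^{1/2} + \lambda_{(k)}^{-1} \veps^{-4/5} e_{R,(k)}^{1/2}.
\end{equation*}
Optimizing at $\veps = \lambda_{(k)}^{-1}$ produces the combined bound $\lesssim \lambda_{(k)}^{-1/5} e_{R,(k)}^{1/2}$.

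\textbf{Conclusion and main obstacle.} The parameters selected in Subsection~\ref{subsec:MLMT:construction} will obey $e_{R,(k)}^{1/2} \lesssim \Xi_{(k)}^{-\alp}$ for some $\alp < 1/5$, hence the ratio of the cross term to the main term is controlled by $\lambda_{(k)}^{-1/5} e_{R,(k)}^{-1/2} \sim \lambda_{(k)}^{\alp - 1/5} \to 0$, with the $V_{(j)}$-interaction contributions also vanishing as $N_{(k)}^{-4/5}$. Therefore a threshold $k_{\star} = k_{\star}(\psi, v_{(0)}, u)$ exists past which $\Delta_{k} > 0$, proving the Claim. The principal technical obstacle is to treat the $C^{1/5}$ and $W^{1/5,1}$ cases in parallel, which forces the mollification-plus-integration-by-parts scheme above rather than a direct H\"older commutator bound; this uses the compactness of $\supp \psi$ essentially, since the $W^{1/5,1}$-based $L^{1}$ estimate on $u - u_{\veps}$ is only available locally. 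All remaining cross-term bounds reduce to non-stationary phase arguments of the type already employed in Sections~\ref{sec:correctionEstimates}--\ref{sec:stressEstimates}.
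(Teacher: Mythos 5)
Your decomposition of $\Delta_k$ and your treatment of the main term and of the $u$-mollification (scale $\veps \sim \lambda_{(k)}^{-1}$, splitting $u = u_\veps + (u-u_\veps)$, using $\nrm{u-u_\veps}_{L^1} \lesssim \veps^{1/5}$ and $\nrm{\nb u_\veps}_{L^1}\lesssim \veps^{-4/5}$) match the paper's argument. But the mechanism you invoke for the cross terms has a genuine gap. You propose to gain the factor $\lambda_{(k)}^{-1}$ by one integration by parts in the phase of $V_{(k)} = \sum_I e^{i\lambda\xi_I}\tilde v_I$. The amplitudes $\tilde v_I$, the cutoffs and the phase gradients all vary at spatial frequency $\aeq \Xi_{(k)}$ (Proposition~\ref{prop:ampEstimates}, \eqref{eq:nabVco}), and $\lambda_{(k)} = B_\lambda N_{(k)}\Xi_{(k)}$ with $N_{(k)} = Z^{5/2}$ a \emph{$k$-independent constant}; so the term where the derivative falls on the amplitude is of size $\lambda_{(k)}^{-1}\Xi_{(k)} e_{R,(k)}^{1/2} \aeq N_{(k)}^{-1} e_{R,(k)}^{1/2}$, not $\lambda_{(k)}^{-1}e_{R,(k)}^{1/2}$. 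Since $e_{R,(k)}^{1/2}/e_{R,(k)} = Z^{(k+1)/2}\to\infty$, a gain that is merely a constant multiple of $N_{(k)}^{-1}$ cannot beat the main term $K e_{R,(k)}$ for large $k$ (and iterating the integration by parts gains only further constant factors $N_{(k)}^{-1}$ per step, at the cost of derivatives of the amplitudes that the scheme controls only up to order $L=2$). The same defect hits your $V_{(j)}\cdot V_{(k)}$ interactions: after the amplitude-derivative term is accounted for, the sum over $j$ is dominated by $j=0$ and is again of size $N_{(k)}^{-1}e_{R,(k)}^{1/2}$, not $O(Z^{-2})e_{R,(k)}$. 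Moreover, the Main Lemma as stated does not export the wave decomposition or any nonstationary-phase bounds to this stage of the argument. The paper avoids all of this by using the potential: $V_{(k)} = \nb\times W_{(k)}$ with $\nrm{W_{(k)}}_{C^0}\leq C\Xi_{(k)}^{-1}N_{(k)}^{-1}e_{R,(k)}^{1/2}$ (estimate \eqref{eq:Wco}), and integrates the curl onto $\psi$, $v_{(k)}$, $v_{(0)}$, $u_\veps$; this realizes the full $\lambda_{(k)}^{-1}$ gain with no amplitude-derivative loss (e.g. $\int|\psi\,\nb\times v_{(k)}\cdot W_{(k)}|\lesssim e_{v,(k)}^{1/2}e_{R,(k)}^{1/2}/N_{(k)} = Z^{-2}e_{R,(k)}$), and it also handles $v_{(k)}-u$ as a whole, with no need to decompose into past corrections.

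There is also a logical slip in your concluding step. From $e_{R,(k)}^{1/2}\lesssim \Xi_{(k)}^{-\alp}$ you get $e_{R,(k)}^{-1/2}\gtrsim \lambda_{(k)}^{\alp}$, a lower bound, and this does \emph{not} imply $\lambda_{(k)}^{-1/5}e_{R,(k)}^{-1/2}\to 0$. What is actually needed is the complementary \emph{lower} bound on the energy levels relative to the frequency, $e_{R,(k)}^{1/2}\gtrsim \lambda_{(k)}^{-\bt}$ for some $\bt < 1/5$; in the parameter scheme of Section~\ref{subsec:MLMT:construction} one has $\lambda_{(k)}^{-1/5}e_{R,(k)}^{-1/2}\aeq C_{0}^{-k/5}$, so the decay comes precisely from $C_{0}>1$ — the point the paper isolates in a remark: if $C_{0}\leq 1$ the limit would in fact lie in $C^{1/5}$ and the Claim would be false, so no argument relying only on the H\"older-type upper bound on $e_{R,(k)}$ can close this step.
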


Assuming these Claims, Theorem \ref{thm:eulerOnRn:cptPert} follows rather immediately.

\begin{proof} [Proof of Theorem \ref{thm:eulerOnRn:cptPert} assuming Claims \ref{claim:MLMT:zeroR}--\ref{claim:MLMT:nontrivial}]
By Claims \ref{claim:MLMT:zeroR} and \ref{claim:MLMT:HolderReg}, it follows that $(v, p) := \lim_{k \to \infty} (v_{(k)}, p_{(k)})$ exists in $C^{1/5-\eps}_{t,x} \times C^{2 (1/5-\eps)}_{t,x}$, and is a solution to the incompressible Euler equations. Moreover, by \eqref{eq:MLMT:smallHolder}, it follows that
\begin{equation*}
	\nrm{v - v_{(0)}}_{C^{1/5-\eps}_{t,x}} + \nrm{p - p_{(0)}}_{C^{2(1/5-\eps)}_{t,x}} \leq \frac{\HolderPert}{2} < \HolderPert,
\end{equation*}
which proves Statement 2. Statements 1 and 4 of Theorem \ref{thm:eulerOnRn:cptPert} then follow from Claims \ref{claim:MLMT:cptSupp} and \ref{claim:MLMT:energyIncrease}, respectively. 
Finally, for every $t_{\star} \in I[\Omg_{0}]$, $B(\rho_{\star}; x_{\star}) \subseteq S_{t_{\star}}[\Omg_{(0)}]$ and $u \in W^{1/5,1}_{x}(B(\rho_{\star}; x_{\star})) \cup C^{1/5}_{x}(B(\rho_{\star}; x_{\star}))$, Claim \ref{claim:MLMT:nontrivial} shows that there exists a non-negative function $\psi$ supported in $B(\rho_{\star}; x_{\star})$ and $k_{\star} \geq 0$ such that the quantity
\begin{equation*}
\int \psi(x) \frac{\abs{(v_{(k+1)} - u)(t_{\star}, x)}^{2}}{2} \, \ud x
\end{equation*}
is \emph{strictly increasing} for $k \geq k_{\star}$. Since this integral is non-negative for every $k$, it follows that 
\begin{equation*}
	\int \psi(x) \frac{\abs{(v - u)(t_{\star}, x)}^{2} }{2 } \, \ud x > 0.
\end{equation*}

Thus, $v \neq u$ on $B(\rho_{\star}; x_{\star})$. 
Since $u$ can be an arbitrary function in $W^{1/5,1}_{x}(B(\rho_{\star}; x_{\star}))$ or in $C^{1/5}_{x}(B(\rho_{\star}; x_{\star}))$, it follows that $v$ belongs to neither $W^{1/5, 1}_{x}(B(\rho_{\star}; x_{\star}))$ nor $C^{1/5}_{x}(B(\rho_{\star}; x_{\star}))$.  
As $t_{\star} \in I[\Omg_{(0)}]$ and $B(\rho_{\star}; x_{\star}) \subseteq S_{t_{\star}}[\Omg_{(0)}]$ can be arbitrary, Statement 3 follows. \qedhere
\end{proof}

The following Subsections will be devoted to the construction of a sequence $(v_{(k)}, p_{(k)}, R_{(k)})$ which satisfies the above claims. More precisely, the construction process itself will be described in Subsection \ref{subsec:MLMT:construction}, and the Claims \ref{claim:MLMT:zeroR}--\ref{claim:MLMT:nontrivial} will be verified for the constructed sequence $(v_{(k)}, p_{(k)}, R_{(k)})$ in Subsection \ref{subsec:MLMT:verifyClaims}.

\subsection{Construction of $(v_{(k)}, p_{{(k)}}, R_{(k)})$} \label{subsec:MLMT:construction}
In this Subsection, we describe the construction of the sequence $(v_{(k)}, p_{(k)}, R_{(k)})$, which will be shown to satisfy Claims \ref{claim:MLMT:zeroR}--\ref{claim:MLMT:nontrivial} in Subsection \ref{subsec:MLMT:verifyClaims}. 
The basic scheme is as follows: Given $(v_{(k)}, p_{(k)}, R_{(k)})$ with frequency and energy levels below $(\Xi_{(k)}, e_{v, (k)}, e_{R, (k)})$, along with sets $\Omg_{(k)}, \widetilde{\Omg}_{(k)}$ such that
\begin{gather} 
\supp \, (v_{(k)} - v_{(0)}, p_{(k)}- p_{(0)}, R_{(k)}) \subseteq  \Omg_{(k)}, \label{eq:MLMT:Omgs:1} \\
\ECyl_{v_{(0)}} (5\tht_{(k)}, 5000 \Xi_{(k)}^{-1} ; \, \Omg_{(k)})
\subseteq  \widetilde{\Omg}_{(k)}  
\subseteq \overline{\widetilde{\Omg}_{(k)}} 
\subseteq \calU \label{eq:MLMT:Omgs:2}
\end{gather}
(where $\tht_{(k)} = \Xi_{(k)}^{-1} e_{v, (k)}^{-1/2}$) we use the Main Lemma to produce $(v_{(k+1)}, p_{(k+1)}, R_{(k+1)})$ with frequency and energy levels below $(\Xi_{(k+1)}, e_{v, (k+1)}, e_{R, (k+1)})$ satisfying the ansatz
\begin{align} 
		\Xi_{(k+1)} = \, & C_{0} Z^{5/2} \Xi_{(k)}		\label{eq:MLMT:parameterEvolution:Xi} \\
	e_{v, (k+1)} = \, & e_{R, (k)} 					\label{eq:MLMT:parameterEvolution:ev}\\
	e_{R, (k+1)} = \, & \frac{e_{R, (k)}}{Z}		\label{eq:MLMT:parameterEvolution:eR} 
\end{align}
where $C_{0}$ is the constant in the Main Lemma and $Z$ is a parameter to be specified. Note that $\Xi_{(k)}$ grows exponentially, and $e_{v, (k)}$ and $e_{R, (k)}$ decay exponentially.
We also construct $\Omg_{(k+1)}, \widetilde{\Omg}_{(k+1)}$ satisfying \eqref{eq:MLMT:Omgs:1}, \eqref{eq:MLMT:Omgs:2} and furthermore
\begin{equation} \label{eq:MLMT:Omgs:3}
	\widetilde{\Omg}_{(k+1)} \subseteq \widetilde{\Omg}_{(k)}.
\end{equation}

\subsubsection{The base case}
Here, we choose the parameters $\Xi_{(0)}, e_{v, (0)}, e_{R, (0)}$.  We will also choose $\widetilde{\Omg}_{(0)}$ so that \eqref{eq:MLMT:Omgs:2} holds. These choices will serve as the base step for the construction sketched above.

\begin{rem}  In general, one can construct solutions by taking the initial frequency and energy levels $\Xi_{(0)}$ and $e_{v,(0)}$ to be any values for which the bounds \eqref{bound:nabkv}-\eqref{bound:nabkp} hold for the initial velocity and pressure $(v_{(0)}, p_{(0)})$.  With such a choice of parameters, it is natural to regard the pair $(\Xi_{(0)}^{-1}, e_{v,(0)}^{1/2})$ as a characteristic length scale and velocity for the solutions constructed by our procedure.  In our proof below, we will take a more specific choice of $(\Xi_{(0)}, e_{v,(0)})$ that is convenient for proving Claims \ref{claim:MLMT:zeroR}--\ref{claim:MLMT:nontrivial}.
\end{rem}


\paragraph*{Choice of $e_{v, (0)}$ and $e_{R, (0)}$.} 
We choose 
\begin{equation} \label{eq:MLMT:baseCase:0}
	e_{v, (0)} = 1, \quad
	e_{R, (0)} = Z^{-1}
\end{equation}
where $Z > 1$ is a large parameter to be chosen later; in fact, it will be finally fixed in the Proof of Claim \ref{claim:MLMT:energyIncrease} in Subsection \ref{subsec:MLMT:verifyClaims}. We take $\Xi_{(0)} > 1 $ sufficiently large so that 
\begin{equation} \label{eq:MLMT:baseCase:1}
(v_{(0)}, p_{(0)}, R_{(0)}) \hbox{ has frequency and energy levels below } (\Xi_{(0)}, 1, \frac{1}{Z}).
\end{equation}

This choice of $\Xi_{(0)}$ can be made independently of the choice of $Z$, since $R_{(0)} = 0$. 

\paragraph*{Choice of $\widetilde{\Omg}_{(0)}$ and $\Xi_{(0)}$.}
We choose
\begin{equation}
\widetilde{\Omg}_{(0)} := \ECyl_{v_{(0)}} (5 \tht_{(0)}, 5000 \Xi_{(0)}^{-1} ; \, \Omg_{(0)}),
\end{equation}
which makes the first inclusion in \eqref{eq:MLMT:Omgs:2} hold automatically. Since $\Omg_{(0)}$ is pre-compact, we may ensure that the last inclusion in \eqref{eq:MLMT:Omgs:2} holds as well by choosing $\Xi_{(0)} > 1$ larger if necessary. We remark that \eqref{eq:MLMT:Omgs:1} also holds, since the left-hand side is empty for $k=0$. 

\subsubsection{Choosing the parameters for $k \geq 1$}
Here, we describe the choice of parameters needed to apply the Main Lemma in order to construct $(v_{(k+1)}, p_{(k+1)}, R_{(k+1)})$, except for the choice of the energy density $e_{(k)}(t,x)$.

From \eqref{eq:theNewEnergyLevel} of the Main Lemma, the Ansatz \eqref{eq:MLMT:parameterEvolution:eR} and base case \eqref{eq:MLMT:baseCase:0}, we are led to the choices
\begin{equation} \label{eq:MLMT:parameterEvolution:ML}
	\frac{e_{v, (k)}}{e_{R, (k)}} = Z, \qquad
	N_{(k)} = Z^{2} \bb( \frac{e_{v, (k)}}{e_{R, (k)}} \bb)^{1/2} = Z^{5/2}.
\end{equation}
for $k \geq 0$. Note that $Z > 1$ is enough to ensure \eqref{eq:conditionsOnN2}.
Accordingly, we choose $\Xi_{(k+1)}$ to be
\begin{equation*} 
	\Xi_{(k+1)} = C_{0} N_{(k)} \Xi_{(k)} = C_{0} Z^{5/2} \Xi_{(k)},
\end{equation*}
where $C_{0} > 1$ is the constant given by the Main Lemma, which depends only on $M > 0$. The latter constant will be chosen to be $M = C_{1} e^{C_{2}}$, where $C_{1}, C_{2}$ are constants in Proposition \ref{prop:MLMT:est4moll}; see \eqref{eq:MLMT:chooseED:upper}. 

\begin{rem} 
The size of the constant $C_{0}$ in the Main Lemma determines whether the constructed solution $(v, p)$ belongs to $C^{1/5}_{t,x} \times C^{2/5}_{t,x}$ or not. In our proof of the Main Lemma, recall that $C_{0}$ was chosen to be sufficiently large in order to absorb many implicit constants that arose in the proof. In particular, $C_{0} > 1$, and as we shall see below, this inequality forces the constructed solution $(v, p)$ to fail to belong to $C^{1/5}_{t,x} \times C^{2/5}_{t,x}$ locally, as stated in Theorem \ref{thm:eulerOnRn:cptPert} (see also Claim \ref{claim:MLMT:nontrivial}). On the other hand, if we had $C_{0} \leq 1$, then it would follow that $(v, p)$ belongs to $C^{1/5}_{t,x} \times C^{2/5}_{t,x}$, by a slight variant of our proof of Claim \ref{claim:MLMT:HolderReg} below.
\end{rem}

At this point, we take $Z > 1$ to be sufficiently large to make sure that the space- and time-scales $\Xi^{-1}_{(k)}$, $\tht_{(k)}$ decrease sufficiently fast to be used in the construction of $e_{(k)}(t,x)$ below.  In particular, our $Z$ will satisfy the hypothesis of the following lemma.

\begin{lem} \label{lem:MLMT:parameterEvolution}
Let $\Xi_{(k)}$, $e_{v, (k)}$, $e_{R, (k)}$, $N_{(k)}$ and $\tht_{(k)}$ be chosen inductively according to \eqref{eq:MLMT:parameterEvolution:Xi}, \eqref{eq:MLMT:parameterEvolution:ev}, \eqref{eq:MLMT:parameterEvolution:eR} and \eqref{eq:MLMT:parameterEvolution:ML} from the case $k=0$ given above. Then there exists $Z_{0} > 0$ such that if $Z \geq Z_{0}$, then we have
\begin{equation} \label{eq:MLMT:scaleDecrease}
\Xi_{(k+1)}^{-1} \leq \frac{1}{5000} \Xi_{(k)}^{-1}, \quad 
\tht_{(k+1)} \leq \frac{1}{500} \tht_{(k)}.
\end{equation}
\end{lem}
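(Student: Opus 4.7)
The plan is to verify both decay estimates in \eqref{eq:MLMT:scaleDecrease} by direct computation from the recursion relations, and then to read off how large $Z$ must be. There is no subtle obstruction here: the claim is essentially a bookkeeping statement about ratios, and the main point is to make sure the factors of $C_0$ and $Z$ combine in the way suggested by the Ansatz.

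First I would handle the frequency inequality. From \eqref{eq:MLMT:parameterEvolution:Xi} one immediately has
\[
\Xi_{(k+1)}^{-1} = \frac{1}{C_0 Z^{5/2}} \Xi_{(k)}^{-1},
\]
so the first inequality in \eqref{eq:MLMT:scaleDecrease} holds as soon as $C_0 Z^{5/2} \geq 5000$, i.e.\ $Z \geq (5000/C_0)^{2/5}$.

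Next I would handle the time-scale inequality. By induction from the base case $e_{v,(0)} = 1$, $e_{R,(0)} = Z^{-1}$ and the rules \eqref{eq:MLMT:parameterEvolution:ev}, \eqref{eq:MLMT:parameterEvolution:eR}, one obtains $e_{v,(k+1)} = e_{R,(k)} = e_{v,(k)}/Z$, which is consistent with the ratio condition \eqref{eq:MLMT:parameterEvolution:ML}. Combining this with the definition $\tht_{(k)} = \Xi_{(k)}^{-1} e_{v,(k)}^{-1/2}$ and with the expression for $\Xi_{(k+1)}^{-1}$ obtained above gives
\[
\tht_{(k+1)} = \Xi_{(k+1)}^{-1} e_{v,(k+1)}^{-1/2} = \frac{1}{C_0 Z^{5/2}} \cdot Z^{1/2} \cdot \Xi_{(k)}^{-1} e_{v,(k)}^{-1/2} = \frac{1}{C_0 Z^2} \tht_{(k)},
\]
and so the second inequality in \eqref{eq:MLMT:scaleDecrease} holds whenever $C_0 Z^2 \geq 500$, i.e.\ $Z \geq (500/C_0)^{1/2}$.

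Taking $Z_0 := \max\{(5000/C_0)^{2/5},\, (500/C_0)^{1/2}\}$ then gives the lemma. There is no hard step: both inequalities reduce to a single application of the recursion, and the only thing to keep track of is that $Z$ is the single free parameter used to dominate the fixed constant $C_0$ and the absolute numerical constants $5000$ and $500$ arising from the support-tracking bookkeeping in \eqref{eq:MLMT:Omgs:2}.
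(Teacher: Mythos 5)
Your proof is correct and follows essentially the same route as the paper: the first inequality is read off directly from \eqref{eq:MLMT:parameterEvolution:Xi} under $C_0 Z^{5/2} \geq 5000$, and the second follows from the same one-line computation $\tht_{(k+1)} = C_0^{-1} Z^{-2} \tht_{(k)}$ with $C_0 Z^{2} \geq 500$. Nothing to add.
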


\begin{proof} 
The first inequality follows from \eqref{eq:MLMT:parameterEvolution:Xi}, by taking $C_{0} Z_{0}^{5/2} \geq 5000$. To prove the second inequality, note that
\begin{equation} \label{eq:MLMT:parameterEvolution:tht}
	\tht_{(k+1)} 
	= \Xi_{(k+1)}^{-1} e_{v, (k+1)}^{-1/2}
	= C_{0}^{-1} Z^{-2} \Xi_{(k)}^{-1} e_{v, (k)}^{-1/2}
	= C_{0}^{-1} Z^{-2} \tht_{(k)}.
\end{equation}

Thus, taking $C_{0} Z_{0}^{2} \geq 500$, the second inequality follows. \qedhere
\end{proof}

\subsubsection{Choosing the energy density} \label{subsubsec:ED}
We now describe how to choose the energy density $e_{(k)}(t,x)$, which satisfies the hypotheses \eqref{eq:lowBoundEoftx} and \eqref{ineq:goodEnergy} of the Main Lemma. This choice allows us to invoke the Main Lemma to produce $(v_{(k+1)}, p_{(k+1)}, R_{(k+1)})$ with frequency and energy levels below $(\Xi_{(k+1)}, e_{v, (k+1)}, e_{R, (k+1)})$ satisfying \eqref{eq:MLMT:parameterEvolution:ev} and \eqref{eq:MLMT:parameterEvolution:eR}.

Recall that we are given $\Omg_{(k)}$, $\widetilde{\Omg}_{(k)}$ satisfying \eqref{eq:MLMT:Omgs:1}, \eqref{eq:MLMT:Omgs:2}.
Let $\chi_{(k)}$ be the characteristic function of $\ECyl_{v_{(k)}}(2 \tht_{(k)}, 2 \Xi_{(k)}^{-1}; \, \Omg_{(k)})$.  Note that $\chi_{(k)}$ is a locally integrable function\footnote{Strictly speaking, one must check at this point that the set $\ECyl_{v_{(k)}}(2 \tht_{(k)}, 2 \Xi_{(k)}^{-1}; \, \Omg_{(k)})$ and the function $\chi_{(k)}$ are measurable.  This point can be proven by noting that $\ECyl_{v_{(k)}}(2 \tht_{(k)}, 2 \Xi_{(k)}^{-1}; \, \Omg_{(k)})$ is a countable union of compact subsets of $\R \times \R^3$. 
of $(t,x)$.}  
Define $e^{1/2}_{(k)}$ to be $(K e_{R, (k)})^{1/2}$ times the mollification of $\chi_{(k)}$ in space and along the flow of $v_{(k)}$, with parameters $\tMP = \frac{1}{100} \tht_{(k)}$ and $\spMP = \frac{1}{100} \Xi_{(k)}^{-1}$. More precisely,
\begin{equation} \label{eq:MLMT:chooseED}
	e^{1/2}_{(k)}(t,x) := (K e_{R, (k)})^{1/2} (\moll{v_{(k)}}{[\chi_{(k)}]}_{\frac{1}{100} \tht_{(k)}, \frac{1}{100} \Xi_{(k)}^{-1}})(t,x).
\end{equation}

The desired upper bound \eqref{ineq:goodEnergy} follows from
\begin{equation} \label{eq:MLMT:chooseED:upper}
|| \nab^m (\pr_t + v_{(k)} \cdot \nab)^r e_{(k)}^{1/2} ||_{C^0} \leq M \Xi_{(k)}^m (\Xi_{(k)} e_{v, (k)}^{1/2})^r e_{R, (k)}^{1/2} \hskip3em 0 \leq r \leq 1, 0 \leq m + r \leq L 
\end{equation}
which in turn is an immediate consequence of Proposition \ref{prop:MLMT:est4moll} (with $M = C_{1} e^{C_{2}}$). 

Next, we verify that the desired lower bound holds, i.e.,
\begin{equation} \label{eq:MLMT:chooseED:lower}
	e_{(k)}(t,x) \geq K e_{R, (k)} \quad \hbox{for} \quad (t,x) \in \ECyl_{v_{(k)}} (\tht_{(k)}, \Xi^{-1}_{(k)}; \, \Omg_{(k)}).
\end{equation}

By \eqref{eq:ECylECylinECyl} (Lemma \ref{lem:cylContainmentProps}), we see that 
\begin{equation*}
\ECyl_{v_{(k)}} (\frac{1}{100} \tht_{(k)}, \frac{1}{100} \Xi^{-1}_{(k)}; \, \ECyl_{v_{(k)}} (\tht_{(k)}, \Xi^{-1}_{(k)}; \, \Omg_{(k)})) \subseteq \ECyl_{v_{(k)}} (2 \tht_{(k)}, 2 \Xi^{-1}_{(k)}; \, \Omg_{(k)}).
\end{equation*}

Thus, for every $(t,x) \in \ECyl_{v_{(k)}} (\tht_{(k)}, \Xi^{-1}_{(k)}; \, \Omg_{(k)})$, we have $\chi_{(k)} \equiv 1$ on $\ECyl_{v_{(k)}} (\tht_{(k)}, \Xi^{-1}_{(k)}; t, x)$. Using Lemma \ref{lem:MLMT:locality4Mollification} (Locality of the mollification) to replace $\chi_{(k)}$ by $1$, and noting that the mollification of the latter is trivially $\equiv 1$, we conclude that \eqref{eq:MLMT:chooseED:lower} holds, in fact, with equality.

\subsubsection{Controlling the enlargement of support} \label{subsubsec:supp}
To continue the construction, we need to choose $\Omg_{(k+1)}$ and $\widetilde{\Omg}_{(k+1)}$ so that \eqref{eq:MLMT:Omgs:1}, \eqref{eq:MLMT:Omgs:2}, \eqref{eq:MLMT:Omgs:3} hold. We define $\Omg_{(k+1)}$, $\widetilde{\Omg}_{(k+1)}$ to be appropriate $v_{(0)}$-adapted cylindrical neighborhoods of $\Omg_{(k)}$, i.e.,
\begin{equation}
	\Omg_{(k+1)} := \LCyl_{v_{(0)}} (4 \tht_{(k)}, 2000 \Xi_{(k)}^{-1} ; \, \Omg_{(k)}), \quad
	\widetilde{\Omg}_{(k+1)} := \ECyl_{v_{(0)}} (5 \tht_{(k)}, 5000 \Xi_{(k)}^{-1} ; \, \Omg_{(k)}).
\end{equation}

We first establish \eqref{eq:MLMT:Omgs:1} for $k+1$. By construction, note that
\begin{equation*}
	\supp \, e_{(k)} 
	\subseteq \LCyl_{v_{(k)}} (\frac{1}{100} \tht_{(k)}, \frac{1}{100} \Xi_{(k)}^{-1} ; \, \ECyl_{v_{(k)}}(2 \tht_{(k)}, 2 \Xi_{(k)}^{-1}; \, \Omg_{(k)}))
\end{equation*}

By \eqref{eq:LCylECylinECyl}, \eqref{eq:ECylECylinECyl} of Lemma \ref{lem:cylContainmentProps} and Lemma \ref{lem:cylinderEquiv} (Equivalence of Eulerian and Lagrangian Cylinders), we have
\begin{equation*}
	\ECyl_{v_{(k)}}(\tht_{(k)}, \Xi_{(k)}^{-1}; \, \supp \, e_{(k)}) \subseteq \LCyl_{v_{(k)}} (4 \tht_{(k)}, 2000 \Xi_{(k)}^{-1} ; \, \Omg_{(k)}).
\end{equation*}

Since $\supp \, (v_{(k)} - v_{(0)}) \subseteq \Omg_{(k)}$, Lemma \ref{lem:MLMT:fsp4Flow} applies and it follows that
\begin{equation*}
	\LCyl_{v_{(k)}} (4 \tht_{(k)}, 2000 \Xi_{(k)}^{-1} ; \, \Omg_{(k)})
	= \LCyl_{v_{(0)}} (4 \tht_{(k)}, 2000 \Xi_{(k)}^{-1} ; \, \Omg_{(k)})
	= \Omg_{(k+1)}.
\end{equation*}

As $(V_{(k)}, P_{(k)}, R_{(k))}) = (v_{(k+1)} - v_{(k)}, p_{(k+1)} - p_{(k)}, R_{(k)})$ produced by the Main Lemma is supported in $\ECyl_{v_{(k)}}(\tht_{(k)}, \Xi_{(k)}^{-1}; \, \supp \, e_{(k)})$, we see that \eqref{eq:MLMT:Omgs:1} holds for $k+1$. 

Next, by \eqref{eq:MLMT:Omgs:2} for $k$, we see that \eqref{eq:MLMT:Omgs:3} holds, i.e., $\widetilde{\Omg}_{(k+1)} \subseteq \widetilde{\Omg}_{(k)}$. In particular, note that the last inclusion in \eqref{eq:MLMT:Omgs:2} holds for $\widetilde{\Omg}_{(k+1)}$.

Finally, we need to verify that the first inclusion in \eqref{eq:MLMT:Omgs:2} holds for $k+1$. By \eqref{eq:MLMT:scaleDecrease}, it suffices to show that
\begin{equation} \label{eq:MLMT:Omgs:3'}
	\ECyl_{v_{(0)}} (\tht_{(k)}, \Xi_{(k)}; \, \Omg_{(k+1)} ) \subset \widetilde{\Omg}_{(k+1)}.
\end{equation}

Note that we use $v_{(0)}$ instead of $v_{(k)}$ on the left-hand side.
Applying \eqref{eq:ECylLCylinECyl} in Lemma \ref{lem:cylContainmentProps}, \eqref{eq:MLMT:Omgs:2} for $k$, and using the fact that $e^{5 \tht_{k} \nrm{\nb v_{(0)}}_{C^{0}}} \leq e^{\frac{1}{100}} \leq 2$ by \eqref{eq:MLMT:scaleDecrease}, the desired inclusion \eqref{eq:MLMT:Omgs:3'} follows.

\subsection{Verification of Claims \ref{claim:MLMT:zeroR} -- \ref{claim:MLMT:nontrivial}} \label{subsec:MLMT:verifyClaims}
Here, we complete the proof of Theorem \ref{thm:eulerOnRn:cptPert} by establishing the Claims \ref{claim:MLMT:zeroR}--\ref{claim:MLMT:nontrivial}, which were made in Subsection \ref{subsec:MLMT:reduceMT}. 

\begin{proof} [Proof of Claim \ref{claim:MLMT:zeroR}: Vanishing of the Euler-Reynolds stress] 
This claim is obvious from construction, since
\begin{equation*}
\nrm{R_{(k)}}_{C^{0}} \leq e_{R, (k)} \to 0 \hbox{ as } k \to \infty. \qedhere
\end{equation*}
\end{proof}

\begin{proof} [Proof of Claim \ref{claim:MLMT:cptSupp}: Compact support in space-time] 
Let $\Omg_{(\infty)} := \cup_{k=1}^{\infty} \Omg_{(k)}$. By construction, for every $k \geq 0$ we have
\begin{equation*}
\supp \, (v_{(k)} - v_{(0)}, \, p_{(k)} - p_{(0)}) \subseteq \Omg_{(k)}
\subseteq \Omg_{(\infty)}.
\end{equation*}
Note furthermore that $\overline{\Omg_{(\infty)}} \subseteq \overline{\widetilde{\Omg}_{(0)}} \subseteq \calU$, from which the claim follows. \qedhere
\end{proof}

\begin{proof} [Proof of Claim \ref{claim:MLMT:HolderReg}: H\"older regularity of the solution] 
Note that $v_{(K)} = v_{(0)} + \sum_{k=1}^{K} V_{(k)}$ and $p_{(K)} = p_{(0)} + \sum_{k=1}^{K} P_{(k)}$, where
\begin{align}
	\nrm{V_{(k)}}_{C^{0}} \leq & C e_{R, (k)}^{1/2} = C Z^{-(k+1)/2}, \label{eq:recallC0correcBdV}\\
	\nrm{P_{(k)}}_{C^{0}} \leq & C e_{R, (k)}^{1/2} = C Z^{-(k+1)}, \\
	\nrm{\nb_{t,x} V_{(k)}}_{C^{0}} \leq & C C_{0} N_{(k)} \Xi_{(k)} e_{R, (k)}^{1/2} = C C_{0}^{k+1} Z^{5(k+1)/2} Z^{-(k+1)/2}, \label{ineq:C1correcBdV} \\
	\nrm{\nb_{t,x} P_{(k)}}_{C^{0}} \leq & C C_{0} N_{(k)} \Xi_{(k)} e_{R, (k)} = C C_{0}^{k+1} Z^{5(k+1)/2} Z^{-(k+1)} , \label{ineq:C1correcBdP}
\end{align}
by the Main Lemma and the base case $e_{R, (0)} = Z^{-1}$.   The estimates \eqref{ineq:C1correcBdV}, \eqref{ineq:C1correcBdP} for the time derivative $\rd_{t}$ follow by writing \[ \rd_{t} = (\rd_{t} + v_{(k)} \cdot \nb) - v_{(k)} \cdot \nb = (\rd_{t} + v_{(k)} \cdot \nb) - (v_{(k)} - v_{(0)}) \cdot \nb - v_{(0)} \cdot \nb \]
 and noting that the advective derivative obeys an even more favorable estimate than needed, while the terms $(v_{(k)} - v_{(0)})$ and $v_{(0)}$ are bounded uniformly on $\Om_{(k)}$, independent of $k$.  The uniform boundedness of $(v_{(k)} - v_{(0)}) = \sum_{k' = 0}^{k-1} V_{(k')}$ follows by summing \eqref{eq:recallC0correcBdV} in $k'$.  Also, the $C^0$ norm of $v_{(0)}$ over $\Om_{(k)}$ is also bounded uniformly in $k$, as $v_{(0)}$ is smooth and the sets $\Omg_{(k)}$ are contained in a fixed compact set $\overline{\widetilde{\Omg}_{(0)}}$ by Claim \ref{claim:MLMT:cptSupp}.  Therefore the constants in \eqref{ineq:C1correcBdV}-\eqref{ineq:C1correcBdP} are independent of $k$.

By interpolation of \eqref{eq:recallC0correcBdV}-\eqref{ineq:C1correcBdP}, we obtain the following upper bounds on the $C^{\alp}_{t,x}$ norm of $V_{(k)}$ and $P_{(k)}$.
\begin{align} 
	\nrm{V_{(k)}}_{C^{\alp}_{t,x}} \leq & C C_{0}^{\alp(k+1)} Z^{\frac{5 \alp  - 1}{2} (k+1)} , \label{ineq:ctcxaVk} \\
	\nrm{P_{(k)}}_{C^{2 \alp}_{t,x}} \leq & C C_{0}^{2\alp(k+1)} Z^{(5 \alp  - 1)(k+1)} . \label{ineq:ctcxaPk}
\end{align}

Therefore, for $\alp = 1/5-\eps$, choosing $Z > 1$ sufficiently large so that
\begin{equation} 
	C_{0}^{2\alp} Z^{- 5 \eps} < 1, \label{eq:suffLargeZ}
\end{equation}
we see that the bounds \eqref{ineq:ctcxaVk}-\eqref{ineq:ctcxaPk} for $V_{(k)}$ and $P_{(k)}$ can be summed in a geometric series, and therefore $(v_{(k)}, p_{(k)})$ is Cauchy in $C^{\alp}_{t,x} \times C^{2 \alp}_{t,x}$. Moreover, taking $Z$ even larger, we can ensure that the sum
\begin{equation*}
	\sum_{k \geq 0} \nrm{V_{(k)}}_{C^{\alp}_{t,x}} + \nrm{P_{(k)}}_{C^{\alp}_{t,x}} 
\end{equation*}
is arbitrarily small, which proves \eqref{eq:MLMT:smallHolder}.
\end{proof}

\begin{proof} [Proof of Claim \ref{claim:MLMT:energyIncrease}: Increase of local energy] 
The proof below closely follows the argument of \cite[\S 11.2.7]{isett}. We begin by reducing our consideration to a specific $\psi$ for each $t_{\star} \in I[\Omg_{(0)}]$. Indeed, by Claim \ref{claim:MLMT:cptSupp} which has been already verified, the following statement holds: If $\psi, \psi'$ are two smooth, compactly supported, smooth function on $\bbR^{3}$ such that $\psi \equiv \psi'$ on $S_{t_{\star}}[\calU]$, then for every $k \geq 1$ we have
\begin{equation*}
	\int (\psi' - \psi)(x) \frac{\abs{v_{(k)}(t_{\star}, x)}^{2}}{2} \, \ud x = \int (\psi' - \psi)(x) \frac{\abs{v_{(0)}(t_{\star}, x)}^{2}}{2} \, \ud x.
\end{equation*}

Therefore, it suffices to verify \eqref{eq:energyIncrease} for a \emph{specific} $\psi_{t_{\star}}$ for each $t_{\star} \in I[\Omg_{(0)}]$. 
By the pre-compactness of $\Omg_{(0)}$ and $\calU$, there exists a smooth, compactly supported $\psi_{t_{\star}} = \psi_{t_{\star}}(x)$ for each $t_{\star} \in I[\Omg_{(0)}]$ so that $\psi_{t_{\star}} \equiv 1 \hbox{ on } S_{t_{\star}}[\calU]$ and
\begin{equation} \label{eq:MLMT:energyIncrease:pf:1}
	\sup_{t_{\star} \in I[\Omg_{(0)}]} \bb( \nrm{\psi_{t_{\star}}}_{L^{1}_{x}} + \nrm{\nb \psi_{t_{\star}}}_{L^{1}_{x}} \bb)  \leq C < \infty
\end{equation}
for some $C = C(\Omg_{(0)}, \calU)$.

We are now ready to prove \eqref{eq:energyIncrease}.  Here we will often omit the $x$ variable for functions $f(t_\star, x) = f(t_\star)$ depending on $x$.  Recalling that $v_{(k+1)} = v_{(k)} + V_{(k)}$, we compute
\begin{equation} \label{eq:MLMT:energyIncrease:pf:2}
\begin{aligned}
	& \int \psi_{t_{\star}} \frac{\abs{v_{(k+1)}(t_{\star})}^{2}}{2} \, \ud x - \int \psi_{t_{\star}} \frac{\abs{v_{(k)}(t_{\star})}^{2}}{2} \, \ud x \\
	& \quad = \int \psi_{t_{\star}} e_{(k)}(t_{\star}) \, \ud x + \int \psi_{t_{\star}} \bb( \frac{\abs{V_{(k)}(t_{\star})}^{2}}{2} - e_{(k)}(t_{\star}) \bb) \, \ud x + \int \psi_{t_{\star}} \, v_{(k)} \cdot V_{(k)}(t_{\star}) \, \ud x
\end{aligned}
\end{equation}

Given $t_{\star} \in I[\Omg_{(0)}]$, let $x_{\star}$ be a point in $\bbR^{3}$ such that $(t_{\star}, x_{\star}) \in \Omg_{(0)}$. From the construction, note that $e_{(0)}(t,x) \geq K e_{R, (0)}$ on $\ECyl_{v_{(0)}}(\tht_{(0)}, \Xi_{(0)}^{-1}; \, \Omg_{(0)} )$. In particular, we have
\begin{equation} \label{eq:MLMT:energyIncrease:pf:3}
	e_{(k)}(t_{\star}, x) \geq K e_{R, (k)} \hbox{ on } B(\Xi_{(0)}^{-1}; x_{\star} ),
\end{equation}
for $k = 0$. Next, again by construction in Subsection \ref{subsec:MLMT:construction}, note that $\ECyl_{v_{(0)}}(\tht_{(0)}, \Xi_{(0)}^{-1}; \, \Omg_{(0)} ) \subseteq \Omg_{(1)} \subseteq \Omg_{(k)}$ for every $k \geq 1$; therefore, \eqref{eq:MLMT:energyIncrease:pf:3} holds for $k \geq 1$ as well. Thus, we conclude that for every $k \geq 0$, we have
\begin{equation} \label{}
	\int \psi_{t_{\star}} e_{(k)}(t_{\star}) \, \ud x \geq c e_{R, (k)} .
\end{equation}
for some constant $c > 0$ which depends on $\Xi_{(0)}^{-1}$ and $K$, but does not depend on $k$, $Z$ or $t_{\star}$.

On the other hand, by the Main Lemma, we have the bound
\begin{equation} \label{}
\begin{aligned}
\bb\vert \int \psi_{t_{\star}} \bb(\frac{\abs{V_{(k)}(t_{\star})}^{2}}{2} - e_{(k)}(t_{\star}) \bb) \, \ud x \bb\vert 
\leq & C \frac{e_{v, (k)}^{1/2} e_{R, (k)}^{1/2}}{N_{(k)}} \bb( \nrm{\psi_{t_{\star}}}_{L^{1}_{x}} + \Xi_{(k)}^{-1} \nrm{\nb \psi_{t_{\star}}}_{L^{1}_{x}} \bb) \\
\leq & C Z^{-2} e_{R, (k)},
\end{aligned}
\end{equation}
where we used \eqref{eq:MLMT:energyIncrease:pf:1} and the fact that $\Xi_{(k)}^{-1} < 1$ on the last line. Next, we have
\begin{equation} \label{}
\begin{aligned}
\bb\vert \int \psi_{t_{\star}} \, v_{(k)} \cdot V_{(k)} (t_{\star}) \, \ud x \bb\vert
= & \bb\vert \int \psi_{t_{\star}} \, v_{(k)} \cdot \nb \times W_{(k)}(t_{\star}) \, \ud x \bb\vert \\
\leq & \bb\vert \int \psi_{t_{\star}} \, \nb \times v_{(k)} \cdot W_{(k)} (t_{\star}) \, \ud x \bb\vert 
	+ \bb\vert \int (\nb \psi_{t_{\star}} \times v_{(k)}) \cdot W_{(k)}(t_{\star}) \, \ud x \bb\vert.
\end{aligned}
\end{equation}

In this case, $\nb \psi_{t_{\star}} = 0$ on $\supp \, W_{(k)}(t_{\star})$ by hypothesis, and therefore the second term on the last line vanishes. Therefore, by \eqref{eq:MLMT:energyIncrease:pf:1}, we have
\begin{equation} \label{}
\bb\vert \int \psi_{t_{\star}} \, v_{(k)} \cdot V_{(k)} (t_{\star}) \, \ud x \bb\vert \leq C \frac{e_{v, (k)}^{1/2} e_{R, (k)}^{1/2}}{N_{(k)}} \nrm{\psi_{t_{\star}}}_{L^{1}_{x}}
\leq C Z^{-2} e_{R, (k)}.
\end{equation}

In conclusion, we have
\begin{equation} \label{}
\int \psi_{t_{\star}} \frac{\abs{v_{(k+1)}(t_{\star})}^{2}}{2} \, \ud x - \int \psi_{t_{\star}} \frac{\abs{v_{(k)}(t_{\star})}^{2}}{2} \, \ud x
\geq c e_{R, (k)} + C Z^{-2} e_{R, (k)}.
\end{equation}

Taking $Z$ sufficiently large, we obtain the desired claim. 
\end{proof}

\begin{proof} [Proof of Claim \ref{claim:MLMT:nontrivial}: Irregularity of the solution] 
The idea of the proof below is similar to that of Claim \ref{claim:MLMT:energyIncrease}. An important difference, however, is that not only do we take $Z \geq Z_{\star}$ for some large $Z_{\star} > 1$ (as in Claim \ref{claim:MLMT:energyIncrease}), but we also take $k \geq k_{\star}$ for a sufficiently large $k_{\star} \geq 0$. In this proof, we shall say that a constant is \emph{universal} if it is independent of the given $\rho_{\star}, t_{\star}, x_{\star}, v_{(0)}, \psi$ and $u$ in the hypotheses of Claim \ref{claim:MLMT:nontrivial}. A constant $C > 0$ that occurs below is always universal, unless otherwise stated. It is important to note that $Z_{\star}$ is also universal, whereas $k_{\star}$ is not.

Let $t_{\star}, x_{\star}, \rho_{\star}, \psi$ and $u$ be given as in the hypotheses of Claim \ref{claim:MLMT:nontrivial}. Let us assume that $u \in W^{1/5, 1}_{x}(B(\rho_{\star}; x_{\star}))$ since the proof in the case where $u \in C^{1/5}_{x}(B(\rho_{\star}; x_{\star}))$ is identical. Below, we shall use the shorthand $B := B(\rho_{\star}; x_{\star})$. 


As in the proof of Claim \ref{claim:MLMT:energyIncrease}, we begin by computing
\begin{equation} \label{eq:MLMT:nontrivial:pf:1}
\begin{aligned}
	&\int \psi \frac{\abs{(v_{(k+1)} - u)(t_{\star})}^{2}}{2} \, \ud x - \int \psi \frac{\abs{(v_{(k)} - u)(t_{\star})}^{2}}{2} \, \ud x  \\
	&\quad = \int \psi e_{(k)}(t_{\star}) \, \ud x + \int \psi \bb( \frac{\abs{V_{(k)}(t_{\star})}^{2}}{2} - e_{(k)}(t_{\star}) \bb) \, \ud x + \int \psi (v_{(k)} - u) \cdot V_{(k)} (t_{\star}) \, \ud x.
\end{aligned}\end{equation}

Since $\supp \psi \subseteq B \subseteq S_{t_{\star}} [\Omg_{(0)}] \subseteq S_{t_{\star}} [\Omg_{(k)}]$, we have by \eqref{eq:MLMT:chooseED:lower}
\begin{equation} 
	\int \psi e_{(k)} (t_{\star}) \, \ud x \geq K e_{R, (k)}.
\end{equation}

For the second term on the right-hand side of \eqref{eq:MLMT:nontrivial:pf:1}, we have
\begin{equation}  
\begin{aligned}
\bb\vert \int \psi \bb(\frac{\abs{V_{(k)}(t_{\star})}^{2}}{2} - e_{(k)}(t_{\star}) \bb) \, \ud x \bb\vert 
\leq & C \frac{e_{v, (k)}^{1/2} e_{R, (k)}^{1/2}}{N_{(k)}} \bb( \nrm{\psi}_{L^{1}_{x}} + \Xi_{(k)}^{-1} \nrm{\nb \psi}_{L^{1}_{x}} \bb) \\
\leq & C Z_{\star}^{-2} e_{R, (k)} + C \nrm{\nb \psi}_{L^{1}_{x}} Z_{\star}^{-2} \Xi_{(k)}^{-1}  e_{R, (k)}.
\end{aligned}
\end{equation}

To estimate the third term on the right-hand side of \eqref{eq:MLMT:nontrivial:pf:1}, we first write
\begin{equation} \label{eq:MLMT:nontrivial:pf:2}
\int \psi (v_{(k)} - u) \cdot V_{(k)} (t_{\star}) \, \ud x
=  \int \psi (v_{(k)} - u_{\eps}) \cdot V_{(k)} (t_{\star}) \, \ud x + \int \psi (u_{\eps} - u) \cdot V_{(k)} (t_{\star}) \, \ud x,
\end{equation}
where $u_{\eps} = \int u(x-y) \eta_{\eps}(y) \, \ud y$ is a mollification of $u$, $\eta_{\eps}(y) = \eps^{-3} \eta(y/\eps)$ and $\eta$ is a smooth compactly supported function such that $\int \eta = 1$. Here we have assumed that the $\ep$-neighborhood of the support of $\psi$ is contained in $B$, which will be true for sufficiently small $\ep$ chosen in the proof below.  For the last term on the right-hand side of \eqref{eq:MLMT:nontrivial:pf:2}, we estimate
\begin{equation*}
	\bb\vert \int \psi (u_{\eps} - u) \cdot V_{(k)}(t_{\star}) \, \ud x \bb\vert \leq C  \eps^{1/5} e_{R, (k)}^{-1/2} \nrm{\psi}_{C^{0}_{x}} \nrm{u}_{W^{1/5,1}_{x}} \, e_{R, (k)},
\end{equation*}
where we have used the elementary convolution estimate $\nrm{u_{\eps} - u}_{L^{1}_{x}} \leq C \eps^{1/5} \nrm{u}_{W^{1/5, 1}}$. 

Finally, we estimate the first term on the right-hand side of \eqref{eq:MLMT:nontrivial:pf:2}. Integrating by parts and using the triangle inequality, we may write
\begin{equation} 
\begin{aligned}
\bb\vert \int \psi \, (v_{(k)} - u_{\eps}) \cdot V_{(k)} (t_{\star}) \, \ud x \bb\vert 
\leq & \int \abs{\psi \, \nb \times v_{(k)} \cdot W_{(k)} (t_{\star})} \, \ud x 
	+ \int \abs{\psi \, \nb \times u_{\eps} \cdot W_{(k)} (t_{\star})} \, \ud x  \\
	& + \int \abs{(\nb \psi \times v_{(k)}) \cdot W_{(k)}(t_{\star})} \, \ud x 
	+ \int \abs{(\nb \psi \times u_{\eps}) \cdot W_{(k)}(t_{\star})} \, \ud x  \\
	=: & \mathrm{I}_{1} + \mathrm{I}_{2} + \mathrm{I}_{3} + \mathrm{I}_{4}.
\end{aligned}
\end{equation}

For $\mathrm{I}_{1}$, we estimate
\begin{equation}
	\mathrm{I}_{1} \leq C \frac{e_{v, (k)}^{1/2} e_{R, (k)}^{1/2}}{N_{(k)}} \nrm{\psi}_{L^{1}_{x}}
	\leq C Z_{\star}^{-2} e_{R, (k)}.
\end{equation}

We estimate $\mathrm{I}_{2}$ by
\begin{equation}
	\mathrm{I}_{2} 
	\leq C \nrm{\nb u_{\eps}}_{L^{1}_{x}} \frac{e_{R, (k)}^{1/2}}{\Xi_{(k)} N_{(k)}} \nrm{\psi}_{C^{0}_{x}} 
	\leq C \eps^{-4/5} N_{(k)}^{-1} \Xi_{(k)}^{-1} e_{R, (k)}^{-1/2} \nrm{\psi}_{C^{0}_{x}} \nrm{u}_{W^{1/5, 1}_{x}} \, e_{R, (k)},
\end{equation}
where we have used the convolution estimate $\nrm{\nb u_{\eps}}_{L^{1}_{x}} \leq C \eps^{-4/5} \nrm{u}_{W^{1/5, 1}_{x}}$. 

To estimate $\mathrm{I}_{3}$, we begin by noting that
\begin{equation*}
	\nrm{v_{(k)} - v_{(0)}}_{C^{0}(B)} 
	\leq \sum_{j=0}^{k-1} e^{-1/2}_{R, (j)}
	\leq (Z_{\star}^{1/2}-1)^{-1}.
\end{equation*}
Note also that $v_{(0)}$ is bounded on $B$, as it is smooth and $B$ is compact. Therefore, we have
\begin{equation}
\begin{aligned}
	\mathrm{I}_{3}
	\leq & C \nrm{\nb \psi}_{L^{1}_{x}} (\nrm{v_{(k)} - v_{(0)}}_{C^{0}(B)} 
							+ \nrm{v_{(0)}}_{C^{0}(B)}) \frac{e_{R, (k)}^{1/2}}{\Xi_{(k)} N_{(k)}} \\
	\leq & C \nrm{\nb \psi}_{L^{1}_{x}} \bb( (Z_{\star}^{1/2} - 1)^{-1} + \nrm{v_{(0)}}_{C^{0}(B)} \bb) \tht_{(k+1)} e_{R, (k)}.
\end{aligned}
\end{equation}

Finally, for $\mathrm{I}_{4}$, we have
\begin{equation}
	\mathrm{I}_{4}
	\leq C \nrm{\nb \psi}_{C^{0}_{x}} \nrm{u_{\eps}}_{L^{1}_{x}}  \frac{e_{R, (k)}^{1/2}}{\Xi_{(k)} N_{(k)}} 
	\leq C \nrm{\nb \psi}_{C^{0}_{x}} \nrm{u}_{L^{1}_{x}} \tht_{(k+1)} e_{R, (k)}
\end{equation}

Putting everything together, we arrive at
\begin{align*}
\eqref{eq:MLMT:nontrivial:pf:1}
>& K e_{R, (k)}
 - C Z_{\star}^{-2} e_{R, (k)} \\
& - C (\eps^{1/5} e_{R, (k)}^{-1/2} 
	+ \eps^{-4/5} N_{(k)}^{-1} \Xi_{(k)}^{-1} e_{R, (k)}^{-1/2})
		\nrm{\psi}_{C^{0}_{x}} \nrm{u}_{W^{1/5, 1}_{x}} \, e_{R, (k)}  \\
& - C_{\star} ( \Xi_{(k)}^{-1} + \tht_{(k+1)} ) e_{R, (k)} \\
=: & K e_{R, (k)} - E_{1} - E_{2} - E_{3}.
\end{align*}
where $C > 0$ is a universal constant and $C_{\star} > 0$ can depend on $\rho_{\star}$, $Z_{\star}$, $\Xi_{(0)}$, $\nrm{v_{(0)}}_{C^{0}(B)}$, $\nrm{\nb \psi}_{L^{1}_{x}}$, $\nrm{\nb \psi}_{C^{0}_{x}}$ and $\nrm{u}_{L^{1}_{x}}$. Taking $Z_{\star} \geq 2 (C/K)^{1/2}$, we have
\begin{equation*}
-E_{1} \geq - \frac{1}{4} K e_{R, (k)}. 
\end{equation*}

Next, choosing $\eps = N_{(k)}^{-1} \Xi_{(k)}^{-1}$ and recalling the evolution laws for parameters \eqref{eq:MLMT:parameterEvolution:Xi}--\eqref{eq:MLMT:parameterEvolution:eR} and \eqref{eq:MLMT:parameterEvolution:ML}, we see that
\begin{equation*}
-E_{2} \geq - C C_{0}^{-k/5} \Xi_{(0)}^{-1/5} \nrm{\psi}_{C^{0}_{x}} \nrm{u}_{W^{1/5, 1}_{x}} e_{R, (k)}.
\end{equation*}

At this point, observe that $C_{0}^{-k/5} \to 0$ as $k \to \infty$ (since $C_{0} > 1$), and also that $\Xi_{(k)}^{-1}$, $\tht_{(k+1)} \to 0$. Therefore, choosing $k \geq k_{\star}$ sufficiently large (but non-universal), we have
\begin{equation*}
-E_{2} - E_{3} \geq - \frac{1}{4} K e_{R, (k)}.
\end{equation*}

This bound concludes the proof. \qedhere
\end{proof}



%

\section{Prescribing the Energy Profile} \label{sec:prescribeEnergy}
In this Section, we show how our method can be applied to produce solutions with a prescribed energy profile, and we present a proof of Theorem~\ref{thm:eulerOnRn:presEnergy}.  Here we outline our presentation of the proof.

Our construction of solutions in Theorem~\ref{thm:eulerOnRn:presEnergy} will require a modification of Lemma~\ref{lem:mainLemma} which can allow for an energy increment with slightly worse bounds on its advective derivative.  We state this modified Lemma in Section~\ref{sec:modMainLem} below, where we also indicate how the proof of Lemma~\ref{lem:mainLemma} can be adjusted to prove the modified Main Lemma.

To simplify our exposition, we start by proving a variant of Theorem~\ref{thm:eulerOnRn:presEnergy} in the periodic setting which illustrates the main ideas of our algorithm in the simplest case.  This proof is carried out in Section~\ref{sec:prescribePeriodicEnergy}.  In Section~\ref{sec:prescribe:Nonperiodic:Energy}, we then explain how the construction can be modified to handle the nonperiodic setting.

\subsection{A modified Main Lemma}\label{sec:modMainLem}
The proof of Theorem~\ref{thm:eulerOnRn:presEnergy} will rely on a modification of Lemma~\ref{lem:mainLemma}, which we present here.  The main difference in this modified lemma is that we allow for a worse bound on the advective derivative of the energy increment, but one which is still compatible with the time scale of the construction.  The price we pay is a worse bound for the accuracy with which the energy increment is prescribed.

\begin{lem}[The Modified Main Lemma] \label{lem:mod:mainLemma}
Suppose that $L \geq 2$.  Let $K$ be the constant in Section 7.3 of \cite{isett}, and let $M \geq 1$ be a constant.  
There exist constants $C_{0}, C > 1$, which depend only on $M$ and $L$, such that that following holds:

Let $(v,p,R)$ be any solution of the Euler-Reynolds system whose frequency and energy levels are below $(\Xi, e_v, e_R)$
 to order $L$ in $C^0$.

Define the time-scale $\th = \Xi^{-1} e_v^{-1/2}$, let $N$ be any positive number obeying the bound
$N \geq \left(\fr{e_v}{e_R} \right)^{3/2}$ and define the dimensionless parameter ${\bf b} = \left(\fr{e_v^{1/2}}{e_R^{1/2}N} \right)^{1/2}$.

Let $ e(t, x) : \R\times \R^3 \to \R_{\geq 0} $ be any non-negative function which satisfies the lower bound
\begin{align}
 e(t,x) \geq K e_R \quad \quad \mbox{ for all } (t,x) \in {\hat C}_{v}(\th, \Xi^{-1}; \supp R) \label{eq:lowBoundEoftxM}
\end{align}
(using the notation of Definition \ref{def:vCylinder}) and whose square root satisfies the estimates
\begin{align}
|| \nab^k (\pr_t + v \cdot \nab)^r e^{1/2} ||_{C^0} &\leq M \Xi^k ({\bf b}^{-1} \Xi e_v^{1/2})^r e_R^{1/2} & 0 \leq r \leq 1, \quad 0 \leq k + r \leq L \label{ineq:goodEnergyM}
\end{align}


Then there exists a solution $(v_1, p_1, R_1)$ of the Euler-Reynolds system of the form $v_1 = v + V$, $p_1 = p + P$ such that the frequency and energy levels of $(v_1, p_1)$ are below
\begin{align}
 (\Xi', e_{v}', e_{R}') 
= (C_{0} N \Xi, e_R, {\bf b}^{-1} \fr{e_v^{1/2} e_R^{1/2}}{N}) \label{eq:theNewEnergyLevelM}
\end{align}
to order $L$ in $C^0$ and such that the following are satisfied:
\begin{enumerate}
\item The estimates \eqref{eq:Vco}-\eqref{eq:matWco} for the correction $V = \nab \times W$ hold as stated in Lemma~\ref{lem:mainLemma}.
\item The estimates \eqref{eq:Pco}-\eqref{eq:matPco} for the pressure correction $P$ hold as in Lemma~\ref{lem:mainLemma}.
\item The estimates \eqref{eq:Wco} and \eqref{eq:energyPrescribed} hold for $\co{W}$ and the error in prescribing the energy increment.  Moreover, the constants in these estimates can be made arbitrarily small at the price of increasing the constants $C, C_0$ in the other bounds.  In particular, the constants in \eqref{eq:Wco} and \eqref{eq:energyPrescribed} may be set equal to $1$.
\item The bounds \eqref{eq:goalForR1supp} and \eqref{eq:goalForVPsupp} on the supports of $R_1, V$ and $P$ hold as in Lemma~\ref{lem:mainLemma}.  Moreover, the enlargement of the support in time is slightly smaller than stated there, and we have 
\begin{align}
 \supp R_1 \cup \supp V \cup \supp P \subseteq \ECyl_{v}({\bf b} \th, \Xi^{-1}; \supp e) \label{eq:goalForAllSuppM}
\end{align}
\item The bound \eqref{eq:energyPrescribed} for prescribing the energy increment in Lemma~\ref{lem:mainLemma} is replaced by the bound
\begin{align}
\left| \int_{\R^3} |V|^2(t,x) \psi(x) dx - \int_{\R^3} e(t,x) \psi(x) dx \right| &\leq {\bf b}^{-1} \fr{e_v^{1/2} e_R^{1/2}}{N} \left( \| \psi \|_{L^1} + \Xi^{-1} \| \nab \psi \|_{L^1} \right) \label{eq:energyPrescribedM}
\end{align}
which holds uniformly in $t$ for all $\psi(x) \in C_c^\infty(\R^3)$.
\end{enumerate}
\end{lem}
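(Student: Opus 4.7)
The plan is to follow the proof of Lemma~\ref{lem:mainLemma} essentially verbatim with the same parameter choices $\tau = b_{0} B_{\la}^{-1/2} {\bf b}\, \Xi^{-1} e_v^{-1/2}$, $\rho = c_{\rho} \Xi^{-1}$, $\la = B_{\la} N \Xi$, and mollification parameters $\ep_{v}, \ep_{x}, \ep_{t}$ as fixed in Section~\ref{sec:choosingParameters}. The crucial observation is that the time scale $\tau^{-1} = b_{0}^{-1} B_{\la}^{1/2} {\bf b}^{-1} \Xi e_{v}^{1/2}$ imposed by the sharp time cutoffs $\eta(\cdot/\tau)$ already dominates the weakened bound ${\bf b}^{-1} \Xi e_{v}^{1/2}$ allowed for $(\pr_{t}+v\cdot\nab) e^{1/2}$ in \eqref{ineq:goodEnergyM}. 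Hence the amplitude estimates of Proposition~\ref{prop:ampEstimates} and the downstream error estimates in Sections~\ref{sec:correctionEstimates}--\ref{sec:stressEstimates} carry through without alteration, yielding the advertised frequency and energy levels \eqref{eq:theNewEnergyLevelM} for the new Euler--Reynolds flow $(v_{1}, p_{1}, R_{1})$ and the correction bounds \eqref{eq:Vco}--\eqref{eq:matPco}.

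To make this precise I would first rerun the analysis of Section~\ref{sec:regularizeStressEnergy} for the regularized increment $\tilde{e}^{1/2}$ defined by \eqref{eq:honestTildeE}. Under hypothesis \eqref{ineq:goodEnergyM}, the analogue of Proposition~\ref{prop:eRepEstimates} yields a bound for $\nab^{a}\Ddt \tilde{e}^{1/2}$ that differs from \eqref{eq:stressEnergyBound} only by an extra factor of ${\bf b}^{-1}$ on terms containing an advective derivative. When this is fed into the computation of the amplitudes $v_{I} = \tilde{e}^{1/2} \mathring{v}_{I}$ of Section~\ref{sec:correctionShape}, each advective derivative is still majorized by $\tau^{-1}$, yielding the same amplitude estimate $\co{D^{(a,r)} v_{I}} \leq C_{a} \Xi^{a} e_{R}^{1/2} \tau^{-r} N^{(a+1-L)_{+}/L}$ as in Proposition~\ref{prop:ampEstimates}. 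Since the high-frequency interference, transport, and stress error bounds depend only on these amplitudes together with the parameters $\tau, \rho, \la$, no modification of Sections~\ref{sec:correctionEstimates}--\ref{sec:stressEstimates} is required.

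For the tightened support containment \eqref{eq:goalForAllSuppM}, I would reprise the argument of Section~\ref{sec:chooseLengthScaleOfConstruction} using that $\tau = b_{0} B_{\la}^{-1/2} {\bf b}\, \th$ and $\ep_{t} \leq c\, {\bf b}\, \th$, the latter following from the hypothesis $N \geq (e_{v}/e_{R})^{3/2}$ exactly as in Lemma~\ref{lem:cylindersIn}, so that every time-scale entering the cylinder comparison of that section is a small multiple of ${\bf b}\, \th$. For the localized energy-increment bound \eqref{eq:energyPrescribedM}, the only modification relative to Section~\ref{sec:energyIncControl} is that the mollification error now satisfies
\[
	\co{e^{1/2} - \tilde{e}^{1/2}} \leq C \ep_{t}\, {\bf b}^{-1} \Xi e_{v}^{1/2} e_{R}^{1/2} + C \ep_{x}^{L} \Xi^{L} e_{R}^{1/2} \leq C\, {\bf b}^{-1} \frac{e_{v}^{1/2}}{N},
\]
carrying an extra factor of ${\bf b}^{-1}$ relative to \eqref{eq:enIntError}. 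Propagating this estimate through the bound \eqref{eq:prescribeApproxEnInc} on the main term $E_{1}$, together with the unchanged bounds on $E_{2}$ and $E_{3}$, produces exactly the ${\bf b}^{-1}$ loss in \eqref{eq:energyPrescribedM}.

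The last assertion, that the constants in \eqref{eq:Wco} and \eqref{eq:energyPrescribedM} may be made arbitrarily small, follows by noting that each of these quantities carries an explicit factor of $\la^{-1} = (B_{\la} N \Xi)^{-1}$ relative to $\sum_{I}|v_{I}|$ and its derivatives; taking $B_{\la}$ sufficiently large shrinks these constants at the cost of worsening the implicit constants in the remaining estimates (in particular through the choice \eqref{eq:choiceOfb}), all of which remain absorbed into the constants $C, C_{0}$ of the statement. The main bookkeeping obstacle in executing this plan will be to verify that the reduction $L = 2$ imposed in Section~\ref{sec:checkLowerBound} and the lower bound $\tilde{e} \geq K e_{R}$ on the support of $R_{\ep}$ remain intact under the tighter time and space scales forced by ${\bf b} \leq 1$; this follows directly from Lemma~\ref{lem:cylindersIn}, since the required cylinder inclusions only become easier as ${\bf b}$ decreases.
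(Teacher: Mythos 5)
There is a genuine gap at the heart of your argument: the claim that Proposition~\ref{prop:ampEstimates} ``carries through without alteration,'' i.e.\ that $\co{D^{(a,r)} v_I} \leq C_a \Xi^a e_R^{1/2}\tau^{-r} N^{(a+1-L)_+/L}$ still holds for $r \leq 2$. The hypothesis \eqref{ineq:goodEnergyM} controls only \emph{one} advective derivative of $e^{1/2}$, so the second advective derivative of the regularized increment $\tilde{e}^{1/2}$ cannot be read off from the data; it must be paid by differentiating the kernel in the mollification along the flow, at cost $\ep_t^{-1} = c^{-1} N \Xi e_R^{1/2}$. Since $\ep_t^{-1} = {\bf b}^{-1}\tau^{-1}$ (up to constants), two advective derivatives of $\tilde{e}^{1/2}$, and hence of the amplitudes $v_I = \tilde{e}^{1/2}\mathring{v}_I$, cost $\tau^{-1}\cdot (N\Xi e_R^{1/2}) = {\bf b}^{-1}\tau^{-2}$, which is strictly worse than the $\tau^{-2}$ you assert. (In the original Lemma~\ref{lem:mainLemma} the identity $(\Xi e_v^{1/2})\cdot \ep_t^{-1} \sim \tau^{-2}$ is what made the uniform statement $\tau^{-r}$ possible; with the weakened hypothesis this identity fails by exactly the factor ${\bf b}^{-1}$.) One also cannot repair this by enlarging $\ep_t$ to $\sim \tau$, since then the mollification error $\co{e^{1/2}-\tilde{e}^{1/2}} \sim \ep_t\,{\bf b}^{-1}\Xi e_v^{1/2} e_R^{1/2}$ would be of size $e_R^{1/2}$ and both \eqref{eq:energyPrescribedM} and the stress mollification bound would be lost. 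The correct replacements are the analogues of \eqref{eq:stressEnergyBound} and \eqref{eq:amplitudeEstimates} in which the second advective derivative carries the factor $N\Xi e_R^{1/2}$ rather than $\tau^{-1}$.

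Because your proposal asserts the amplitude bounds are unchanged, it then concludes that Sections~\ref{sec:correctionEstimates}--\ref{sec:stressEstimates} need no re-examination; but with the degraded second advective derivative bound this conclusion requires justification at precisely one place, namely the estimate of $\Ddt Q_T$ (the advective derivative of the transport term), which is the only point where second advective derivatives of the amplitudes enter. The required check is that the extra cost $\ep_t^{-1} = N\Xi e_R^{1/2}$ per advective derivative does not exceed the allowed cost $\Xi'(e_v')^{1/2} = C_0 N \Xi e_R^{1/2}$ demanded by the new frequency-energy levels \eqref{eq:theNewEnergyLevelM} for $\nab^k(\pr_t + v_1\cdot\nab)R_1$; this does in fact hold, but it is an explicit verification your argument omits rather than a consequence of ``nothing having changed.'' The remaining parts of your plan (unchanged parameter choices, the ${\bf b}$ gain in the support containment via the cylinder comparison with time scale $\lesssim \tau \leq {\bf b}\th$, the ${\bf b}^{-1}$ loss in $\co{e^{1/2}-\tilde{e}^{1/2}}$ propagating to \eqref{eq:energyPrescribedM}, and the smallness of the constants in \eqref{eq:Wco} and \eqref{eq:energyPrescribed} via $B_\la$) are in line with the intended proof.
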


The proof of Lemma~\ref{lem:mod:mainLemma} is identical to the proof of Lemma~\ref{lem:mainLemma} in the sense that every choice of parameter in the argument is left unchanged.  The only differences in the proof are due to the inferior bound \eqref{ineq:goodEnergyM} on the advective derivative of the energy increment, which leads to worse estimates for a few terms in the argument that we will list here.  Ultimately, the reason we are allowed to relax the bound on the advective derivative is that the cost of the advective derivative in \eqref{ineq:goodEnergyM} coincides with the inverse of the time scale in the construction:
\ali{
 \tau^{-1} &\sim {\bf b}^{-1} \Xi e_v^{1/2} 
}
In particular, there is no room here to allow for a bound which is any worse than \eqref{ineq:goodEnergyM} without losing regularity.  It is therefore necessary to check a few of the estimates to make sure that the proof goes through with straightforward modifications.  Here we list the necessary modifications in the proof.
\begin{itemize}
\item All choices of parameters in the construction ($\ep_v, \ep_x, \ep_t, \tau, \rho, \la$) are exactly the same.  In particular, we have $\tau = a {\bf b} \Xi^{-1} e_v^{-1/2}$ for some constant $a$ chosen sufficiently small.
\item The fact that the bound \eqref{eq:goalForAllSuppM} on the support of the iteration gains a factor of ${\bf b}$ can be observed from inspecting the bound \eqref{eq:bigger:cyl:supp} on the support of the stress.  The time scale in this estimate is bounded by, say, $3 \tau$, which is smaller than ${\bf b} \Xi^{-1} e_v^{-1/2}$ when the small constant $a$ in the definition of $\tau$ is chosen appropriately.
\item The constants in the estimates \eqref{eq:Wco} and \eqref{eq:energyPrescribed} can be made arbitrarily small by taking the constant $B_\la$ in the construction to be sufficiently large when these terms are estimated\footnote{In fact, this improvement also follows already from the original statement of Lemma~\ref{lem:mainLemma}.  One can simply apply the same Lemma with $N'$ equal to a large multiple of $N$.  The estimates \eqref{eq:Wco} and \eqref{eq:energyPrescribed} then improve, as they are homogeneous of degree $-1$ in $N$, while the constants $C_0$ and $C$ may increase by a factor of a constant.}.
\item The choice of $\ep_t = c N^{-1} \Xi^{-1} e_R^{-1/2}$ for the time scale for the mollification along the flow made in \eqref{eq:RmollParamChoice} leads to a worse estimate on the error $\| e^{1/2} - \tilde{e}^{1/2} \|$ made in mollifying the energy increment.  Namely, the bound \eqref{eq:enIntError} loses a factor of ${\bf b}^{-1}$, and is replaced instead by
\ali{
\co{e^{1/2} - \tilde{e}^{1/2} } &\leq {\bf b}^{-1} \fr{e_v^{1/2}}{100 N} \label{eq:enIntErrorM}
}
\item The loss of ${\bf b}^{-1}$ in \eqref{eq:enIntErrorM} ultimately leads to the loss of ${\bf b}^{-1}$ in \eqref{eq:energyPrescribedM} when bounding the error for prescribing the energy increment.  Namely, this estimate introduces a ${\bf b}^{-1}$ in the estimate of line \eqref{eq:prescribeApproxEnInc}.
\item The bound on the first advective derivative of $\tilde{e}^{1/2}$ also worsens by a factor of ${\bf b}^{-1}$.  As a result, the estimates in Proposition~\ref{prop:eRepEstimates} incur a loss of ${\bf b}^{-1}$, and the estimate \eqref{eq:stressEnergyBound} must be replaced by
\ali{
e_R^{1/2} \co{ D^{(a,r)} \tilde{e}^{1/2} } + \co{ D^{(a,r)} R_\ep } &\leq C_a \Xi^a e_R ({\bf b}^{-1} \Xi e_v^{1/2})^{(r \geq 1)} ( N \Xi e_R^{1/2} )^{(r \geq 2)} N^{(a + 1 - L)_+/L}  \label{eq:stressEnergyBoundM}
}
Here again we use the notation introduced in Proposition~\ref{prop:eRepEstimates} for the indicator functions $(r \geq 1)$ and $(r \geq 2)$.  Taking the second advective derivative incurs the same cost of $\ep_t^{-1} = N \Xi e_R^{1/2}$ as in \eqref{eq:stressEnergyBound}, since this estimate arises from differentiating the kernel used to mollify in time along the flow.
\item The inferior bound in \eqref{eq:stressEnergyBoundM} affects the bounds for the advective derivatives of the amplitudes $v_I$ stated in Proposition~\ref{prop:ampEstimates}.  These bounds take on the same pattern as the estimate \eqref{eq:stressEnergyBoundM}, as now all of the worst terms occur when the advective derivatives fall on the factor of $\tilde{e}^{1/2}$.  In particular, the first advective derivative incurs the same cost of $\tau^{-1} \sim {\bf b}^{-1} \Xi e_v^{1/2}$, but now the second advective derivative gives a greater cost of $(N \Xi e_R^{1/2})$.  The estimates which replace \eqref{eq:amplitudeEstimates}-\eqref{eq:tinyCorrectionEstimates} are:
\ali{
 \co{D^{(a,r)} v_I } &\leq C_a \Xi^a e_R^{1/2} \tau^{-(r\geq 1)} (N \Xi e_R^{1/2})^{(r \geq 2)} N^{(a + 1 - L)_+/L}  \label{eq:amplitudeEstimatesM} \\
\co{D^{(a,r)} \de v_I } &\leq C_a B_\la^{-1} N^{-1} \Xi^a e_R^{1/2} \tau^{-(r\geq 1)} (N \Xi e_R^{1/2})^{(r \geq 2)} N^{(a + 2 - L)_+/L} \label{eq:tinyCorrectionEstimatesM}
}
Note that the only difference compared to \eqref{eq:amplitudeEstimates}-\eqref{eq:tinyCorrectionEstimates} lies in the bound on the second advective derivative.
\item The only point in the argument at which the second advective derivative estimate is used comes in estimating the advective derivative of transport term $Q_T^{jl}$.  The main term in $Q_T^{jl}$ is given by the parametrix in Section~\ref{sec:applyParametrix}
\ALI{
Q_T^{jl} &= \sum_I \fr{1}{i \la} e^{i \la \xi_I} q(\nab \xi_I)[(\pr_t + v_\ep^j \pr_j) v_I^l] + \tx{ Lower order terms}
}
As in Section~\ref{sec:stressEstimates}, we must estimate the cost of taking an advective derivative for this term.  According to the estimate \eqref{eq:amplitudeEstimatesM}, the cost of taking a further advective derivative is no longer $\tau^{-1}$ as before, but instead is given by $\ep_t^{-1} = c^{-1} N \Xi e_R^{1/2}$.  This cost is exactly the cost of $\left|\Ddt\right| \unlhd \Xi' (e_v')^{1/2} = C N \Xi e_R^{1/2}$ which we are required to verify for the advective derivative in order to conclude the proof of Lemma~\ref{lem:mod:mainLemma}.
\end{itemize}

Having explained the modifications necessary to prove Lemma~\ref{lem:mod:mainLemma}, we now explain how Lemma~\ref{lem:mod:mainLemma} can be applied to establish Theorem~\ref{thm:eulerOnRn:presEnergy}.

\subsection{Prescribing the energy profile: the periodic setting }\label{sec:prescribePeriodicEnergy}
In this Section, we establish a simplified version of Theorem~\ref{thm:eulerOnRn:presEnergy} on prescribing the energy profile of solutions by repeated application of Lemma~\ref{lem:mod:mainLemma}.  For the purpose of exposition, we consider first the problem of prescribing the energy profile for a solution which is periodic in the spatial variables.  By restricting to the periodic setting, we can demonstrate the main ideas in our algorithm for prescribing the energy profile while avoiding some technical details which come into play in the nonperiodic setting.  In Section~\ref{sec:prescribe:Nonperiodic:Energy} below we explain how to modify the argument to prescribe the energy profile in the nonperiodic setting.  

The construction explained in this section involves the introduction of several parameters which must be chosen in a particular logical order.  We provide a summary of this construction and the logical structure of the choice of parameters in Section~\ref{eq:onOrderOfParameters} below.

The theorem we establish in this Section is the following:
\begin{thm}[Periodic Euler flows with prescribed energy profile] \label{thm:eulerOnRn:period:presEnergy}
Let $\a < \a^* \leq 1/5$ and let $I \subseteq \R$ be a bounded open interval.  Let $\bar{e}(t) \geq 0$ be any non-negative function with compact support in $I$ which belongs to the class $\bar{e}(t) \in C_t^{\ga}$ for $\ga = \fr{2 \a^*}{1 - \a^*}$.  Then there exists a weak solution $(v, p)$ to the Euler equations in the class $v \in C_{t,x}^\a(\R \times \T^3)$ with compact support in $I \times \T^3$ such that the energy profile of $v$ is given by
\ali{
\int_{\T^3} |v|^2(t,x) dx &= \bar{e}(t), \qquad t \in \R
}
%
\end{thm}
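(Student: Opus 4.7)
The plan is to iterate the Modified Main Lemma (Lemma~\ref{lem:mod:mainLemma}) starting from the trivial solution $(v_{(0)}, p_{(0)}, R_{(0)}) = (0,0,0)$, producing a sequence of Euler--Reynolds flows $(v_{(k)}, p_{(k)}, R_{(k)})$ whose frequency and energy levels are below $(\Xi_{(k)}, e_{v,(k)}, e_{R,(k)})$ with the parameter evolution
\[ \Xi_{(k+1)} = C_0 N \Xi_{(k)}, \qquad e_{v,(k+1)} = e_{R,(k)}, \qquad e_{R,(k+1)} = {\bf b}_{(k)}^{-1} \frac{e_{v,(k)}^{1/2} e_{R,(k)}^{1/2}}{N}, \]
with $N$ tuned to $\a^*$ so that $\Xi_{(k)}$ grows geometrically and so that the corrections $V_{(k)}$ sum to a limit in $C^{\a}_{t,x}$ for every $\a < \a^*$, exactly as in Section~\ref{sec:mainLemImpliesMainThm}. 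The periodic setting simplifies matters: one can drop the angular-momentum obstructions from the construction, or equivalently invoke Lemma~\ref{lem:mod:mainLemma} with the observation that the full construction is $\Z^3$-periodic once $v_{(0)}=0$.

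The key new ingredient is a greedy algorithm for the choice of the local energy increment $e_{(k)}(t)$ at each stage, driven by the running discrepancy $D_{(k)}(t) := \bar{e}(t) - E_{(k)}(t)$, where $E_{(k)}(t) := \int_{\T^3} |v_{(k)}|^2\,\ud x$. Choosing the time mollifier scale $\tau_{(k)} = {\bf b}_{(k)} \Xi_{(k)}^{-1} e_{v,(k)}^{-1/2}$, I would set
\[ e_{(k)}(t) := \eta_{\tau_{(k)}} \ast \bigl( D_{(k)}(t)\, \chi_{(k)}(t)\bigr) + K e_{R,(k)} \widetilde{\chi}_{(k)}(t), \]
where $\chi_{(k)}(t)$ is a smooth cutoff equal to $1$ on $\{D_{(k)}\geq 10 K e_{R,(k)}\}$ and $0$ on $\{D_{(k)}\leq 5K e_{R,(k)}\}$, and $\widetilde{\chi}_{(k)}$ is a slight enlargement that contains $\supp R_{(k)}$. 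The threshold piece enforces the lower bound \eqref{eq:lowBoundEoftxM}; the mollification at scale $\tau_{(k)}$ gives exactly the derivative bound \eqref{ineq:goodEnergyM} allowed by the Modified Main Lemma, since each advective (or time) derivative costs at most $\tau_{(k)}^{-1} \sim {\bf b}_{(k)}^{-1}\Xi_{(k)} e_{v,(k)}^{1/2}$. Here is precisely where the relaxation in Lemma~\ref{lem:mod:mainLemma} is essential: the unmollified $\bar{e}\in C^{\ga}_t$ with $\ga<1$ would not even be differentiable in time, so the stronger hypothesis \eqref{ineq:goodEnergy} of Lemma~\ref{lem:mainLemma} could not be satisfied.

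Applying \eqref{eq:energyPrescribedM} with $\psi\equiv 1$, one step of the iteration gives
\[ D_{(k+1)}(t) = \bigl(D_{(k)}(t) - \eta_{\tau_{(k)}} \ast D_{(k)}(t)\bigr) + O\!\bigl(K e_{R,(k)} (1-\chi_{(k)}) + {\bf b}_{(k)}^{-1} e_{v,(k)}^{1/2} e_{R,(k)}^{1/2}/N\bigr) \]
on the set $\{\chi_{(k)}=1\}$. A quadratic commutator estimate of Constantin--E--Titi type (as used to prove \eqref{eq:holderEnergy} in \cite{isett2}) bounds the mollification error by $\|D_{(k)} - \eta_{\tau_{(k)}}\ast D_{(k)}\|_{C^0} \lesssim \tau_{(k)}^{\ga}\|\bar{e}\|_{C^{\ga}}$. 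With the scaling $e_{v,(k)}^{1/2}\sim \Xi_{(k)}^{-\a^*}$ we have $\tau_{(k)}\sim \Xi_{(k)}^{-(1-\a^*)}$, so $\tau_{(k)}^{\ga} \sim \Xi_{(k)}^{-2\a^*}$, which is comparable to $e_{R,(k+1)}$ precisely because $\ga = 2\a^*/(1-\a^*)$. This matching is the heart of the proof. A careful bookkeeping of constants then closes the induction $|D_{(k+1)}(t)|\leq C K e_{R,(k+1)}$, where the constants in \eqref{eq:Wco} and \eqref{eq:energyPrescribedM} can be driven below any prescribed tolerance using the freedom afforded by item~3 of Lemma~\ref{lem:mod:mainLemma}. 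On the complement $\{\chi_{(k)}<1\}$ the discrepancy is already $O(e_{R,(k)})$ by construction, so no correction is needed there.

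The main obstacle will be handling points where $\bar{e}(t)$ is small or vanishes, including $t\notin I$: there the lower bound \eqref{eq:lowBoundEoftxM} seems to force a positive correction that would destroy both the vanishing of the terminal energy and the compact support in $I$. The cutoff $\chi_{(k)}$ resolves this, but requires a delicate inductive verification that $\supp R_{(k)}$ remains inside $\{D_{(k)} \geq K e_{R,(k)}\}$ at every stage; this uses the support-propagation estimate \eqref{eq:goalForAllSuppM} of the Modified Main Lemma together with the fact that by construction $\supp R_{(k+1)} \subseteq \ECyl_{v_{(k)}}(\tht_{(k)}, \Xi_{(k)}^{-1}; \supp e_{(k)})$, which stays comfortably inside $\supp D_{(k)}\subseteq I$ provided $\bar{e}-E_{(k)}$ is sufficiently regular near the edges of its support (which follows from $\bar{e}\in C^{\ga}_c(I)$). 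Once convergence in $C^{\a}_{t,x}$ and $\|R_{(k)}\|_{C^0}\to 0$ are established, the limit $(v,p)$ is the required weak solution.
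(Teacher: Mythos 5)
Your overall architecture (iterate Lemma~\ref{lem:mod:mainLemma} from the zero solution, mollify in time at the scale $\hat{\tau}_k \sim {\bf b}\,\Xi_{(k)}^{-1}e_{v,(k)}^{-1/2}$, and close the induction via the matching $\hat{\tau}_k^{\ga} \lesssim e_{R,(k+1)}$ forced by $\ga = 2\a^*/(1-\a^*)$) is the same as the paper's, and that numerological matching is indeed the heart of the matter. But your greedy rule for $e_{(k)}$ has a genuine gap. You prescribe $e_{(k)} = \eta_{\hat{\tau}_k}\ast(D_{(k)}\chi_{(k)}) + K e_{R,(k)}\widetilde{\chi}_{(k)}$, i.e.\ you saturate the full discrepancy and then bolt on an additive bump of size $K e_{R,(k)}$ on a neighborhood of $\supp R_{(k)}$ to meet the lower bound \eqref{eq:lowBoundEoftxM}. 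On the core region where $\chi_{(k)}=\widetilde{\chi}_{(k)}=1$ this adds roughly $D_{(k)} + K e_{R,(k)}$, so $D_{(k+1)} \approx -K e_{R,(k)} < 0$ there; and since each later stage can only decrease the energy by $O({\bf b}\,e_{R,(j+1)})$ (the $\int v\cdot V$ term and the prescription error \eqref{eq:energyPrescribedM}), while the lemma's lower bound forces you to keep adding at least $\sim K e_{R,(j)}$ wherever the error is supported, the overshoot never heals: at such times $\lim_k E_k(t) > \bar{e}(t)$ and exact prescription fails. Dropping the bump does not help either: then $D_{(k+1)} = O(e_{R,(k+1)})$ with uncontrolled sign on $\supp R_{(k+1)}$, so the hypothesis $e_{(k+1)} \geq K e_{R,(k+1)}$ on the cylinder around $\supp R_{(k+1)}$ cannot be met at the next stage without again overshooting. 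The missing idea is an inductive guarantee that the error is only ever supported where a \emph{large} gap remains: the paper deliberately undershoots by $\De E_k = Y e_{R,(k+1)}$ (with $Y$ a large constant fixed at the end, cf.\ \eqref{eq:DeEkDef}) and takes $e_{(k)}^{1/2} = (\bar{e}-E_k-\De E_k)_+^{1/2}\ast\eta_{\hat{\tau}_k}$ as in \eqref{eq:mollified:increm}, so that after one step the new discrepancy on the new error's support equals $\De E_k + O(e_{R,(k+1)}) \geq 3K e_{R,(k+1)}$ (Claim~\ref{claim:room:to:add}, via the sets $I_{(k)}$), while positivity $\bar{e}-E_{k+1}\geq 0$ and near-saturation \eqref{eq:nearSaturated} are preserved simultaneously; your scheme has no analogue of this claim.

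Two secondary points. First, Lemma~\ref{lem:mod:mainLemma} hypothesizes bounds on derivatives of $e^{1/2}$, not of $e$; mollifying $D_{(k)}\chi_{(k)}$ itself leaves the bound on $\pr_t e_{(k)}^{1/2}$ unverified where $e_{(k)}$ is small (it can be rescued by $|f'|\leq (2\|f''\|_{C^0} f)^{1/2}$ for nonnegative mollified $f$, but it is not automatic, and the cutoff edges need care). The paper avoids this entirely by mollifying the square root, at the price of the genuine quadratic commutator estimate of Lemma~\ref{lem:commutator:energy:lem}, which compares $\bigl[(\cdot)_+^{1/2}\ast\eta\bigr]^2$ with $(\cdot)_+$; the bound $\|D_{(k)} - \eta_{\hat{\tau}_k}\ast D_{(k)}\|_{C^0}\lesssim \hat{\tau}_k^{\ga}\|\bar{e}\|_{\dot C^{\ga}_t} + \Xi_{(k)}e_{v,(k)}^{1/2}e_{R,(k)}\hat{\tau}_k$ you invoke is just a plain mollification estimate (and needs the $E_k$ contribution through $\|\fr{d}{dt}E_k\|_{C^0}\lesssim \Xi_{(k)}e_{v,(k)}^{1/2}e_{R,(k)}$), not a Constantin--E--Titi commutator. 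Second, your recursion for $D_{(k+1)}$ on $\{\chi_{(k)}=1\}$ omits the $-Ke_{R,(k)}\widetilde{\chi}_{(k)}$ contribution of your own threshold piece, which is precisely the term that breaks the induction.
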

We will give the proof for the case $\a^* = 1/5$ (in which case $\ga = 1/2$), since this case is most closely related to the proof of Theorem~\ref{thm:eulerOnRn:cptPert}, and the cases $\a^* < 1/5$ can be handled similarly.  We will outline how to handle the more general case in Section~\ref{sec:remarksOnThm} below, where we will also explain how to obtain a one parameter family of solutions tending to $0$ in $C_{t,x}^\a$ as in the statement of Theorem~\ref{thm:eulerOnRn:presEnergy}.

Theorem~\ref{thm:eulerOnRn:period:presEnergy} is proved by iterating Lemma~\ref{lem:mod:mainLemma} in a similar manner to the proof of Theorem~\ref{thm:eulerOnRn:cptPert} in Section~\ref{sec:mainLemImpliesMainThm}.  We remark first of all that Lemma~\ref{lem:mod:mainLemma} holds as stated in the case of $\T^3$ after very slightly modifying the proof to ensure that all the velocity fields involved are periodic in space\footnote{The main modification here is to ensure that the partition of unity defined in \eqref{eq:spacePartition} is a partition of unity of $1$ on the torus.  A partition of unity of this type is already present in \cite{isett}.}.  As in Section~\ref{sec:mainLemImpliesMainThm}, the solution $(v,p)$ stated in Theorem~\ref{thm:eulerOnRn:period:presEnergy} will be obtained as a uniform limit of a sequence of solutions $(v_{(k)}, p_{(k)}, R_{(k)})$ to the Euler-Reynolds equations, beginning with the trivial solution $(0, 0, 0)$.

The sequence of Euler-Reynolds flows $(v_{(k)}, p_{(k)}, R_{(k)})$ will have frequency-energy levels below certain values $(\Xi_{(k)}, e_{v,(k)}, e_{R,(k)})$ which are chosen to satisfy iteration rules of the form
\begin{align} 
		\Xi_{(k+1)} = \, & C_{0} Z^{5/2} \Xi_{(k)}		\label{eq:MLMT:parameterEvolution:Xi:M} \\
	e_{v, (k+1)} = \, & e_{R, (k)} 					\label{eq:MLMT:parameterEvolution:ev:M}\\
	e_{R, (k+1)} = \, & \frac{e_{R, (k)}}{Z}		\label{eq:MLMT:parameterEvolution:eR:M} 
\end{align}
just as in \eqref{eq:MLMT:parameterEvolution:Xi}-\eqref{eq:MLMT:parameterEvolution:eR}. These solutions are obtained by repeatedly applying Lemma~\ref{lem:mod:mainLemma} with a choice of $N = Z^{5/2}$ and a parameter $M$ which will be specified later.  Recall from Section~\ref{subsec:MLMT:verifyClaims} that, for $\a < 1/5$ and $Z$ sufficiently large depending on $\a$ and the constants in the statement of Lemma~\ref{lem:mod:mainLemma}, this choice of parameters leads to convergence of $(v_{(k)}, p_{(k)})$ in the $C_{t,x}^\a \times C_{t,x}^{2 \a}$ norm to a weak solution to the Euler equations.  The choice of a large parameter $Z$ will correspond in the context of the construction to a choice of a rapid frequency and time scale in the first stage of the iteration, and also to a large ratio between consecutive frequencies in the iteration.  We remark that the power $Z^{5/2}$ appearing in the iteration rules \eqref{eq:MLMT:parameterEvolution:Xi:M}-\eqref{eq:MLMT:parameterEvolution:eR:M} corresponds to taking $\alp^{*} = 1/5$.  

In specifying the construction, it will be helpful to introduce the parameters
\ali{
\label{eq:theTimeScales}
\begin{split}
{\bf b} = \left(\fr{e_v^{1/2}}{e_R^{1/2} N} \right)_{(k)}^{1/2} &= Z^{-1} \\ 
\th_{(k)} = \Xi_{(k)}^{-1} e_{v,(k)}^{-1/2}, \qquad &\hat{\tau}_k = {\bf b} \Xi_{(k)}^{-1} e_{v,(k)}^{-1/2} 
\end{split}
}
in order to distinguish the important time scales in the iteration.  The time scale $\th_{(k)}$ corresponds to the natural time scale of motion for the flow of the velocity field $v_{(k)}$, whereas the time scale $\hat{\tau}_k$ corresponds up to a constant to the more rapid time scale employed in the time cutoffs of the construction.

Along the way, we will also keep track of the time supports of the solutions and errors, by defining a sequence of sets $I_{(k)} \subseteq I$ such that the following claims hold

\begin{claim}[Growing Supports] \label{claim:growingSupp} For all $k \geq 0$, we have
\ali{
 \label{eq:supp-in-I}
	\supp v_{(k)} \cup \supp p_{(k)} \cup \supp R_{(k)} &\subseteq I_{(k)} \times \bbT^{3}.  \\
	I_{(k)} &\subseteq I_{(k+1)} \label{subset:growing}
}
\end{claim}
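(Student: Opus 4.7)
The plan is to prove Claim \ref{claim:growingSupp} by induction on $k$, defining the time supports $I_{(k)}$ in parallel with the iteration. For the base case $k=0$, we start from the trivial Euler--Reynolds flow $(v_{(0)}, p_{(0)}, R_{(0)}) = (0,0,0)$, so taking $I_{(0)} := \emptyset$ (or any initial subset of $I$) makes \eqref{eq:supp-in-I} trivially true.

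For the inductive step, suppose the claim holds at level $k$. Before invoking Lemma~\ref{lem:mod:mainLemma}, the greedy algorithm selects an energy density $e_{(k)}(t,x)$ whose support is a compact subset of $I \times \T^3$; in particular, its temporal projection $J_{(k)} \subseteq \R$ is arranged to contain a $\theta_{(k)}$-enlargement of the temporal projection of $\supp R_{(k)}$, so that the lower bound \eqref{eq:lowBoundEoftxM} on $\ECyl_{v_{(k)}}(\theta_{(k)}, \Xi_{(k)}^{-1}; \supp R_{(k)})$ holds. We then define
\begin{equation*}
I_{(k+1)} := I_{(k)} \cup \{ t + s : t \in J_{(k)},\ |s| \leq \mathbf{b}\theta_{(k)} \},
\end{equation*}
making $I_{(k)} \subseteq I_{(k+1)}$ immediate. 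The support bound \eqref{eq:goalForAllSuppM} from Lemma~\ref{lem:mod:mainLemma} then gives
\begin{equation*}
\supp R_{(k+1)} \cup \supp V_{(k)} \cup \supp P_{(k)} \subseteq \ECyl_{v_{(k)}}(\mathbf{b}\theta_{(k)}, \Xi_{(k)}^{-1}; \supp e_{(k)}) \subseteq I_{(k+1)} \times \T^3,
\end{equation*}
where the second inclusion uses that the spatial fiber in $\T^3$ is already periodic, so only the temporal component of the cylinder enlarges beyond $\supp e_{(k)}$. Combined with the inductive hypothesis $\supp v_{(k)} \cup \supp p_{(k)} \subseteq I_{(k)} \times \T^3 \subseteq I_{(k+1)} \times \T^3$ and the relations $v_{(k+1)} = v_{(k)} + V_{(k)}$, $p_{(k+1)} = p_{(k)} + P_{(k)}$, this yields \eqref{eq:supp-in-I} at level $k+1$.

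The main obstacle will be to ensure that $\bigcup_{k \geq 0} I_{(k)}$ remains a compact subset of the open interval $I$, so that the limit solution inherits the compact support in $I \times \T^3$ required by Theorem~\ref{thm:eulerOnRn:period:presEnergy}. This will be handled by requiring the greedy algorithm to select each $\supp e_{(k)}$ with a temporal margin to $\partial I$ exceeding the cumulative future enlargement $\sum_{j \geq k} \mathbf{b}\theta_{(j)}$. By \eqref{eq:theTimeScales} and the evolution laws \eqref{eq:MLMT:parameterEvolution:Xi:M}--\eqref{eq:MLMT:parameterEvolution:eR:M}, $\mathbf{b}\theta_{(k)}$ decays geometrically in $k$, so this tail is summable and can be made arbitrarily small by choosing $\Xi_{(0)}$ sufficiently large at the outset. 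Since $\bar{e}$ is assumed to be compactly supported strictly inside $I$, such a margin is available from the start, and the coordinated choice of $\Xi_{(0)}$ together with the greedy selection of the $\supp e_{(k)}$ will keep all $I_{(k)} \subseteq I$.
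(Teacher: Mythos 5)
Your proposal proves the two displayed containments only by \emph{redefining} the sets $I_{(k)}$ as cumulative unions of enlarged supports, which makes \eqref{subset:growing} true by fiat. The paper does something different and the difference carries real content: there $I_{(k+1)}$ is defined non-recursively by the energy-gap threshold, $I_{(k+1)} = I(2\hat{\tau}_{k}; \{t : \bar{e}(t) - E_{k}(t) \geq \Delta E_{k}\})$ as in \eqref{eq:I-k-def}, and the monotonicity $I_{(k)} \subseteq I_{(k+1)}$ is then a genuine statement. It is proved by induction coupled with Claim~\ref{claim:room:to:add}: on $I_{(k)}$ one has $\bar{e} - E_{k} \geq 3K e_{R,(k)}$ by \eqref{eq:needed:room}, and the parameter condition $Z \geq (3K)^{-1} Y$ of \eqref{eq:imposeZ2} forces $\Delta E_{k} \leq 3K e_{R,(k)}$, so every point of $I_{(k)}$ lies in the new threshold set. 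That is where the choice of $Y$ and the constraint on $Z$ actually enter; your definition hides this step rather than performing it.

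The gap this creates is in the sentence ``its temporal projection $J_{(k)}$ is arranged to contain a $\theta_{(k)}$-enlargement of the temporal projection of $\supp R_{(k)}$, so that the lower bound \eqref{eq:lowBoundEoftxM} holds.'' Containing the enlarged support does not give the lower bound: \eqref{eq:lowBoundEoftxM} requires $e_{(k)} \geq K e_{R,(k)}$ on that neighborhood, which can only hold if the remaining energy gap $\bar{e} - E_{k}$ is still of size $\gtrsim e_{R,(k)}$ there — precisely the coupled Claim~\ref{claim:room:to:add}, whose proof is intertwined with the nesting and with the specific formula \eqref{eq:mollified:increm} for $e_{(k)}$ (the gaps $\Delta E_{k}$ are left exactly so that this remains true at the next stage, and Claim~\ref{claim:nearly:saturated} prevents overshooting $\bar{e}$). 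Since Claims~\ref{claim:growingSupp}--\ref{claim:nearly:saturated} must all hold for the \emph{same} sequence $I_{(k)}$, trivializing \eqref{subset:growing} by redefinition simply shifts the unproved content onto the other claims, and your proposal does not verify them for your sets. (A minor further point: your summable-tail argument for $\bigcup_k I_{(k)} \subseteq I$ is more elaborate than needed; with either definition each $I_{(k+1)}$ lies within $2\hat{\tau}_{k} \leq 2\hat{\tau}_{0}$ of $\supp \bar{e}$, since $\hat{e}_{(k)} \leq \bar{e}$ pointwise, so a single margin controlled by the choice of $\Xi_{(0)}$ suffices — no compounding of enlargements occurs.)
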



To fully specify the iteration, we must construct the functions $e_{(k)}(t,x)$ which prescribe the energy increment at each stage of the iteration, and specify the parameters of the construction, including the initial frequency energy levels $(\Xi_{(0)}, e_{v,(0)}, e_{R,(0)})$ for the base case of the iteration and the parameter $Z$.  Since we are considering the case of spatially periodic solutions, we may consider energy increments $e_{(k)}(t)$ which depend only on the variable $t$.  

Our goal in choosing the energy increments is to ensure that the solution $(v,p)$ constructed in the limit has energy profile given by $\bar{e}(t)$.  Since the approximate solutions $v_{(k)}(t)$ converge uniformly to the limiting solution, it suffices to show that the energy profiles of the approximate solutions
\ali{
E_k(t) := \int_{\T^3} |v_{(k)}|^2(t,x) dx
}
converge pointwise to the desired energy profile $\bar{e}(t)$ as $k \to \infty$.  This convergence will be obtained by ensuring that the inductive Claims~\ref{claim:room:to:add} and \ref{claim:nearly:saturated} below hold throughout the iteration.  In order to state these claims, we introduce the following notation, which will be used in the remainder of the paper.
\begin{defn}
Given any set $J \subseteq \bbR$ and any $\bar{\tau} \in \R_{\geq 0}$, we define
\begin{equation}
	I(\overline{\tau} ; J) := \set{ t + \Dlt t \in \bbR : t \in J, \ \abs{\Dlt t} \leq \overline{\tau}}.
\end{equation}
\end{defn}

\begin{claim}[There is Always Room to Add More Energy where the Error is Supported] \label{claim:room:to:add}
For every $k \geq 0$ and $t \in \bbR$, we have
\begin{equation} \label{eq:below-bar-e}
	\bar{e}(t) - E_{k}(t) \geq 0.
\end{equation}
Moreover, for all $t \in I(2 \hat{\tau}_{k}; I_{(k)})$, we have
\begin{equation} \label{eq:needed:room}
	\bar{e}(t) - E_{k}(t) \geq 3 K e_{R, (k)}
\end{equation}
where $K$ is the constant in the lower bound \eqref{eq:lowBoundEoftx} of Lemmas~\ref{lem:mainLemma} and \ref{lem:mod:mainLemma}.
%
\end{claim}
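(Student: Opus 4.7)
The plan is to prove Claim~\ref{claim:room:to:add} by induction on $k$, jointly with Claim~\ref{claim:growingSupp} and the not-yet-stated Claim~\ref{claim:nearly:saturated}, while simultaneously specifying at each stage the energy increment $e_{(k)}(t)$ fed into the Modified Main Lemma~\ref{lem:mod:mainLemma} and the set $I_{(k+1)}$ recording the new time support. The base case $k=0$ is immediate: since $v_{(0)} \equiv 0$ we have $E_{0} \equiv 0 \leq \bar{e}$, and choosing $I_{(0)} := \emptyset$ makes the second assertion \eqref{eq:needed:room} hold vacuously.

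For the inductive step, the heart of the argument is the pointwise energy identity
\begin{equation*}
	E_{k+1}(t) - E_{k}(t) = \int_{\T^{3}} \abs{V_{(k)}(t,x)}^{2} \, \ud x + 2 \int_{\T^{3}} v_{(k)}(t,x) \cdot V_{(k)}(t,x) \, \ud x.
\end{equation*}
The first term is approximated by $\int_{\T^{3}} e_{(k)}(t,x) \, \ud x$ via \eqref{eq:energyPrescribedM} applied to a bump function equal to $1$ on a fundamental domain of $\T^{3}$, with error made arbitrarily small relative to $e_{R,(k+1)} = e_{R,(k)}/Z$ by the freedom in the constant of \eqref{eq:energyPrescribedM}. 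The second term equals $2 \int (\nb \times v_{(k)}) \cdot W_{(k)} \, \ud x$ after integration by parts on the torus, and is bounded by $C \co{\nb v_{(k)}} \co{W_{(k)}} \leq C N_{(k)}^{-1} e_{v,(k)}^{1/2} e_{R,(k)}^{1/2} = C Z^{-2} e_{R,(k)}$ using \eqref{eq:Wco} and the frequency--energy bounds for $v_{(k)}$, again negligible compared to $e_{R,(k+1)}$. Normalizing $\mathrm{vol}(\T^{3}) = 1$, these together yield
\begin{equation*}
	E_{k+1}(t) = E_{k}(t) + e_{(k)}(t) + \mathrm{err}_{k}(t), \qquad \abs{\mathrm{err}_{k}(t)} \leq \tfrac{K}{10} \, e_{R,(k+1)}.
\end{equation*}

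I will then choose $e_{(k)}$ via a greedy algorithm that extracts as much of the remaining budget $\bar{e}(t) - E_{k}(t)$ as possible while respecting three constraints: (i) the lower bound $e_{(k)}(t) \geq K e_{R,(k)}$ on the cylinder required by \eqref{eq:lowBoundEoftxM}; (ii) the Hölder-type bound \eqref{ineq:goodEnergyM} on $e_{(k)}^{1/2}$ at the rapid time scale $\hat{\tau}_{k}$; and (iii) the upper bound $e_{(k)}(t) \leq \bar{e}(t) - E_{k}(t) - 5 K e_{R,(k+1)}$, leaving room for the next stage. The candidate is the positive part of a mollification of $\bar{e}(t) - E_{k}(t) - 5 K e_{R,(k+1)}$ at time scale $\hat{\tau}_{k}$, followed by a quadratic partition-of-unity cutoff as in the construction of $\tilde{e}^{1/2}$ in Section~\ref{sec:regularizeStressEnergy}. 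The $C^{1/2}$ regularity of $\bar{e}$ (together with the better regularity of $E_{k}$) controls the mollification error by $C \nrm{\bar{e}}_{C^{1/2}} \hat{\tau}_{k}^{1/2}$, which is geometrically summable in $k$ since $\hat{\tau}_{k+1}/\hat{\tau}_{k} = C_{0}^{-1} Z^{-2}$, and can be absorbed into $K e_{R,(k+1)}$ once $Z$ is large. The derivative cost $\abs{\partial_{t} e_{(k)}^{1/2}} \lesssim \hat{\tau}_{k}^{-1} e_{R,(k)}^{1/2} = {\bf b}^{-1} \Xi_{(k)} e_{v,(k)}^{1/2} e_{R,(k)}^{1/2}$ is exactly what is permitted by \eqref{ineq:goodEnergyM} --- precisely the reason the modified Main Lemma is needed here.

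With this choice of $e_{(k)}$, combining (iii) with the error bound gives $E_{k+1}(t) \leq \bar{e}(t) - 4 K e_{R,(k+1)}$ everywhere, which yields \eqref{eq:below-bar-e} at level $k+1$ and, on the appropriate thickened set, the strong bound \eqref{eq:needed:room} at level $k+1$ as well. The main technical obstacle I anticipate lies in reconciling the time scales: the lower-bound hypothesis \eqref{eq:lowBoundEoftxM} of the Main Lemma must hold on a cylinder of temporal extent $\th_{(k)} = Z \hat{\tau}_{k}$, which is \emph{larger} than the set $I(2 \hat{\tau}_{k}; I_{(k)})$ on which the inductive hypothesis directly supplies room. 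This is to be handled by choosing $I_{(k)}$ to already be a thickening of the time projection of $\supp R_{(k)}$ by $\sim \th_{(k-1)}$ --- consistent with \eqref{eq:goalForAllSuppM}, which enlarges support by $\hat{\tau}_{k-1}$ per step, so that $I_{(k)}$ acquires a buffer of size $\sim Z^{2} \hat{\tau}_{k}$ --- ensuring that $I(2 \hat{\tau}_{k}; I_{(k)})$ properly contains the time projection of $\ECyl_{v_{(k)}}(\th_{(k)}, \Xi_{(k)}^{-1}; \supp R_{(k)})$. Carrying out this bookkeeping in tandem with the parallel saturation claim, and verifying sharpness of the $C^{1/2}$ mollification step (no loss in powers of $Z$ in either direction), is what I expect to be the most delicate part of the argument.
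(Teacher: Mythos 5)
Your overall architecture coincides with the paper's (joint induction with the support and saturation claims, the identity $E_{k+1}-E_k=\int|V_{(k)}|^2+2\int v_{(k)}\cdot V_{(k)}$, the prescription estimate applied with a cutoff, the cross term handled through $W_{(k)}$, a greedy increment with a safety gap, mollification at scale $\hat{\tau}_k$, and the $\ga=1/2$ numerology), but your construction of the increment has a genuine defect: you mollify first and take the positive part afterwards. The hypothesis \eqref{ineq:goodEnergyM} of Lemma~\ref{lem:mod:mainLemma} is a bound on $\partial_t e_{(k)}^{1/2}$, and for $e_{(k)}=\big((\bar{e}-E_k-5Ke_{R,(k+1)})\ast\eta_{\hat{\tau}_k}\big)_+$ the derivative of $e_{(k)}^{1/2}$ blows up like $|t-t_0|^{-1/2}$ wherever the mollified function crosses zero transversally, so your candidate is not admissible and the Modified Main Lemma cannot be invoked; the assertion that its derivative cost "is exactly what is permitted" is therefore unjustified. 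The paper's order of operations is the essential point: one sets $e_{(k)}^{1/2}=(\bar{e}-E_k-\De E_k)_+^{1/2}\ast\eta_{\hat{\tau}_k}$ as in \eqref{eq:mollified:increm}, so that the object being mollified is already bounded by $(\overline{M}e_{R,(k)})^{1/2}$ via Claim~\ref{claim:nearly:saturated}, and \eqref{ineq:goodEnergyM} follows by simply differentiating the kernel.

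The second gap is in the error bookkeeping. The discrepancy between $e_{(k)}$ and the target $(\bar{e}-E_k-\De E_k)_+$ is of size $\sim \nrm{\bar{e}}_{\dot{C}^{1/2}_t}\hat{\tau}_k^{1/2}+\Xi_{(k)}e_{v,(k)}^{1/2}e_{R,(k)}\hat{\tau}_k$ (Lemma~\ref{lem:commutator:energy:lem}). At the critical exponent $\ga=1/2$ the ratio $Q_{(k)}=\hat{\tau}_k^{1/2}/e_{R,(k+1)}$ is only non\emph{increasing} in $k$ because $C_0>1$ — the powers of $Z$ cancel exactly, cf.\ \eqref{eq:energyStaysOk} — and it is made $\leq 1$ by the choice of $\Xi_{(0)}$ in \eqref{eq:choiceOfZeroFreq}, not small by taking $Z$ large; moreover the second (drift) term equals $\hat{C}e_{R,(k+1)}$ on the nose. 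So this error is of size $\sim(\nrm{\bar{e}}_{\dot{C}^{1/2}_t}+\hat{C})\,e_{R,(k+1)}$ and cannot be "absorbed into $Ke_{R,(k+1)}$ once $Z$ is large"; the paper absorbs it by enlarging the gap constant, $\De E_k=Y e_{R,(k+1)}$ with $Y\gtrsim \nrm{\bar{e}}_{\dot{C}^{\ga}_t}+$ constants, not by enlarging $Z$. For the same reason your pointwise constraint (iii) fails near the edge of the positivity set (mollification raises the value there by the same amount), so the claimed bound $E_{k+1}\leq\bar{e}-4Ke_{R,(k+1)}$ "everywhere" does not follow, and with it neither \eqref{eq:below-bar-e} nor \eqref{eq:needed:room} at level $k+1$. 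The paper instead proves these via Lemma~\ref{lem:en:int:control} combined with the H\"older continuity of $\bar{e}$ and the quantitative bound $|\frac{d}{dt}E_k|\leq \hat{C}\Xi_{(k)}e_{v,(k)}^{1/2}e_{R,(k)}$ obtained by testing the Euler--Reynolds equation, \eqref{eq:energyReg} (a bound you allude to but never state or use), and then fixes $Y$; the same continuity estimates, together with $\th_{(k+1)}^{\ga}/e_{R,(k+1)}=Z^{\ga-1}Q_{(k+1)}\leq 1$, already yield the lower bound on the full thickened set at the long time scale, so the extra buffering of $I_{(k)}$ you sketch at the end is unnecessary.
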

\begin{claim}[The Energy Threshold is Nearly Saturated]\label{claim:nearly:saturated}
There is an absolute constant $\overline{M}$ such that the upper bound
\ali{
 \sup_t | \bar{e}(t) - E_k(t) | &\leq \overline{M} e_{R,(k)} \label{eq:nearSaturated}
}
holds uniformly.
\end{claim}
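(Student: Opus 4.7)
The plan is to prove Claims~\ref{claim:room:to:add} and \ref{claim:nearly:saturated} by a joint induction on $k$, interlocked with Claim~\ref{claim:growingSupp} and the construction of the energy increment $e_{(k)}(t)$ fed into Lemma~\ref{lem:mod:mainLemma} at each step.  The base case $k = 0$ is immediate after setting $(v_{(0)}, p_{(0)}, R_{(0)}) = (0,0,0)$ and $I_{(0)} = \emptyset$: then $E_{0} \equiv 0$ and $g_{0} := \bar{e} - E_{0} = \bar{e} \geq 0$, the second inequality in Claim~\ref{claim:room:to:add} is vacuous, and Claim~\ref{claim:nearly:saturated} holds provided $e_{R,(0)}$ is chosen with $\|\bar{e}\|_{C^{0}} \leq \overline{M} e_{R,(0)}$, for $\overline{M}$ an absolute constant to be fixed below.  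We also choose $\Xi_{(0)}$ large (depending on $Z$, $\|\bar{e}\|_{C^{1/2}}$, $e_{R,(0)}$, and $I$) so that both $\sum_{k} \hat{\tau}_{k} \leq \tfrac{1}{2} \mathrm{dist}(\supp \bar{e}, \partial I)$ and $\|\bar{e}\|_{C^{1/2}} \hat{\tau}_{k}^{1/2} \leq c_{*} e_{R,(k+1)}$ for all $k$ and some small absolute constant $c_{*}$.  The latter is achievable because, under the parameter evolution \eqref{eq:MLMT:parameterEvolution:Xi:M}--\eqref{eq:MLMT:parameterEvolution:eR:M}, the ratio $\hat{\tau}_{k}^{1/2}/e_{R,(k+1)}$ is independent of $k$, precisely matching the $C^{1/2}$ hypothesis on $\bar{e}$ against the exponent $\alpha^{\ast} = 1/5$, where $\gamma = 2\alpha^{\ast}/(1 - \alpha^{\ast}) = 1/2$.

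For the inductive step, choose $e_{(k)}$ greedily by setting $\widetilde{g}_{k} := \eta_{\hat{\tau}_{k}/100} \ast g_{k}$ (with $g_{k}$ extended by zero outside $I$) and
\[
e_{(k)}(t) := \phi\!\bb( \widetilde{g}_{k}(t)/e_{R,(k+1)} \bb)^{2} \, e_{R,(k+1)},
\]
where $\phi \in C^{\infty}(\bbR; \bbR_{\geq 0})$ satisfies $\phi(s) = 0$ for $s \leq \lambda_{0}$ and $\phi(s)^{2} = s - 2 \lambda_{0}$ for $s \geq 3 \lambda_{0}$, with $\lambda_{0} := 8K$.  The representation $e_{(k)}^{1/2} = \phi(\widetilde{g}_{k}/e_{R,(k+1)}) \, e_{R,(k+1)}^{1/2}$ shows that $e_{(k)}^{1/2}$ is globally smooth, bounded by $C e_{R,(k)}^{1/2}$ by Claim~\ref{claim:nearly:saturated}, and carries only the time scale $\hat{\tau}_{k}$; each time derivative therefore costs at most $\hat{\tau}_{k}^{-1} = {\bf b}^{-1} \Xi_{(k)} e_{v,(k)}^{1/2}$, verifying \eqref{ineq:goodEnergyM}.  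To verify the lower bound \eqref{eq:lowBoundEoftxM} on a cylindrical neighborhood of $\supp R_{(k)} \subseteq I_{(k)} \times \bbT^{3}$, we combine Claim~\ref{claim:room:to:add} at stage $k$ (giving $g_{k} \geq 3K e_{R,(k)}$ on $I(2\hat{\tau}_{k}; I_{(k)})$) with the estimate $|\widetilde{g}_{k} - g_{k}| \leq c_{*} e_{R,(k+1)} + C e_{R,(k+1)} \leq K e_{R,(k)}/2$ for $Z$ sufficiently large: the first summand comes from the $C^{1/2}$ bound on $\bar{e}$ through the calibration above, and the second from summing the oscillations-at-scale-$\hat{\tau}_{k}$ of the increments $E_{j+1} - E_{j}$ for $j<k$, controlled via the $\hat{\tau}_{j}^{-1}$-sized time-derivative bound on $e_{(j)}$ provided by \eqref{ineq:goodEnergyM} at stage $j$.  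Thus $\widetilde{g}_{k} \geq 2K e_{R,(k)} \geq 3 \lambda_{0} e_{R,(k+1)}$ there, and $e_{(k)} = \widetilde{g}_{k} - 2 \lambda_{0} e_{R,(k+1)} \geq K e_{R,(k)}$.

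Applying the Modified Main Lemma then produces $(v_{(k+1)}, p_{(k+1)}, R_{(k+1)})$; we set $I_{(k+1)} := I(2 \hat{\tau}_{k}; I_{(k)} \cup \supp e_{(k)})$, which remains in $I$ by the summability of $\hat{\tau}_{k}$.  Testing \eqref{eq:energyPrescribedM} with $\psi \equiv 1$ on $\bbT^{3}$ and invoking Point~3 of the Lemma yields the pointwise bound $|E_{k+1}(t) - E_{k}(t) - e_{(k)}(t)| \leq \epsilon_{0} e_{R,(k+1)}$ for any preassigned $\epsilon_{0} > 0$; hence $g_{k+1} = g_{k} - e_{(k)} + O(\epsilon_{0} e_{R,(k+1)})$ uniformly in $t$.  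The propagation of both claims then reduces to case analysis driven by the definition of $\phi$: where $\widetilde{g}_{k}(t) \leq \lambda_{0} e_{R,(k+1)}$, $e_{(k)}(t) = 0$ and $g_{k+1}(t) \in [0, (2\lambda_{0} + C \epsilon_{0}) e_{R,(k+1)}]$; where $\widetilde{g}_{k}(t) \geq 3 \lambda_{0} e_{R,(k+1)}$, $g_{k+1}(t) \in [(\lambda_{0} - C\epsilon_{0}) e_{R,(k+1)}, (3 \lambda_{0} + C \epsilon_{0}) e_{R,(k+1)}]$, which is $\geq 3K e_{R,(k+1)}$ since $\lambda_{0} = 8K$.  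Points of $I(2 \hat{\tau}_{k+1}; I_{(k+1)})$ at which $e_{(k)}$ vanishes must lie within $2 \hat{\tau}_{k+1} + 2 \hat{\tau}_{k} \leq 3 \hat{\tau}_{k}$ of $I_{(k)}$, where the inductive bound $g_{k} \geq 3K e_{R,(k)} = 3KZ e_{R,(k+1)}$ absorbs any $O(e_{R,(k+1)})$ perturbation.  Setting $\overline{M} := 4 \lambda_{0}$ closes the induction.

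The principal technical obstacle is the coordination of three time scales that are all comparable to $\hat{\tau}_{k}$: the safety buffer $2 \hat{\tau}_{k}$ built into Claim~\ref{claim:room:to:add}, the mollification radius $\hat{\tau}_{k}/100$ used to define $\widetilde{g}_{k}$, and the temporal enlargement of support ${\bf b} \theta_{(k)} = \hat{\tau}_{k}$ appearing in \eqref{eq:goalForAllSuppM}.  The improved bound ${\bf b} \theta_{(k)}$ (as opposed to the larger $\theta_{(k)}$ that the unmodified Main Lemma~\ref{lem:mainLemma} would give) is precisely what allows the safety zones around $\supp R_{(k)}$ to persist at each stage, which is the reason we pass to Lemma~\ref{lem:mod:mainLemma}.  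A secondary subtlety is keeping $|\widetilde{g}_{k} - g_{k}|$ controlled uniformly in $k$: this requires tracking how the time-regularity of $E_{k}$ accumulates across stages, with the sharp advective-derivative bound on $e^{1/2}$ in \eqref{ineq:goodEnergyM} playing a key role in ensuring that oscillations of $E_{k}$ at scale $\hat{\tau}_{k}$ sum to only $O(e_{R,(k+1)})$.
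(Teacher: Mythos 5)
Your greedy increment $e_{(k)} = \phi\big(\widetilde{g}_k/e_{R,(k+1)}\big)^2 e_{R,(k+1)}$ (mollify the gap $g_k=\bar e - E_k$ first, then apply a smooth threshold) is a genuinely different and viable alternative to the paper's choice $e_{(k)}^{1/2}=(\bar e - E_k-\De E_k)_+^{1/2}\ast\eta_{\hat\tau_k}$: it sidesteps the CET-type commutator argument of Lemma~\ref{lem:commutator:energy:lem}, since where the threshold is exceeded one has exactly $e_{(k)}=\widetilde g_k-2\lambda_0 e_{R,(k+1)}$ and only the mollification error $|\widetilde g_k-g_k|$ enters. However, your justification of precisely that error bound contains a genuine gap. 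You claim $|\widetilde g_k-g_k|\lesssim e_{R,(k+1)}$ by ``summing the oscillations-at-scale-$\hat\tau_k$ of the increments $E_{j+1}-E_j$ for $j<k$, controlled via the $\hat\tau_j^{-1}$-sized time-derivative bound on $e_{(j)}$ provided by \eqref{ineq:goodEnergyM}.'' But $E_{j+1}-E_j$ is not $e_{(j)}$: it differs from $e_{(j)}$ by the prescription error of \eqref{eq:energyPrescribedM} plus the cross term $2\int v_{(j)}\cdot V_{(j)}\,dx$, which are only controlled in size, $O(e_{R,(j+1)})$, with no modulus-of-continuity or time-derivative control supplied by the Modified Main Lemma (the bound \eqref{eq:DtenergyPrescribed} is not among its conclusions). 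The oscillation of these error parts at scale $\hat\tau_k$ can only be bounded by their sup, and $\sum_{j<k} e_{R,(j+1)}\sim e_{R,(1)}\gg e_{R,(k+1)}$, so the argument as written does not close. The correct route — and the one the paper uses inside the proof of Lemma~\ref{lem:commutator:energy:lem}, estimate \eqref{eq:energyReg} — is the one-step bound obtained directly from the Euler--Reynolds equation at stage $k$: $\frac{d}{dt}E_k=-\int \pr_j v_{(k),l}R_{(k)}^{jl}\,dx$, hence $\|\frac{d}{dt}E_k\|_{C^0}\leq \hat C\,\Xi_{(k)}e_{v,(k)}^{1/2}e_{R,(k)}$, which combined with $\|\bar e\|_{\dot C^{1/2}}\hat\tau_k^{1/2}\lesssim e_{R,(k+1)}$ gives $|\widetilde g_k-g_k|\lesssim e_{R,(k+1)}$. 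Without this input your verification of the admissibility lower bound \eqref{eq:lowBoundEoftxM}, of Claim~\ref{claim:room:to:add}, and of the case analysis yielding \eqref{eq:nearSaturated} all lose their key estimate.

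Two smaller inaccuracies should also be repaired (both are fixable once the bound on $\frac{d}{dt}E_k$ is in hand). First, the error in \eqref{eq:energyPrescribedM} with $\psi\equiv 1$ is a fixed universal multiple of $e_{R,(k+1)}$ (here ${\bf b}^{-1}e_v^{1/2}e_R^{1/2}/N = e_{R,(k+1)}$), not an ``arbitrarily small $\epsilon_0 e_{R,(k+1)}$''; the remedy, as in the paper's choice of $Y$, is to take the threshold $\lambda_0$ (hence $\overline M$) large relative to these universal constants — note the resulting constant, like the paper's, is then tied to $\nrm{\bar e}_{\dot C^{1/2}}$ through the calibration of $\Xi_{(0)}$ or of the threshold. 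Second, the assertion that points of $I(2\hat\tau_{k+1};I_{(k+1)})$ at which $e_{(k)}$ vanishes must lie within $3\hat\tau_k$ of $I_{(k)}$ is false: with $I_{(k+1)}=I(2\hat\tau_k;I_{(k)}\cup\supp e_{(k)})$, such points may only be near the outer edge of $\supp e_{(k)}$ and far from $I_{(k)}$. The lower bound there can still be recovered from $\widetilde g_k>\lambda_0 e_{R,(k+1)}$ at a nearby point together with the moduli of continuity of $\bar e$ and $E_k$ over a distance $\sim\hat\tau_k$ — but this again uses the missing Lipschitz bound on $E_k$, so it must be stated and proved explicitly.
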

Note that Claim~\ref{claim:nearly:saturated} implies the uniform convergence of $E_k(t) \to \bar{e}(t)$ as $k \to \infty$.  The condition \eqref{eq:needed:room} is required for continuing the iteration; this condition is present to ensure that one can construct an energy increment compatible with the conditions \eqref{eq:lowBoundEoftxM} and \eqref{ineq:goodEnergyM} while condition \eqref{eq:nearSaturated} is maintained.  Thus, the proof of Theorem~\ref{thm:eulerOnRn:period:presEnergy} reduces to specifying an iteration in which Claims~\ref{claim:room:to:add}-\ref{claim:nearly:saturated} remain satisfied\footnote{We point out that the Claims \ref{claim:room:to:add}-\ref{claim:nearly:saturated} are also carried along the iteration in the schemes for prescribing energy in \cite{deLSzeCts,deLSzeHoldCts,deLSzeBuck}.  The difference in this respect is that these papers assume a strictly positive lower bound on $\bar{e}(t)$, and the lower bound \eqref{eq:needed:room} is assumed to hold everywhere.}.  

We now explain our rule for specifying the iteration.  Our construction will involve choosing two large constants ($Y$ and $Z$), which will be chosen in alphabetical order during the course of the proof.  First, we define a sequence of ``gaps'' 
\ali{
\De E_k = Y e_{R, (k+1)} = Y \fr{e_{R,(k)}}{Z} \label{eq:DeEkDef}
}
with $Y$ some constant to be chosen later on.  Our goal is to choose an energy increment $e_{(k)}(t)$ which ensures that the number $\De E_k$ is a lower bound for the gap in the energy profile $(\bar{e}(t) - E_{k+1}(t)) \geq \De E_k$ which remains after stage $k$ of the iteration on the support of the error.  

According to conditions \eqref{eq:lowBoundEoftxM}, \eqref{ineq:goodEnergyM} and \eqref{eq:nearSaturated}, we should choose at each stage an energy increment $e_{(k)}(t)$ which will nearly saturate the energy threshold, but we must leave a gap of size $(\bar{e} - e_{(k)} - E_k) \sim e_{R,(k+1)}$ on the support of the error to ensure we can continue the iteration in the next stage.  A sensible first guess for the energy increment we desire would be the function
\ali{ \label{eq:desired:inc}
\hat{e}_{(k)}(t) &= (\bar{e}(t) - E_k(t) - \De E_k )_+ 
} 
(Recall that we use the notation $y_+ = \max \{y,0\}$.)  The key calculation which motivates this choice is given in Section~\ref{sec:verifyLastClaim} below. 

The problem with this guess is that the function $\hat{e}_{(k)}$  is only Lipschitz, whereas Lemma~\ref{lem:mod:mainLemma} requires control over derivatives of the square root $e_{(k)}^{1/2}(t)$.  We therefore modify the function $\hat{e}_{(k)}^{1/2}$ by prescribing an energy profile of the form
\ali{
e_{(k)}^{1/2}(t) &= (\bar{e} - E_k - \De E_k)_+^{1/2} \ast \eta_{\bMinus \hat{\tau}_k} =  \hat{e}_{(k)}^{1/2} \ast \eta_{\bMinus \hat{\tau}_k}  \label{eq:mollified:increm}
}
The function $\eta_{\bMinus \hat{\tau}_k}$ here denotes a standard, non-negative mollifying kernel in the time variable with support in the interval $\supp \eta_{\bMinus \hat{\tau}_k} \subseteq \{ |t| \leq \bMinus \hat{\tau}_k \}$.  The number $\hat{\tau}_k$ is the timescale of the construction defined in \eqref{eq:theTimeScales}.  For intuition, one can picture the formula~\eqref{eq:mollified:increm} graphically in the case $E_k = 0$ as shifting the graph of $\bar{e}(t)$ downwards by an amount $\De E_k$, taking a square root and then averaging over translates in $t$ by a width less than $\tau_k$. 


We will see during the course of the proof that the $C_t^\ga$ regularity of $\bar{e}$ will be essential for verifying that the assumptions \eqref{eq:lowBoundEoftxM}, \eqref{ineq:goodEnergyM} can be carried on during the iteration.  Without sufficient regularity for the function $\bar{e}$, the regularized function $e_{(k)}$ may be a poor approximation to the desired energy increment \eqref{eq:desired:inc}.  One must also worry that the time mollification in \eqref{eq:mollified:increm} may cause the energy profile of the approximate solutions to exceed the energy threshold if the regularity of $\bar{e}$ is too low. 

\subsection{Prescribing the energy increment: The Base Case}\label{sec:baseEnInc}
We initialize the construction by taking our Euler-Reynolds flow to be $(v_{(0)}, p_{(0)}, R_{(0)}) = (0,0,0)$.  For the initial set of times containing the support of the iteration, we take $I_{(0)} = \emptyset$ to be the empty set.
We must now choose the initial frequency energy levels $(\Xi_{(0)}, e_{v,(0)}, e_{R,(0)})$.  For the initial energy level $e_{R,(0)}$ we take $e_{R,(0)} = \sup_{t \in \R} \bar{e}(t)$. 
This choice and the Ansatz \eqref{eq:MLMT:parameterEvolution:ev:M}-\eqref{eq:MLMT:parameterEvolution:eR:M} dictate our choice of $e_{v,(0)} = Z e_{R,(0)}$.  Observe that these choices guarantee that Claims~\ref{claim:room:to:add}-\ref{claim:nearly:saturated} and the containment \eqref{eq:supp-in-I} hold at the stage $k = 0$.

During the course of the iteration (see Line~\eqref{eq:QkBoundNeed} below), we will have to show that the quotient 
\[ Q_{(k)} = \fr{|\hat{\tau}_{(k)}|^{\ga}}{e_{R,(k+1)}} \]
remains uniformly bounded, independent of the choices of $Y$ and $Z$. We remark that this point is the reason for the numerology $\gmm = \frac{2 \alp^{*}}{1-\alp^{*}}$.  With this motivation, we choose $\Xi_{(0)}$ to achieve the inequality $Q_{(0)} \unlhd 1$.  Recall from \eqref{eq:theTimeScales} that $\hat{\tau}_{(0)} = {\bf b} \Xi_{(0)}^{-1} e_{v,(0)}^{-1/2} = Z^{-3/2} \Xi_{(0)}^{-1} e_{R,(0)}^{-1/2}$ and $e_{R,(1)} = \fr{1}{Z} e_{R,(0)}$.  The goal $Q_{(0)} \leq 1$ is therefore accomplished by choosing a value $\Xi_{(0)}$ such that
\ali{
\Xi_{(0)} &\geq \left(Z^{1 - \fr{3 \ga}{2}} e_{R,(0)}^{-1 - \fr{\ga}{2}} \right)^{1/\ga} \label{eq:choiceOfZeroFreq}
}
We have now specified the initial frequency energy levels (up to the specification of $Z$), but we are not quite ready to proceed with the iteration by applying Lemma~\ref{lem:mod:mainLemma}.  Namely, we want to apply Lemma~\ref{lem:mod:mainLemma} with the choice of energy increment $e_{(k)}(t)$ defined by \eqref{eq:mollified:increm}.  However, in order to apply Lemma~\ref{lem:mod:mainLemma}, we are required to specify the constant $M$ in the upper bounds \eqref{ineq:goodEnergyM} for the energy profile, which turns out to depend on the choice of $Y$.  Once we have determined the value of $Y$ (which is accomplished in Section~\ref{sec:verifyClaims} below), we will be able to apply Lemma~\ref{lem:mod:mainLemma} for a specified value of $M$.  In particular, the constant $C_0$ in the iteration rule \eqref{eq:MLMT:parameterEvolution:Xi:M} comes from Lemma~\ref{lem:mod:mainLemma} and depends on the value of $Y$.


In Section~\ref{sec:admissEnergy} below, we continue the proof of Theorem~\ref{thm:eulerOnRn:period:presEnergy} by verifying that our choice of energy increment $e_{(k)}(t)$ defined by \eqref{eq:mollified:increm} remains for all indices $k$ an admissible choice of energy function in Lemma~\ref{lem:mod:mainLemma} for the sequence of frequency energy levels dictated by \eqref{eq:MLMT:parameterEvolution:Xi:M}-\eqref{eq:MLMT:parameterEvolution:eR:M}.  In the process, we specify the sequence $I_{(k)}$ and verify that Claims~\ref{claim:room:to:add}-\ref{claim:nearly:saturated} hold with this choice of energy increment $e_{(k)}(t)$, provided the function ${\bar e}(t)$ has sufficient regularity.

\subsection{Prescribing the energy increment: Admissibility of the energy function} \label{sec:admissEnergy}
In this Section, we define the sequence $I_{(k)}$, and we verify that the energy function defined in \eqref{eq:mollified:increm} is always an admissible choice of energy function for applying Lemma~\ref{lem:mod:mainLemma}.  

For $k \geq 0$, we define $I_{(k+1)}$ to be\footnote{We remark that in principle the set $I_{(k+1)}$ is allowed to be empty.}
\begin{equation} \label{eq:I-k-def}
	I_{(k+1)} := I(2 \hat{\tau}_{k}; \set{t \in \bbR : \bar{e}(t) - E_{k}(t) \geq \Dlt E_{k}} ).
\end{equation}
In what follows, we will assume that the constant $Y$ has already been chosen so that Claim~\ref{claim:room:to:add} holds, and also that Claim~\ref{claim:nearly:saturated} is satisfied for a specified constant $\overline{M}$.    


With the assumptions that $Y$ has already been chosen and that Claim~\ref{claim:nearly:saturated} is satisfied for a specified constant $\overline{M}$, we obtain the following bounds on the square root of the energy increment:
\ali{
 \| \left(\fr{d}{dt}\right)^r e_{(k)}^{1/2} \|_{C^0_t} &\leq A  (\bMinus {\bf b}^{-1} \Xi e_v^{1/2})^r [ \overline{M} e_{R,(k)} ]^{1/2}, \qquad 0 \leq r \leq 2 \label{eq:ek:has:bound}
}
Indeed, at the level $r = 0$, we have 
\ALI{
\co{ e_{(k)}^{1/2} } &\leq \co{ (\bar{e} - E_k - \De E_k)_+^{1/2} } 
 \leq \co{ \bar{e} - E_k }^{1/2} \leq  \overline{M}^{1/2} e_{R, (k)}^{1/2}
}
using our induction hypothesis Claim~\ref{eq:nearSaturated}.  The estimates for higher derivatives follow by differentiating the mollifier in the definition \eqref{eq:mollified:increm} of $e_{(k)}^{1/2}$.

Here $A$ is some absolute constant, but we have not yet specified $\overline{M}$, which will turn out to depend on our choice of $Y$. We postpone these choices for Section~\ref{sec:verifyClaims}.  

The estimate \eqref{eq:ek:has:bound} specifies the value of $M = A \overline{M}$ with which we may apply Lemma~\ref{lem:mod:mainLemma}.  To conclude that the function $e_{(k)}^{1/2}$ is admissible, we must also verify the lower bound \eqref{eq:lowBoundEoftxM} on the set $t \in I(\th_{(k)}; I_{(k)})$.

For stage $k = 0$, the lower bound \eqref{eq:lowBoundEoftxM} is vacuous.  To see that the lower bound holds at later stages, we first establish a lower bound for the function $(\bar{e} - E_k - \De E_k)_+$ on a slightly larger set of times.  Namely, for any $t' \in I(2 \th_{(k)}; I_{(k)})$, we have a lower bound
\[ \bar{e}(t') - E_k(t') \geq 3 K e_{R,(k)}\] 
by Claim~\ref{claim:room:to:add}.  We now impose the requirement 
\ali{
 Z &\geq 2 Y  \label{ineq:requireZY1}
}
to ensure the bound  $\De E_k = \fr{Y}{Z} e_{R,(k)} \leq \fr{1}{2} K e_{R,(k)}$ for $\De E_k$ defined in \eqref{eq:DeEkDef}.  We now have that 
\ali{
(e(t') - E_k(t') - \De E_k)_+ = (e(t') - E_k(t') - \De E_k) \geq 2 K e_{R,(k)}
}
for all $t' \in I_{(k)} \pm 2\th_{(k)} $.  As a consequence, we have for $t' \in I(2 \th_{(k)}; I_{(k)})$
\ali{
(e(t') - E_k(t') - \De E_k)_+^{1/2} \geq (2 K e_{R,(k)})^{1/2}.
}
From this lower bound, we obtain the desired lower bound 
\begin{equation} \label{eq:ek:has:lb}
e_{(k)}^{1/2}(t) \geq (2 K e_{R,(k)})^{1/2}, \qquad 
 t \in I(\th_{(k)}; I_{(k)}) 
\end{equation}
for the function $e_{(k)}^{1/2} = (e - E_k - \De E_k)_+^{1/2} \ast \eta_{\bMinus \hat{\tau}_k}$ because the time scale $\bMinus \hat{\tau}_k$ in the mollification is smaller than $\th_{(k)}$, and because the kernel in the mollification is non-negative with integral equal to $1$.  Since $\supp R_{(k)} \subseteq I_{(k)} \times \bbT^{3}$ by Claim~\ref{claim:room:to:add}, it follows that our choice of $e_{(k)}(t)$ is admissible for Lemma~\ref{lem:mod:mainLemma}.

To conclude the proof, we now verify Claims~\ref{claim:growingSupp}, \ref{claim:room:to:add} and \ref{claim:nearly:saturated}.  In the process, we will specify the constants $Y$ and $\overline{M}$.

\subsection{Verification of Claims~\ref{claim:growingSupp}-\ref{claim:nearly:saturated}} \label{sec:verifyClaims}
In the following Section, we verify that Claims~\ref{claim:growingSupp}-\ref{claim:nearly:saturated} hold during the iteration given the choice of energy function in \eqref{eq:mollified:increm} and the choice of $I_{(k)}$ defined in \eqref{eq:I-k-def}.  In the process, we explain the choice of the constants 
$Y$ and $\overline{M}$ (which are required to be independent of the constants in Lemma~\ref{lem:mod:mainLemma}, and also must be independent of the parameter $Z$).  In this proof, we will therefore say that a constant $C$ is {\it universal} if it is independent of $Y$, $\overline{M}$ and $Z$, and we will use the letter $\hat{C}$ to denote constants which are universal.  Some of the constants here will depend (in an increasing manner) on the homogeneous H\"{o}lder seminorm of $\bar{e}$, which we denote by
\[ \| \bar{e} \|_{\dot{C}_t^\ga} = \sup_t \sup_{\De t \neq 0 } \fr{|\bar{e}(t + \De t) - \bar{e}(t)|}{|\De t|^\ga} \]
Our starting point is to prove an estimate on the control of the energy profile that will be used at several points in the verification of Claims~\ref{claim:growingSupp}-\ref{claim:nearly:saturated}.

\paragraph{The main estimate on the energy gap}
Claims~\ref{claim:room:to:add}-\ref{claim:nearly:saturated} require us to control the difference $\bar{e}(t) - E_{k+1}(t)$.  Our main tool for estimating this difference is the following Lemma.
\begin{lem}\label{lem:en:int:control} We have an approximation
\ali{
\bar{e}(t) - E_{k+1}(t) &= \bar{e}(t) - E_k(t) - (\bar{e}(t) - E_k(t) - \De E_k)_+ + O((1 + \| \bar{e} \|_{\dot{C}_t^\ga}) e_{R,(k+1)}) 
}
where the constant in the $O(~)$ is universal.
\end{lem}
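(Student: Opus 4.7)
The plan is to first expand $E_{k+1}(t) - E_{k}(t)$ as a sum of quadratic terms in the correction $V_{(k)}$, apply the Main Lemma's local energy estimate and the double-curl structure to reduce matters to the pointwise comparison $|e_{(k)}(t) - \hat{e}_{(k)}(t)| \leq C (1 + \|\bar{e}\|_{\dot{C}^{\ga}}) e_{R,(k+1)}$ with universal $C$, where $\hat{e}_{(k)}(t) = (\bar{e}(t) - E_{k}(t) - \De E_{k})_{+}$, and then prove this comparison via an algebraic identity exploiting the structure $e_{(k)}^{1/2} = \hat{e}_{(k)}^{1/2} \ast \eta_{\bMinus \hat{\tau}_{k}}$. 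Concretely, $E_{k+1}(t) - E_{k}(t) = \int_{\T^{3}} |V_{(k)}|^{2}\, dx + 2 \int_{\T^{3}} v_{k} \cdot V_{(k)}\, dx$; applying \eqref{eq:energyPrescribedM} with $\psi \equiv 1$ gives $\left| \int |V_{(k)}|^{2}\, dx - e_{(k)}(t) \right| \lesssim Z^{-1} e_{R,(k+1)}$, while writing $V_{(k)} = \nab \times W_{(k)}$ and integrating by parts gives $\left| 2 \int v_{k} \cdot V_{(k)}\, dx \right| = \left| 2 \int (\nab \times v_{k}) \cdot W_{(k)}\, dx \right| \lesssim \co{\nab v_{k}} \co{W_{(k)}} \lesssim Z^{-1} e_{R,(k+1)}$ by Definition~\ref{def:subSolDef} and \eqref{eq:Wco}.

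For the main estimate, setting $f(t) := (\hat{e}_{(k)}^{1/2} \ast \eta_{\bMinus \hat{\tau}_{k}})(t) - \hat{e}_{(k)}^{1/2}(t)$, we have $e_{(k)} - \hat{e}_{(k)} = 2 \hat{e}_{(k)}^{1/2} f + f^{2}$. The pointwise algebraic identity $2 \sqrt{b}\, (\sqrt{a} - \sqrt{b}) = (a - b) - (\sqrt{a} - \sqrt{b})^{2}$ for $a, b \geq 0$, applied with $a = \hat{e}_{(k)}(t-s)$, $b = \hat{e}_{(k)}(t)$ and integrated against $\eta(s)\, ds$, combined with Jensen's inequality $f(t)^{2} \leq \int [\hat{e}_{(k)}^{1/2}(t-s) - \hat{e}_{(k)}^{1/2}(t)]^{2} \eta(s)\, ds$ and the elementary bound $|\sqrt{x_{+}} - \sqrt{y_{+}}|^{2} \leq |x - y|$, yields the pointwise estimate
\ali{
| e_{(k)}(t) - \hat{e}_{(k)}(t) | \leq 2 \sup_{|s| \leq \bMinus \hat{\tau}_{k}} | \hat{e}_{(k)}(t - s) - \hat{e}_{(k)}(t) |.
}

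To estimate the right-hand side, using $|x_{+} - y_{+}| \leq |x - y|$ I bound $| \hat{e}_{(k)}(t-s) - \hat{e}_{(k)}(t) | \leq \|\bar{e}\|_{\dot{C}^{\ga}} |s|^{\ga} + |E_{k}(t-s) - E_{k}(t)|$. For $|s| \leq \hat{\tau}_{k}$ the first term is controlled by $\|\bar{e}\|_{\dot{C}^{\ga}} \hat{\tau}_{k}^{\ga} \leq C \|\bar{e}\|_{\dot{C}^{\ga}} e_{R,(k+1)}$ via the choice~\eqref{eq:choiceOfZeroFreq} which ensures $Q_{(k)} \unlhd 1$ uniformly in $k$. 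For the $E_{k}$-increment I telescope $E_{k} = \sum_{j < k} (E_{j+1} - E_{j})$, apply the derivative bound \eqref{eq:ek:has:bound} on $e_{(j)}$ together with the Main Lemma's estimate \eqref{eq:DtenergyPrescribed} on $\fr{d}{dt} \int |V_{(j)}|^{2}\, dx$ (with $\psi \equiv 1$, where $\nab \psi = (\pr_{t} + v \cdot \nab) \psi = 0$), and use the step-1 reduction again to obtain $|(E_{j+1} - E_{j})(t-s) - (E_{j+1} - E_{j})(t)| \leq C \overline{M} e_{R,(j)} |s| / \hat{\tau}_{j}$. The parameter evolution $e_{R,(j)} = Z \cdot e_{R,(j+1)}$ and the scaling $\hat{\tau}_{j}^{\ga} \sim e_{R,(j+1)}$ then give $e_{R,(j)} \hat{\tau}_{k} / \hat{\tau}_{j} = Z^{1 - (k-j)} e_{R,(k+1)}$, whose sum over $j = 0, \ldots, k-1$ is a convergent geometric series bounded by $2 e_{R,(k+1)}$ for $Z \geq 2$.

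The hardest part will be the step above — specifically, ensuring that the constant in the bound on $|E_{k}(t-s) - E_{k}(t)|$ is universal (independent of $k$, $Y$, $Z$, and $\overline{M}$). This is precisely where the numerology $\ga = \frac{2 \alpha^{*}}{1 - \alpha^{*}}$ enters: it is designed so that $\hat{\tau}_{j}^{\ga} \sim e_{R,(j+1)}$, which in turn makes the contributions from successive stages of the iteration form a geometric series in $Z^{-1}$ rather than accumulating in $k$. A secondary technical point is the time-regularity of the cross term from the first paragraph: this is handled by the same advective-derivative bookkeeping and does not introduce essentially new difficulties, since the Main Lemma's estimates provide exactly the requisite derivative control and the resulting sums inherit the same geometric structure.
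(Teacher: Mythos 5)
Your first step (splitting $E_{k+1}-E_k$ into $\int |V_{(k)}|^2$ plus the cross term, using \eqref{eq:energyPrescribedM} with $\psi \equiv 1$ and $V_{(k)} = \nab \times W_{(k)}$ with \eqref{eq:Wco}) and your Jensen/square-root commutator argument are essentially the paper's: the latter is equivalent to the proof of Lemma~\ref{lem:commutator:energy:lem} via the CET-type identity \eqref{eq:goodCommutatorFormula} and the bound $|(y+\De y)_+^{1/2} - y_+^{1/2}| \leq |\De y|^{1/2}$. The genuine gap is in how you control $|E_k(t-s) - E_k(t)|$. The paper's key ingredient, which you are missing, is the one-line estimate \eqref{eq:energyReg}: testing the Euler--Reynolds equation for $v_{(k)}$ against $v_{(k)}$ itself, the transport and pressure terms vanish and one gets $\fr{d}{dt}E_k = -\int \pr_j v_{(k),l}\, R_{(k)}^{jl}\,dx$, hence $|\fr{d}{dt}E_k| \leq \hat{C}\,\Xi_{(k)} e_{v,(k)}^{1/2} e_{R,(k)}$ with a universal constant; multiplied by $|s| \leq \hat{\tau}_k$ this is already $\hat{C} e_{R,(k+1)}$, with no telescoping over previous stages at all.

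Your telescoping substitute does not close as written. First, it leans on \eqref{eq:DtenergyPrescribed}, which is not among the conclusions of the modified Main Lemma~\ref{lem:mod:mainLemma} actually being iterated in this section, and whose proof the paper explicitly omits because it is never used. Second, and more seriously, the claimed per-stage Lipschitz bound $|(E_{j+1}-E_j)(t-s)-(E_{j+1}-E_j)(t)| \leq C\overline{M}\, e_{R,(j)}|s|/\hat{\tau}_j$ is not justified for the cross terms $2\int v_{(j)}\cdot V_{(j)}\,dx$: Definition~\ref{def:subSolDef} gives no control on $\pr_t v_{(j)}$, and converting the advective-derivative bounds \eqref{eq:matWco}, \eqref{eq:nabWco} into plain time derivatives produces terms like $\int (\nab\times v_{(j)})\cdot (v_{(j)}\cdot\nab) W_{(j)}\,dx$ of size $\Xi_{(j)} e_{v,(j)}^{1/2} e_{R,(j)}^{1/2}$, which exceeds $e_{R,(j)}/\hat{\tau}_j$ by a factor $\mathbf{b}\, e_{R,(j)}^{-1/2} \sim Z^{(j-1)/2} e_{R,(0)}^{-1/2}$ growing in $j$. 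Feeding such bounds into your geometric-series computation, the stages $j$ near $k$ contribute on the order of $Z^{k/2} e_{R,(k+1)}$, not $O(e_{R,(k+1)})$, so the sum does not close. The correct per-stage bound does hold, but only by invoking the Euler--Reynolds equations for $v_{(j+1)}$ and $v_{(j)}$ -- and once you do that, the direct bound \eqref{eq:energyReg} on $\fr{d}{dt}E_k$ makes the telescoping superfluous. So the missing idea is precisely the weak energy-balance identity from the equations; the rest of your argument is sound and parallels the paper.
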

\begin{proof}
The starting point for establishing this control is the following calculation:
\ali{
E_{k+1}(t) &= E_k(t) + \int_{\T^3} ( |v_{(k)} + V_{(k)}|^2(t,x) - |v_{(k)}|^2(t,x) ) dx \\
&= E_k(t) + \int_{\T^3} |V_{(k)}|^2(t,x) dx  + 2 \int_{\T^3} v_{(k)} \cdot V_{(k)} dx \label{eq:eKplus1:terms}
}
For the last term, Lemma~\ref{lem:mod:mainLemma} gives an estimate
\ali{
\left| \int_{\T^3} v_{(k)} \cdot V_{(k)} dx \right| &\leq \left| \int_{\T^3} \nab \times v_{(k)} \cdot W_{(k)} dx \right| \\
&\leq \hat{C} ( \Xi_{(k)} e_{v,(k)}^{1/2} ) (N_{(k)}^{-1} \Xi_{(k)}^{-1} e_{R,(k)}^{1/2} ) \\
&\leq \hat{C} {\bf b}^2 e_{R,(k)} = \hat{C} {\bf b} e_{R,(k+1)} = \fr{\hat{C}}{Z} e_{R,(k+1)}
}
Note that the constant here can be made smaller than $1$ if $Z$ is larger than some universal constant.

For the second term, Lemma~\ref{lem:mod:mainLemma} gives a bound 
\ali{
\left| \int_{\T^3} |V_{(k)}|^2(t,x) dx - \int_{\T^3} e_{(k)}(t) dx \right| &\leq {\bf b}^{-1} \fr{e_{v,(k)}^{1/2} e_{R,(k)}^{1/2}}{N_{(k)}} \\
&\leq e_{R,(k+1)}
}
We assume here for simplicity that our torus $\T^3$ has unit volume.  Note that both the estimates above use the remark in Lemma~\ref{lem:mod:mainLemma} on the universality of the constants in  \eqref{eq:Wco} and \eqref{eq:energyPrescribed}.

Combining these estimates, \eqref{eq:eKplus1:terms} gives
\ali{
E_{k+1} &= E_k + e_{(k)}(t) + O( e_{R,(k+1)} )
}
where the constant in the $O(\cdot)$ notation is universal.

The proof of Lemma~\ref{lem:en:int:control} concludes by applying the following Lemma, which gives an estimate  for how well the smoothed out energy increment
\[ e_{(k)}(t) = [(\bar{e} - E_k - \De E_k)_+^{1/2} \ast \eta_{\bMinus \hat{\tau}_k} ]^2 \]
 approximates the desired energy increment $\tilde{e}_{(k)}(t)$.
\begin{lem} \label{lem:commutator:energy:lem}
There is a universal constant $\hat{C}$ such that 
\ali{
\| e_{(k)}(t) - (\bar{e} - E_k - \De E_k)_+ \|_{C_t^0} &\leq \hat{C} \left( \| \bar{e} \|_{\dot{C}_t^\ga} |\hat{\tau}_k|^\ga + \bMinus \Xi_{(k)} e_{v,(k)}^{1/2} e_{R,(k)} |\hat{\tau}_k| \right) \label{eq:smoothed:out:en:est}
}
\end{lem}
For now we postpone the proof of Lemma~\ref{lem:commutator:energy:lem}, which is based on the commutator estimate of \cite{CET}, and the following bound on the derivative of the energy profile of the approximate solution
\ali{
\| \fr{d}{dt} E_k \|_{C^0_t} &\leq \hat{C} \Xi_{(k)} e_{v,(k)}^{1/2} e_{R,(k)} \label{eq:energyReg}
}
We will return to the proof of Lemma~\ref{lem:commutator:energy:lem} in Section~\ref{sec:commutEstimate}.

Lemma~\ref{lem:en:int:control} now follows from Lemma~\ref{lem:commutator:energy:lem} if we can estimate the right hand side of \eqref{eq:smoothed:out:en:est} by $O(e_{R,(k+1)})$.  For the second term in \eqref{eq:smoothed:out:en:est}, recalling $|\hat{\tau}_k| = {\bf b} \Xi_{(k)}^{-1} e_{v,(k)}^{-1/2}$ and $e_{R,(k+1)} = Z^{-1} e_{R,(k)} = {\bf b} e_{R,(k)}$ gives
\[ \bMinus \Xi_{(k)} e_{v,(k)}^{1/2} e_{R,(k)} |\hat{\tau}_k| = \bMinus e_{R,(k+1)}\] 
For the first term in \eqref{eq:smoothed:out:en:est}, we want to estimate
\ali{
|\hat{\tau}_k|^{\ga} &= Q_{(k)} e_{R,(k+1)} \notag \\
Q_{(k)} &= \fr{|\hat{\tau}_k|^\ga}{e_{R,(k+1)}} \label{eq:QkBoundNeed}
}
For $k = 0$, we established the inequality $Q_{(0)} \leq 1$ in line \eqref{eq:choiceOfZeroFreq}.  For larger values of $k$ we can decide whether $Q_{(k)}$ increases in size by calculating
\ali{
Q_{(k+1)} &= (C_0^{-\ga} Z^{- 2 \ga} Z ) Q_{(k)} \label{eq:energyStaysOk}
}
Here $C_0 > 1$ is the large constant in Lemma~\ref{lem:en:int:control} for the gain in the frequency level.  Recalling now that $\ga = \fr{1}{2}$, we see that $Q_{(k+1)} \leq Q_{(k)} \leq 1$ for all $k$, which concludes the proof of Lemma~\ref{lem:en:int:control}.
\end{proof}

Given Lemma~\ref{lem:en:int:control}, we are now in a position to verify Claims~\ref{claim:growingSupp}, \ref{claim:room:to:add} and \ref{claim:nearly:saturated}.  We start by verifying Claims~\ref{claim:growingSupp} and \ref{claim:room:to:add}, which will require us to fix our choice of $Y$.  

\subsubsection{Verifying Claims~\ref{claim:growingSupp} and \ref{claim:room:to:add}} \label{subsub:verifyLowerBound}

We now check that Claims~\ref{claim:growingSupp} and \ref{claim:room:to:add} hold provided the constants $Y$ and $Z$ are chosen appropriately.  Our proof of Claims~\ref{claim:growingSupp} and \ref{claim:room:to:add} will proceed by induction, and it will be necessary to couple these claims together in the argument in order to close the induction.  For simplicity, we will suppress the dependence of constants on the norm $\nrm{\bar{e}}_{\dot{C}^{\gmm}_{t}}$ in what follows. 

Let $k \geq 0$.  To confirm Claims~\ref{claim:growingSupp} and \ref{claim:room:to:add}, we must verify that
\begin{align} 
	&\bar{e}(t) - E_{k+1}(t) \geq 0 \qquad \hbox{ for } t \in \bbR, \label{eq:below-bar-e:need} \\
	&\bar{e}(t) - E_{k+1}(t) \geq 3 K e_{R, (k+1)} \quad \hbox{ for } t \in I(2 \tht_{(k+1)}; I_{(k+1)}), \label{eq:saturated:need} \\
	&\supp v_{(k+1)} \cup  \supp p_{(k+1)} \cup \supp R_{(k+1)} \subseteq I_{(k+1)} \times \bbT^{3}, 	\label{eq:supp-in-I:need} \\
	&I_{(k)} \subseteq I_{(k+1)}  \label{subset:inductContainment}
\end{align}
hold under the induction hypothesis that \eqref{eq:below-bar-e:need}--\eqref{subset:inductContainment} hold for $k$ replacing $k+1$.  In the base case of the iteration (i.e. $k + 1 = 0$), the requirements \eqref{eq:below-bar-e:need}-\eqref{eq:supp-in-I:need} are satisfied trivially, as $I_{(0)} = \emptyset$ and $E_{0}(t) = 0$, while the containment \eqref{subset:inductContainment} imposes no restriction.  We now consider the case $k + 1 > 0$.

Recall that in \eqref{eq:I-k-def}, we defined 
\[ I_{(k+1)} = I(2 \hat{\tau}_{k} ; \set{t \in \bbR : \bar{e}(t) - E_{k}(t) \geq \Dlt E_{k}}).\]

We begin by checking the containment \eqref{subset:inductContainment}.  Let $t_0$ be an element of $I_{(k)}$.  By inequality \eqref{eq:saturated:need} for $k$, we have that $\bar{e}(t_0) - E_{k}(t_0) \geq 3 K e_{R, (k)}$.  Recalling that $\De E_{k} = Y e_{R,(k+1)} = Y \fr{e_{R,(k)}}{Z}$, we have that $t_0 \in I_{(k+1)}$ as long as we impose the condition
\ali{
 Z &\geq (3K)^{-1} Y  \label{eq:imposeZ2}
}
on our choice of $Z$.  Assuming this restriction, we have that $I_{(k)} \subseteq I_{(k+1)}$.  

We next verify the containment \eqref{eq:supp-in-I:need} for $k+1$.  Observe that the definition of $I_{(k+1)}$ implies
\begin{equation} \label{eq:supp-e-in-I}
	 I(\hat{\tau}_{k}; \supp e_{(k)}) \times \bbT^{3} \subseteq I_{(k+1)} \times \bbT^{3},
\end{equation}
where $e_{(k)}$ is defined by $e_{(k)}^{1/2} (t) := (\bar{e}(t) - E_{k}(t) - \Dlt E_{k})^{1/2} \ast \eta_{\hat{\tau}_{k}}$. By the containment \eqref{eq:goalForAllSuppM} of Lemma~\ref{lem:mod:mainLemma}, the containment \eqref{eq:supp-e-in-I}, and recalling our choice of $\hat{\tau}_k$, we obtain
\begin{equation} \label{eq:supp-VPR-in-I}
	 \supp R_{(k)} \subseteq I_{(k+1)} \times \bbT^{3}, \quad \supp V_{(k)} \cup \supp P_{(k)} \subseteq I_{(k+1)} \times \bbT^{3}.
\end{equation}
From the definition of $(v_{(k+1)}, p_{(k+1)}) = (v_{(k)} + V_{(k)}, p_{(k)} + P_{(k)})$, it follows that \eqref{eq:supp-in-I:need} holds for $k+1$.

We now prove that \eqref{eq:below-bar-e:need} holds for $k+1$ under the assumption that \eqref{eq:saturated:need} holds for $k+1$.  From the definition of $v_{(k+1)} = v_{(k)} + V_{(k)}$ and the containment \eqref{eq:supp-VPR-in-I}, it follows that
\begin{equation} \label{eq:outside-supp-V}
\bar{e}(t) - 	E_{k+1}(t) = \bar{e}(t) - E_{k}(t) \geq 0 \quad \hbox{ for } t \not \in I_{(k+1)},
\end{equation}
since $t \not \in I_{(k+1)}$ implies that $E_{k+1}(t) = \int \abs{v_{(k)} + V_{(k)}}^{2}(t, x) \, dx = \int \abs{v_{(k)}}^{2}(t,x) d x = E_{k}(t)$.  Under the assumption that \eqref{eq:saturated:need} holds for $k+1$, we also have the inequality \eqref{eq:below-bar-e:need} for $t \in I_{(k+1)}$, which confirms \eqref{eq:below-bar-e:need} for $k+1$.

To complete proof of Claims~\ref{claim:growingSupp} and \ref{claim:room:to:add}, it only remains to establish \eqref{eq:saturated:need} for $k+1$, for an appropriate choice of $Y$ (independent of $k$, of course). First observe that for $t \in I_{(k+1)}$, we have by Lemma~\ref{lem:en:int:control}
\begin{equation}  \label{eq:saturated:t-in-I-k+1}
	\bar{e}(t) - E_{k+1}(t) 
	= \Dlt E_{k} + O(e_{R, (k+1)}) \geq (Y - C) e_{R, (k+1)},
\end{equation}
where $C \geq 0$ is a constant independent of $k$. Now let $t \in I(2 \tht_{(k+1)}; I_{(k+1)})$.  Writing $t$ in the form $t = t' + \Dlt t$ where $t' \in  I_{(k+1)}$ and $\abs{\Dlt t} \leq 2 \tht_{(k+1)}$, we have
\begin{align*}
	\bar{e}(t) - E_{k+1}(t)
	= & \bar{e}(t') - E_{k+1}(t') + (\bar{e}(t) - \bar{e}(t')) - (E_{k+1}(t) -E_{k+1}(t')) \\
	\geq & \bb( Y - C - 2^{\gmm} \nrm{\bar{e}}_{\dot{C}^{\gmm}_{t}} \frac{\tht_{(k+1)}^{\gmm}}{e_{R, (k+1)}} - \hat{C} \Xi_{(k+1)} e_{v, (k+1)}^{1/2} 2 \tht_{(k+1)} \bb) e_{R, (k+1)}
\end{align*}
where we used \eqref{eq:saturated:t-in-I-k+1}, H\"older continuity of $\bar{e}$ and \eqref{eq:energyReg} on the last line.  We bound the last term from below by $-2 \hat{C} e_{R,(k+1)}$, recalling the identity $\tht_{(k+1)} = \Xi_{(k+1)}^{-1} e_{v, (k+1)}^{-1/2}$.  The third term is bounded by
\[ \tht^{\gmm}_{(k+1)} / e_{R, (k+1)} = Z^{\ga-1} Q_{(k+1)} \leq 1, \] 
which follows from the iteration rules, the definitions \eqref{eq:theTimeScales} and \eqref{eq:QkBoundNeed} of $\th_{(k)}$ and $Q_{(k)}$, the choices of $Z, Q_{(0)} \leq 1$, and the fact that $\ga - 1  = -\fr{1}{2} < 0$.
From these estimates we arrive at

\begin{equation} \label{eq:saturated:t-in-I-I-k+1}
	\bar{e}(t) - E_{k+1}(t) \geq (Y - C - 2^{\gmm} \nrm{\bar{e}}_{\dot{C}^{\gmm}_{t}} - 2 \hat{C}) e_{R, (k+1)} \quad \hbox{ for } t \in I(2 \tht_{(k+1)} ; I_{(k+1)})
\end{equation}
Choosing $Y \geq C + 2^{\gmm} \nrm{\bar{e}}_{\dot{C}^{\gmm}_{t}} + 2 \hat{C} + 3 K$, the desired statement \eqref{eq:saturated:need} follows.

\subsubsection{Verifying the upper bound Claim~\ref{claim:nearly:saturated}} \label{sec:verifyLastClaim}
We now verify the upper bound in Claim~\ref{claim:nearly:saturated} for stage $k+1$, and in the process we specify the constant $\overline{M}$ for this upper bound.  This estimate follows quickly from Lemma~\ref{lem:en:int:control} now that the constant $Y$ has already been chosen and we have the lower bound $\bar{e}(t) - E_{k+1}(t) \geq 0$ from Claim~\ref{claim:room:to:add}.  The proof splits into two cases.  In the first case, we consider $t$ for which $\bar{e}(t) - E_k - \De E_k \geq 0$.  In this case, Lemma~\ref{lem:en:int:control} gives
\ali{
\bar{e}(t) - E_{k+1}(t)  &= \bar{e} - E_k - (\bar{e} - E_k - \De E_k)_+ + O(e_{R,(k+1)}) \\
&= \De E_k + O(e_{R,(k+1)}) = Y e_{R,(k+1)} + O(e_{R,(k+1)})  \label{eq:energy:gap:inside}
}
where the constant in the $O(~)$ notation is universal.  In particular, we have \eqref{eq:nearSaturated} on this set.

For other values of $t$, we have an upper bound $\bar{e}(t) - E_k  < \De E_k$.  In this case, Lemma~\ref{lem:en:int:control} gives the same upper bound
\ali{
\bar{e}(t) - E_{k+1}(t)  &= \bar{e} - E_k - (\bar{e} - E_k - \De E_k)_+ + O(e_{R,(k+1)}) \\
&\leq \De E_k + O(e_{R,(k+1)}) = Y e_{R,(k+1)} + O(e_{R,(k+1)}) \label{eq:energy:gap:edge}
}
Recalling that the constant in the $O( )$ notation is universal, the above bound depends only on $Y$.

We now choose the constant $\overline{M}$ in Claim~\ref{claim:nearly:saturated} depending on $Y$ such that the estimates \eqref{eq:energy:gap:inside}-\eqref{eq:energy:gap:edge} hold.  This choice of $\overline{M}$ together with the estimate \eqref{eq:ek:has:bound} now determine the constant $M$ in our applications of Lemma~\ref{lem:mod:mainLemma}.  

With this bound, we have established Claim~\ref{claim:nearly:saturated}, which concludes our proof of Theorem~\ref{thm:eulerOnRn:period:presEnergy}.  The last remaining detail is to establish the Lemma~\ref{lem:commutator:energy:lem}, which had been used in the proof of Lemma~\ref{lem:en:int:control}.  We accomplish this step in Section~\ref{sec:commutEstimate} below.

\subsection{Proof of Lemma~\ref{lem:commutator:energy:lem}} \label{sec:commutEstimate}
In this Section, we complete the proof of Theorem~\ref{thm:eulerOnRn:period:presEnergy} by establishing Lemma~\ref{lem:commutator:energy:lem}.

The main idea is to decompose the difference into two terms
\ali{ 
e_{(k)}(t) - (\bar{e} - E_k - \De E_k)_+ &= T_{I} + T_{II} \\
T_I &= [(\bar{e} - E_k - \De E_k)_+^{1/2} \ast \eta_{\bMinus \hat{\tau}_k} ]^2 - (\bar{e} - E_k - \De E_k)_+ \ast \eta_{\bMinus \hat{\tau}_k} \\
T_{II} &= (\bar{e} - E_k - \De E_k)_+ \ast \eta_{\bMinus \hat{\tau}_k} - (\bar{e} - E_k - \De E_k)_+
}
We can then establish the estimate of Lemma~\ref{lem:commutator:energy:lem} for each term individually using the estimates
\ali{
\begin{split}
|\bar{e}(t + \De t) - \bar{e}(t)| &\leq \| \bar{e} \|_{\dot{C}_t^\ga} |\De t|^\ga \\
|E_k(t + \De t) - E_k(t)| &\leq \hat{C} \Xi_{(k)} e_{v,(k)}^{1/2} e_{R,(k)} |\De t| 
\end{split} \label{eq:energy:reg:vk}
}
To obtain the second estimate involving $E_k$ in \eqref{eq:energy:reg:vk}, we apply an observation in \cite{deLSzeBuck}, which is that this bound can be obtained from the Euler-Reynolds equations in the same way that one usually proves conservation of energy for Euler. 
\ali{
\fr{d}{dt} E_k = \fr{d}{dt} \int_{\T^3} |v_{(k)}|^2(t,x) dx &= \int_{\T^3} v_{(k),l} \pr_j R_{(k)}^{jl} dx \\
&= - \int_{\T^3} \pr_j v_{(k),l} R_{(k)}^{jl} dx
}
The desired bound for the term $T_{II}$
\ali{
 \| T_{II} \|_{C_t^0} &\leq \hat{C} \left( \| \bar{e} \|_{\dot{C}_t^\ga} |\hat{\tau}_k|^\ga + \bMinus \Xi_{(k)} e_{v,(k)}^{1/2} e_{R,(k)} |\hat{\tau}_k| \right) \label{eq:boundForTII}
}
now follows easily from \eqref{eq:energy:reg:vk}, where $\hat{C}$ is some constant depending on the volume of the torus $\T^3$.  

We now show that the bounds in \eqref{eq:energy:reg:vk} also imply the same estimate for the term $T_{I}$.  The main idea is to view the difference $T_I$ as a quadratic commutator term as in the well-known commutator estimate of \cite{CET} (i.e. the term can be written in the form $f \ast \eta_\ep g \ast \eta_\ep - (f g)\ast \eta_\ep$ for the appropriate functions $f$ and $g$ and the appropriate mollifying kernel $\eta_\ep$).  Setting $\hat{e}_{(k)}^{1/2}(t) = (\bar{e}(t) - E_k(t) - \De E_k)_+^{1/2}$, this commutator structure allows us to write the term $T_I$ as
\ali{
T_I &= \int_\R \left( \hat{e}_{(k)}^{1/2}(t + \tau) - \eta_{\bMinus \hat{\tau}_k} \ast \hat{e}_{(k)}^{1/2}(t) \right)^2 \eta_{\bMinus \hat{\tau}_k}(\tau) d\tau \label{eq:goodCommutatorFormula}
}
We first estimate the integrand of \eqref{eq:goodCommutatorFormula} pointwise at each fixed value of $\tau \in \R$. We begin with the elementary inequality 
\[ |(y + \De y)_+^{1/2} - (y)_+^{1/2}| \leq |\De y|^{1/2} \qquad \tx{ for all } y, \De y \in \R. \]
Taking $y = \bar{e}(t) + E_k(t) - \De E_k$ and $y + \De y = \bar{e}(t + \tau) + E_k(t + \tau)- \De E_k$ in the above inequality, we apply the bounds in \eqref{eq:energy:reg:vk} to obtain the estimate
\[ |\hat{e}_{(k)}^{1/2}(t + \tau) - \hat{e}_{(k)}^{1/2}(t) | \leq \hat{C}^{1/2} \left( \| \bar{e} \|_{\dot{C}_t^\ga} |\tau|^{\ga} + \Xi_{(k)} e_{v,(k)}^{1/2} e_{R,(k)} |\tau| \right)^{1/2}  \] 
for all $\tau \in \R$.  
From the above estimate on the modulus of continuity of $\hat{e}_{(k)}^{1/2}$ and the containment $\supp \eta_{\hat{\tau}_k} \subseteq \{ \tau \in \R ~:~ |\tau| \leq \hat{\tau}_k \}$, it is now straightforward to estimate the integrand of \eqref{eq:goodCommutatorFormula} pointwise, and to obtain the desired bound \eqref{eq:boundForTII} for the term $T_I$.  This bound concludes the proof of Lemma~\ref{lem:commutator:energy:lem}.

\subsection{Proof of Theorem~\ref{thm:eulerOnRn:period:presEnergy}: Summary }\label{eq:onOrderOfParameters}
In this Section, we summarize the proof of Theorem~\ref{thm:eulerOnRn:period:presEnergy}, and clarify the logical order in which the parameters involved in proving these claims are chosen.  What we have shown in Sections~\ref{sec:baseEnInc}-\ref{sec:commutEstimate} above is the following statement:
\begin{prop}[Summary of the Iteration] \label{prop:itSummary}
Given a positive number $\a < \a^* = 1/5$, an open interval $I \subseteq \R$, a compactly supported, non-negative function $\bar{e}(t) \in C_t^\ga(I)$, $\ga = \fr{2 \a^*}{1-\a^*} = \fr{1}{2}$, and a real number $B > 0$ which bounds the H\"{o}lder semi-norm  $\| \bar{e} \|_{\dot{C}_t^\ga}$ from above, there exist:
\begin{itemize}
\item Non-negative constants $\overline{M}, C_0 $ and $Z$;
\item A sequence of parameters $(\Xi_{(k)}, e_{v,(k)}, e_{R,(k)})$, $\Xi_{(k)} \geq 2$, $e_{v,(k)}, e_{R,(k)} \geq 0$;
\item A sequence of Euler-Reynolds flows $(v_{(k)}, p_{(k)}, R_{(k)})$;
\item A sequence of subsets $I_{(k)} \subseteq I$,
\end{itemize}
such that 
\begin{itemize}
\item The iteration rules \eqref{eq:MLMT:parameterEvolution:Xi:M}-\eqref{eq:MLMT:parameterEvolution:eR:M} relating $(\Xi_{(k)}, e_{v,(k)}, e_{R,(k)})$, $C_0$ and $Z$ hold for all $k \geq 0$.
\item The frequency energy levels of $(v_{(k)}, p_{(k)}, R_{(k)})$ are below $(\Xi_{(k)}, e_{v,(k)}, e_{R,(k)})$ to order $2$ in $C^0$.
\item The sequence $(v_{(k)}, p_{(k)})$ converges in $C_{t,x}^\a \times C_{t,x}^{2 \a}(I \times \T^3)$ to a solution of incompressible Euler.
\item The containment $\supp v_{(k)} \cup \supp p_{(k)} \cup \supp R_{(k)} \subseteq I_{(k)} \times \bbT^{3}$ holds as stated in \eqref{eq:supp-in-I}.
\item The containment $I_{(k)} \subseteq I_{(k+1)}$ holds as stated in \eqref{subset:growing} for all $k \geq 0$. 
\item For $E_k(t) = \fr{1}{2}\int_{\T^3} |v_{(k)}|^2(t,x) dx$, the inequalities \eqref{eq:below-bar-e}, \eqref{eq:needed:room} and \eqref{eq:nearSaturated} which relate the functions $\bar{e}(t)$, $E_k(t)$ to the sets $I_{(k)}$ and the parameters $\overline{M}$, $Z$, and $(\Xi_{(k)}, e_{v,(k)}, e_{R,(k)})$ hold for all $k \geq 0$.
\end{itemize}
\end{prop}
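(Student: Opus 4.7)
The plan is to assemble the proposition by fixing the parameters $\overline{M}, C_0, Z$ in a careful logical order and then running the iterative construction of Sections~\ref{sec:baseEnInc}--\ref{sec:commutEstimate} with these choices, so that each of Claims~\ref{claim:growingSupp}--\ref{claim:nearly:saturated} propagates inductively in $k$. The main point to keep track of is the chain of dependencies between $Y$, $\overline{M}$, $M$, $C_0$, and $Z$, which must be resolved without circularity; in particular the constants $\overline{M}$, $Y$ and $C_0$ must ultimately depend only on $B$ (and hence on $\bar{e}$), not on $Z$.

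First, I would fix the constant $Y$ by appealing to the analysis of Section~\ref{subsub:verifyLowerBound}: setting $Y \geq C + 2^\ga B + 2\hat{C} + 3K$, where $C, \hat{C}$ are the universal constants appearing in \eqref{eq:saturated:t-in-I-I-k+1} and using $B \geq \nrm{\bar{e}}_{\dot{C}^\ga_t}$, guarantees the inductive step for \eqref{eq:saturated:need}. Once $Y$ is fixed, the upper bound calculations \eqref{eq:energy:gap:inside}--\eqref{eq:energy:gap:edge} of Section~\ref{sec:verifyLastClaim} specify a constant $\overline{M} = \overline{M}(Y)$ for Claim~\ref{claim:nearly:saturated}, and then \eqref{eq:ek:has:bound} produces the constant $M = A\overline{M}$ to be used as input in the Modified Main Lemma~\ref{lem:mod:mainLemma}. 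This fixes the constant $C_0 = C_0(M, L)$ appearing in the iteration rule \eqref{eq:MLMT:parameterEvolution:Xi:M}. Only at this point do I select $Z$: it must simultaneously satisfy \eqref{ineq:requireZY1}, \eqref{eq:imposeZ2}, ensure that the recursion \eqref{eq:energyStaysOk} for $Q_{(k)}$ is contractive (equivalently $C_0^{-\ga} Z^{1-2\ga} \leq 1$, which holds for $\ga = 1/2$ and $Z \geq C_0^{1/\ga}$), and finally satisfy the H\"older convergence criterion \eqref{eq:suffLargeZ} so that $(v_{(k)}, p_{(k)})$ is Cauchy in $C^\a_{t,x} \times C^{2\a}_{t,x}$ with the given $\a < 1/5$.

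With $(\overline{M}, C_0, Z)$ fixed, I initialize the construction as in Section~\ref{sec:baseEnInc}: $(v_{(0)}, p_{(0)}, R_{(0)}) = (0,0,0)$, $I_{(0)} = \emptyset$, $e_{R,(0)} = \sup_t \bar{e}(t)$, $e_{v,(0)} = Z e_{R,(0)}$, and $\Xi_{(0)}$ chosen large enough that \eqref{eq:choiceOfZeroFreq} holds (so $Q_{(0)} \leq 1$). Define $(\Xi_{(k)}, e_{v,(k)}, e_{R,(k)})$ for $k \geq 1$ by the iteration rules \eqref{eq:MLMT:parameterEvolution:Xi:M}--\eqref{eq:MLMT:parameterEvolution:eR:M}; then the base case of Claims~\ref{claim:growingSupp}--\ref{claim:nearly:saturated} holds trivially because $E_0 \equiv 0$ and $I_{(0)} = \emptyset$. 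Inductively, assuming Claims~\ref{claim:growingSupp}--\ref{claim:nearly:saturated} at level $k$, I define $I_{(k+1)}$ by \eqref{eq:I-k-def} and $e_{(k)}$ by \eqref{eq:mollified:increm}; the analysis of Section~\ref{sec:admissEnergy} verifies the lower bound \eqref{eq:lowBoundEoftxM} on $I(\th_{(k)}; I_{(k)}) \supseteq \supp R_{(k)}$ and the derivative bound \eqref{ineq:goodEnergyM} with the prescribed constant $M$, so Lemma~\ref{lem:mod:mainLemma} applies and produces $(v_{(k+1)}, p_{(k+1)}, R_{(k+1)})$ with frequency-energy levels below $(\Xi_{(k+1)}, e_{v,(k+1)}, e_{R,(k+1)})$. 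The inductive step for Claims~\ref{claim:growingSupp} and~\ref{claim:room:to:add} is then exactly the argument of Section~\ref{subsub:verifyLowerBound}, using the support bound \eqref{eq:goalForAllSuppM} and Lemma~\ref{lem:en:int:control}, and the step for Claim~\ref{claim:nearly:saturated} follows the case split \eqref{eq:energy:gap:inside}--\eqref{eq:energy:gap:edge}.

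Once the sequence is constructed, convergence of $(v_{(k)}, p_{(k)})$ in $C^\a_{t,x} \times C^{2\a}_{t,x}$ to a weak solution of incompressible Euler follows verbatim from the interpolation argument of the proof of Claim~\ref{claim:MLMT:HolderReg}: the $C^0$ bounds \eqref{eq:recallC0correcBdV}--\eqref{ineq:C1correcBdP} on $V_{(k)} = v_{(k+1)} - v_{(k)}$ and $P_{(k)} = p_{(k+1)} - p_{(k)}$ (with the time derivative handled by writing $\rd_t = \Ddt - (v_{(k)} - v_{(0)})\cdot\nb - v_{(0)}\cdot\nb$ and using the uniform boundedness of $v_{(0)}$ and $\sum_{k'<k} V_{(k')}$ on $I_{(k)}$) yield via interpolation the estimates \eqref{ineq:ctcxaVk}--\eqref{ineq:ctcxaPk}, which are summable in $k$ once \eqref{eq:suffLargeZ} holds. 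The main obstacle I anticipate is purely logistical: ensuring that $\overline{M}$, $Y$, $C_0$ are fixed \emph{before} $Z$, since $C_0$ enters the recursion \eqref{eq:energyStaysOk} that must be contractive for $\ga = 1/2$, and the bound on $Q_{(0)}$ in \eqref{eq:choiceOfZeroFreq} then dictates $\Xi_{(0)}$ after $Z$ is known. No new analytic ingredient beyond what was established in Sections~\ref{sec:baseEnInc}--\ref{sec:commutEstimate} is needed.
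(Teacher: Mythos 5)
Your proposal is correct and follows essentially the same route as the paper's own summary in Section~\ref{eq:onOrderOfParameters}: the same logical ordering of parameter choices ($Y$, then $\overline{M}$ and $M = A\overline{M}$, then $C_0$ from Lemma~\ref{lem:mod:mainLemma}, then $Z$ subject to \eqref{ineq:requireZY1}, \eqref{eq:imposeZ2}, \eqref{eq:suffLargeZ}, then $\Xi_{(0)}$ via \eqref{eq:choiceOfZeroFreq}), the same inductive propagation of Claims~\ref{claim:growingSupp}--\ref{claim:nearly:saturated} through \eqref{eq:I-k-def}, \eqref{eq:mollified:increm} and the admissibility checks of Section~\ref{sec:admissEnergy}, and the same interpolation argument for $C^{\a}_{t,x}\times C^{2\a}_{t,x}$ convergence. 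The only detail you leave implicit is the containment $I_{(k)}\subseteq I$, which the paper disposes of in one sentence using the compact support of $\bar e$ in $I$ together with \eqref{eq:needed:room}.
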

The proof of Proposition~\ref{prop:itSummary} involves the introduction of a parameter $Y$ which is used to define the energy increments $e_{(k)}(t)$ during the iteration.  We also define a parameter ${\bf b} = Z^{-1}$ and time scales $\th_{(k)} = \Xi_{(k)}^{-1} e_{v,(k)}^{-1/2}$ and $\hat{\tau}_k = Z^{-1} \Xi_{(k)}^{-1} e_{v,(k)}^{-1/2}$ for ease of notation.

In the base case of the iteration (Section~\ref{sec:baseEnInc}), we choose the initial Euler-Reynolds flow to be $(v_{(0)}, p_{(0)}, R_{(0)}) = (0, 0, 0)$, and $I_{(0)} = \emptyset$, while the initial the energy level $e_{R,(0)}$ is chosen to depend only on the norm $\|\bar{e}(t)\|_{C_t^0}$.  At this point the parameters $(\Xi_{(0)}, e_{v,(0)})$ remain unspecified as they will depend on the choice of $Z$.  The reason for this dependence is that we desire a sharp time scale in the first stage of the iteration, and this goal is accomplished by taking a large value of $Z$.

The parameter $Y$ appearing in \eqref{eq:DeEkDef} is the next parameter specified.  This parameter is chosen at the end of Section~\ref{subsub:verifyLowerBound}.  The choice of $Y$ depends only on: certain universal constants $C$ and $\hat{C}$ appearing in Lemma~\ref{lem:en:int:control} and Section~\ref{sec:commutEstimate} where Lemma~\ref{lem:commutator:energy:lem} is proven; the universal constant $K$ from the construction; and the upper bound ($B$ above) for the $\dot{C}_t^\ga$ H\"{o}lder semi-norm of $\bar{e}$.  

The constant $\overline{M}$ in Claim~\ref{claim:nearly:saturated:nonperiodic} is the second parameter specified.  This constant depends on the parameter $Y$ and the other absolute constants from the Lemmas in Section~\ref{sec:verifyClaims}.  The choice of $\overline{M}$ is made in Section~\ref{sec:verifyLastClaim}.  With the constant $\overline{M}$ determined, the sequence of upper bounds 
\ali{
 \| (\fr{d}{dt})^r e_{(k)}^{1/2} \|_{C^0_t} &\leq A  (Z \Xi_{(k)} e_{v,(k)}^{1/2})^r [ \overline{M} e_{R,(k)} ]^{1/2}, \qquad 0 \leq r \leq \label{ineq:boundSequence}
}
stated on the right hand side of \eqref{eq:ek:has:bound} are fully determined up to the choice of $Z$ and the determination of $C_0$.  (In this equation, we have substituted $Z$ for ${\bf b}^{-1}$ in order to distinguish the ${\bf b}$ in the definition of $\hat{\tau}_k$ and the parameter $\left( \fr{e_v^{1/2}}{e_R^{1/2}N} \right)$ appearing in Lemma~\ref{lem:mod:mainLemma}.)

We choose $C_0$ to be the constant whose existence is asserted by Lemma~\ref{lem:mod:mainLemma} with $L = 2$ and $M = A \overline{M}$.

With $C_0$ chosen, it is possible to determine the appropriate choice of $Z$ subject to some requirements.  First, $Z$ is sufficiently large depending on $\a$, $C_0$ and other absolute constants to ensure $C_{t,x}^\a \times C_{t,x}^{2\a}$ convergence of the sequence $(v_{(k)}, p_{(k)})$ as in equation \eqref{eq:suffLargeZ}.  More precisely, $Z$ is chosen sufficiently large so that the sequence of bounds on the $C_{t,x}^\a \times C_{t,x}^{2\a}$ norms of the corrections which result from the iteration will be summable (as in \eqref{eq:suffLargeZ}).  Furthermore, $Z$ satisfies the requirements $Z \geq \max\{ 2Y, (3K)^{-1} Y\}$ coming from \eqref{ineq:requireZY1} and \eqref{eq:imposeZ2}.

With the constants $C_0$ and $Z$ determined, the full sequence of parameters $(\Xi_{(k)}, e_{v,(k)}, e_{R,(k)})$ along with the time scales $\th_{(k)}, \hat{\tau}_k$ are determined by induction according to the iteration rules \eqref{eq:MLMT:parameterEvolution:Xi:M}-\eqref{eq:MLMT:parameterEvolution:eR:M}, and the initial choice of $e_{v,(0)} = Z e_{R, (0)}$ and $\Xi_{(0)}$ made in \eqref{eq:choiceOfZeroFreq}.  The energy increment $e_{(0)}^{1/2}(t) = (\bar{e}(t) - Y e_{R,(0)})_+^{1/2} \ast \eta_{\hat{\tau}_0}$ for initializing the iteration has also been determined (it is possible that $e_{(0)}(t) = 0$).  The set $I_{(1)} := I(2 \hat{\tau}_{0}; \set{t \in \bbR : \bar{e}(t) \geq Y e_{R,(0)} } )$ has also been determined according to \eqref{eq:I-k-def} (it is possible that $I_{(1)}$ is empty).

With these parameters, we generate a sequence of Euler-Reynolds flows $(v_{(k)}, p_{(k)}, R_{(k)})$ by repeated application of Lemma~\ref{lem:mod:mainLemma}.  This iteration simultaneously generates a function $e_{(k)}(t)$ and a set $I_{(k)} \subseteq \R$ associated to each Euler Reynolds flow $(v_{(k)}, p_{(k)}, R_{(k)})$ according to the formulas \eqref{eq:mollified:increm} and \eqref{eq:I-k-def}.  The assumption that $\bar{e}$ has compact support in $I$ together with inequality \eqref{eq:needed:room} imply by induction that $I_{(k)} \subseteq I$ for all $k \geq 0$.  Lemma~\ref{lem:mod:mainLemma} is applied in each stage choosing $L = 2$ and the parameter $M$ to be the constant $A \overline{M}$.  In each stage, we take\footnote{We remark that in principle $N$ could be allowed to depend on the stage $k$, as was the case in \cite{isett}. } $N = Z^{5/2}$, and define the energy increment $e_{(k)}(t)$ according to \eqref{eq:mollified:increm}.  

Our choices of parameters have been made such that both $N = Z^{5/2}$ and $e_{(k)}(t)$ defined in \eqref{eq:I-k-def} are admissible according to the requirements $N \geq (e_{v,(k)} / e_{R,(k)})^{3/2}$, \eqref{eq:lowBoundEoftxM} and \eqref{ineq:goodEnergyM}.  The requirement $N \geq (e_{v,(k)} / e_{R,(k)})^{3/2}$ follows by induction from the parameter evolution rules.  To verify the estimates in \eqref{ineq:goodEnergyM}, we must check that the factor of $Z$ appearing in the right hand side of inequality \eqref{ineq:boundSequence} is no larger than the loss of the factor ${\bf b}^{-1} = \left(\fr{e_{v}^{1/2}}{e_{R} N} \right)_{(k)}^{-1/2}$ allowed by the Lemma.  From the parameter evolution rules, it follows by induction that this factor is equal to $Z$ for all $k$.  It follows by induction that the admissibility condition \eqref{ineq:goodEnergyM} is satisfied for all indices $k$ under the assumption inequality \eqref{ineq:boundSequence} holds.  In Section~\ref{sec:admissEnergy}, we verify that the inequality \eqref{ineq:boundSequence} holds for the sequence of functions $e_{(k)}(t)$ using the inductive Claim~\ref{claim:nearly:saturated}.  In Section~\ref{sec:admissEnergy}, we also verify the required lower bound \eqref{eq:lowBoundEoftxM} of Lemma~\ref{lem:mod:mainLemma} using the inductive Claims~\ref{claim:growingSupp}-\ref{claim:room:to:add}.

\subsection{Extending Theorem~\ref{thm:eulerOnRn:period:presEnergy} to Theorem~\ref{thm:eulerOnRn:presEnergy}} \label{sec:remarksOnThm}
Having concluded the proof of Theorem~\ref{thm:eulerOnRn:period:presEnergy}, we outline how our argument above extends to establish Theorem~\ref{thm:eulerOnRn:presEnergy} in the periodic case.  We address the additional technical issues involved in the nonperiodic case in Section~\ref{sec:prescribe:Nonperiodic:Energy}.

We first observe that the ideas of our proof of the case $\ga = \fr{1}{2}$ in Theorem~\ref{thm:eulerOnRn:period:presEnergy} can be extended to give solutions $v \in C_{t,x}^\a$ with prescribed energy profiles $\bar{e} \in C_t^\ga$ having lower regularity $0 < \a < \a^* \leq 1/5$, $\ga = \fr{2 \a^*}{1 - \a^*}$.  For example, one can state a variant of Lemma~\ref{lem:mod:mainLemma} where the stress is reduced at an inferior rate of $e_R' = {\bf b}^{-\b} \fr{e_v^{1/2} e_R^{1/2}}{N}$ for some $\b \geq 1$, and the number ${\bf b}^{-1}$ is replaced by ${\bf b}^{-\b}$ in all of the estimates.  One must also replace the smallness factor ${\bf b}$ in the enlargement of the time support by the smaller factor ${\bf b}^\b$.  In this case, the same argument establishes Theorem~\ref{thm:eulerOnRn:period:presEnergy} with lesser regularity.  The crucial point at which the exponent $\ga = \fr{2 \a^*}{1 - \a^*}$ comes into play is in the estimate \eqref{eq:energyStaysOk}, where we estimate the difference between the smoothed out energy increment and the desired energy increment.  As $\b$ tends to infinity, the threshold $\a^*$ for the H\"{o}lder regularity tends to $0$, as does $\ga = \fr{2 \a^*}{1 - \a^*}$.  In the opposite direction, if one assumes the same lemma but with $0 \leq \b \leq 1$, then the threshold $\a^*$ would tend to $1/3$ while the required regularity $\ga = \fr{2 \a^*}{1 - \a^*}$ would tend to $1$.

Next, we observe that our solutions with prescribed energy profiles in the class $C_t^\ga$ become arbitrarily small in the $C_{t,x}^{\a}$ topology if $\a < \a^*$ and $\ga = \fr{2 \a^*}{1 - \a^*}$ when we consider a one-parameter family of energy profiles tending to $0$ as in the statement of Theorem~\ref{thm:eulerOnRn:presEnergy}.  To check this observation, consider the algorithm in the proof of Theorem~\ref{thm:eulerOnRn:period:presEnergy}, and apply this algorithm to an energy profile 
\ali{
 \bar{e}_A(t) &= A \bar{e}(t) \label{eq:smallerEnProfile}
}
 where $A \leq 1$ is some constant.  Note that the choice of the constants $Y$, $\overline{M}$ and $Z$ in our algorithm do not depend on $A$, but rather depend only on the desired regularity $\a$ and possibly on an upper bound for the $C_t^\ga$ norm of $\bar{e}$ (see Sections~\ref{subsec:MLMT:verifyClaims} and \ref{prop:itSummary}).    
Recall also that the bounds on the $C_{t,x}^\a$ norms of the corrections $V_{(k)}$ decrease exponentially by a certain factor for all indices $k \geq 0$ thanks to the choice of the parameter $Z$.  Thus, to check that the algorithm produces a solution that is arbitrarily small in $C_{t,x}^\a$ as $A$ tends to $0$, the key point is to check that the size of the initial correction $V_{(0)}$ becomes arbitrarily small in $C_{t,x}^\a$ as the parameter $A$ tends to $0$.

To check that this smallness holds, recall from Section~\ref{subsec:MLMT:verifyClaims} that the correction obeys the estimate
\ali{
\| V_{(0)} \|_{C_{t,x}^\a} &\leq C (N_{(0)} \Xi_{(0)})^\a e_{R,(0)}^{1/2} \label{eq:V0Ca:est}
}
The constant $C$ here is universal.  The parameters $N_{(0)}$ and $\Xi_{(0)}$ both depend on $Z$, but we can ignore this dependence since $Z$ is fixed.  What we consider here is the dependence on $A$.  Recall from Section~\ref{sec:baseEnInc} that $e_{R,(0)}$ is proportional to $\| A \bar{e} \|_{C^0}$, and is therefore proportional to $A$.  The initial frequency level is chosen in Section~\ref{sec:baseEnInc} to have size $\Xi_{(0)} \sim e_{R,(0)}^{-\fr{1}{\ga} - \fr{1}{2}}$.  Our estimate \eqref{eq:V0Ca:est} therefore scales as
\[ \| V_{(0)} \|_{C_{t,x}^\a} \leqc A^{\fr{1}{2} - \a( \fr{1}{\ga} + \fr{1}{2}) } \]
with an implied constant depending on the choices of $Y, \overline{M}$ and $Z$.  The above bound tends to $0$ as $A \to 0$ provided $\ga > \fr{2 \a}{1 - \a}$.  Thus, the initial correction $V_{(0)}$, and furthermore the sum of all corrections $\sum_{k=0}^\infty V_{(k)}$, may be made arbitrarily small in $C_{t,x}^\a$ by applying our algorithm to the energy profile $A \bar{e}(t)$ with $A$ small and $\bar{e} \in C_t^\ga$.

From this calculation, we can view our Theorem~\ref{thm:eulerOnRn:presEnergy} as providing some evidence for the conjecture in \cite{isett2} that irregularity of the energy profile is characteristic of generic solutions to Euler with H\"{o}lder regularity strictly below $1/3$.  Here we have shown that, within the range of exponents $\a < 1/5$ and $\ga > \fr{2 \a}{1-\a}$,  Euler flows which are arbitrarily small perturbations of $0$ in $C_{t,x}^\a$ can have energy profiles that fail to have $C^\ga$ regularity in time.  In view of Theorem~\ref{thm:eulerOnRn:cptPert}, it is very likely that our methods show that solutions with such irregular energy profiles can approximate any sufficiently smooth solution to Euler in $C_{t,x}^\a$.  We are optimistic that the method of convex integration can be extended to make statements about generic Euler flows, at least those with regularity below $1/5$.  

\subsection{Prescribing the energy profile: the nonperiodic setting }\label{sec:prescribe:Nonperiodic:Energy}
In this Section, we describe how to modify the proof of Theorem~\ref{thm:eulerOnRn:period:presEnergy} to construct a H\"older continuous weak solution to the Euler equations with a prescribed energy profile in the non-periodic setting.
\begin{thm}[Nonperiodic Euler flows with prescribed energy profile] \label{thm:eulerOnRn:nonperiod:presEnergy}
Let $\a < \a^* \leq 1/5$ and let $I \subseteq \R$ be a bounded open interval.
Let $\bar{e}(t) \geq 0$ be any non-negative function with compact support in $I$ which belongs to the class $\bar{e}(t) \in C_t^{\ga}$ for $\ga = \fr{2 \a^*}{1 - \a^*}$.  Then there exists a weak solution $(v, p)$ to the Euler equations in the class $v \in C_{t,x}^\a(\R \times \R^3)$ with compact support in space-time such that the energy profile of $v$ is given by
\begin{equation} \label{eq:nonperiod:presEnergy}
\int_{\R^3} |v|^2(t,x) dx = \bar{e}(t), \qquad t \in \R
\end{equation}
\end{thm}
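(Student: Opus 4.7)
The plan is to run an iteration analogous to that of Theorem~\ref{thm:eulerOnRn:period:presEnergy}, but now design at each stage a \emph{local} energy density $e_{(k)}(t,x)$ (a function of both time and space) whose integral over $\R^3$ reproduces the time-dependent target energy increment from the periodic greedy algorithm. Concretely, I would fix a nonempty pre-compact open set $\calU \subseteq \R \times \R^{3}$ large enough to contain $\supp \bar{e}$ in its time projection, take $(v_{(0)},p_{(0)},R_{(0)}) = (0,0,0)$ and $I_{(0)} = \emptyset$, and choose frequency-energy levels $(\Xi_{(k)}, e_{v,(k)}, e_{R,(k)})$ obeying the same iteration rules \eqref{eq:MLMT:parameterEvolution:Xi:M}--\eqref{eq:MLMT:parameterEvolution:eR:M} and parameter choices as in the periodic proof. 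The sets $\Omega_{(k)}, \widetilde{\Omega}_{(k)}$ tracking the enlargement of supports along $v_{(0)}$-adapted cylinders will be set up exactly as in Section~\ref{subsubsec:supp}, guaranteeing that $\supp\,(v_{(k)},p_{(k)},R_{(k)}) \subseteq \overline{\calU}$ for all $k$ provided $\Xi_{(0)}$ is taken large enough.

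The construction of $e_{(k)}(t,x)$ factors as a product $e_{(k)}^{1/2}(t,x) = B_{(k)}^{1/2}(t)\,\phi_{(k)}^{1/2}(t,x)$. The temporal amplitude $B_{(k)}^{1/2}(t) := (\bar{e} - E_{k} - \Delta E_{k})_{+}^{1/2} \ast \eta_{\mathbf{b}\hat\tau_{k}}$ is taken verbatim from the periodic greedy scheme of Section~\ref{sec:prescribePeriodicEnergy}, with $\Delta E_{k} = Y e_{R,(k+1)}$ and the same large parameters $Y, Z$. The spatial template $\phi_{(k)}^{1/2}(t,x)$ is built via the mollification-along-the-flow machinery of Subsection~\ref{subsec:MLMT:moll}, as in Section~\ref{subsubsec:ED}: take a non-negative bump supported in an appropriate pre-compact spatial slab, mollify along the flow of $v_{(k)}$ with parameters $\frac{1}{100}\theta_{(k)}$ and $\frac{1}{100}\Xi_{(k)}^{-1}$, and normalize so that $\int_{\R^{3}} \phi_{(k)}(t,x)\,dx = 1$ for every relevant $t$. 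The spatial template is arranged so that $\phi_{(k)}(t,x) \geq c_{*}$ on the $v_{(k)}$-adapted cylinder $\ECyl_{v_{(k)}}(\theta_{(k)},\Xi_{(k)}^{-1}; \supp R_{(k)})$, ensuring the lower bound \eqref{eq:lowBoundEoftxM}, while the derivative bounds \eqref{ineq:goodEnergyM} for $e_{(k)}^{1/2}$ follow from Proposition~\ref{prop:MLMT:est4moll} applied to $\phi_{(k)}^{1/2}$ combined with the derivative bounds on $B_{(k)}^{1/2}(t)$ inherited from $\bar{e} \in C_{t}^{\gamma}$ and \eqref{eq:energyReg}. The set $I_{(k+1)}$ is defined exactly as in \eqref{eq:I-k-def}. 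The support bound \eqref{eq:goalForAllSuppM} of Lemma~\ref{lem:mod:mainLemma}, together with a choice of sufficiently large $\Xi_{(0)}$ and $Z$, keeps $\supp V_{(k)} \cup \supp P_{(k)} \cup \supp R_{(k+1)}$ inside $\calU$, yielding the desired compact space-time support.

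The main obstacle is the verification that the integrated energy identity \eqref{eq:nonperiod:presEnergy} holds in the limit. The periodic analysis in Section~\ref{sec:verifyClaims} relied on computing $\int_{\T^{3}}|V_{(k)}|^{2}\,dx$ against the constant test function $\psi \equiv 1$. In the nonperiodic case I instead apply \eqref{eq:energyPrescribedM} of Lemma~\ref{lem:mod:mainLemma} with a smooth spatial cutoff $\psi(x)$ that equals $1$ on the compact set $S_{t}[\widetilde{\Omega}_{(k)}]$; since $V_{(k)}$ is supported there, one concludes
\begin{equation*}
\int_{\R^{3}} |V_{(k)}|^{2}(t,x)\,dx = \int_{\R^{3}} e_{(k)}(t,x)\,dx + O(\mathbf{b}^{-1} N_{(k)}^{-1}e_{v,(k)}^{1/2}e_{R,(k)}^{1/2}),
\end{equation*}
and by the normalization $\int \phi_{(k)}(t,x)\,dx = 1$ together with the commutator estimate of Lemma~\ref{lem:commutator:energy:lem} applied to $B_{(k)}(t)$,
\begin{equation*}
\int_{\R^{3}} e_{(k)}(t,x)\,dx = B_{(k)}(t) = (\bar{e}-E_{k}-\Delta E_{k})_{+} + O(e_{R,(k+1)}).
\end{equation*}
Thus the analog of Lemma~\ref{lem:en:int:control} holds with universal implied constants, and the greedy analysis of Claims~\ref{claim:growingSupp}--\ref{claim:nearly:saturated} goes through identically, delivering $E_{k}(t) \to \bar{e}(t)$ uniformly in $t$. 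A technical nuance is that if the flow mollification introduces small deviations from exact normalization $\int \phi_{(k)}(t,x)\,dx = 1$, one either absorbs the deviation into the $O(e_{R,(k+1)})$ error or, cleaner, replaces the mollification along the flow by exact transport by $v_{(k)}$ of a smooth initial profile, which preserves $\int \phi_{(k)}\,dx$ exactly because $v_{(k)}$ is divergence free. The spatial support bounds and derivative estimates for this transported profile follow from Lemmas~\ref{lem:MLMT:est4DPhi} and \ref{lem:MLMT:diffAlongFlow:Phi} of Subsection~\ref{subsec:MLMT:moll}. Once these ingredients are in place, the convergence in $C_{t,x}^{\alpha} \times C_{t,x}^{2\alpha}$ and the compact space-time support of the limit $(v,p)$ follow exactly as in Sections~\ref{sec:mainLemImpliesMainThm} and \ref{sec:verifyClaims}, completing the proof.
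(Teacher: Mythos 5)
Your proposal is correct in outline and follows the same overall strategy as the paper: both proofs reduce the nonperiodic problem to the periodic greedy algorithm by taking the energy density in product form, a purely temporal amplitude $(\bar{e}-E_{k}-\Dlt E_{k})_{+}^{1/2}\ast\eta_{\hat{\tau}_{k}}$ times a spatial profile of (essentially) unit mass, and then rerunning Claims~\ref{claim:growingSupp}--\ref{claim:nearly:saturated}. The difference lies in how the spatial factor is built. The paper takes a \emph{time-independent} cutoff $\chi_{(k)}(x)$ that is constant on a neighborhood of a slowly growing ball $B_{(k+1)}=B(10\Xi_{(k)}^{-1};B_{(k)})$ containing $\supp_{x} v_{(k)}$, so that $\supp v_{(k)}\cap\supp\nb\chi_{(k)}=\emptyset$; the advective derivative then never touches the spatial factor, the admissibility bounds \eqref{eq:lowBoundEoftxM}, \eqref{ineq:goodEnergyM} reduce verbatim to the temporal ones, and no flow machinery is needed (the support is tracked by the balls $B_{(k)}$ rather than by cylinders, since $v_{(k)}$ vanishes outside $B_{(k)}$). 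You instead mollify the spatial template along the flow of $v_{(k)}$ as in the Theorem~\ref{thm:eulerOnRn:cptPert} iteration and normalize $\int\phi_{(k)}(t,x)\,dx=1$; this buys the clean identity $\int e_{(k)}(t,x)\,dx=B_{(k)}(t)$, so the periodic claims carry over without the $\abs{B_{(k)}}$, $A'$ bookkeeping of the paper, at the cost of invoking Proposition~\ref{prop:MLMT:est4moll} and of the normalization issue you flag: if $\phi^{1/2}_{(k)}=\moll{v_{(k)}}{G}$, the flow-mollification preserves $\int\phi^{1/2}_{(k)}\,dx$ but not $\int\phi_{(k)}\,dx$, so your first option (absorbing the deviation into the $O(e_{R,(k+1)})$ error) is the one to use; it does work, since the deviation per unit amplitude is of size $O(\nrm{v_{(k)}}_{C^{0}}\tht_{(k)}+\Xi_{(k)}^{-1})$, which is far below $Z^{-1}$ for $\Xi_{(0)}$ large, but this estimate should be written out. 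One caution about your ``cleaner'' fallback: exact transport of $\phi^{1/2}_{(k)}$ by $v_{(k)}$ over the full, $k$-independent time interval $I$ is only admissible if $\nb\phi^{1/2}_{(k)}$ stays off $\supp v_{(k)}$ for all times, since otherwise Gronwall gives $\nrm{\nb\phi^{1/2}_{(k)}}\lesssim e^{\Xi_{(k)}e_{v,(k)}^{1/2}\abs{I}}$, destroying \eqref{ineq:goodEnergyM}; and in the favorable case the transported profile is literally time-independent, so this fallback collapses to the paper's static locally-constant cutoff.
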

In fact, our construction below will produce a weak solution $(v, p)$ to the Euler equations in $C_{t,x}^{\alp}(\bbR \times \bbR^{3})$ supported in the space-time cylinder
\begin{equation} \label{eq:prescribeNonperiodicEnergy:supp-vp}
	\supp (v, p) \subseteq \supp \bar{e} \times B(2;0),
\end{equation}
where $B(2; 0) \subseteq \bbR^{3}$ is simply the closed ball of radius $2$ centered at the origin.  By making straightforward modifications to the argument below, one can also arrange for the solutions of Theorem~\ref{thm:eulerOnRn:nonperiod:presEnergy} $(v, p)$ to have spatial supports contained in an arbitrarily small open subset $\calU \subseteq \R^3$, as asserted in Theorem~\ref{thm:eulerOnRn:presEnergy}.

\begin{proof} 
Unless otherwise stated, we employ the same notation as in the proof of Theorem~\ref{thm:eulerOnRn:period:presEnergy}. As before, we only consider the case $\alp^{*} = 1/5$ and $\gmm = 1/2$. Beginning with the trivial solution $(v_{(0)}, p_{(0)}, R_{(0)}) = (0, 0, 0)$, we will construct a sequence of solutions $(v_{(k)}, p_{(k)}, R_{(k)})$ to the Euler-Reynolds equations that obeys the following properties:
\begin{enumerate}
\item Each solution $(v_{(k)}, p_{(k)}, R_{(k)})$ has frequency-energy levels below $(\Xi_{(k)}, e_{v, (k)}, e_{R, (k)})$, which evolves under the iteration rules \eqref{eq:MLMT:parameterEvolution:Xi:M}--\eqref{eq:MLMT:parameterEvolution:eR:M} (or \eqref{eq:MLMT:parameterEvolution:Xi}--\eqref{eq:MLMT:parameterEvolution:eR}); recall that these rules ensure that $(v_{(k)}, p_{(k)})$ converges to a weak solution to the Euler equations in $C^{\alp}_{t,x} \times C^{2 \alp}_{t,x}$ for $0 < \alp < \alp^{*} =1/5$. 
\item In addition to keeping track of the time support of the Euler-Reynolds flows $(v_{(k)}, p_{(k)}, R_{(k)})$, we now need to take into account their supports in space. We will construct sets $I_{(k)} \subseteq \bbR$, $B_{(k)} \subseteq \bbR^{3}$ so that for each $k$, we have the space-time support condition
\begin{equation}  \label{eq:prescribeNonperiodicEnergy:supp-vpR}
	\supp v_{(k)} \cup \supp p_{(k)} \cup \supp R_{(k)} \subseteq I_{(k)} \times B_{(k)}
\end{equation} 
and $B_{(k)} \subseteq \bbR^{3}$ is an open ball satisfying
\begin{equation} \label{eq:prescribeNonperiodicEnergy:Bk}
	B_{(k)} \subseteq B_{(k+1)} \subseteq B(2; 0).
\end{equation}
The last property ensures that the limiting Euler flow $(v, p)$ obeys \eqref{eq:prescribeNonperiodicEnergy:supp-vp}.
\item Finally, the energy profiles $E_{k}(t)$ converge pointwise to $\bar{e}(t)$ as $k \to \infty$, i.e.,
\begin{equation} \label{eq:prescribeNonperiodicEnergy:E-k-bar-e}
	E_{k}(t) := \int_{\bbR^{3}} \abs{v_{(k)}}^{2}(t,x) \, \ud x \to \bar{e}(t) \quad \hbox{ as } k \to \infty.
\end{equation}
This property achieves the desired energy prescription \eqref{eq:nonperiod:presEnergy}.
\end{enumerate}

As before, we construct the sequence $(v_{(k)}, p_{(k)}, R_{(k)})$ via iteration of Lemma~\ref{lem:mod:mainLemma}. To this end, we need to choose: 
\begin{itemize}
\item The initial space-time set $I_{(0)} \times B_{(0)}$,
\item The initial frequency-energy levels $(\Xi_{(0)}, e_{v, (0)}, e_{R, (0)})$, 
\item The iteration factor $Z$, 
\item The energy increment $\widetilde{e}_{(k)}(t,x)$ and the space-time set $I_{(k)} \times B_{(k)}$ for each $k \geq 0$.
\end{itemize}

We set
\begin{equation}
	I_{(0)} = \emptyset, \quad B_{(0)} = B(1; 0)
\end{equation}
Similarly as before, we then take 
%
%
\begin{equation}  \label{eq:prescribeNonperiodicEnergy:e-vR-0}
	e_{R, (0)} = \frac{A'}{ \abs{B(1;0)}} \max_{t \in I} \bar{e}(t), \quad e_{v, (0)} = Z e_{R, (0)}.
\end{equation}
where $A' > 0$ is an absolute constant that will be specified in \eqref{eq:prescribeNonperiodicEnergy:chi-k-size} below. The iteration factor $Z$ will be chosen so that  

\begin{equation} \label{eq:prescribeNonperiodicEnergy:Z:1}
	Z \geq C_{0}^{\frac{2 \alp}{5 \eps}}, \quad 
	Z \geq (2 C_{0}^{-1})^{2/5},
\end{equation}
where $C_{0} > 1$ is the constant arising in the iteration rule \eqref{eq:MLMT:parameterEvolution:Xi:M}. The first condition, which coincides with \eqref{eq:suffLargeZ}, ensures that the resulting Euler flow $(v, p)$ belongs to $C^{1/5-\eps}_{t,x} \times C^{2/5-2\eps}_{t,x}$. The second condition will be used to control the growth of $B_{(k)}$ defined in \eqref{eq:prescribeNonperiodicEnergy:Bk-def}. We emphasize, however, that the value of $Z$ will be fixed only later. 
Similarly, for $\Xi_{(0)}$, we require
\begin{equation} \label{eq:prescribeNonperiodicEnergy:Xi-0}
	\Xi_{(0)} \geq \max \set{100, \  (Z^{1-\frac{3 \gmm}{2}} e^{-1-\frac{\gmm}{2}}_{R, (0)})^{1/\gmm}},
\end{equation}
but its actual value will be fixed after $Z$ has been chosen. The frequency-energy levels $(\Xi_{(k)}, e_{v, (k)}, e_{R, (k)})$ for $k \geq 1$ are determined by the iteration rules \eqref{eq:MLMT:parameterEvolution:Xi:M}--\eqref{eq:MLMT:parameterEvolution:eR:M}. Note that by \eqref{eq:prescribeNonperiodicEnergy:Z:1}, \eqref{eq:prescribeNonperiodicEnergy:Xi-0} and the iteration rules (see also the proof of \eqref{eq:energyStaysOk} before), we have
\begin{equation} \label{eq:prescribeNonperiodicEnergy:Q-k}
	Q_{(k)} := \frac{\abs{\hat{\tau}_{k}}^{\gmm}}{e_{R, (k+1)}} \leq Q_{(0)} \leq 1.
\end{equation}


Now it only remains to specify the space-time sets $I_{(k)} \times B_{(k)}$ and the energy density $\widetilde{e}_{(k)}(t,x)$. We need $\widetilde{e}_{(k)}(t,x)$ to be admissible in the sense that \eqref{eq:lowBoundEoftxM}, \eqref{ineq:goodEnergyM} are satisfied with $(\Xi_{(k)}, e_{v, (k)}, e_{R, (k)})$ as specified above. On the other hand, we need to ensure that the desired properties \eqref{eq:prescribeNonperiodicEnergy:supp-vpR}, \eqref{eq:prescribeNonperiodicEnergy:Bk} and \eqref{eq:prescribeNonperiodicEnergy:E-k-bar-e} hold with our choice of $I_{(k)} \times B_{(k)}$ and $\widetilde{e}_{(k)}(t,x)$. 

Here, our strategy is to essentially reduce the proof to that of Theorem~\ref{thm:eulerOnRn:period:presEnergy}. We proceed recursively: Under the assumption that $I_{(k)} \times B_{(k)}, v_{(k)}, p_{(k)}, R_{(k)}$ have been constructed so that $(v_{(k)}, p_{(k)}, R_{(k)})$ has frequency-energy level below $(\Xi_{(k)}, e_{v, (k)}, e_{R, (k)})$ and \eqref{eq:prescribeNonperiodicEnergy:supp-vpR} is satisfied, we construct appropriate $I_{(k+1)} \times B_{(k+1)}$ and $\widetilde{e}_{(k)}$. First, we define $B_{(k+1)}$ as
\begin{equation} \label{eq:prescribeNonperiodicEnergy:Bk-def}
	B_{(k+1)} := B(10 \Xi_{(k)}^{-1} ; B_{(k)}) \quad \hbox{ for } k \geq 0
\end{equation}
where $B(\overline{\rho}; S) := \set{x + \Dlt x : x \in S, \ \abs{\Dlt x} \leq \overline{\rho}}$. Since $B_{(0)} = B(1; 0)$, note that each $B_{(j)}$ is a closed ball centered at the origin as well; we will denote the radius of $B_{(j)}$ by $r_{(j)}$. By the second condition in \eqref{eq:prescribeNonperiodicEnergy:Z:1}, \eqref{eq:prescribeNonperiodicEnergy:Xi-0} and the iteration rules, \eqref{eq:prescribeNonperiodicEnergy:Bk} follows. Next, as in the proof of Theorem~\ref{thm:eulerOnRn:period:presEnergy}, we set
\begin{equation} \label{eq:prescribeNonperiodicEnergy:e-prof}
	e^{1/2}_{(k)}(t) = (\bar{e}(t) - E_{k}(t) - \Dlt E_{k})_{+}^{1/2} \ast \eta_{\hat{\tau}_{k}}
\end{equation}
where the gap $\Dlt E_{k}$ now takes the form\footnote{This definition is exactly analogous to \eqref{eq:DeEkDef}, as we assumed $\abs{\bbT^{3}} = 1$ before.}
\begin{equation} \label{eq:prescribeNonperiodicEnergy:e-gap}
	\Dlt E_{k} = Y e_{R, (k+1)} \abs{B_{(k)}},
\end{equation}
and $Y$ is a constant to be chosen later. We then take the energy density $\widetilde{e}^{1/2}_{(k)}(t,x)$ to be of the form
\begin{equation} \label{eq:prescribeNonperiodicEnergy:e-d}
	\widetilde{e}^{1/2}_{(k)}(t,x) = e^{1/2}_{(k)}(t) \chi_{(k)}(x)
\end{equation}
where $\chi_{(k)}(x)$ is a smooth non-negative function on $\bbR^{3}$ that obeys
\begin{align} 
	& \supp  \chi_{(k)} \subseteq B(5 \Xi_{(k)}^{-1}; B_{(k)}), \label{eq:prescribeNonperiodicEnergy:chi-k-supp} \\
	& B(2 \Xi_{(k)}^{-1}; B_{(k)}) \subseteq \set{x \in \bbR^{3} : \chi_{(k)}(x) = \chi_{(k)}(0)},\label{eq:prescribeNonperiodicEnergy:chi-k-const} \\
	& \int \chi_{(k)}(x) = 1, \quad 
	\abs{\nb^{m} \chi_{(k)}} \leq A' \frac{\Xi_{(k)}^{m}}{\abs{B_{(k)}}} \quad \hbox{ for } 0 \leq m \leq 2, \label{eq:prescribeNonperiodicEnergy:chi-k-size}
\end{align}
for some absolute constant $A' > 0$.
To construct such a function $\chi_{(k)}(x)$, consider a smooth radial function $\eta(r)$ which equals 1 on $\set{r \leq r_{(k)} + 2 \Xi_{(k)}^{-1}}$ and vanishes on $\set{r \geq r_{(k)} + 5 \Xi_{(k)}^{-1}}$, and then normalize $\chi_{(k)}(x) = c \eta(\abs{x})$ so that $\int \chi_{(k)} = 1$. Finally, the set $I_{(k+1)}$ is defined as
\begin{equation} \label{eq:prescribeNonperiodicEnergy:I-k-def}
	I_{(k+1)} = I(2 \hat{\tau}_{k}; \set{t \in \bbR : \bar{e}(t) - E_{k}(t) \geq \Dlt E_{k} }) \quad \hbox{ for } k \geq 0,
\end{equation}
where we recall the notation $I(\overline{\tau}; J) = \set{t + \Dlt t : t \in J, \ \abs{\Dlt t} \leq \overline{\tau}}$.

The Ansatz \eqref{eq:prescribeNonperiodicEnergy:e-d} reduces the question of admissibility of the energy density $\widetilde{e}_{(k)}(t,x)$ to that of the energy profile $e_{(k)}(t,x)$, which we have dealt with in the proof of Theorem~\ref{thm:eulerOnRn:period:presEnergy}. Indeed, note that

\begin{equation*}
 \hat{C}_{v_{(k)}}(\tht_{(k)}, \Xi_{(k)}^{-1} ; \supp R_{(k)}) \subseteq I(\tht_{(k)}; I_{(k)}) \times B(\Xi_{(k)}^{-1}; B_{(k)})
\end{equation*}
which follows from $\supp v_{(k)} \cup \supp R_{(k)} \subseteq I_{(k)} \times B_{(k)}$ (i.e., \eqref{eq:prescribeNonperiodicEnergy:supp-vpR} for $k$) and the duality \eqref{iff:EuLagDuality}. Hence, the desired lower bound \eqref{eq:lowBoundEoftxM} follows (using \eqref{eq:prescribeNonperiodicEnergy:chi-k-const}) once we prove the bound $e_{(k)}(t) \chi_{(k)}(0) \geq 2 K e_{R, (k)}$ for $t \in I(\tht_{(k)}; I_{(k)})$. Using \eqref{eq:prescribeNonperiodicEnergy:chi-k-size}, we can further reduce \eqref{eq:lowBoundEoftxM} to the following lower bound on $e_{(k)}(t)$:
\begin{equation} \label{eq:prescribeNonperiodicEnergy:e-prof-lb}
	e_{(k)}(t) \geq \frac{2 K \abs{B_{(k)}}}{A'} e_{R, (k)}, \quad t \in I(\tht_{(k)}; I_{(k)}).
\end{equation}
Next, by \eqref{eq:prescribeNonperiodicEnergy:chi-k-supp} and \eqref{eq:prescribeNonperiodicEnergy:chi-k-const}, we have
\begin{equation} \label{eq:prescribeNonperiodicEnergy:chi-k-supp:2}
	\supp v_{(k)} \cap \supp \nb \chi_{(k)} = \emptyset, 
\end{equation}
which implies
\begin{equation}
	\nb^{m} (\rd_{t} + v_{(k)} \cdot \nb)^{r} \, \widetilde{e}^{1/2}_{(k)}(t,x)
	= (\frac{d}{d t})^{r} e^{1/2}_{(k)}(t) \nb^{m} \chi_{(k)}(x).
\end{equation}
Hence, by \eqref{eq:prescribeNonperiodicEnergy:chi-k-size}, the desired upper bound \eqref{ineq:goodEnergyM} for $\widetilde{e}^{1/2}_{(k)}(t,x)$ with $L = 2$ follows once $M$ is chosen to be such that
\begin{equation} \label{eq:prescribeNonperiodicEnergy:e-prof-ub}
	\nrm{(\frac{d}{d t})^{r} e_{(k)}^{1/2}}_{C^{0}_{t}} \leq \frac{M}{A'} (\bfb^{-1} \Xi_{(k)} e_{v, (k)})^{r} e_{R, (k)}^{1/2}, \quad 0 \leq r \leq 1.
\end{equation}

Repeating the arguments in the proof of Theorem~\ref{thm:eulerOnRn:period:presEnergy}, the following analogues of Claims~\ref{claim:growingSupp}, \ref{claim:room:to:add} and \ref{claim:nearly:saturated} can be established using induction (note that \eqref{eq:prescribeNonperiodicEnergy:e-vR-0} ensures that these claims hold for $k = 0$): 
\begin{claim}[Growing Supports] \label{claim:growingSuppNonperiodic} For all $k \geq 0$, we have
\ali{
 \label{eq:supp-in-I:nonperiodic}
	\supp v_{(k)} \cup \supp p_{(k)} \cup \supp R_{(k)} &\subseteq I_{(k)} \times B_{(k)} \\
	I_{(k)} &\subseteq I_{(k+1)} \label{subset:growintTimeRn} \\ 
B(1,0) \subseteq	B_{(k)} &\subseteq B_{(k+1)} \subseteq B(2,0) \label{subset:growingBallRn}
} 
\end{claim}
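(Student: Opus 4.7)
The plan is to argue by induction on $k$, coupling this claim with the non-periodic analogues of Claims~\ref{claim:room:to:add}--\ref{claim:nearly:saturated} (the latter must be carried along because the step from $I_{(k)}$ to $I_{(k+1)}$ requires knowing a quantitative lower bound on $\bar e - E_k$). The base case $k=0$ is immediate: $(v_{(0)},p_{(0)},R_{(0)})=(0,0,0)$ and $I_{(0)}=\emptyset$ give \eqref{eq:supp-in-I:nonperiodic} and \eqref{subset:growintTimeRn} trivially, while $B_{(0)}=B(1;0)$ combined with $\Xi_{(0)}\geq 100$ yields $B_{(1)}=B(10\Xi_{(0)}^{-1};B_{(0)})\subseteq B(2;0)$.

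For the inductive step, first dispose of \eqref{subset:growingBallRn}: $B_{(k+1)}\subseteq B_{(k+2)}$ is a consequence of the definition \eqref{eq:prescribeNonperiodicEnergy:Bk-def}, and the containment in $B(2;0)$ follows by summing the geometric series $\sum_{j=0}^{\infty} 10\Xi_{(j)}^{-1} \leq 10\Xi_{(0)}^{-1}\sum_{j=0}^{\infty}(C_{0}Z^{5/2})^{-j}\leq 1$, where the second inequality in \eqref{eq:prescribeNonperiodicEnergy:Z:1} guarantees $C_{0}Z^{5/2}\geq 2$ and $\Xi_{(0)}\geq 100$ closes the bound. Next, for the support containment \eqref{eq:supp-in-I:nonperiodic} at level $k+1$, write $v_{(k+1)}=v_{(k)}+V_{(k)}$, $p_{(k+1)}=p_{(k)}+P_{(k)}$. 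By the Main Lemma, $\supp V_{(k)}\cup \supp P_{(k)}\cup \supp R_{(k+1)}\subseteq \ECyl_{v_{(k)}}(\mathbf{b}\theta_{(k)},\Xi_{(k)}^{-1};\supp \widetilde{e}_{(k)})$. The factorization $\widetilde{e}_{(k)}(t,x)=e_{(k)}^{1/2}(t)\chi_{(k)}(x)$ together with \eqref{eq:prescribeNonperiodicEnergy:chi-k-supp} gives $\supp\widetilde{e}_{(k)}\subseteq I(\hat{\tau}_{k};\{\bar e-E_{k}\geq \Delta E_{k}\})\times B(5\Xi_{(k)}^{-1};B_{(k)})$, and the cylinder enlarges this by an extra $\hat{\tau}_{k}$ in time and $\Xi_{(k)}^{-1}$ in space plus whatever transport $v_{(k)}$ produces over the time interval $\hat{\tau}_{k}$.

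The key technical step is to verify that the transport contribution is dwarfed by $\Xi_{(k)}^{-1}$. Telescoping $\|V_{(j)}\|_{C^{0}}\leq C e_{R,(j)}^{1/2}$ with the iteration rules \eqref{eq:MLMT:parameterEvolution:ev:M}--\eqref{eq:MLMT:parameterEvolution:eR:M} gives the uniform bound $\|v_{(k)}\|_{C^{0}}\leq C' e_{R,(0)}^{1/2}$, whence a short calculation using $\hat{\tau}_{k}=Z^{-1}\Xi_{(k)}^{-1}e_{v,(k)}^{-1/2}$, $e_{v,(k)}=Z^{-(k-1)}e_{R,(0)}$, and $\Xi_{(k)}=(C_{0}Z^{5/2})^{k}\Xi_{(0)}$ yields $\|v_{(k)}\|_{C^{0}}\hat{\tau}_{k}\lesssim Z^{-(2k+3/2)}\Xi_{(k)}^{-1}$, negligible compared to $\Xi_{(k)}^{-1}$. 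Consequently the spatial enlargement totals at most $(5+1+o(1))\Xi_{(k)}^{-1}\leq 10\Xi_{(k)}^{-1}$, so the support of $V_{(k)},P_{(k)},R_{(k+1)}$ lies in $B_{(k+1)}$; the time enlargement produces exactly the set $I_{(k+1)}$ defined in \eqref{eq:prescribeNonperiodicEnergy:I-k-def}. Combining with the inductive hypothesis $\supp v_{(k)}\cup\supp p_{(k)}\subseteq I_{(k)}\times B_{(k)}$ and the (still to be proved) inclusion $I_{(k)}\subseteq I_{(k+1)}$, the containment \eqref{eq:supp-in-I:nonperiodic} follows at level $k+1$.

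Finally, the inclusion $I_{(k)}\subseteq I_{(k+1)}$ is where the coupling with the lower-bound Claim~\ref{claim:room:to:add} enters: assuming inductively that $\bar e(t)-E_{k}(t)\geq 3Ke_{R,(k)}$ on $I(2\theta_{(k)};I_{(k)})\supseteq I_{(k)}$, and imposing $Z\geq Y|B(2;0)|/(3K)$, we obtain $\bar e(t)-E_{k}(t)\geq Ye_{R,(k+1)}|B_{(k)}|=\Delta E_{k}$, placing $t$ into the set defining $I_{(k+1)}$. The remaining inductive obligation, namely propagating $\bar e-E_{k+1}\geq 3Ke_{R,(k+1)}$ on $I(2\theta_{(k+1)};I_{(k+1)})$, is the main technical step; it follows from an energy-gap computation analogous to \S\ref{subsub:verifyLowerBound} using the local energy estimate \eqref{eq:energyPrescribed} applied to the test function $\chi_{(k)}$ and the crucial curl-cancellation $\int v_{(k)}\cdot V_{(k)}\,dx = -\int (\nabla\chi_{(k)}\times v_{(k)})\cdot W_{(k)}\,dx = 0$, valid because \eqref{eq:prescribeNonperiodicEnergy:chi-k-supp:2} forces $\nabla\chi_{(k)}$ to vanish on $\supp v_{(k)}$. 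The main obstacle, as in the periodic argument, is the ordering of parameter choices: $Y$ depends on $\|\bar e\|_{\dot C^{\gamma}_{t}}$, $\overline{M}$ depends on $Y$, and $Z$ depends on both together with $C_{0}$; once this bookkeeping is done exactly as in \S\ref{eq:onOrderOfParameters} with the volume factor $|B(2;0)|$ inserted throughout, the induction closes.
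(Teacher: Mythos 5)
Your overall inductive architecture — coupling the support claim with the nonperiodic analogues \ref{claim:room:to:add:nonperiodic} and \ref{claim:nearly:saturated:nonperiodic}, the geometric-series bound giving $B_{(k)} \subseteq B(2;0)$ via \eqref{eq:prescribeNonperiodicEnergy:Z:1}--\eqref{eq:prescribeNonperiodicEnergy:Xi-0}, and the time-support bookkeeping copied from the periodic argument — is exactly what the paper intends (it explicitly declares the modifications routine and omits them). However, the one step that is genuinely new in the nonperiodic setting, namely why the spatial support grows by only $O(\Xi_{(k)}^{-1})$ per stage, is handled in your proposal by an estimate that is false, and the strategy behind it does not close. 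You claim $\|v_{(k)}\|_{C^{0}}\hat{\tau}_{k} \lesssim Z^{-(2k+3/2)}\Xi_{(k)}^{-1}$, hence "negligible compared to $\Xi_{(k)}^{-1}$." But $\|v_{(k)}\|_{C^{0}} \lesssim e_{R,(0)}^{1/2}$ does not decay in $k$, while $\hat{\tau}_{k} = Z^{-1}\Xi_{(k)}^{-1}e_{v,(k)}^{-1/2}$ with $e_{v,(k)} = Z^{1-k}e_{R,(0)}$, so in fact $\|v_{(k)}\|_{C^{0}}\hat{\tau}_{k} \sim Z^{(k-3)/2}\,\Xi_{(k)}^{-1}$ (your exponent seems to come from expressing $\hat{\tau}_{k}$ against $\Xi_{(0)}^{-1}$ rather than $\Xi_{(k)}^{-1}$). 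The ratio to $\Xi_{(k)}^{-1}$ diverges: the admissible drift $\Xi_{(k)}^{-1}/\hat{\tau}_{k} = Z e_{v,(k)}^{1/2} \to 0$ while the velocity amplitude stays of size $e_{R,(0)}^{1/2}$, so a crude transport-distance bound cannot confine the new support inside $B_{(k+1)} = B(10\Xi_{(k)}^{-1};B_{(k)})$ once $k \gtrsim 3$, and the induction as you set it up does not close.

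The construction nevertheless works — and this is precisely why the paper builds $\chi_{(k)}$ with the properties \eqref{eq:prescribeNonperiodicEnergy:chi-k-supp}, \eqref{eq:prescribeNonperiodicEnergy:chi-k-const} and leaves the $10\Xi_{(k)}^{-1}$ margin — because of a support argument, not a smallness argument: by the inductive hypothesis $\supp v_{(k)} \subseteq I_{(k)}\times B_{(k)}$, the field $v_{(k)}$ vanishes identically on the closed complement of $B_{(k)}$, so its flow leaves $B_{(k)}$ invariant and is spatially trivial on the annulus $B(5\Xi_{(k)}^{-1};B_{(k)})\setminus B_{(k)}$ containing the edge of $\supp \chi_{(k)}$. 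Hence the $v_{(k)}$-adapted Eulerian cylinder over $\supp \widetilde{e}_{(k)} \subseteq I(\hat{\tau}_{k};\cdot)\times B(5\Xi_{(k)}^{-1};B_{(k)})$ has spatial projection inside $B(6\Xi_{(k)}^{-1};B_{(k)}) \subseteq B_{(k+1)}$, with no estimate on the transport distance needed at all; this is the mechanism announced in item 3 of Section \ref{sec:mainLemImpliesMainThm} and encoded in Lemma \ref{lem:MLMT:fsp4Flow}, here with ambient field $v_{(0)}=0$. Two smaller slips in your last paragraph: the cross term equals $\int v_{(k)}\cdot V_{(k)}\,dx = \int (\nabla\times v_{(k)})\cdot W_{(k)}\,dx$, which is $O(Z^{-1}e_{R,(k+1)})$ but not exactly zero (only the $\nabla\psi$ contribution dies when the cutoff is constant on $\supp W_{(k)}$); and the local energy estimate \eqref{eq:energyPrescribedM} should be tested against a function $\psi \equiv 1$ on the supports of $V_{(k)}$ and $\widetilde{e}_{(k)}$, not against $\chi_{(k)}$ itself, whose height is of order $|B_{(k)}|^{-1}$.
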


\begin{claim}[There is Always Room to Add More Energy where the Error is Supported] \label{claim:room:to:add:nonperiodic}
For every $t \in \bbR$, we have
\begin{equation} \label{eq:below-bar-e:nonperiodic}
	\bar{e}(t) - E_{k}(t) \geq 0.
\end{equation}
Moreover, for $t \in I(2 \hat{\tau}_{k}; I_{(k)})$, we have
\begin{equation} \label{eq:needed:room:nonperiodic}
	\bar{e}(t) - E_{k}(t) \geq \frac{3 K}{A'} e_{R, (k)} \abs{B_{(k)}},
\end{equation}
where $K$ is the constant in the lower bound \eqref{eq:lowBoundEoftx} of Lemmas~\ref{lem:mainLemma} and \ref{lem:mod:mainLemma}, $A'$ is the constant in \eqref{eq:prescribeNonperiodicEnergy:chi-k-size}. 
\end{claim}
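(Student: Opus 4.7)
The proof proceeds by a simultaneous induction on $k$, establishing Claims~\ref{claim:growingSuppNonperiodic} and \ref{claim:room:to:add:nonperiodic} in parallel and closely mirroring Section~\ref{subsub:verifyLowerBound}. The base case $k = 0$ is immediate: since $(v_{(0)}, p_{(0)}, R_{(0)}) = (0,0,0)$ and $I_{(0)} = \emptyset$, the nonnegativity \eqref{eq:below-bar-e:nonperiodic} reduces to $\bar{e} \geq 0$, while $I(2\hat{\tau}_0; I_{(0)}) = \emptyset$ renders \eqref{eq:needed:room:nonperiodic} vacuous.

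For the inductive step $k \to k+1$, assume both claims hold at level $k$. First, I would check the containment $I_{(k)} \subseteq I_{(k+1)}$, which follows from the inductive bound \eqref{eq:needed:room:nonperiodic} at level $k$ combined with the identity $\Dlt E_k = \tfrac{Y}{Z} e_{R,(k)} \abs{B_{(k)}}$, under the condition $Z \geq A' Y /(3K)$. Second, I would verify the space-time support containment \eqref{eq:supp-in-I:nonperiodic} at level $k+1$ by applying \eqref{eq:goalForAllSuppM} to $\widetilde{e}_{(k)}$. The temporal enlargement ${\bf b}\theta_{(k)} \leq \hat{\tau}_k$ is absorbed by the definition of $I_{(k+1)}$, while the spatial enlargement is bounded via Lemma~\ref{lem:cylCompare} (comparing $v_{(k)}$ with the trivial vector field) together with the telescoping bound $\nrm{v_{(k)}}_{C^0} \leq C e_{R,(0)}^{1/2}$, obtained by summing \eqref{eq:Vco} over $j < k$; for $Z$ large the resulting spatial drift is dominated by $\Xi_{(k)}^{-1}$, so the spatial projection of $\supp R_{(k+1)}$ lies in $B(7 \Xi_{(k)}^{-1}; B_{(k)}) \subseteq B_{(k+1)}$.

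Next I would establish the sharp lower bound \eqref{eq:needed:room:nonperiodic} at level $k+1$. Expanding $E_{k+1}(t) - E_k(t) = \int \abs{V_{(k)}}^2\,\ud x + 2 \int v_{(k)} \cdot V_{(k)}\,\ud x$, applying the local energy prescription \eqref{eq:energyPrescribedM} with a cutoff $\psi$ equal to $1$ on a slight enlargement of $B_{(k+1)}$ (so that $\nrm{\psi}_{L^1} \leqc \abs{B(2;0)}$ and $\Xi_{(k)}^{-1}\nrm{\nb\psi}_{L^1} \leqc \Xi_{(k)}^{-1}$), using the curl identity $V_{(k)} = \nb \times W_{(k)}$ together with bounds \eqref{eq:Wco} and \eqref{bound:nabkv} to dispatch the cross term, and invoking a nonperiodic commutator-type estimate analogous to Lemma~\ref{lem:commutator:energy:lem}, I obtain for $t \in I_{(k+1)}$
\[ \bar{e}(t) - E_{k+1}(t) = \Dlt E_k + O\bigl(e_{R,(k+1)} \abs{B_{(k)}}\bigr) \geq (Y - \hat{C})\, e_{R,(k+1)} \abs{B_{(k)}}. \]
Extending from $I_{(k+1)}$ to $I(2 \theta_{(k+1)}; I_{(k+1)})$ uses H\"older continuity of $\bar{e}$ and the Lipschitz bound $\abs{\tfrac{d}{d t} E_{k+1}(t)} \leq \hat{C} \Xi_{(k+1)} e_{v,(k+1)}^{1/2} e_{R,(k+1)} \abs{B_{(k+1)}}$, which follows from $\tfrac{d}{d t} E_{k+1} = -\int \rd_j v_{(k+1),l} R_{(k+1)}^{jl}\,\ud x$ and the inductively-established spatial support, exactly as in \eqref{eq:saturated:t-in-I-I-k+1}. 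Choosing $Y$ larger than a constant depending on $\nrm{\bar{e}}_{\dot C^\gamma_t}$, $\abs{B(2;0)}$ and universal constants then yields \eqref{eq:needed:room:nonperiodic} at level $k+1$. Finally, \eqref{eq:below-bar-e:nonperiodic} at level $k+1$ follows: outside $I_{(k+1)}$ we have $V_{(k)} = 0$, so $E_{k+1} = E_k$ and the inductive hypothesis applies, while inside we use the lower bound just established.

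The main obstacle is the nonperiodic analog of the commutator estimate Lemma~\ref{lem:commutator:energy:lem}: one must show that the smoothed profile $e_{(k)}(t) = [(\bar{e} - E_k - \Dlt E_k)_+^{1/2} \ast \eta_{\hat{\tau}_k}]^2$ differs from $(\bar{e} - E_k - \Dlt E_k)_+$ by at most $O(e_{R,(k+1)}\abs{B_{(k)}})$. The new ingredient compared to the periodic case is that the Lipschitz estimate for $E_k(t)$ now carries a volume factor $\abs{B_{(k)}}$, since the Euler-Reynolds identity integrates $R_{(k)}^{jl}$ against $\rd_j v_{(k), l}$ over the spatial support $B_{(k)}$. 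Verifying that this $\abs{B_{(k)}}$ factor cancels correctly against the volume-weighted gap $\Dlt E_k = Y e_{R,(k+1)} \abs{B_{(k)}}$ in every error term, and that the upper-bound companion of Claim~\ref{claim:nearly:saturated} (in its nonperiodic form) closes with the same volume scaling, is the technical heart of the argument. Particular care is required with the $\nrm{\nb \psi}_{L^1}$ contribution in \eqref{eq:energyPrescribedM}, which scales with surface area rather than volume and must be absorbed using the gain of $\Xi_{(k)}^{-1}$.
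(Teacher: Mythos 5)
Your overall architecture is the same as the paper's: a simultaneous induction on $k$ coupling the support claim, the lower bound, and the near-saturation claim, reducing everything to the periodic argument of Section~\ref{subsub:verifyLowerBound} with the volume factor $\abs{B_{(k)}}$ (bounded above and below by absolute constants) carried through, and fixing $Y$ first and then $Z$. The energy part of your argument (the nonperiodic analogue of Lemma~\ref{lem:en:int:control}, the Lipschitz bound on $E_k$ with the volume factor, the extension from $I_{(k+1)}$ to $I(2\tht_{(k+1)};I_{(k+1)})$ via H\"older continuity of $\bar e$, and the treatment of the cross term through $V_{(k)}=\nb\times W_{(k)})$ is exactly the intended route.

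There is, however, one step that fails as written: your justification of the spatial support containment $\supp R_{(k+1)}\subseteq \bbR\times B(7\Xi_{(k)}^{-1};B_{(k)})$ via Lemma~\ref{lem:cylCompare} and the drift bound $\nrm{v_{(k)}}_{C^0}\,\hat{\tau}_k$. Since $\nrm{v_{(k)}}_{C^0}\lesssim e_{R,(0)}^{1/2}$ while $\hat{\tau}_k={\bf b}\,\Xi_{(k)}^{-1}e_{v,(k)}^{-1/2}$ with $e_{v,(k)}=Z^{1-k}e_{R,(0)}$, the ratio of the drift to $\Xi_{(k)}^{-1}$ is of size $Z^{-1}(e_{R,(0)}/e_{v,(k)})^{1/2}\sim Z^{(k-3)/2}$, which diverges as $k\to\infty$ and in fact grows, rather than shrinks, when $Z$ is taken larger; so no choice of $Z$ makes the crude drift estimate close the induction beyond the first few stages. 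The same issue would also poison the admissibility of $\widetilde e_{(k)}$, since the lower bound \eqref{eq:lowBoundEoftxM} must be checked on an Eulerian cylinder of the even longer duration $\tht_{(k)}$ about $\supp R_{(k)}$, inside the region $B(2\Xi_{(k)}^{-1};B_{(k)})$ where $\chi_{(k)}$ is constant. The repair uses the inductive support hypothesis itself rather than any drift estimate: since $v_{(k)}$ vanishes identically outside $B_{(k)}$ for all times, uniqueness for the flow ODE shows that points outside $B_{(k)}$ are fixed by the flow and trajectories starting in $B_{(k)}$ never leave it (this is the same mechanism as Lemma~\ref{lem:MLMT:fsp4Flow}). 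Hence $\ECyl_{v_{(k)}}({\bf b}\tht_{(k)},\Xi_{(k)}^{-1};\supp\widetilde e_{(k)})\subseteq \bbR\times B(6\Xi_{(k)}^{-1};B_{(k)})\subseteq\bbR\times B_{(k+1)}$ by \eqref{eq:prescribeNonperiodicEnergy:chi-k-supp} and \eqref{eq:prescribeNonperiodicEnergy:Bk-def}, and $\ECyl_{v_{(k)}}(\tht_{(k)},\Xi_{(k)}^{-1};\supp R_{(k)})\subseteq\bbR\times B(\Xi_{(k)}^{-1};B_{(k)})$ lies where $\chi_{(k)}=\chi_{(k)}(0)$ by \eqref{eq:prescribeNonperiodicEnergy:chi-k-const}; this is precisely what the margins $5\Xi_{(k)}^{-1}$, $10\Xi_{(k)}^{-1}$ in the construction are designed for. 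With that substitution the rest of your induction goes through and coincides with the paper's (omitted) argument.
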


\begin{claim}[The Energy Threshold is Nearly Saturated]\label{claim:nearly:saturated:nonperiodic}
There is an absolute constant $\overline{M}$ such that the upper bound
\ali{
 \sup_t | \bar{e}(t) - E_k(t) | &\leq \overline{M} e_{R, (k)} \abs{B_{(k)}} \label{eq:nearSaturated:nonperiodic}
}
holds uniformly.
\end{claim}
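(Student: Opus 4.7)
The plan is to argue by induction on $k \geq 0$, mirroring the structure of Section \ref{sec:verifyLastClaim} from the periodic case with careful bookkeeping of the spatial volume factor $|B_{(k)}|$. The base case $k=0$ is immediate from the choice of $e_{R,(0)}$ in \eqref{eq:prescribeNonperiodicEnergy:e-vR-0}: since $E_0 \equiv 0$ and $\max_t \bar{e}(t) = (1/A') e_{R,(0)} |B_{(0)}|$, the inequality \eqref{eq:nearSaturated:nonperiodic} holds at $k=0$ with $\overline{M} = 1/A'$. The bulk of the work lies in the inductive step $k \to k+1$, which, just as in the periodic case, rests on establishing a nonperiodic analog of Lemma \ref{lem:en:int:control}:
\begin{equation*}
\bar{e}(t) - E_{k+1}(t) = \bar{e}(t) - E_k(t) - (\bar{e}(t) - E_k(t) - \Delta E_k)_+ + O(|B_{(k+1)}| e_{R,(k+1)}),
\end{equation*}
with a universal implicit constant.

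Granted the approximation formula, the conclusion of the induction follows exactly as in Section \ref{sec:verifyLastClaim}. Splitting into the two cases $\bar{e}(t) - E_k(t) \geq \Delta E_k$ and $\bar{e}(t) - E_k(t) < \Delta E_k$, and combining with the nonnegativity $\bar{e} - E_{k+1} \geq 0$ furnished by Claim \ref{claim:room:to:add:nonperiodic}, yields in either case
\begin{equation*}
|\bar{e}(t) - E_{k+1}(t)| \leq \Delta E_k + C |B_{(k+1)}| e_{R,(k+1)} \leq (Y + C)\, |B_{(k+1)}|\, e_{R,(k+1)},
\end{equation*}
where we used the containment $|B_{(k)}| \leq |B_{(k+1)}|$ from Claim \ref{claim:growingSuppNonperiodic} and the definition $\Delta E_k = Y e_{R,(k+1)} |B_{(k)}|$. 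Setting $\overline{M} := Y + C$, which depends only on $Y$ and universal constants, closes the induction.

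To prove the approximation formula, I would write $E_{k+1} - E_k = \int |V_{(k)}|^2\, dx + 2\int v_{(k)} \cdot V_{(k)}\, dx$. The cross term is handled by the identity $V_{(k)} = \nabla \times W_{(k)}$ and integration by parts: since both $v_{(k)}$ and $W_{(k)}$ have compact spatial supports contained in $B_{(k+1)}$, boundary terms vanish and one obtains $|\int v_{(k)} \cdot V_{(k)} dx| \leq \|\nabla v_{(k)}\|_{C^0} \|W_{(k)}\|_{C^0} |B_{(k+1)}| \lesssim \bfb \, e_{R,(k+1)} |B_{(k+1)}|$ from the bounds \eqref{eq:Wco} and Definition \ref{def:subSolDef}. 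For the quadratic term, I would apply the local energy estimate \eqref{eq:energyPrescribedM} with a smooth cutoff $\psi(x) \equiv 1$ on $\supp V_{(k)} \cup \supp \widetilde{e}_{(k)}$, supported in a slight enlargement so that $\|\psi\|_{L^1} + \Xi_{(k)}^{-1}\|\nabla\psi\|_{L^1} \lesssim |B_{(k+1)}|$, producing the bound $|\int |V_{(k)}|^2 dx - \int \widetilde{e}_{(k)}(t,x) dx| \lesssim |B_{(k+1)}| e_{R,(k+1)}$.

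The remaining and main technical obstacle is the nonperiodic analog of Lemma \ref{lem:commutator:energy:lem}, namely
\begin{equation*}
\Big|\int \widetilde{e}_{(k)}(t,x)\, dx - (\bar{e}(t) - E_k(t) - \Delta E_k)_+\Big| \lesssim |B_{(k+1)}|\, e_{R,(k+1)}.
\end{equation*}
Using the Ansatz \eqref{eq:prescribeNonperiodicEnergy:e-d} and the normalization of $\chi_{(k)}$, this reduces to the Constantin--E--Titi style quadratic commutator estimate applied to $e_{(k)}^{1/2}(t) = (\bar{e} - E_k - \Delta E_k)_+^{1/2} \ast \eta_{\hat{\tau}_k}$, exactly as in Section \ref{sec:commutEstimate}. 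The required inputs are the H\"older regularity $\bar{e} \in \dot{C}^\gamma_t$, which contributes $\|\bar{e}\|_{\dot{C}^\gamma} \hat{\tau}_k^\gamma = Q_{(k)} e_{R,(k+1)} \lesssim e_{R,(k+1)}$ by the bound \eqref{eq:prescribeNonperiodicEnergy:Q-k}, together with the derivative bound
\begin{equation*}
\Big\|\tfrac{d}{dt} E_k\Big\|_{C^0_t} \lesssim \Xi_{(k)} e_{v,(k)}^{1/2} e_{R,(k)} |B_{(k)}|,
\end{equation*}
which follows from the Euler--Reynolds equations by integration against $v_{(k)}$, using $\supp R_{(k)} \subseteq I_{(k)} \times B_{(k)}$. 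The hard part is to track the extra $|B_{(k)}|$ factors through the commutator computation and to ensure the chosen cutoff $\psi$ respects the slow polynomial growth $B_{(k+1)} = B(10\Xi_{(k)}^{-1}; B_{(k)})$ so that all volume factors remain comparable to $|B_{(k+1)}|$.
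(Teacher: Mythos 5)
Your outline follows the paper's own route: the paper disposes of this claim by asserting that the periodic arguments (the induction of Sections \ref{sec:verifyClaims}--\ref{sec:commutEstimate}, i.e.\ the analogue of Lemma \ref{lem:en:int:control} via the cross-term/curl bound, the localized prescription estimate \eqref{eq:energyPrescribedM}, and the Constantin--E--Titi commutator argument together with \eqref{eq:prescribeNonperiodicEnergy:Q-k}) go through with ``routine modifications'' in which the factors $\abs{B_{(k)}}$ are harmless absolute constants, and you supply essentially those modifications, including a correct treatment of the cross term and a sensible cutoff $\psi$ adapted to $B_{(k+1)}$.

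There is, however, one step you assert that does not follow as written, and it is exactly the step your last sentence defers. You reduce the key approximation to $\bigl|\int \widetilde{e}_{(k)}(t,x)\,dx - (\bar{e}-E_k-\Delta E_k)_+\bigr| \lesssim \abs{B_{(k+1)}}\, e_{R,(k+1)}$ ``using the Ansatz \eqref{eq:prescribeNonperiodicEnergy:e-d} and the normalization of $\chi_{(k)}$.'' But \eqref{eq:prescribeNonperiodicEnergy:e-d} gives $\widetilde{e}_{(k)}(t,x)=e_{(k)}(t)\,\chi_{(k)}^2(x)$, so
\[
\int \widetilde{e}_{(k)}(t,x)\,dx \;=\; e_{(k)}(t)\int \chi_{(k)}^{2}(x)\,dx ,
\]
while \eqref{eq:prescribeNonperiodicEnergy:chi-k-size} normalizes $\int\chi_{(k)}=1$, which forces $\int\chi_{(k)}^{2}\approx \abs{B_{(k)}}^{-1}$, an absolute constant strictly less than $1$, not equal to $1$. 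This deficit cannot be absorbed into the $O(\abs{B_{(k+1)}}e_{R,(k+1)})$ error or into $\overline{M}$: if each step removes only a fixed fraction $\mu<1$ of the gap $\bar{e}-E_k$, the gap decays at best like $(1-\mu)^{k}$, whereas \eqref{eq:nearSaturated:nonperiodic} demands decay at the rate $e_{R,(k)}=Z^{-k}e_{R,(0)}$ with $Z$ large; the induction therefore closes only if $\int\chi_{(k)}^{2}=1$ up to errors of size $O(Z^{-1})$. The cure is to normalize $\chi_{(k)}$ in $L^2$ (equivalently, to place the spatial cutoff on $\widetilde{e}_{(k)}$ itself rather than on its square root), after which the lower bound \eqref{eq:prescribeNonperiodicEnergy:e-prof-lb}, the upper bound \eqref{eq:prescribeNonperiodicEnergy:e-prof-ub}, and all of your estimates go through with adjusted absolute constants. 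This is admittedly a wrinkle in the paper's own setup that its ``routine modifications'' do not spell out, but your appeal to ``the normalization of $\chi_{(k)}$'' and your deferred tracking of the $\abs{B_{(k)}}$ factors is precisely where the argument must be repaired: as literally set up, the identity you need fails by a constant factor, and knowing that all volume factors are ``comparable to $\abs{B_{(k+1)}}$'' is not sufficient for this particular claim.
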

We remark that the factors of $\abs{B_{(k)}}$ on the right-hand sides of \eqref{eq:needed:room:nonperiodic} and \eqref{eq:nearSaturated:nonperiodic} ensure that these estimates are dimensionally correct; note that in \eqref{eq:needed:room} and \eqref{eq:nearSaturated}, we had $\abs{\bbT^{3}} = 1$. The presence of the factor $\abs{B_{(k)}}$ does not cause any significant modification of the proof, as $\abs{B_{(k)}}$ is bounded from below and above by absolute constants by construction, i.e., $\abs{B(1;0)} \leq \abs{B_{(k)}} \leq \abs{B(2;0)}$. The absolute constant $A' > 0$ in \eqref{eq:needed:room:nonperiodic} does not introduce any difficulty as well. It is in the proof of these claims that the constant $Y > 0$ in \eqref{eq:prescribeNonperiodicEnergy:e-gap} is fixed, and the iteration constant $Z$ is required to be even larger depending on $Y$. We omit the routine modifications.

We are now ready to conclude the proof of Theorem~\ref{thm:eulerOnRn:nonperiod:presEnergy}. Arguing as in the proofs of \eqref{eq:ek:has:lb} and \eqref{eq:ek:has:bound}, the desired estimates \eqref{eq:prescribeNonperiodicEnergy:e-prof-lb} and \eqref{eq:prescribeNonperiodicEnergy:e-prof-ub} (with a constant $M > 0$ independent of $k$) for $e_{(k)}(t)$ follow from Claims~\ref{claim:growingSuppNonperiodic}, \ref{claim:room:to:add:nonperiodic} and \ref{claim:nearly:saturated:nonperiodic} once $Z$ is chosen to be sufficiently large. Hence Lemma~\ref{lem:mod:mainLemma} (with $L = 2$) can be applied to $(v_{(k)}, p_{(k)}, R_{(k)})$ to produce $(v_{(k+1)}, p_{(k+1)}, R_{(k+1)})$ with frequency-energy levels below $(\Xi_{(k+1)}, e_{v, (k+1)}, e_{R, (k+1)})$.  The support property \eqref{eq:prescribeNonperiodicEnergy:supp-vpR} for  $(v_{(k+1)}, p_{(k+1)}, R_{(k+1)})$ follows from \eqref{eq:goalForAllSuppM}, and \eqref{eq:prescribeNonperiodicEnergy:E-k-bar-e} is a quick consequence of \eqref{eq:nearSaturated:nonperiodic}. \qedhere
\end{proof}

\appendix
\section{$h$-Principle for incompressible Euler on Euclidean space } \label{sec:hPrinc}
In this Appendix, we observe that our construction leads to a result of ``$h$-principle'' type given in Theorem~\ref{thm:hPrinciple} below.  
To motivate this theorem, recall Proposition~\ref{prop:conserveLaw}, according to which every finite energy weak solution to Euler with appropriate integrability conserves linear and angular momentum. Furthermore, note that if $v_{n}$ is a sequence of finite energy solutions to Euler with appropriate uniform integrability (say, the family $\set{(1+\abs{x}) v_{n}(t)}_{n, t}$ is uniformly integrable in $x$), then the weak limit $v_{n} \rightharpoonup v$, provided that it exists, also conserves linear and angular momentum. Theorem~\ref{thm:hPrinciple} essentially says that there are no other conservation laws closed under taking weak limits. More precisely, this theorem
 shows that every smooth, divergence free vector field on $\R \times \R^3$ which conserves both linear and angular momentum can be realized as a weak limit of a sequence of $C_{t,x}^{1/5 - \ep}$ Euler flows in the $L_{t,x}^\infty$ weak-* topology.  We note that the space $L_{t,x}^\infty$ cannot be improved for this type of result in terms of regularity, and the result below implies weak convergence in $L^p$ spaces for $1 < p < \infty$ as well.

\begin{thm}\label{thm:hPrinciple} Let $\ep > 0$ and let $\calU$ be a bounded, convex, open subset of $\R \times \R^3$.  Let $v^l \in \CC_c^\infty(\R \times \R^3)$ be a smooth vector field with compact support in $\calU$ such that for all $t \in \R$ we have
\ALI{
\pr_l v^l(t,x) &= 0 \qquad \forall~x \in \R^3  \\
\fr{d}{dt} \int_{\R^3} v^l(t,x)~dx &= 0 \qquad \forall~l = 1, 2, 3\\
\fr{d}{dt} \int_{\R^3} (x^k v^l(t,x) - x^l v^k(t,x) )~dx &= 0 \qquad \forall~1 \leq k < l \leq 3
}
Then there exists a sequence  of solutions to incompressible Euler in the class $(v_{(k)}, p_{(k)}) \in C_{t,x}^{1/5 - \ep} \times C_{t,x}^{2(1/5-\ep)}(\R \times \R^3)$ such that $\supp v_{(k)} \cup \supp p_{(k)} \subseteq \calU  \tx{ for all } k \in \N$ and $v_{(k)} \rightharpoonup v \tx{ in } L_{t,x}^\infty \tx{ weak-}^\ast$.
\end{thm}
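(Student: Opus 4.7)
}
The plan is to realize $v$ as a smooth Euler--Reynolds flow with compactly supported stress, and then iterate the Main Lemma starting from this flow with an arbitrarily large initial frequency parameter $\Xi_{(0)}^{N}$, producing a sequence of genuine H\"older continuous Euler solutions whose corrections oscillate at frequencies tending to $\infty$ with $N$, and therefore vanish weakly.

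\emph{Step 1: Euler--Reynolds representation.} Set $U^{l} := -\pr_{t} v^{l} - \pr_{j}(v^{j} v^{l})$, which is smooth and compactly supported in $\calU$. The hypotheses that $v$ is divergence-free and conserves linear and angular momentum translate directly into
\ali{
\int U^{l}(t,x)\,\ud x = 0, \qquad \int (x^{k} U^{l} - x^{l} U^{k})(t,x)\,\ud x = 0,
}
for every $t$ and every $k,l$; for the second identity the contribution of $\pr_{j}(v^{j} v^{l})$ cancels by symmetry. Because $\calU$ is convex and $\supp v$ is compact, we may choose a single large space-time cylinder $\ECyl_{0}(\bar\tau,\bar\rho;t(I),x(I))$ (i.e., an Eulerian cylinder for $v_{\eps}\equiv 0$) containing $\supp U$ and sitting inside a strict convex subset $\calU'\subset\subset\calU$. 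Theorem~\ref{thm:inv4divEq} (applied with $v_{\eps}=0$) then produces a symmetric tensor $R^{jl}$ supported in that cylinder satisfying $\pr_{j} R^{jl} = -U^{l}$, so that $(v,0,R)$ is a smooth Euler--Reynolds flow with $\supp R \subseteq \calU'$.

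\emph{Step 2: Iteration with vanishing spatial scale.} Since $v$ and $R$ are smooth and compactly supported, there exist $e_{v,(0)}$ and $e_{R,(0)}$ with $e_{v,(0)} = Z e_{R,(0)}$ (for a fixed large $Z$ chosen as in Section~\ref{sec:mainLemImpliesMainThm} to guarantee $C^{1/5-\eps}_{t,x}\times C^{2(1/5-\eps)}_{t,x}$ summability) such that, for every sufficiently large $N$, the triple $(v,0,R)$ has frequency and energy levels below $(\Xi_{(0)}^{N}, e_{v,(0)}, e_{R,(0)})$ with $\Xi_{(0)}^{N} = N$. We then iterate the Main Lemma exactly as in the proof of Theorem~\ref{thm:eulerOnRn:cptPert}, with the same parameter evolution \eqref{eq:MLMT:parameterEvolution:Xi}--\eqref{eq:MLMT:parameterEvolution:eR} and energy densities $e_{(k)}^{N}(t,x)$ built by mollifying along the flow of $v_{(k)}^{N}$ the characteristic function of a neighborhood of $\supp R_{(k)}^{N}$. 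This produces a sequence $(v_{(k)}^{N},p_{(k)}^{N},R_{(k)}^{N})$ converging in $C^{1/5-\eps}_{t,x} \times C^{2(1/5-\eps)}_{t,x}$ to an Euler solution $(v^{N},p^{N})$ with $v^{N} = v + \sum_{k\geq 0} V_{(k)}^{N}$. The cumulative enlargement of support is controlled by $\sum_{k} \Xi_{(k)}^{-1}\lesssim (\Xi_{(0)}^{N})^{-1} = N^{-1}$ in space and similarly in time, so for $N$ large enough $\supp v^{N}\cup\supp p^{N}\subseteq\calU$.

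\emph{Step 3: Weak-$*$ convergence.} For each fixed $k$, the correction $V_{(k)}^{N}$ is a finite sum of spatially localized waves of the form $e^{i\la_{(k)}^{N}\xi_{I}^{N}} \tilde v_{I}^{N}$ with frequency $\la_{(k)}^{N}\sim C_{0}^{k+1} Z^{5(k+1)/2} N$ and amplitude bounded by $C Z^{-(k+1)/2} e_{R,(0)}^{1/2}$, with uniform bounds on $\co{\,\abs{\nb\xi_{I}^{N}}^{-1}}$ independent of $N$ (Proposition~\ref{prop:statPhaseProp}). For any $\phi\in L^{1}(\bbR\times\bbR^{3})$ and any fixed $k$, approximating $\phi$ in $L^{1}$ by smooth functions and integrating by parts once in the smooth regime yields $\int V_{(k)}^{N}\phi\,\ud t\,\ud x \to 0$ as $N\to\infty$; this is a quantitative Riemann--Lebesgue argument built on nonstationary phase. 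Since $\co{V_{(k)}^{N}}\leq C Z^{-(k+1)/2}$ is summable in $k$ uniformly in $N$, a standard tail estimate upgrades pointwise-in-$k$ weak convergence to $\sum_{k}V_{(k)}^{N}\rightharpoonup 0$ in $L^{\infty}_{t,x}$ weak-$*$, and therefore $v^{N}\rightharpoonup v$ as $N\to\infty$.

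The principal technical obstacle is ensuring that the compact support constraint $\supp v^{N}\subseteq\calU$ is compatible with the choice of $\Xi_{(0)}^{N}\to\infty$: one must verify that the $v_{(0)}$-adapted cylindrical neighborhoods tracked in Section~\ref{sec:mainLemImpliesMainThm} shrink fast enough (they do, geometrically with ratio $C_{0}^{-1}Z^{-5/2}$ in space and $C_{0}^{-1}Z^{-2}$ in time) that for $N$ large the entire iteration stays within the strictly convex intermediate set $\calU'\subset\subset\calU$. The remaining steps, namely the weak-$*$ convergence via nonstationary phase and the tail estimate, are then routine.
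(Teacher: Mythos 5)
Your overall route is the same as the paper's: realize $v$ as a smooth Euler--Reynolds flow $(v,0,R)$ with $R$ compactly supported in $\calU$ via the symmetric divergence solution operators with ambient velocity zero, then run the iteration from the proof of Theorem~\ref{thm:eulerOnRn:cptPert} with the initial frequency sent to infinity, using the geometric decay of the cylinder scales to keep the supports inside $\calU$ and uniform $C^0$ bounds on the corrections. The genuine gap is in your Step 3. The waves are not fixed profiles multiplied by a fast oscillation: the amplitudes $\tilde v_I$ themselves vary at the spatial scale $\Xi_{(k)}^{-1}$, with $\co{\nab v_I}\lesssim \Xi_{(k)} e_{R,(k)}^{1/2}$ by Proposition~\ref{prop:ampEstimates}, while the phase frequency is $\la_{(k)} = B_\la N_{(k)}\Xi_{(k)}$ with $N_{(k)} = Z^{5/2}$ a constant. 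Hence one integration by parts against the phase gains only $\la_{(k)}^{-1}\Xi_{(k)} = (B_\la Z^{5/2})^{-1}$, a factor independent of your parameter $N$, and further integrations by parts do not help, since each additional derivative of the amplitude costs another factor of order $\Xi_{(k)}$ (in fact $N_{(k)}^{1/L}\Xi_{(k)}$). So the claim that $\int V_{(k)}^{N}\phi\,\ud t\,\ud x \to 0$ as $N\to\infty$ ``by nonstationary phase'' is not established by the argument you give. The statement itself is true, but by a different mechanism, and it is exactly the one the paper uses: the Main Lemma produces $V_{(k)}^{N} = \nab\times W_{(k)}^{N}$ with $\co{W_{(k)}^{N}} \le C\,\Xi_{(k)}^{-1}N_{(k)}^{-1}e_{R,(k)}^{1/2}$ by \eqref{eq:Wco}, so summing in $k$ gives $v^{N} - v = \nab\times W^{N}$ with $\co{W^{N}} \le C(Z)\,N^{-1}e_{R,(0)}^{1/2}\to 0$, while $\co{v^{N}-v}\le C e_{R,(0)}^{1/2}$ and the supports are uniformly bounded; testing against $\nab\times\phi$ for smooth compactly supported $\phi$ and then using density in $L^1$ together with the uniform $L^\infty$ and support bounds yields $v^{N}\rightharpoonup v$ with no stationary-phase input at all.

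A secondary slip occurs in Step 1: it is not true in general that a bounded convex $\calU$ contains a single space-time cylinder that contains $\supp U$ (picture a long, thin, slanted convex region), so you cannot simply place one Eulerian cylinder inside $\calU'\subset\subset\calU$ and invoke Theorem~\ref{thm:inv4divEq} there. What convexity actually buys is slice-wise: the averaged ray operators of Proposition~\ref{prop:Rjl} produce, at each time $t$, a solution supported in the convex hull of $\supp \zeta(t,\cdot)\cup\supp U(t,\cdot)$; choosing $\zeta(t,\cdot)$ smooth in $t$, of unit integral, supported in small balls whose time slices lie in $\calU$ (possible because $\supp U$ is compact in the open set $\calU$), this hull lies in the convex time slice of $\calU$, and hence $\supp R\subseteq\calU$. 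With that fix, and with the weak-$*$ step replaced by the curl-potential argument above, your proof coincides with the paper's.
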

Theorem~\ref{thm:hPrinciple} contributes to the growing literature on $h$-principle type results in fluid equations, for which we refer the reader to \cite{deLSzeHFluid, choff, choffSzeStationary, isettVicol} for further discussion. The result helps to express the point that the only results that appear to be closed under weak limits for low regularity solutions to these equations can be viewed as conservation laws or as time regularity statements.
Here we will outline the main ideas of the proof of Theorem~\ref{thm:hPrinciple}, and we will refer the reader to \cite{isettVicol} for a detailed proof of an analogous result for active scalar equations.  

\subsection{Sketch of proof of Theorem~\ref{thm:hPrinciple}}

Let $\ep > 0$ and let $\calU$ be a bounded, convex, open subset of $\R \times \R^3$.  Let $v^l \in \CC_c^\infty(\calU)$ be an incompressible velocity field which conserves both linear and angular momentum, as in the statement of Theorem~\ref{thm:hPrinciple}.  Consider the vector field $U^l = \pr_t v^l + \pr_j(v^j v^l)$.  One can interpret $U^l(t,x)$ as the force per unit volume (or unit mass) acting on a particle at the point $(t,x)$, since $U^l = \pr_t v^l + v^j \pr_j v^l$ by incompressibility.

Choose a smooth, symmetric tensor field $R^{jl} \in C_c^\infty(\R \times \R^3)$ with compact support in $\calU$ such that 
\ali{
 \pr_j R^{jl} &= U^l  \label{eq:symmDivEqn}
}
As we have seen, it is necessary for $U^l(t,\cdot)$ to be $L^2$-orthogonal to both translation and rotation vector fields at all times $t \in \R$ in order for such a tensor field to exist.  For the vector field $U^l$ above, the orthogonality conditions are equivalent to the conservation laws assumed in Theorem~\ref{thm:hPrinciple}, since the term $\pr_j(v^j v^l)$ is already the divergence of a symmetric tensor.  With these conditions satisfied, we can construct the desired $R^{jl}$ using the operators constructed in Proposition~\ref{prop:Rjl} (where we take the ambient velocity field to be $0$ so that the operator is time-independent).

With this choice of $R^{jl}$, we may view $v^l$ as part of a smooth solution $(v_{(0)}, p_{(0)}, R_{(0)})$ to the Euler-Reynolds equations with velocity field $v_{(0)}^l = v^l$, pressure $p_{(0)} = 0$ and stress tensor $R_{(0)}^{jl} = R^{jl}$ as chosen above.  The proof of Theorem~\ref{thm:hPrinciple} now proceeds along the same lines as the proof of Theorem~\ref{thm:eulerOnRn:cptPert} given in Section~\ref{sec:mainLemImpliesMainThm}.  Namely, beginning with $(v_{(0)}, p_{(0)}, R_{(0)})$, one generates a sequence  of Euler Reynolds flows $(v_{(k)}, p_{(k)}, R_{(k)})$ by repeated application of Lemma \ref{lem:mainLemma} such that the sequence $(v_{(k)}, p_{(k)})$ converges in $C_{t,x}^{1/5 - \ep} \times C_{t,x}^{2(1/5-\ep)}$ to a solution $(\hat{v}, \hat{p})$ of incompressible Euler.  This sequence of Euler Reynolds flows $(v_{(k)}, p_{(k)}, R_{(k)})$ is dictated by the choice of the sequence of frequency energy levels $(\Xi_{(k)}, e_{v,(k)} e_{R,(k)})$, which obey the iteration rules \eqref{eq:MLMT:parameterEvolution:Xi:M}-\eqref{eq:MLMT:parameterEvolution:eR:M}.  The solution $(\hat{v}, \hat{p})$ is thus determined completely by the choice of initial frequency energy levels $(\Xi_{(0)}, e_{v,(0)}, e_{R,(0)})$ and the choice of the frequency $\Xi_{(1)}$ applied in the first stage of the iteration.

  The key point in achieving solutions $\hat{v}$ which are close to the given $v = v_{(0)}$ in $L_{t,x}^\infty$ weak-* is that the initial frequency $\Xi_{(1)}$ (and all subsequent frequencies) may be chosen arbitrarily large in the first stage of the iteration while maintaining a uniform bound on $\|\hat{v} - v \|_{L_{t,x}^\infty}$ that is independent of the choice of $\Xi_{(1)}$.  In fact, one can arrange that $\hat{v} - v = \nab \times W$ where $\co{W} \leq \Xi_{(1)}^{-1} e_{R,(0)}^{1/2}$ can be made arbitrarily small, while maintaining a bound of the form $\co{\hat{v} - v} \leq C e_{R,(0)}^{1/2}$ and uniform control over the support of $\hat{v} - v$.  To arrange that the support of the iteration remains inside a precompact subset of $\Om$, one may choose a larger frequency level $\Xi_{(0)}$ if necessary, since the choice of a sufficiently large frequency level at the beginning of the iteration will cause the time and spatial scales of the entire iteration to become arbitrarily small.  Choosing a sequence of $\Xi_{(1)}$ tending to $\infty$, one obtains the desired sequence\footnote{One may also construct continuous families of such $\hat{v}$ if desired using the construction employed here.} of solutions $\hat{v}$.  We refer to \cite[Proof of Theorem 9.1]{isettVicol} for a detailed implementation of this technique.

\bibliographystyle{alpha}
\bibliography{eulerOnRn}

\end{document}